\numberwithin{equation}{section}
\newtheorem{lma}{Lemma}[section]
\newaliascnt{thmCt}{lma}
\newtheorem{thm}[thmCt]{Theorem}
\newaliascnt{corCt}{lma}
\newtheorem{cor}[corCt]{Corollary}
\newaliascnt{prpCt}{lma}
\newtheorem{prp}[prpCt]{Proposition}
\newcounter{theoremintro}
\newaliascnt{thmIntroCt}{theoremintro}
\newtheorem{thmIntro}[thmIntroCt]{Theorem}
\newtheorem{corIntro}[theoremintro]{Corollary}
\theoremstyle{definition}
\newaliascnt{pgrCt}{lma}
\newtheorem{pgr}[pgrCt]{}
\newaliascnt{dfnCt}{lma}
\newtheorem{dfn}[dfnCt]{Definition}
\newaliascnt{rmkCt}{lma}
\newtheorem{rmk}[rmkCt]{Remark}
\newaliascnt{pbmCt}{lma}
\newtheorem{pbm}[pbmCt]{Problem}
\newaliascnt{exaCt}{lma}
\newtheorem{exa}[exaCt]{Example}
\newaliascnt{ntnCt}{lma}
\newtheorem{ntn}[ntnCt]{Notation}
\newcommand{\vect}[1]{\textbf{#1}}
\newcommand{\NN}{\mathbb{N}}
\newcommand{\RR}{\mathbb{R}}
\newcommand{\CC}{\mathbb{C}}
\newcommand{\KK}{\mathcal{K}}
\newcommand{\freeVar}{\_\,}
\newcommand{\sa}{\mathrm{sa}}
\newcommand{\ca}{C*-al\-ge\-bra}
\newcommand{\axiomO}[1]{(O#1)}
\newcommand{\cc}{\mathrm{c}}
\newcommand{\CuSgp}{$\CatCu$-sem\-i\-group}
\newcommand{\CuMor}{$\CatCu$-mor\-phism}
\newcommand{\llpw}{\ll_{\mathrm{pw}}}
\newcommand{\filter}{\mathcal{U}}
\newcommand{\CatPomProd}{\mathrm{PoM}\text{-}\!\prod}
\newcommand{\CatCu}{\ensuremath{\mathrm{Cu}}}
\newcommand{\andSep}{\,\,\,\text{ and }\,\,\,}
\DeclareMathOperator{\Aff}{Aff}
\DeclareMathOperator{\Cu}{Cu}
\DeclareMathOperator{\Lsc}{Lsc}
\DeclareMathOperator{\LL}{L}
\DeclareMathOperator{\supp}{supp}
\DeclareMathOperator{\QT}{QT}
\DeclareMathOperator{\TT}{T}
\DeclareMathOperator{\FF}{F}
\DeclareMathOperator{\PP}{P}
\DeclareMathOperator{\set}{set}
\DeclareMathOperator{\rc}{rc}
\DeclareMathOperator{\CompAmpl}{ca}
\DeclareMathOperator{\LimF}{LimF}
\DeclareMathOperator{\LimT}{LimT}
\DeclareMathOperator{\LimQT}{LimQT}
\DeclareMathOperator{\LDF}{LDF}
\DeclareMathOperator{\DF}{DF}
\begin{document}

\title{Traces on ultrapowers of C*-algebras}

\author{Ramon Antoine}
\author{Francesc Perera}
\author{Leonel Robert}
\author{Hannes Thiel}

\date{\today}

\address{
R.~Antoine, 
Departament de Matem\`{a}tiques,
Universitat Aut\`{o}noma de Barcelona,
08193 Bellaterra, Barcelona, Spain, and
Centre de Recerca Matem\`atica, Edifici Cc, Campus de Bellaterra, 08193 Cerdanyola del Vall\`es, Barcelona, Spain}
\email[]{ramon.antoine@uab.cat}

\address{
F.~Perera, 
Departament de Matem\`{a}tiques,
Universitat Aut\`{o}noma de Barcelona,
\linebreak 08193 Bellaterra, Barcelona, Spain, and
Centre de Recerca Matem\`atica, Edifici Cc, Campus de Bellaterra,  08193 Cerdanyola del Vall\`es, Barcelona, Spain}
\email[]{francesc.perera@uab.cat}
\urladdr{https://mat.uab.cat/web/perera}

\address{
L.~Robert,
University of Louisiana at Lafayette,
Lafayette, 70504-3568, USA}
\email[]{lrobert@louisiana.edu}

\address{Hannes~Thiel, 
Department of Mathematical Sciences, Chalmers University of Technology and the University of
Gothenburg, Gothenburg 412 96, Sweden}
\email{hannes.thiel@chalmers.se}
\urladdr{www.hannesthiel.org}

\subjclass[2020]
{Primary
19K14, 
46L05, 
46B08, 
46M07. 
Secondary
06B35, 
06F05, 
46L30, 
46L35, 
46L51, 
46L80. 
}

\keywords{ultraproducts, traces, quasitraces, Cuntz semigroup, $C^*$-algebra}

\thanks{
The two first named authors were partially supported by MINECO (grants No.\  MTM2017-83487-P and PID2020-113047GB-I00), by the Comissionat per Universitats i Recerca de la Ge\-ne\-ralitat de Ca\-ta\-lu\-nya (grant No.\ 2017-SGR-1725) and by the Spanish State Research Agency through the Severo Ochoa and María de Maeztu Program for Centers and Units of Excellence in R\&D (CEX2020-001084-M).
The fourth named author was partially supported by the Deutsche Forschungsgemeinschaft (DFG, German Research Foundation) under Germany's Excellence Strategy EXC 2044-390685587 (Mathematics M\"{u}nster: Dynamics-Geometry-Structure), by the ERC Consolidator Grant No. 681207, and by the Knut and Alice Wallenberg Foundation (KAW 2021.0140).
}

\begin{abstract}
Using Cuntz semigroup techniques, we characterize when limit traces are dense in the space of all traces on a free ultrapower of a \ca{}. 
More generally, we consider density of limit quasitraces on ultraproducts of \ca{s}.

Quite unexpectedly, we obtain as an application that every simple \ca{} that is $(m,n)$-pure in the sense of Winter is already pure.
As another application, we provide a partial verification of the first Blackadar--Handelman conjecture on dimension functions.

Crucial ingredients in our proof are new Hahn--Banach type separation theorems for noncancellative cones, which in particular apply to the cone of extended-valued traces on a \ca.
\end{abstract}

\maketitle

\section{Introduction}
\label{sec:intro}

Ultraproducts have a well established presence in the field of operator algebras, starting with the groundbreaking work of McDuff \cite{McD69UncountablyManyII1} on tensorial absorption of the hyperfinite $\mathrm{II}_1$ factor and later the award-winning results of Connes \cite{Con76ClassifInjFactors} on the classification of injective factors.
Ultraproducts also play a crucial role in model theory of \ca{s}
\cite{GarLup18ApplModelThyCDynamics, GarKalLup19ModelThyRokhlinDim, FarHarLupRobTikVigWin21ModelThy}.

In recent years, work on the stably finite case of Elliott's classification program and on the Toms--Winter conjecture has drawn attention to the trace space of the free ultrapower of a \ca{} (see, for example, \cite{KirRor14CentralSeq}, \cite{TikWhiWin17QDNuclear}, and \cite{BBSTWW19}). 
A natural question arises in this context: To what extent is the trace space of a free ultrapower of a \ca{} determined by the trace space of the \ca? 
The same question can be asked more generally for trace spaces of products and ultraproducts of \ca{s}.

Since there are various notions of trace associated to a \ca{}, to make the above question more precise we must fix what is meant by trace space. 
Given a unital \ca{} $A$, let us consider first the set $\TT_1(A)$ of tracial states on $A$ regarded as a compact convex set (embedded in $A^*$ and endowed with the weak* topology). 
Let $\filter$ be a free ultrafilter on $\NN$, and let $A_\filter$ denote the free ultrapower of~$A$ with respect to $\mathcal U$. 
The category of compact convex sets admits ultracoproducts, and there is a naturally defined continuous affine map
\begin{equation}
\label{intro:coprod}
\coprod_\filter \TT_1(A) \to \TT_1(A_\filter).
\end{equation}

The question of calculating $\TT_1(A_\filter)$ can be made explicit by asking whether this map is an isomorphism. 
The map \eqref{intro:coprod} is always injective, but it may fail in general to be surjective. 
For instance, it is possible for  $\TT_1(A)$ to be  a singleton set while $\TT_1(A_\filter)$ is not. 
For the  ultraproduct of an arbitrary collection of unital \ca{s} $(A_j)_{j\in J}$, a similar map $\coprod_\filter \TT_1(A_j) \to \TT_1(\prod_\filter A_j)$ can be defined. 
Again, this map is guaranteed to be injective (\autoref{autoinj}), while its surjectivity may fail even more dramatically:
we can have $\TT_1(A_j)$ empty for all~$j$ while  $\TT(\prod_\filter A_j)$ is nonempty;
see Examples~\ref{exa:1} and~\ref{exa:2} below and \cite[Corollary~2.2]{BicFar15Trace}. 

The range of the map \eqref{intro:coprod} can be described as the closure of the set $\LimT_1(A_\filter)$ of limit tracial states in $\TT_1(A_\filter)$, where a limit tracial state is an element in $\TT_1(A_\filter)$ that is the limit along $\filter$ of a sequence of tracial states in $\TT_1(A)$.
Surjectivity in \eqref{intro:coprod} can thus be alternately stated as the density of $\LimT_1(A_\filter)$ in $\TT_1(A_\filter)$. 

Stated in this form, surjectivity of the map in \eqref{intro:coprod} has been obtained in the literature under different kinds of  ``regularity properties'' for the \ca{s}. 
For ultrapowers of an exact \ca{} that tensorially absorbs the Jiang--Su algebra, the density of $\LimT_1(A_\filter)$ was proved by Ozawa in \cite[Theorem~8]{Oza13DixmApproxSymmAmen}.  
This was extended by Ng and the third named author to ultraproducts of unital \ca{s} with the property of strict comparison of full positive elements by bounded traces (\cite[Theorem~1.2]{NgRob16CommutatorsPureCa}).
In the context of products of unital \ca{s}, Archbold, Tikuisis, and the third named author showed in \cite[Theorem~3.19]{ArcRobTik17Dixmier} that the map $\coprod_j \TT_1(A_j)\to \TT_1(\prod_j A_j)$ is surjective if the involved \ca{s} have uniformly bounded radius of comparison by traces. 
Very recently, analogous questions have also been considered in the context of ultraproducts of $W^*$-bundles by Vaccaro \cite{Vac23arX:UltraprodWBundles}.

In order to investigate exactly which regularity properties on $A$ guarantee an isomorphism in \eqref{intro:coprod}, we use the theory of the Cuntz semigroup. 
For this reason, our results are more naturally phrased in terms of spaces of $2$-quasitraces, since $2$-quasitracial states are in bijection with functionals on the Cuntz semigroup normalized at
the class of the unit (\cite[Theorem~4.4]{EllRobSan11Cone}). 
To pass from $2$-quasitraces to traces, one can add the assumption of exactness to the \ca{s}, and invoke Haagerup's theorem asserting that for exact \ca{s} ``$2$-quasitraces are traces'' (\cite{Haa14Quasitraces}), or simply assume that $2$-quasitraces are traces for the \ca{s} in the collection.  

Let us  introduce some notation.
Let $A$ be a \ca. 
Let $\Cu(A)$ denote its Cuntz semigroup, that is, the set of Cuntz classes of positive elements in $A\otimes \mathcal K$ (the stabilization of $A$) endowed with a suitable order and addition operation. 
We denote the Cuntz class of a positive element $a\in A\otimes \mathcal K$ by $[a]$;
see \autoref{pgr:axioms}.
 
Given $N\in \NN$, we define the relation $\leq_N$ on $\Cu(A)$ by setting $x\leq_N y$ if $nx\leq ny$ for all $n\geq N$;
see \autoref{ntn:leqN}.
Suppose now that $A$ is unital. 
Let $\QT_1(A)$ denote the set of $2$-quasitracial states of $A$. 
The \emph{rank} of a Cuntz class $[a]\in \Cu(A)$ is defined as the function $\widehat{[a]}\colon \QT_1(A)\to [0,\infty]$ given by $\widehat{[a]}(\tau)=\lim_n \tau(a^{\frac 1n})$, where~$\tau$ is canonically extended to $A\otimes \mathcal K$ to be evaluated at $a^{\frac 1n}$.
Finally, for an ultrapower $A_\filter$ of $A$, let $\LimQT_1(A_\filter)$ denote the set of limit $2$-quasitracial states on $A_\filter$, defined similarly as for traces.

We write $[x,y]$ for the additive commutator $xy-yx$ of $x,y \in A$, and we let $[A,A]$ denote the linear span of commutators in $A$.
The next theorem is our main result on the calculation of the tracial state space of an ultrapower.
The equivalence of~(i), (ii) and~(iii) follows from \autoref{prp:CharFuDenseUpower} in combination with \autoref{QTFbijection}, while the equivalence with~(iv) and~(v) follows from \autoref{nosillycomms}.

\begin{thmIntro}
\label{thmtraces}
Let $A$ be a unital \ca{} and let $\filter$ be a free ultrafilter on~$\NN$. 
The following are equivalent:
\begin{enumerate}[{\rm (i)}]
\item
The set $\LimQT_1(A_\filter)$ of limit $2$-quasitracial states is dense in $\QT_1(A_\filter)$.
\item
For every $\gamma\in (0,1)$ and $d\in\NN$ there exists $N\in\NN$ such that
\[
\widehat{x}\leq\gamma\widehat{y}
\quad \text{ implies } \quad 
x\leq_N y,
\]
for all $x,y\in \Cu(A)$ such that $x,y\leq d[1]$ and $[1]\leq dy$.

\item
There exists $M\in \NN$ such that for every $d\in\NN$ there exists $N\in\NN$  such that 
\[
\widehat{x} \leq \widehat{y}
\quad \text{ implies } \quad 
Nx\leq  NMy,
\]
for all $x,y\in \Cu(A)$ such that  $x,y\leq d[1]$ and $[1]\leq dy$.
\end{enumerate}
If we assume at the outset that $\QT_1(A)=\TT_1(A)$ (for example, if $A$ is exact), then the above statements are also equivalent to
\begin{enumerate}
\item[{\rm (iv)}]
There exists $N\in \NN$ such that if $a\in \overline{[A,A]}$, then
\[
\left\| a-\sum_{k=1}^N [b_k,c_k] \right\| \leq \frac{1}{2}\|a\|
\] 
for some $b_k,c_k\in A$ such that $\|b_k\|,\|c_k\|\leq \|a\|^{\frac12}$ for all $k = 1,\ldots,N$.
\item[{\rm (v)}]
The natural map $\coprod_\filter \TT_1(A) \to \TT_1(A_\filter)$ is an isomorphism.
\end{enumerate}
\end{thmIntro}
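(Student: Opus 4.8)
The plan is to route the entire equivalence through the Cuntz semigroup, exploiting the bijection (\autoref{QTFbijection}) between $2$-quasitracial states and normalized functionals on $\Cu(A)$. Under this bijection a quasitrace $\tau$ corresponds to the functional $[a]\mapsto \widehat{[a]}(\tau)$, and the topology on $\QT_1(A)$ matches pointwise convergence of functionals. So I would first establish \autoref{prp:CharFuDenseUpower}, which recasts density of $\LimQT_1(A_\filter)$ in $\QT_1(A_\filter)$ as density of the corresponding \emph{limit functionals} on $\Cu(A_\filter)$. This reduces the tracial question to a purely order-theoretic one about ranks.

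For (i)$\Leftrightarrow$(ii)$\Leftrightarrow$(iii) the governing principle is separation: limit functionals fail to be dense exactly when some functional on $\Cu(A_\filter)$ can be strictly separated from them. Since the Cuntz semigroup is noncancellative, I would invoke the Hahn--Banach type separation theorems for cones announced in the abstract rather than the classical one. A separating functional should manifest concretely as a pair $x,y\in\Cu(A)$ with $\widehat{x}\leq\gamma\widehat{y}$ yet $nx\not\leq ny$ for infinitely many $n$---precisely a failure of (ii). Thus (ii) is what forbids separation and forces density; the constraints $x,y\leq d[1]$ and $[1]\leq dy$ encode that functionals are tested on the order-unit part of $\Cu(A)$ and that $y$ may be taken full. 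The step (ii)$\Leftrightarrow$(iii) is a uniformity repackaging, trading the scalar $\gamma\in(0,1)$ for a single multiplicative constant $M$ and the scaled inequality $Nx\leq NMy$, obtained by a rescaling argument internal to $\Cu(A)$.

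Under the hypothesis $\QT_1(A)=\TT_1(A)$---guaranteed for exact $A$ by Haagerup's theorem---the equivalence (i)$\Leftrightarrow$(v) is then immediate, since $\QT_1(A_\filter)$ and $\TT_1(A_\filter)$ coincide and density of limit quasitraces becomes surjectivity, hence (using the injectivity recorded in \autoref{autoinj}) an isomorphism of the ultracoproduct map. The equivalence with the commutator condition (iv) I would obtain from \autoref{nosillycomms} by a dual argument: a state is tracial exactly when it vanishes on $\overline{[A,A]}$, so a uniform bound on how efficiently $N$ commutators of norm-controlled elements fill $\overline{[A,A]}$ up to $\tfrac{1}{2}\|a\|$ is the Banach-space dual incarnation of the uniform comparison in (ii)/(iii). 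One direction extracts the bound $N$ and the commutators from the uniform comparison via separation in $A/\overline{[A,A]}$; the converse feeds the commutator estimate back to approximate an arbitrary trace on $A_\filter$ by limit traces.

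The main obstacle is the separation step underlying (i)$\Leftrightarrow$(ii): proving Hahn--Banach type separation for functionals on the \emph{noncancellative} semigroup $\Cu(A_\filter)$, and translating an abstract separating functional into a concrete comparison failure in $\Cu(A)$ with control of the parameters $\gamma$ and $d$. Understanding how $\Cu(A_\filter)$ relates to the ultraproduct of the $\Cu(A)$'s, and ensuring the separating data descends to $A$ with \emph{uniform} constants $N$, is where the real difficulty lies; the commutator reformulation (iv) and the uniformity step (ii)$\Leftrightarrow$(iii) are then comparatively formal once the separation machinery is in place.
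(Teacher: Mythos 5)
Your proposal follows essentially the same route as the paper: the equivalence of (i)--(iii) is obtained from \autoref{QTFbijection} together with \autoref{prp:CharFuDenseUpower}, whose proof rests on the Hahn--Banach type separation theorems for noncancellative cones (for the density direction) and a direct contradiction argument extracting a comparison failure (for the converse), while (iv) and (v) are handled by \autoref{nosillycomms} exactly as you describe. One small correction: the implication from (iii) to (ii) is \emph{not} a rescaling argument internal to $\Cu(A)$ --- only (ii)$\Rightarrow$(iii) is formal, and the paper explicitly notes that the only known proof of the converse passes through the ultrapower and the density of limit functionals, which is consistent with your overall plan but not with your characterization of that step as ``comparatively formal.''
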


The above theorem subsumes all the existing results on the density of limit tracial states on an ultrapower.

Using that tracial states form a closed subset among $2$-quasitracial states, we obtain:

\begin{corIntro}
Let $A$ be a unital \ca{} such that every $2$-quasitracial state on~$A$ is a trace. 
If $\Cu(A)$ satisfies the equivalent conditions of \autoref{thmtraces}, then every $2$-quasitracial state on~$A_\filter$ is a trace.
\end{corIntro}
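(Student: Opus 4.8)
The plan is to deduce the corollary formally from the density statement in \autoref{thmtraces}(i), together with the fact (noted just before the corollary) that tracial states form a weak*-closed subset of the $2$-quasitracial states. The only genuine inputs are two auxiliary observations, both of which reduce to weak*-continuity of evaluation maps.

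First I would check that, under the hypothesis $\QT_1(A)=\TT_1(A)$, every limit $2$-quasitracial state on $A_\filter$ is automatically a trace, so that $\LimQT_1(A_\filter)=\LimT_1(A_\filter)\subseteq\TT_1(A_\filter)$. Indeed, such a state has the form $\tau=\lim_\filter\tau_n$ with each $\tau_n\in\QT_1(A)=\TT_1(A)$; given $x,y\in A_\filter$ represented by bounded sequences $(x_n)_n,(y_n)_n$, the tracial identity $\tau_n(x_ny_n)=\tau_n(y_nx_n)$ holds for each $n$ and survives the ultralimit, giving $\tau(xy)=\tau(yx)$.

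Second I would record that $\TT_1(A_\filter)$ is weak*-closed in $\QT_1(A_\filter)$. A $2$-quasitracial state is a trace exactly when it is additive, that is, when $\tau(a+b)-\tau(a)-\tau(b)=0$ for all self-adjoint $a,b\in A_\filter$. For each fixed pair $(a,b)$ this is a weak*-closed condition, since evaluation at a fixed element is weak*-continuous; intersecting over all pairs exhibits $\TT_1(A_\filter)$ as a closed set.

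Finally I would assemble the argument. Since $\Cu(A)$ satisfies the equivalent conditions of \autoref{thmtraces}, condition~(i) gives that $\LimQT_1(A_\filter)$ is dense in $\QT_1(A_\filter)$, so its closure is all of $\QT_1(A_\filter)$; but by the two observations above this closure is contained in the closed set $\TT_1(A_\filter)$. Hence $\QT_1(A_\filter)=\TT_1(A_\filter)$, which is precisely the assertion that every $2$-quasitracial state on $A_\filter$ is a trace. I do not expect a real obstacle, as all the substantive content is already carried by \autoref{thmtraces}; the corollary is a clean closure-versus-density deduction.
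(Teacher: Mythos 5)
Your proposal is correct and is essentially the paper's own argument: the paper justifies the corollary in one line by noting that limit $2$-quasitracial states are traces (since limits of traces are traces) and that tracial states form a closed subset of $\QT_1(A_\filter)$, so density forces equality. Your write-up simply makes these two closedness/limit observations explicit, with no substantive difference in route.
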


Let us discuss now a different trace space associated to a \ca{} $A$ (possibly nonunital).
Let $\TT(A)$ denote the set of $[0,\infty]$-valued, lower semicontinuous traces on~$A$;
see \autoref{pgr:quasitraces}.
We regard $\TT(A)$ as a cone endowed with pointwise addition and pointwise scalar multiplication by positive real numbers. 
The cone $\TT(A)$ is always noncancellative, so it does not embed into a vector space. 
There is, however, a compact Hausdorff topology on $\TT(A)$ compatible with the cone operations; see \cite{EllRobSan11Cone}.

Consider an ultrapower $A_\filter$ of $A$. 
A sequence of traces in $\TT(A)$ naturally defines a \emph{limit trace} in $\TT(A_\filter)$. 
We denote the set of limit traces on $A_\filter$ arising this way by $\LimT(A_\filter)$.  
The central question that we address in the context of the trace space $\TT(A_\filter)$ is that of characterizing, through comparibility properties on the Cuntz semigroup, the density of $\LimT(A_\filter)$ in $\TT(A_\filter)$.  
We also investigate this density question for the cones of traces of products and ultraproducts of arbitrary collections of \ca{s}. 
Although these density questions can be reformulated in terms of the surjectivity of maps with domain a coproduct or ultracoproduct of cones, we shall refrain from formulating them in this way. 
This point of view will be pursued in a separate work. 

As in the case of tracial states, our results are more naturally formulated in terms of the cone $\QT(A)$ of $[0,\infty]$-valued, lower semicontinuous $2$-quasitraces.
By the homeomorphism between $\QT(A)$ and the cone $\FF(\Cu(A))$ of functionals on $\Cu(A)$, the problem of density of $\LimQT(A_\filter)$ in $\QT(A_\filter)$ is translated into the density of a set of limit functionals in $\FF(\Cu(A_\filter))$.
Moreover, in this setting the problem admits a formulation working purely in the category $\mathrm{Cu}$ of abstract Cuntz semigroups, usually called \CuSgp{s};
see \autoref{pbm:denseLimitFctl}.

The category $\mathrm{Cu}$ was introduced in \cite{CowEllIva08CuInv} and was extensively studied in
\cite{AntPerThi18TensorProdCu, AntPerPet18PerfConditionsCu, AntPerThi20AbsBivariantCu, AntPerThi20AbsBivarII, AntPerThi20CuntzUltraproducts, AntPerRobThi22CuntzSR1} as well as \cite{ThiVil22DimCu, ThiVil21DimCu2, ThiVil21arX:NowhereScattered}. 
The cones of functionals on \CuSgp{s} have also been thoroughly studied; 
see, for example, \cite{EllRobSan11Cone, Rob13Cone, AntPerRobThi21Edwards}.
This allows us to use functional analytic techniques developed for the category $\mathrm{Cu}$ together with the computation of Cuntz semigroups of ultraproducts, as carried out in \cite{AntPerThi20CuntzUltraproducts}.
We thus show that the density of limit $2$-quasitraces on an ultrapower of a \ca{} is equivalent to a certain comparability property on the Cuntz semigroup of the algebra. 
These results are obtained as corollaries of their counterparts on functionals on abstract Cuntz semigroups, and solve the original problem under the not uncommon assumption that $2$-quasitraces are traces, and in particular if the algebra is exact.

We now explicitly state the results. Let $A$ be a \ca, and let $\tau\in \QT(A)$, which we regard as a $2$-quasitrace defined on $(A\otimes\mathcal K)_+$. 
For $a\in(A\otimes\KK)_+$, we define the \emph{rank} of~$a$ as the function $\widehat{[a]}\colon \QT(A)\to [0,\infty]$ given by $\widehat{[a]}(\tau)=\lim_n \tau(a^{\frac 1n})$, as we did in the case of quasitracial states. 
Given a free ultrafilter $\filter$ on $\NN$, as before we have that every sequence in $\QT(A)$ naturally induces a limit $2$-quasitrace in $\QT(A_\filter)$ and we denote the set of limit $2$-quasitraces by $\LimQT(A_\filter)$. 
Our main result characterizing the density of limit $2$-quasitraces of an ultrapower is the following:

\begin{thmIntro}[{\ref{prp:MainThm}}]
\label{thm-intro}
Let $A$ be a \ca{} and let $\mathcal{U}$ be a free ultrafilter on~$\NN$. 
The following are equivalent:
\begin{enumerate}[{\rm (i)}]
\item
The set of limit $2$-quasitraces $\LimQT(A_\filter)$ is dense in $\QT(A_\filter)$.
\item
For every $\gamma\in(0,1)$ and $d\in\NN$ there exists $N\in\NN$ such that 
\[
\widehat{[a]} \leq \gamma \widehat{[b]} 
\quad \text{ implies } \quad 
[a]\leq_N [b], \quad \text{ for all }a,b\in M_d(A)_+.\]
\item 
There exists $M\in\NN$ such that for every $d\in\NN$ there exists $N\in\NN$ such that 
\[
\widehat{[a]} \leq \widehat{[b]}
\quad \text{ implies } \quad 
N[a]\leq MN[b], \quad \text{ for all }a,b\in M_d(A)_+.
\]
\end{enumerate}
\end{thmIntro}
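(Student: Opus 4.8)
The plan is to recast the statement entirely in terms of the Cuntz semigroup and its cone of functionals, and to prove the resulting abstract equivalence. By the cone homeomorphism $\QT(A)\cong\FF(\Cu(A))$ (see \cite{EllRobSan11Cone}), the rank $\widehat{[a]}$ is identified with the evaluation $f\mapsto f([a])$ on $\FF(\Cu(A))$, so that the premises $\widehat{[a]}\leq\gamma\widehat{[b]}$ and $\widehat{[a]}\leq\widehat{[b]}$ in~(ii) and~(iii) become order relations between ranks in $\FF(\Cu(A))$, and the density of $\LimQT(A_\filter)$ in $\QT(A_\filter)$ becomes the density of the limit functionals in $\FF(\Cu(A_\filter))$. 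Invoking the computation of the Cuntz semigroup of an ultrapower from \cite{AntPerThi20CuntzUltraproducts}, which identifies $\Cu(A_\filter)$ with an ultrapower of the \CuSgp{} $\Cu(A)$, I would reduce the whole theorem to a statement about a \CuSgp, its functionals, and its ultrapower. The classes $[a]$ with $a\in M_d(A)_+$ then correspond exactly to the $d$-bounded elements, which makes the quantification over $d$ in~(ii) and~(iii) the natural ``size'' stratification.

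The heart of the matter is the equivalence $\mathrm{(i)}\Leftrightarrow\mathrm{(ii)}$, which I would prove by a Hahn--Banach type separation argument for the noncancellative cone $\FF(\Cu(A_\filter))$. The closure of the limit functionals is precisely the set of functionals that respect every order relation forced by the limit functionals; thus density is equivalent to the assertion that whenever $s,t\in\Cu(A_\filter)$ satisfy $f(s)\leq f(t)$ for all limit functionals $f$, then $g(s)\leq g(t)$ for every functional $g$. For $\mathrm{(ii)}\Rightarrow\mathrm{(i)}$ one represents $s=[(a_n)_n]$ and $t=[(b_n)_n]$ with $a_n,b_n\in M_d(A)_+$; the condition on limit functionals, after an $\varepsilon$-perturbation that exploits the slack in the topology, yields $\widehat{[a_n]}\leq\gamma\widehat{[b_n]}$ for $\filter$-almost all $n$ and some $\gamma<1$, whereupon~(ii) gives $N$ with $[a_n]\leq_N[b_n]$ $\filter$-almost everywhere, hence $ms\leq mt$ for all $m\geq N$ and therefore $g(s)\leq g(t)$. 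For $\mathrm{(i)}\Rightarrow\mathrm{(ii)}$ one argues contrapositively: a failure of~(ii) produces, along the ultrafilter, elements $s,t$ with $f(s)\leq\gamma f(t)$ for all limit $f$ but with $ms\not\leq mt$ for arbitrarily large $m$, and the separation theorem converts this order-incomparability into a functional $g$ that cannot be approximated by limit functionals.

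The equivalence $\mathrm{(ii)}\Leftrightarrow\mathrm{(iii)}$ is elementary and takes place entirely inside $\Cu(A)$, using only rescaling by positive integers and the relation $\leq_N$. The implication $\mathrm{(ii)}\Rightarrow\mathrm{(iii)}$ is immediate: taking $\gamma=\tfrac12$, the hypothesis $\widehat{[a]}\leq\widehat{[b]}$ gives $\widehat{[a]}\leq\tfrac12\widehat{2[b]}$, and since $2[b]$ is represented in $M_{2d}(A)_+$, condition~(ii) yields $[a]\leq_N 2[b]$, hence $N[a]\leq 2N[b]$, so $M=2$ works. For the converse one uses the strict gap $\gamma<1$ to absorb the multiplicative constant $M$ into a clean $\leq_N$-comparison, by comparing suitable integer multiples $p[a]$ and $q[b]$ with $p/q$ chosen just above $\gamma$ and iterating~(iii); this bookkeeping is elementary but is the most delicate of the purely order-theoretic steps.

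I expect the main obstacle to be the separation step underlying $\mathrm{(i)}\Leftrightarrow\mathrm{(ii)}$: one must establish that the closure of the limit functionals in $\FF(\Cu(A_\filter))$ is governed precisely by the order relations $f(s)\leq f(t)$, and that an order-incomparability $ms\not\leq mt$ can be detected by an honest functional on the noncancellative cone. Matching the ultrafilter quantifiers --- turning $\lim_\filter$ inequalities into $\filter$-almost-everywhere comparisons with a genuine gap, and back --- is the other point requiring care, and it is exactly here that the restriction $\gamma<1$ in~(ii), as opposed to the bare inequality in~(iii), becomes indispensable.
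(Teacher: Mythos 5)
Your overall strategy for the equivalence of (i) and (ii) --- translating the statement to functionals on the ultrapower of the Cuntz semigroup via $\QT(A)\cong\FF(\Cu(A))$ and the computation of $\Cu(A_\filter)$ from \cite{AntPerThi20CuntzUltraproducts}, identifying the classes from $M_d(A)_+$ with the $d$-fold scale amplification, and then running a Hahn--Banach separation argument for the noncancellative cone of functionals --- is exactly the route the paper takes (via \autoref{prp:CharDensityUltraproduct} and the separation results of \autoref{sec:separateFS}), and your description of the ultrafilter bookkeeping, including the role of the gap $\gamma<1$ and the countable-incompleteness diagonalization, is essentially correct.

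The genuine gap is your treatment of (iii)$\Rightarrow$(ii), which you claim is ``elementary and takes place entirely inside $\Cu(A)$'' by comparing integer multiples $p[a]$ and $q[b]$ and ``iterating (iii)''. This does not work. The hypothesis of (iii) is a rank inequality $\widehat{[a]}\leq\widehat{[b]}$ while its conclusion is an order inequality $N[a]\leq MN[b]$ in $\Cu(A)$, so the statement cannot be iterated: its output is not of the form needed as its input. A single application, starting from $\widehat{[a]}\leq\gamma\widehat{[b]}$ and passing to multiples with $\widehat{p[a]}\leq\widehat{q[b]}$ for $q/p$ just above $\gamma$, yields $Np[a]\leq MNq[b]$, and the coefficient on the right dominates the one on the left unless $M\gamma<1$; for $\gamma\in[1/M,1)$ one obtains nothing usable. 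Indeed, the authors explicitly record (in the introduction and in the remark following \autoref{prp:trivialScaleDenseFS}) that they know of no direct proof of the equivalence of (ii) and (iii) that avoids ultrapowers. The paper instead proves (iii)$\Rightarrow$(i) directly: one shows that comparison of ranks up to the multiplicative constant $2M$ on the closure of the limit functionals still forces that closed subcone to be all of $\FF$, and this requires the strengthened separation theorem \autoref{prp:SubconeIsAll2} (valid for \CuSgp{s} satisfying \axiomO{6} and Edwards' condition, and proved via extreme rays of $\FF_I(S)$ and well-cappedness), not the basic $M=1$ statement \autoref{prp:SubconeIsAll} that your plan invokes. Without either a working elementary argument for (iii)$\Rightarrow$(ii) or this stronger, $M$-weighted separation machinery, your proposal does not close the cycle of implications.
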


A rather unexpected corollary of our results is the equivalence of different kinds of comparability properties in the Cuntz semigroup of a \ca{} as is evidenced, for example, from (ii) and (iii) in \autoref{thm-intro}.
We do not know a direct proof of this equivalence that avoids the use of ultrapowers.

In \cite{Win12NuclDimZstable}, Winter defines a \ca{} to be $(m,n)$\emph{-pure} provided it satisfies certain comparability and divisibility properties, called $m$-comparison and $n$-divisibility;
see Paragraphs~\ref{pgr:comparison} and~\ref{pgr:divisibility}.
A \ca{} is said to be \emph{pure} if it is $(0,0)$-pure, which by definition means that its Cuntz semigroup is almost unperforated and almost divisible.
The relevance of purity resides in Winter's theorem \cite{Win12NuclDimZstable} showing that $(m,n)$-pure, unital, simple, separable \ca{s} with locally finite nuclear dimension are $\mathcal{Z}$-stable (an important regularity property) and thus pure by \cite{Ror04StableRealRankZ}.
We generalize this consequence of Winter's theorem to general simple \ca{s}: 

\begin{thmIntro}[{\ref{pureReduction}}]
\label{thm-introC}
A simple $(m,n)$-pure \ca{} is pure.
\end{thmIntro}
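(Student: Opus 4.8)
The plan is to verify directly that $\Cu(A)$ is almost unperforated and almost divisible, treating the two halves of purity by a common mechanism. First I would translate the hypotheses into statements about ranks, i.e.\ about the functionals $\widehat{[a]}\colon \QT(A)\to[0,\infty]$. The functional reformulation of Winter's $m$-comparison asserts that $\widehat{[a]}<\widehat{[b]}$ at every nonzero $2$-quasitrace forces $[a]\leq (m+1)[b]$ in $\Cu(A)$; here simplicity is used to guarantee that every nonzero element is full, so that $\widehat{[b]}$ is strictly positive wherever it is defined, and to convert a non-strict domination into the strict hypothesis of $m$-comparison after replacing $[a]$ by some $[a']\ll[a]$ and passing to the supremum. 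In the same spirit, $n$-divisibility becomes the possibility of subdividing an element into $k$ pieces with only a fixed multiplicative loss governed by $n$. The goal is then to remove the fixed slacks $m$ and $n$.

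The engine for removing the comparison slack is \autoref{thm-intro}. By the previous paragraph, $m$-comparison shows that $\Cu(A)$ satisfies condition~(iii) of \autoref{thm-intro} with the uniform constant $M=m+1$ (and $N=1$): whenever $\widehat{[a]}\leq\widehat{[b]}$ for $a,b\in M_d(A)_+$, one gets $[a]\leq (m+1)[b]$. Invoking the equivalence (iii)$\Leftrightarrow$(ii) --- exactly the unexpected comparison equivalence furnished by the ultrapower --- the multiplicative constant disappears, leaving the slack-free implication: for every $\gamma\in(0,1)$ and $d$ there is $N$ with $\widehat{[a]}\leq\gamma\widehat{[b]}\Rightarrow [a]\leq_N [b]$. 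This is the crucial gain, since no elementary $\CatCu$-theoretic manipulation of $m$-comparison can beat the factor $m+1$: dividing $\widehat{[b]}$ into $m+1$ equal pieces always reintroduces it.

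With the slack-free comparison in hand, almost unperforation should follow. Given $(k+1)[a]\leq k[b]$ one has $\widehat{[a]}\leq\tfrac{k}{k+1}\widehat{[b]}$, and by compactness of the normalized functional space together with strict positivity of $\widehat{[b]}$ (simplicity) this yields a uniform $\gamma<1$; condition~(ii) then gives $[a]\leq_N [b]$, that is, $n[a]\leq n[b]$ for all $n\geq N$. To pass from $\leq_N$ to $\leq$ I would feed in almost divisibility, subdividing $[b]$ finely enough that the factor $N$ can be cancelled, together with the fullness of $[b]$ and a weak-cancellation argument. The divisibility half itself I expect to obtain by the same slack-removal principle applied to $n$-divisibility: passing to finer and finer subdivisions dilutes the fixed loss, which the ultrapower computation of $\Cu(A_\filter)$ upgrades to honest almost divisibility, reflected back to $\Cu(A)$.

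I expect the main obstacle to be precisely the elimination of the fixed slacks, which is genuinely non-elementary: as the authors emphasize, there is no known direct proof on the Cuntz semigroup, and the essential input is the equivalence of the multiplicative-slack and slack-free comparison forms in \autoref{thm-intro}, proved through $A_\filter$. A secondary difficulty is the analytic bookkeeping at functionals where a rank vanishes or is infinite, and the reduction of non-strict rank comparisons to the strict hypothesis of $m$-comparison for compact (e.g.\ projection-type) elements; both are controlled by simplicity, which makes every nonzero element full, so that strict and non-strict rank comparisons can be interchanged via way-below perturbations.
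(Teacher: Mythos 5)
Your comparison half tracks the paper closely: $m$-comparison gives the uniform bound $\widehat{x}\leq\widehat{y}\Rightarrow x\leq 2(m+1)y$ (\autoref{rmk:mcomparison}), the ultrapower equivalence between the multiplicative-slack and slack-free comparison conditions (\autoref{prp:trivialScaleDenseFS}, i.e.\ (ii)$\Leftrightarrow$(iii) of \autoref{thm-intro}) upgrades this to (LBCA) (\autoref{prp:ComparisonLBCA}), and almost divisibility together with (LBCA) yields almost unperforation exactly as in \autoref{prp:almostDivCu}. You should, however, separate out the degenerate case where $A$ has no nontrivial lower semicontinuous $2$-quasitraces: there your compactness/strict-positivity argument has nothing to bite on, and the paper handles it directly by showing $\Cu(A)=\{0,\infty\}$.

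The genuine gap is the divisibility half. You propose to get almost divisibility ``by the same slack-removal principle applied to $n$-divisibility,'' but no such principle is available: \autoref{thm-intro} and all of its relatives are statements about comparison only, and the paper proves no divisibility analogue. A formal dilution argument also demonstrably fails: applying $n$-almost divisibility with a large parameter $K$ and regrouping the resulting pieces leaves the multiplicative loss of roughly $(n+1)$ between the lower multiple ($kz\leq x$) and the upper multiple ($x'\leq (k+1)(n+1)z$) intact however fine the subdivision, and the comparison machinery cannot absorb a factor $\geq 2$; relatedly, the proof of \autoref{prp:almostDivCu} genuinely requires $0$-almost divisibility, not $n$-almost divisibility. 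The paper's actual route is entirely different and analytic: for a positive contraction $a$ it invokes \cite[Corollary~7.6]{RobTik17NucDimNonSimple} to produce a unital embedding of the Jiang--Su algebra into the generalized central sequence algebra $C/I$, where $C=\{a\}'\cap A_\filter$ and $I=\{a\}^{\perp}\cap A_\filter$; almost divisibility of the unit of $\mathcal{Z}$ then gives $k[e]\leq[1]\leq(k+1)[e]$ in $\Cu(C/I)$, which is pulled back through $\Cu(C)/\Cu(I)$ to $k[b]\leq[a]\leq(k+1)[b]$ in $\Cu(A_\filter)$ and finally, by an approximation and lifting argument, to almost divisibility of $[a]$ in $\Cu(A)$. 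This central-sequence/Jiang--Su input is a substantial external ingredient your proposal is missing, and without it the argument does not close.
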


In the course of our investigations we obtain  a partial confirmation of a conjecture by Blackadar and Handelman (\cite{BlaHan82DimFct}), which we proceed to recall.
The classical Cuntz semigroup $W(A)$ of a \ca{} $A$ is the subsemigroup of $\Cu(A)$ consisting of the Cuntz classes of positive elements in $A \otimes M_n(\CC) \subseteq A\otimes\KK$, for $n\in\NN$.
A dimension function on a unital \ca{}~$A$ is a normalized state on~$W(A)$, and the set of dimension functions is denoted by $\DF(A)$. 
The subset $\LDF(A)$ of  lower semicontinuous dimension functions is of special importance, as by results from \cite{BlaHan82DimFct} these are in natural bijection with the set $\QT_1(A)$ of normalized quasitraces. 
Blackadar and Handelman conjectured that $\LDF(A)$ is always dense in $\DF(A)$, and confirmed this in the commutative case. 
The simple, exact $\mathcal{Z}$-stable case was established in \cite{BroPerTom08CuElliottConj}, and the case of \ca{s}   with finite radius of comparison  in \cite{Sil16Thesis}. 
We show here  that $\LDF(A)$ is dense in $\DF(A)$ whenever $A$ is a unital  \ca{} such that $\LimQT_1(A_\filter)$ is dense in $\QT_1(A_\filter)$ (see \autoref{BHdensitycor}). This result suggests that the Blackadar--Handelman conjecture might be false in general, but an example seems difficult to come by.

%
%
%
%

The central results obtained on cones of functionals, which may well be of independent interest, are separation results \`a la Hahn--Banach that allow us to characterize when a subcone of functionals is dense; 
see \autoref{sec:separateFS}. 
Moreover, in this context, and under mild additional assumptions (that are satisfied by the Cuntz semigroups of any \ca{}, \cite{AntPerRobThi21Edwards}), we are able to obtain even stronger separation results; see \autoref{sec:strongerSeparateFS}.
We discuss these results in the appendix in order not to disturb the flow of the presentation.

Here is a brief outline of the paper. 
Throughout, we will largely focus on ultrapowers and ultraproducts. 
When similar results hold for products we make some brief remarks on how the methods can be adapted to that case. 
In \autoref{sec:commutators} we review the main ideas relating limit tracial states and commutators, present some motivating examples, and prove the second part of \autoref{thmtraces}. 
\autoref{sec:prelim} contains the necessary preliminaries on the Cuntz semigroup, its functionals, and  (quasi)traces on \ca{s}.
In \autoref{sec:ultrapowers} we review the construction of ultraproducts of abstract Cuntz semigroups as well as their relation to ultraproducts of \ca{s}.
In \autoref{sec:main}, we use the new Hahn--Banach theorems from the appendices to give a characterization of density of limit functionals in an ultrapower of an abstract Cuntz semigroup in terms of comparability conditions. 
In \autoref{sec:LCBA} we introduce the notion of Locally Bounded Comparison Amplitude and view its importance in connection with the density results of the previous section.
By translating the density characterization to the setting of \ca{s} we prove \autoref{thm-intro} in \autoref{sec:StrongerDensity}.
In \autoref{sec:BH} we prove the first part of \autoref{thmtraces} and study the Blackadar--Handelman conjecture.
In \autoref{sec:pure} we analyze the pureness of simple \ca{s} and prove \autoref{thm-introC}.

\section{Tracial states on products and ultraproducts}
\label{sec:commutators}

The main result of this section, \autoref{nosillycomms}, characterizes in multiple ways the density of limit tracial states on an ultraproduct of unital \ca{s}. 
This includes the equivalence of (i), (iv) and~(v) in \autoref{thmtraces} from the introduction under the assumption that $\QT_1(A)=\TT_1(A)$.
Parts of this result are well known to experts in the area, although it has not been previously stated in the form given below  (see \cite[Section~2]{BicFar15Trace}, \cite[Theorem~8]{Oza13DixmApproxSymmAmen},  \cite[Proposition~2.3]{NgRob16CommutatorsPureCa}, \cite[Section~3.5]{FarHarLupRobTikVigWin21ModelThy}). 
At the end of the section we give two examples of ultraproducts of \ca{s} where the density of limit tracial states fails to hold. 

\medskip

Throughout this section we assume that~$A$ is a unital \ca. 
We denote by~$A_\sa$ the set of selfadjoint elements of~$A$.
Let $\TT_1(A)$ denote the set of tracial states of~$A$ endowed with the weak* topology. 
Given $a\in A_\sa$, define $\widehat a\colon \TT_1(A)\to \RR$ by $\widehat a(\tau)=\tau(a)$ for all $\tau\in \TT_1(A)$. 
Let $\Aff(\TT_1(A))$ denote Banach space of $\RR$-valued, continuous, affine functions on $\TT_1(A)$, equipped with the supremum norm.
Observe that $\widehat a\in \Aff(\TT_1(A))$,
for $a\in A_\sa$.

Let $[A,A]$ denote the linear span of the set of commutators $\{[x,y]:x,y\in A\}$, where $[x,y]:=xy-yx$) in $A$. 
We form the quotient $A_\sa / (A_\sa \cap \overline{[A,A]})$, which we regard as a real Banach space under the quotient norm.

The following lemma is well known.

\begin{lma}
\label{distance2comms}
The real Banach spaces $A_\sa / (A_\sa\cap \overline{[A,A]})$ and $\Aff(\TT_1(A))$  are isomorphic via the map $a+\overline{[A,A]}\mapsto \widehat a$.
\end{lma}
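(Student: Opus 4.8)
The plan is to show that the stated map is in fact an isometric isomorphism, which in particular yields the Banach space isomorphism claimed. Write $\Phi\colon A_\sa\to\Aff(\TT_1(A))$ for $\Phi(a)=\widehat a$. It is real-linear, and since $|\widehat a(\tau)|=|\tau(a)|\le\|a\|$ for every $\tau\in\TT_1(A)$ we get $\|\widehat a\|_\infty\le\|a\|$, so $\Phi$ is contractive. As every tracial state vanishes on each commutator $[x,y]$ and $\Phi$ is continuous, $\Phi$ kills $A_\sa\cap\overline{[A,A]}$ and hence factors through a contractive map $\overline\Phi$ on $A_\sa/(A_\sa\cap\overline{[A,A]})$.

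First I would pin down the kernel, which gives injectivity. A bounded $\RR$-linear functional $\varphi$ on $A_\sa$ is tracial (meaning $\varphi([x,y])=0$ for all $x,y$) exactly when it annihilates $A_\sa\cap[A,A]$; note $[A,A]$ is adjoint-invariant since $[x,y]^*=[y^*,x^*]$. The structural input here is that every bounded selfadjoint tracial functional decomposes as a difference $\varphi_+-\varphi_-$ of positive tracial functionals, which for unital $A$ are nonnegative multiples of tracial states. Therefore $a\in A_\sa$ satisfies $\tau(a)=0$ for all $\tau\in\TT_1(A)$ if and only if $\varphi(a)=0$ for every bounded tracial functional, i.e.\ iff $a$ lies in the pre-annihilator of $(A_\sa\cap[A,A])^\perp$; by the bipolar theorem this pre-annihilator equals $A_\sa\cap\overline{[A,A]}$. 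Thus $\ker\Phi=A_\sa\cap\overline{[A,A]}$ and $\overline\Phi$ is injective.

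The genuinely hard part is upgrading injectivity to an isometry, namely proving $\mathrm{dist}(a,\overline{[A,A]})=\|\widehat a\|_\infty$ for $a\in A_\sa$. The inequality $\|\widehat a\|_\infty\le\mathrm{dist}(a,\overline{[A,A]})$ is immediate from contractivity together with the vanishing of traces on $\overline{[A,A]}$; the reverse estimate $\mathrm{dist}(a,\overline{[A,A]})\le\|\widehat a\|_\infty$ is the substance of the Cuntz--Pedersen theory of traces, and I would invoke it directly rather than reprove it. This is where essentially all the analytic work resides, since one must approximate $a$ modulo commutators to within $\max_\tau|\tau(a)|$. Granting it, $\overline\Phi$ is isometric and in particular has closed range.

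It remains to prove surjectivity, for which, by closedness of the range, density suffices. The range is a linear subspace of $\Aff(\TT_1(A))$ containing the constants (as $\widehat 1=1$) and separating the points of the compact convex set $\TT_1(A)$. Suppose a continuous functional on $\Aff(\TT_1(A))$, extended by Hahn--Banach to a signed measure $\mu$ on $\TT_1(A)$, annihilates every $\widehat a$. Then $\mu(\TT_1(A))=0$, and writing $\mu=\mu_+-\mu_-$ with equal total masses, the normalized parts have barycenters $b_+,b_-\in\TT_1(A)$ with $b_+(a)=b_-(a)$ for all $a\in A_\sa$, whence $b_+=b_-$. Since a continuous affine function integrates against a probability measure to its value at the barycenter, $\int f\,d\mu=0$ for every $f\in\Aff(\TT_1(A))$, so the functional vanishes. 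Hence the range is dense, and therefore all of $\Aff(\TT_1(A))$. Combining injectivity, the isometric estimate, and surjectivity, $\overline\Phi$ is the desired isomorphism.
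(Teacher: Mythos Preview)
Your proof is correct and follows essentially the same strategy as the paper: both treat the isometric estimate $\mathrm{dist}(a,\overline{[A,A]})=\|\widehat a\|_\infty$ as the substantive analytic input and defer to the Cuntz--Pedersen circle of results (the paper additionally cites Thomsen and Ozawa for this). The one place your write-up differs is surjectivity: the paper simply invokes Kadison's function representation theorem, whereas you give a direct barycenter argument showing that any functional on $\Aff(\TT_1(A))$ annihilating the range must vanish. That argument is fine and is, in effect, a proof of the relevant special case of Kadison's theorem. Two minor remarks: your separate injectivity paragraph is redundant once you have the isometry (and it quietly relies on the Jordan decomposition of bounded tracial functionals, itself a nontrivial fact); and your barycenter argument tacitly assumes $\TT_1(A)\neq\varnothing$ and $\|\mu\|>0$, both harmless edge cases but worth a word.
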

\begin{proof}
Surjectivity is a well-known consequence of Kadison's function representation theorem;
see \cite[Theorem~II.1.8]{Alf71CpctCvxSets} and \cite[Section~3.10]{Ped79CAlgsAutGp}.
That the map $a+\overline{[A,A]}\mapsto \widehat a$ is isometric is proven in the proof of \cite[Lemma~3.1]{Tho95TracesCrProdZ}, and also in \cite[Theorem~5]{Oza13DixmApproxSymmAmen}.
The case of a positive $a$ is also obtained in \cite[Theorem~2.9]{CunPed79EquivTraces}. 
(Note that the subspace~$A_0$ considered in \cite{CunPed79EquivTraces} and in \cite[Lemma~3.1]{Tho95TracesCrProdZ} agrees with $A_\sa \cap \overline{[A,A]}$.) 
\end{proof}

We will use below the following lemma.

\begin{lma}
\label{easyinduction}
Let $N\in \NN$ and let $A$ be a \ca{} with the property that for all $a\in  \overline{[A,A]}$ there exist $b_k,c_k\in A$ with $\|b_k\|,\|c_k\|\leq \|a\|^{\frac12}$ for $k=1,\ldots,N$ such that 
\[
\left\| a-\sum_{k=1}^N [b_k,c_k] \right\| \leq \frac12\|a\|.
\]

Then for all $m\in \NN$ and all $a\in \overline{[A,A]}$ there exist $b_k,c_k\in A$ with $\|b_k\|,\|c_k\|\leq \|a\|^{\frac12}$ for $k=1,\ldots,mN$ such that
\[
\left\| a-\sum_{k=1}^{mN} [b_k,c_k] \right\| \leq \frac1{2^m}\|a\|.
\]
\end{lma}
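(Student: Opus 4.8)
The plan is to prove the statement by induction on $m$, with the base case $m=1$ being precisely the hypothesis. The guiding idea is that the $N$-term approximation property can be iterated: once $a$ has been approximated to within $\frac{1}{2^m}\|a\|$ using $mN$ commutators, the remainder again lies in $\overline{[A,A]}$, so the hypothesis applies once more to shrink the error by a further factor of $\frac12$ at the cost of $N$ additional commutators. The total error therefore contracts geometrically.

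Concretely, assume the conclusion holds for some $m\geq 1$ and fix $a\in\overline{[A,A]}$. By the inductive hypothesis choose $b_1,c_1,\ldots,b_{mN},c_{mN}\in A$ with $\|b_k\|,\|c_k\|\leq\|a\|^{\frac12}$ and put $a':=a-\sum_{k=1}^{mN}[b_k,c_k]$, so that $\|a'\|\leq\frac{1}{2^m}\|a\|$. The first thing I would check is that $a'$ again belongs to $\overline{[A,A]}$: each $[b_k,c_k]$ lies in the linear span $[A,A]$, hence so does the finite sum, and $\overline{[A,A]}$ is a linear subspace containing $a$. Thus the hypothesis applies to $a'$, yielding $b_1',c_1',\ldots,b_N',c_N'\in A$ with $\|b_k'\|,\|c_k'\|\leq\|a'\|^{\frac12}$ and $\bigl\|a'-\sum_{k=1}^N[b_k',c_k']\bigr\|\leq\frac12\|a'\|$.

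Relabeling $b_{mN+k}:=b_k'$ and $c_{mN+k}:=c_k'$ for $k=1,\ldots,N$, the error telescopes:
\[
\Bigl\| a-\sum_{k=1}^{(m+1)N}[b_k,c_k] \Bigr\|
= \Bigl\| a'-\sum_{k=1}^{N}[b_k',c_k'] \Bigr\|
\leq \tfrac12\|a'\|
\leq \frac{1}{2^{m+1}}\|a\|,
\]
which is the bound required for $m+1$. It then remains only to verify the norm constraints on the newly introduced elements, and the single point to notice here is that $\|a'\|\leq\frac{1}{2^m}\|a\|\leq\|a\|$, whence $\|b_k'\|,\|c_k'\|\leq\|a'\|^{\frac12}\leq\|a\|^{\frac12}$; combined with the bounds on the first $mN$ elements, all $(m+1)N$ pairs satisfy the desired estimate. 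I do not expect a genuine obstacle in this argument: the only thing demanding care is the bookkeeping that keeps every $b_k,c_k$ bounded by $\|a\|^{\frac12}$ rather than by the norm of the current remainder, and this works out precisely because the successive remainders only decrease in norm.
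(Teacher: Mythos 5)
Your proof is correct and follows exactly the route the paper intends: the paper's proof simply notes that the remainder $a_1=a-\sum_{k=1}^N[b_k,c_k]$ lies again in $\overline{[A,A]}$ with $\|a_1\|\leq\frac12\|a\|$ and leaves the induction to the reader, which is precisely the induction you carry out. Your added care with the norm bookkeeping (that $\|a'\|^{1/2}\leq\|a\|^{1/2}$ so all coefficients stay bounded by $\|a\|^{1/2}$) is exactly the point the paper glosses over, and you handle it correctly.
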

\begin{proof}
Given $a\in \overline{[A,A]}$, the element $a_1=a-\sum_{k=1}^N [b_k,c_k]$ is again in $\overline{[A,A]}$, and has norm $\|a_1\| \leq \frac 12\|a\|$.
A straightforward induction yields the desired result.
\end{proof}

\begin{pgr}[Compact convex sets and complete order unit vector spaces]
Let us recall briefly the duality between compact convex sets and complete order unit vector spaces. 
We refer the reader to \cite{Alf71CpctCvxSets} for further details.
Given a compact, convex set $K$, let $\Aff(K)$ denote the vectors space of $\RR$-valued, continuous, affine functions on $K$. 
We regard $\Aff(K)$ as an ordered vector space endowed with the pointwise order and with order unit the constant function $1$.
The norm induced by the order unit is the supremum norm, and thus $\Aff(K)$ is a complete order unit vector space. 

Given a complete order unit vector space $(V,V_+,e)$, let $S_1(V)\subseteq V^*$ denote the set of states on $V$, that is, $\lambda\in V^*$ such that $\lambda(e)=\|\lambda\|=1$.  
Then $S_1(V)$ is convex and compact when endowed with the weak* topology. 
The constructions of $\Aff(\cdot)$ and $S_1(\cdot)$ extend to morphisms thus yielding functors $\Aff$ and $S_1$ between the categories of complete order unit vector spaces and of compact convex sets. 
The natural isomorphisms $K\to S_1(\Aff(K))$ and $V\to \Aff(S_1(V))$ establish a contravariant duality between these two categories.
\end{pgr}

\begin{pgr}[Ultraproducts of \ca{s}]
\label{pgr:ultraproducts}
Let $(A_j)_{j\in J}$ be a family of \ca{s} and let~$\filter$ be a free ultrafilter on the set $J$. 
Consider the product \ca{} $\prod_j A_j$.
Set
\[
c_\filter\big( (A_j)_j \big) := \Big\{ (a_j)_j \in \prod_j A_j : \lim_{j\to\filter}\|a_j\|=0 \Big\},
\]
which is a (closed, two-sided) ideal in $\prod_j A_j$.
The \emph{ultraproduct} of the family $(A_j)_{j\in J}$ (along $\filter$) is $\prod_\filter A_j := \prod_j A_j / c_\filter((A_j)_j)$. 
In case $A_j=A$ for all $j$, we speak of the \emph{ultrapower} $\prod_\filter A$.  
We denote by $\pi_\filter\colon \prod_j A_j\to\prod_\filter A_j$ the quotient map.
\end{pgr}

\begin{pgr}
The category of complete order unit vector spaces admits products and ultraproducts: 
given $(V_j,(V_j)_+,e_j)$ for $j\in J$, we form $V=\prod_j V_j$, composed of norm bounded collections $(v_j)_{j\in J}$, and endow it with the coordinatewise order and with order unit $(e_j)_j$. If $\filter$ is an ultrafilter on the set $J$, then passing to the quotient by the subspace
$c_\filter((V_j)_j)= \{(v_j)_j:\lim_\filter \|v_j\|=0\}$ we obtain the ultraproduct~$\prod_\filter V_j$. 

Since the category of complete order unit vector spaces admits products and ultraproducts, the category of compact convex sets admits coproducts and ultra-coproducts.
Given compact convex sets $(K_j)_{j\in J}$ and an ultrafilter $\filter$ on the index set $J$, we denote by $\coprod_j K_j$ and $\coprod_\filter K_j$  their coproduct and ultracoproduct, respectively. 
We can concretely think of these compact convex sets as follows:
\begin{align*}
\coprod_j K_j &=S_1\big( \prod_j \Aff(K_j) \big), \\
\coprod_\filter K_j &= S_1\big( \prod_\filter \Aff(K_j) \big).
\end{align*}
\end{pgr}

Now consider a family of unital \ca{s} $(A_j)_{j\in J}$. 
For each $k\in J$, the projection map $\pi_k\colon \prod_j A_j\to A_k$ induces $\Aff \TT_1(\pi_k)\colon \Aff \TT_1(\prod_j A_j)\to \Aff \TT_1(A_k)$. 
By the universal property of the product, we get a map
\begin{equation}
\label{affprod}
\Aff \TT_1\big( \prod_j A_j \big) \to \prod_j \Aff \TT_1(A_j).
\end{equation}
It is easy to calculate that given a selfadjoint $a=(a_j)_j\in \prod A_j$, the function $\widehat a\in \Aff \TT_1(\prod_j A_j)$ is mapped by the above map to $(\widehat a_j)_j\in \prod_j \Aff \TT_1(A_j)$. 

Let $\filter$ be an ultrafilter on $J$. 
If $a\in c_\filter((A_j)_J)$, then $\lim_\filter \|\widehat a_j\|=0$.
Thus, again we have a map
\begin{equation}
\label{affultra}
\Aff \TT_1\big( \prod_\filter A_j \big) \to \prod_\filter \Aff \TT_1(A_j).
\end{equation}
Applying the functor $S_1(\cdot)$ in \eqref{affprod} and $\eqref{affultra}$ we obtain continuous affine maps
\begin{align}\label{coprod}
\coprod_j \TT_1(A_j)\to \TT_1\big( \prod_j A_j \big), \\
\label{coultra}
\coprod_\filter \TT_1(A_j)\to \TT_1\big( \prod_\filter A_j \big).
\end{align} 

\begin{lma}
\label{autoinj}
Let $(A_j)_{j\in J}$ be a family of unital \ca{s} and let $\filter$ be a free ultrafilter on the index set $J$. 
The following are true:
\begin{enumerate}[\rm (i)]
\item
The maps in \eqref{affprod} and \eqref{affultra} are surjective. 
\item
The maps in \eqref{coprod} and \eqref{coultra} are injective. 
\end{enumerate}
\end{lma}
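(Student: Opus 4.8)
The plan is to derive part~(ii) as a formal consequence of part~(i): since \eqref{coprod} and \eqref{coultra} are obtained by applying the contravariant functor $S_1(\freeVar)$ to the maps \eqref{affprod} and \eqref{affultra}, and a surjection of complete order unit vector spaces induces an injection of the associated state spaces, it suffices to establish the surjectivity asserted in~(i). Thus the substance of the argument lies in part~(i), which I would prove by lifting continuous affine functions coordinatewise to self-adjoint elements while keeping their norms uniformly controlled.

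For the surjectivity of \eqref{affprod}, I would start with an arbitrary norm-bounded family $(f_j)_j \in \prod_j \Aff\TT_1(A_j)$ and set $C := \sup_j \|f_j\| < \infty$. By \autoref{distance2comms} applied to each $A_j$, the assignment $a + \overline{[A_j,A_j]} \mapsto \widehat a$ is an \emph{isometric} isomorphism onto $\Aff\TT_1(A_j)$; in particular each $f_j$ equals $\widehat{a_j}$ for some self-adjoint $a_j \in A_j$, and the isometry lets me choose the lift so that $\|a_j\| \leq C+1$. Then $a := (a_j)_j$ is a self-adjoint element of $\prod_j A_j$, so $\widehat a \in \Aff\TT_1(\prod_j A_j)$, and by the computation recorded just before the lemma the map \eqref{affprod} sends $\widehat a$ to $(\widehat{a_j})_j = (f_j)_j$. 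For \eqref{affultra} I would argue identically: given a class represented by a norm-bounded $(f_j)_j$, lift it to a self-adjoint $a = (a_j)_j \in \prod_j A_j$ as above; then $\pi_\filter(a)$ is self-adjoint in $\prod_\filter A_j$, and the construction of \eqref{affultra}, which is well defined precisely because $a \in c_\filter((A_j)_j)$ forces $\lim_\filter \|\widehat{a_j}\| = 0$, sends $\widehat{\pi_\filter(a)}$ to the class of $(\widehat{a_j})_j = (f_j)_j$.

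With~(i) in hand, part~(ii) follows at once. The maps \eqref{affprod} and \eqref{affultra} are unital positive linear maps between complete order unit vector spaces, and applying $S_1(\freeVar)$ to them yields exactly \eqref{coprod} and \eqref{coultra}, after identifying $S_1(\Aff\TT_1(B)) \cong \TT_1(B)$ via the duality recalled above and $S_1(\prod_\filter \Aff\TT_1(A_j)) = \coprod_\filter \TT_1(A_j)$ by definition. If $\phi$ is any surjective morphism of complete order unit vector spaces and $\lambda_1,\lambda_2$ are states with $\lambda_1\circ\phi = \lambda_2\circ\phi$, then surjectivity of $\phi$ forces $\lambda_1 = \lambda_2$; hence $S_1(\phi)$ is injective, which gives the injectivity of \eqref{coprod} and \eqref{coultra}. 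I expect the only genuinely delicate point to be the uniform norm control in the lifting step of~(i): the coordinatewise lifts must assemble into a single element of the product $\prod_j A_j$, and this is guaranteed by the isometric (not merely surjective) nature of the isomorphism in \autoref{distance2comms}.
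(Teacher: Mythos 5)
Your proposal is correct and follows essentially the same route as the paper: part~(i) is proved by lifting each $f_j$ to a self-adjoint element of $A_j$ with uniformly controlled norm via the isometric isomorphism of \autoref{distance2comms}, and part~(ii) is deduced formally by applying $S_1(\freeVar)$ and noting that a surjection of complete order unit vector spaces induces an injection of state spaces. The only cosmetic difference is the choice of norm bound on the lifts ($C+1$ versus the paper's $\tfrac{3}{2}\|f_j\|$), which is immaterial.
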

\begin{proof}
(i) Let $(f_j)_j$ be an element of $\prod_j \Aff \TT_1(A_j)$. 
By the isometric isomorphism of $\Aff \TT_1(A_j)$ with $(A_j)_\sa / ((A_j)_\sa \cap \overline{[A_j,A_j]})$ 
(\autoref{distance2comms}), we can choose for each $j$ an element 
$a_j\in (A_{j})_\sa$ such that $f_j=\widehat a_j$ and $\|a_j\|\leq 3/2\|f_j\|$. Let  $a=(a_j)_j\in \prod_j A_j$.
Then $\widehat a\in \Aff \TT_1(\prod_j A_j)$  is mapped to $(f_j)_j$ by \eqref{affprod}.
This proves surjectivity of this mapping.

Let $f\in \prod_\filter \Aff \TT_1(A_j)$. 
Let $(f_j)_{j}$ be a lift of $f$ in $\prod_j \Aff \TT_1(A_j)$ having norm at most $3/2\|f\|$. 
By the arguments from the previous paragraph, we can choose a selfadjoint 
$a\in \prod_j A_j$
such that $\widehat a$ is mapped to $(f_j)_j$ by \eqref{affprod} and $\|a\|\leq 9/4\|f\|$.  
Let $b\in \prod_\filter A_j$ be the image of $a$ in the ultraproduct. 
Then $\widehat{b}$ is mapped to $f$ by \eqref{affultra}. This proves surjectivity of \eqref{affultra}.

\smallskip

(ii) Injectivity of \eqref{coprod} and \eqref{coultra}  follows at once from the surjectivity of \eqref{affprod} and \eqref{affultra}  
and the definition of the former maps as the functor $S_1(\cdot)$ applied to the latter.
\end{proof}

\begin{pgr}[Limit tracial states]
\label{limittracialstates}
Let us recall the construction of limit tracial states on an ultraproduct of unital \ca{s}. 
Let $(A_j)_{j\in J}$ be a family of unital \ca{s} and let~$\filter$ be a free ultrafilter on the index  set~$J$. 
Let $(\tau_j)_{j\in J}$ be such that $\tau_j\in \TT_1(A_j)$ for all~$j$. 
For each $k\in J$, let $\bar \tau_k$ denote the tracial state on $\prod_{j\in J} A_j$ induced by $\tau_k$ via the projection $\pi_k \colon \prod_{j\in J}A_j\to A_k$. 
The limit 
\[
\bar\tau_\filter=\lim_{\filter}\bar \tau_k
\] 
exists by the compactness of $\TT_1(\prod_{j\in J} A_j)$. 
Moreover, $\bar\tau_\filter$ is easily seen to vanish on the ideal $c_\filter((A_j)_j)$. 
It thus induces a tracial state $\tau_\filter\in \TT_1(\prod_\filter A_j)$. 
The tracial states on $\prod_\filter A_j$ obtained in this way are called \emph{limit tracial states}. 
We denote by $\LimT_1(\prod_\filter A_j)$ the subset of $\TT_1(\prod_\filter A_j)$ of limit tracial states.
\end{pgr}

We use $\mathrm{co}(M)$ to denote the convex hull of a subset $M$ of a convex space.

\begin{lma}
\label{comaprange}
Let $(A_j)_{j\in J}$ be a family of unital \ca{s} and let $\filter$ be a free ultrafilter on the index set $J$. 
The following are true:
\begin{enumerate}[{\rm (i)}]
\item
The range of the map \eqref{coprod} is equal to  $\overline{\mathrm{co}(\bigcup_j \TT_1(A_j))}$ (closure in the weak* topology).
\item
The range of the  map \eqref{coultra} is equal to $\overline{\LimT_1(\prod_\filter A_j)}$ (closure in the weak* topology). 
\end{enumerate}
\end{lma}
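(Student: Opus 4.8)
The plan is to prove both parts in parallel. In each case the relevant map is continuous and affine with domain a compact convex set (the coproduct $S_1(\prod_j\Aff\TT_1(A_j))$, respectively the ultracoproduct $S_1(\prod_\filter\Aff\TT_1(A_j))$), so its range is automatically a closed convex subset of $\TT_1(\prod_j A_j)$, respectively $\TT_1(\prod_\filter A_j)$. For each part I would establish one inclusion by exhibiting the generating set inside the range, and the reverse inclusion by a Hahn--Banach separation argument exploiting the fact that every preimage of a point of the range is a \emph{state} on the corresponding (ultra)product of order unit vector spaces. Throughout, denote by $\Phi$ and $\Psi$ the maps \eqref{affprod} and \eqref{affultra}, so that \eqref{coprod} equals $S_1(\Phi)$ and \eqref{coultra} equals $S_1(\Psi)$, and recall that $\Phi$ and $\Psi$ are surjective by \autoref{autoinj}.

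For (i), I would first note that the range contains the image of each $\TT_1(A_k)$. Writing $\iota_k\colon\TT_1(A_k)\to\coprod_j\TT_1(A_j)$ for the coproduct inclusion, which is dual to the coordinate projection $p_k\colon\prod_j\Aff\TT_1(A_j)\to\Aff\TT_1(A_k)$, the construction of $\Phi$ gives $p_k\circ\Phi=\Aff\TT_1(\pi_k)$; dualizing yields $S_1(\Phi)\circ\iota_k=\TT_1(\pi_k)$, the map $\tau\mapsto\tau\circ\pi_k$. Hence the range contains $\bigcup_k\TT_1(\pi_k)(\TT_1(A_k))$, and being compact and convex it contains $\overline{\mathrm{co}(\bigcup_j\TT_1(A_j))}$. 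For the reverse inclusion, suppose $\mu$ lies in the range but not in this closed convex hull. Since every continuous affine function on $\TT_1(\prod_j A_j)$ is of the form $\widehat a$ by \autoref{distance2comms}, Hahn--Banach separation provides $a=(a_j)_j\in(\prod_jA_j)_\sa$ with $\widehat a(\mu)>\sup\{\widehat a(\nu):\nu\in\overline{\mathrm{co}(\bigcup_j\TT_1(A_j))}\}$; as $\widehat a$ is continuous and affine, the right-hand side equals $s_0:=\sup_j\max_{\tau\in\TT_1(A_j)}\tau(a_j)$. Writing $\mu=\lambda\circ\Phi$ for a state $\lambda$ on $\prod_j\Aff\TT_1(A_j)$ (possible since $\Phi$ is surjective), we get $\widehat a(\mu)=\lambda((\widehat{a_j})_j)$. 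As $(\widehat{a_j})_j\leq s_0\cdot e$ with $e=(1)_j$ the order unit, positivity and normalization of $\lambda$ give $\widehat a(\mu)\leq s_0$, contradicting the separation. Thus the range equals $\overline{\mathrm{co}(\bigcup_j\TT_1(A_j))}$.

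For (ii) the argument runs in parallel, with two additional inputs. First, $\LimT_1(\prod_\filter A_j)$ is convex, because a convex combination of two defining sequences again takes values in $\prod_j\TT_1(A_j)$ and ultrafilter limits are linear. Second, every limit trace lies in the range: given $\tau_j\in\TT_1(A_j)$, the formula $\lambda([(f_j)_j])=\lim_{j\to\filter}f_j(\tau_j)$ is well defined (a $c_\filter$-sequence has $\filter$-vanishing norm, hence $\filter$-vanishing values) and defines a state $\lambda$ on $\prod_\filter\Aff\TT_1(A_j)$; unwinding the construction in \autoref{limittracialstates} shows that $S_1(\Psi)$ sends $\lambda$ to the limit trace $\tau_\filter$. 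Consequently the range is a closed convex set containing $\LimT_1(\prod_\filter A_j)$, hence contains $\overline{\LimT_1(\prod_\filter A_j)}$. For the reverse inclusion, I separate a point $\mu$ of the range from $\overline{\LimT_1(\prod_\filter A_j)}$ by some $\widehat b$ with $b\in(\prod_\filter A_j)_\sa$, lift $b$ to $a=(a_j)_j$, and write $\mu=\lambda\circ\Psi$ with $\lambda$ a state. Putting $m_j=\max_{\tau\in\TT_1(A_j)}\tau(a_j)$, one has $[(\widehat{a_j})_j]\leq(\lim_{j\to\filter}m_j)\,e$ in $\prod_\filter\Aff\TT_1(A_j)$, so $\widehat b(\mu)\leq\lim_{j\to\filter}m_j$. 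Finally, choosing $\sigma_j\in\TT_1(A_j)$ attaining $m_j$, the sequence $(\sigma_j)_j$ yields a limit trace $\sigma_\filter$ with $\sigma_\filter(b)=\lim_{j\to\filter}m_j$, whence $\widehat b(\mu)\leq\sigma_\filter(b)\leq\sup_{\LimT_1(\prod_\filter A_j)}\widehat b$, contradicting the separation.

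I expect the main obstacle to be the reverse inclusion in each part, and specifically the step that converts the abstract statement that $\mu$ is a state on the (ultra)product of order unit vector spaces into the concrete domination $(\widehat{a_j})_j\leq s_0 e$, respectively $[(\widehat{a_j})_j]\leq(\lim_\filter m_j)e$. In the ultrapower case there is the extra subtlety that $\lim_{j\to\filter}\max_{\TT_1(A_j)}\widehat{a_j}$ must be recognized as the value at $b$ of an actual limit trace; this is exactly what the choice of maximizers $\sigma_j$ achieves, using compactness of each $\TT_1(A_j)$, and it is the point where the supremum over $\bigcup_j\TT_1(A_j)$ appearing in (i) is replaced by the supremum over $\LimT_1(\prod_\filter A_j)$.
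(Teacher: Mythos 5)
Your proof is correct and follows essentially the same route as the paper's: both arguments combine a Hahn--Banach separation with the observation that a state on the (ultra)product of the order-unit spaces $\Aff\TT_1(A_j)$ is dominated by $\sup_j\max_{\TT_1(A_j)}\widehat{a_j}$ (respectively its limit along $\filter$), which is then realized as a value on the generating set by choosing maximizers in each $\TT_1(A_j)$ via compactness. The only cosmetic difference is that you separate in the codomain $\TT_1(\prod_\filter A_j)$ and pull the point back along $S_1(\Psi)$, whereas the paper separates directly inside the (ultra)coproduct and pushes forward; the underlying estimate is identical.
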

\begin{proof}
(i) For each $k\in J$, we have a commutative diagram
\[
\xymatrix{
\TT_1(A_k)\ar[d]\ar[rd] &\\
\coprod_j \TT_1(A_j)\ar[r] & \TT_1(\prod_j A_j)\\
}
\]
where the horizontal arrow is the map from \eqref{coprod}, the vertical arrow maps a trace $\tau\in \TT_1(A_k)$ to the functional $\lambda_\tau\in \coprod_j \TT_1(A_j)$ given by $\lambda_\tau((f_j)_j)=f_k(\tau)$, and the diagonal one maps~$\tau$ to $\bar\tau\in \TT_1(\prod A_j)$ given by $\bar\tau((a_j)_j)=\tau(a_k)$.
Since the range of \eqref{coprod} is closed, as it is the image of a compact set under a continuous map, to complete the proof it will suffice to show that the convex hull of the images of $\{\TT_1(A_k) : k\in J\}$ in $\coprod_j \TT_1(A_j)$ is a dense set in $\coprod_j \TT_1(A_j)$.
 
Suppose that this is not the case. 
Then by Hahn--Banach's separation theorem, there exists $f=(f_j)_{j}\in \prod_j \Aff\TT_1(A_j)$
such that $\lambda_\tau(f)\leq 1$ for all $\tau\in \TT_1(A_k)$ and all $k$, but $\mu(f)>1$ for some $\mu\in \coprod_j \TT_1(A_j)$.
Shifting $f$ by a scalar multiple of the unit and renormalizing (that is, replacing $f$ by $\tfrac{f+t}{1+t}$ for sufficiently large $t\in\RR$), we may assume that $f\geq 0$. 
Then $0\leq \lambda_\tau(f)\leq 1$ for all $\tau\in \TT_1(A_k)$ readily implies that $\|f_j\|\leq 1$ for all~$j$. 
Hence $\|f\|\leq 1$, which contradicts that $\mu(f)\geq 1$.

\smallskip

(ii) A collection of tracial states  $(\tau_j)_j$, with $\tau_j\in \TT_1(A_j)$, induces an element~$\lambda_\filter$ of the coproduct as follows: 
Given $f\in \prod_\filter \Aff\TT_1(A_j)$, choose a lift $(f_j)_j\in  \prod_j \Aff\TT_1(A_j)$, and define
\[
\lambda_\filter(f)=\lim_\filter f_j(\tau_j).
\]
Let us call such a $\lambda_\filter$ a limit state on $\prod_\filter \Aff \TT_1(A_j)$. Limit states are mapped to limit tracial states in $\TT_1(\prod_\filter A_j)$ by the map \eqref{coultra}, with $\lambda_\filter$ as defined above being mapped to the limit tracial state $\tau_\filter$ associated to $(\tau_j)_j$. 
Since  the range of \eqref{coultra} is closed, to complete the proof it will suffice to show that the limit states are dense in $S_1(\prod_\filter \Aff \TT_1(A_j))$.

Supposing that this is not case, we use Hahn--Banach as in (i) to obtain an element $f\in \prod_\filter \Aff \TT_1(A_j)$, with lift $(f_j)_j$, such that $\lambda_\filter(f)\leq 1$ for every limit state, while $\mu(f)>1$ for some $\mu\in \coprod_\filter \TT_1(A_j)$. 
As before, we may assume that $f\geq 0$. 
Let $(f_j)_j$ be a positive lift of $f$ in $\prod_j \Aff \TT_1(A_j)$. 
For each $j$, let $\tau_j$ be a tracial state such that $f_j(\tau_j)=\|f_j\|$ (which exists by the compactness of $\TT_1(A_j)$). 
Let $\lambda_\filter$ be the associated limit state.
Then 
\[
\lim_\filter \|f_j\| = \lim_\filter f_j(\tau_j) =\lambda_\filter(f)\leq 1.
\]
Thus, $\|f\|\leq 1$, in contradiction with $\mu(f)>1$. 
\end{proof}

An ultrafilter $\filter$ is said to be \emph{countably incomplete} if there exists a sequence~$(E_n)_n$ in~$\filter$ with $\bigcap_n E_n=\varnothing$.
We note that every free ultrafilter on a countable set is countably incomplete.

The next result shows the equivalence of (i), (iv) and~(v) in \autoref{thmtraces}.

\begin{thm}
\label{nosillycomms}
Let $(A_j)_{j\in J}$ be a family of unital \ca{s} and let $\filter$ be a countably incomplete ultrafilter on the index set~$J$.
The following are equivalent:
\begin{enumerate}[{\rm (i)}]
\item
The natural map $\coprod_\filter \TT_1(A_j)\to \TT_1(\prod_\filter A_j)$ from \eqref{coultra} is an isomorphism.
\item
The set $\LimT_1 (\prod_\filter A_j)$ is dense in $\TT_1(\prod_\filter A_j)$, in the weak* topology.
\item
There exist $N\in \NN$ and $E\in \filter$ such that for all $j\in E$ and all $a\in \overline{[A_j,A_j]}$ there exist $b_k,c_k\in A_j$ with $\|b_k\|,\|c_k\|\leq \|a\|^{\frac12}$ for $k=1,\ldots,N$ such that
\[
\left\| a-\sum_{k=1}^N [b_k,c_k] \right\| \leq  \frac{1}{2}\|a\|.
\]
\item 
We have
\[
\overline{\Big[\prod_\filter A_j,\prod_\filter A_j \Big]}=\prod_\filter \overline{[A_j,A_j]}.
\]
(Here $\prod_\filter \overline{[A_j,A_j]}$ denotes the image of  $\prod_{j\in J} \overline{[A_j,A_j]}$, regarded as a subset of 
$\prod_{j\in J} A_j$, under the quotient map $\prod_{j\in J} A_j\to \prod_{\filter} A_j$.)
\end{enumerate}
\end{thm}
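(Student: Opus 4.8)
The plan is to organize the four conditions around a single hard implication: the equivalences (i) $\Leftrightarrow$ (ii) and (iv) $\Leftrightarrow$ (i), together with (iii) $\Rightarrow$ (iv), are soft consequences of the preparatory results, whereas (iv) $\Rightarrow$ (iii) carries all the weight; combining them gives (iii) $\Leftrightarrow$ (iv) $\Leftrightarrow$ (i) $\Leftrightarrow$ (ii). Throughout write $B = \prod_\filter A_j$. For (i) $\Leftrightarrow$ (ii) the map \eqref{coultra} is injective by \autoref{autoinj}(ii) and, being a continuous affine map between compact convex sets, is an isomorphism exactly when surjective; by \autoref{comaprange}(ii) its range equals $\overline{\LimT_1(B)}$, so surjectivity is precisely the density asserted in (ii). For (iv) $\Leftrightarrow$ (i), since \eqref{coultra} is $S_1(\cdot)$ applied to \eqref{affultra}, the contravariant duality makes (i) equivalent to \eqref{affultra} being an isomorphism, and as \eqref{affultra} is surjective by \autoref{autoinj}(i) this means it is injective. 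Under the identifications of \autoref{distance2comms}, \eqref{affultra} is the map $a + \overline{[B,B]} \mapsto (\widehat{a_j})_j$; its kernel consists of the $a$ with $\lim_\filter \mathrm{dist}(a_j, \overline{[A_j,A_j]}) = 0$, that is $a \in \prod_\filter \overline{[A_j,A_j]}$. As one always has $\overline{[B,B]} \subseteq \prod_\filter \overline{[A_j,A_j]}$ (lift commutators coordinatewise; the right-hand side is closed by a routine ultrafilter distance computation) and both sides are $*$-closed, injectivity of \eqref{affultra} is exactly the equality in (iv).

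Then (iii) $\Rightarrow$ (iv) requires only the reverse inclusion $\prod_\filter \overline{[A_j,A_j]} \subseteq \overline{[B,B]}$. Given $a = \pi_\filter((a_j)_j)$ with $a_j \in \overline{[A_j,A_j]}$, I apply (iii) together with \autoref{easyinduction} coordinatewise: for each $m$ and $\filter$-almost every $j$, the element $a_j$ is within $2^{-m}\|a_j\|$ of a sum of $mN$ commutators whose factors have norm $\le \|a_j\|^{\frac12}$. Assembling the factors into elements of $B$ places $a$ within $2^{-m}\|a\|$ of $[B,B]$, so letting $m\to\infty$ gives $a \in \overline{[B,B]}$.

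For the hard implication I argue contrapositively. For a \ca{} $C$ let $K_N(C)$ denote the set of sums of $N$ commutators with factors of norm $\le 1$, and set $g_N(C) = \sup\{ \mathrm{dist}(a, K_N(C)) : a \in \overline{[C,C]},\ \|a\| \le 1\}$; by homogeneity, (iii) says that some $N$ satisfies $\{j : g_N(A_j) \le \tfrac12\} \in \filter$. Assuming (iii) fails, and using that $g_N$ is non-increasing in $N$ and that $\filter$ is an ultrafilter, I obtain $\{j : g_N(A_j) > \tfrac12\} \in \filter$ for every $N$. Invoking countable incompleteness, I intersect these decreasing sets with a sequence $E_1 \supseteq E_2 \supseteq \cdots$ in $\filter$ with $\bigcap_n E_n = \varnothing$ to produce decreasing $F_n \in \filter$ with $\bigcap_n F_n = \varnothing$ and $g_n(A_j) > \tfrac12$ for $j \in F_n$. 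Diagonalizing with $n(j) = \max\{n : j \in F_n\}$, I choose norm-one $a_j \in \overline{[A_j,A_j]}$ with $\mathrm{dist}(a_j, K_{n(j)}(A_j)) > \tfrac12$ and form $a = \pi_\filter((a_j)_j) \in \prod_\filter \overline{[A_j,A_j]}$ with $\|a\| = 1$.

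If (iv) held then $a \in \overline{[B,B]}$, so $a$ lies within $\tfrac12$ of a sum of $M$ commutators in $B$; rescaling the factors to a common bound $s$ places $a$ within $\tfrac12$ of $s^2 K_M(B)$. The two ingredients that finish the argument are the distance identity $\mathrm{dist}(a, s^2 K_M(B)) = \lim_\filter \mathrm{dist}(a_j, s^2 K_M(A_j))$ (proved by lifting and truncating factors) and the elementary containment $s^2 K_M(A_j) \subseteq K_{M\lceil s^2 \rceil}(A_j)$ (split each $s^2[x,y]$ into $\lceil s^2 \rceil$ commutators with norm-$\le 1$ factors). Together these force $\mathrm{dist}(a_j, K_{N'}(A_j)) < \tfrac12$ for $\filter$-many $j$, where $N' = M\lceil s^2 \rceil$; but on $F_{N'+1} \in \filter$ we have $n(j) > N'$ and hence $\mathrm{dist}(a_j, K_{N'}(A_j)) \ge \mathrm{dist}(a_j, K_{n(j)}(A_j)) > \tfrac12$, so intersecting the two $\filter$-large sets yields a contradiction. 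I expect this reconciliation to be the main obstacle: matching the single number $M$ produced by membership in $\overline{[B,B]}$ against the unbounded numbers $n(j)$ of commutators forced coordinatewise is exactly what compels both the countable-incompleteness diagonalization and the rescaling containment.
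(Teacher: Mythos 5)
Your proof is correct, but it is organized quite differently from the paper's, which establishes (i)$\Leftrightarrow$(ii) from the preparatory lemmas and then runs the cycle (ii)$\Rightarrow$(iii)$\Rightarrow$(iv)$\Rightarrow$(ii). Writing $B=\prod_\filter A_j$, the main divergences are these. First, you obtain (i)$\Leftrightarrow$(iv) purely by duality: since \eqref{coultra} is $S_1$ applied to \eqref{affultra} and the latter is surjective by \autoref{autoinj}, condition (i) reduces to injectivity of \eqref{affultra}, whose kernel you identify---via the isometry in \autoref{distance2comms} and the $*$-closedness and linearity of both sides---with the selfadjoint part of $\prod_\filter\overline{[A_j,A_j]}$ modulo $\overline{[B,B]}$. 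The paper instead proves (iv)$\Rightarrow$(ii) by a Hahn--Banach separation inside $\TT_1(B)$; your route replaces that functional-analytic step with kernel bookkeeping and is arguably cleaner, at the mild cost of invoking that the duality reflects isomorphisms. Second, you get the inclusion $\overline{[B,B]}\subseteq\prod_\filter\overline{[A_j,A_j]}$ from the elementary observation that the right-hand side is a closed subspace containing every commutator of $B$, whereas the paper cites the lifting theorem \cite[Theorem 1.6]{ArcRobTik17Dixmier} at this point; your argument shows that citation is not needed for this inclusion. Third, the combinatorial heart---countable incompleteness, the diagonal choice of $a_j$ far from sums of $n(j)$ commutators, and the clash with the single $M$ produced by membership in $\overline{[B,B]}$---is essentially the paper's proof of (ii)$\Rightarrow$(iii), except that you enter it directly from (iv) rather than from density of limit traces combined with \autoref{distance2comms}; your rescaling containment $s^2K_M\subseteq K_{M\lceil s^2\rceil}$ plays the role of the paper's ``enlarging $N$ if necessary, assume $\|b_k\|,\|c_k\|\leq 1$.'' Two cosmetic points to tidy when writing this up: the witnesses produced by $\neg$(iii) satisfy $\mathrm{dist}(a_j,K_{n(j)}(A_j))\geq\tfrac12$ rather than the strict inequality you state (the contradiction with $<\tfrac12$ still goes through), and your asserted equivalence of (iii) with $\{j: g_N(A_j)\leq\tfrac12\}\in\filter$ has an $\varepsilon$-loss in one direction---but that direction is never used, since your argument only needs that $\neg$(iii) forces $\{j: g_N(A_j)\geq\tfrac12\}\in\filter$ for every $N$.
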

\begin{proof}
The equivalence of~(i) and~(ii) follows from previous lemmas. 
Indeed, the map \eqref{coultra} is always injective, by \autoref{autoinj}. 
Thus, it is an isomorphism if and only if it is surjective. 
Since its range is the closure of the set of limit tracial states, by \autoref{comaprange} its surjectivity amounts to the density of the limit tracial states.

\smallskip

\emph{We show that~(ii) implies~(iii).}
For $n\in \NN$ and $j\in J$, set
\[
\Gamma_{n,j} = \left\{ \sum_{k=1}^n[x_k,y_k]: x_k,y_k\in A_j,\, \|x_k\|,\|y_k\|\leq 1\hbox{ for all }k \right\}.
\]
It will suffice to show that there exists $N$ such that the set of indices $j\in J$ for which $\mathrm{dist}(a,\Gamma_{N,j})<\frac{1}{4}$ for all $a \in (A_j)_\sa \cap \overline{[A_j,A_j]}$, with $\|a\| = 1$, belongs to $\filter$. 
The result for a general nonselfadjoint element $a$ is then easily obtained decomposing it as $a=a_1+ia_2$, with $a_1,a_2$ selfadjoint, and normalizing $a_1$ and $a_2$.  

Suppose for the sake of contradiction that for every $n=1,2,\ldots$, the sets
\[
E_n := \big\{ j\in J : \mathrm{dist}(a,\Gamma_{n,j})\geq \frac14\hbox{ for some }a\in \overline{[A_j,A_j]}\hbox{ with }\|a\| \leq 1 \big\}
\] 
belong to $\filter$. 
Using that $\filter$ is countably incomplete, let us choose a decreasing sequence $(E_n')_{n\in\NN}$ in $\filter$ such that $\bigcap_n E_n'=\varnothing$ and  $E_n'\subseteq E_n$ for all $n$.

Let us choose $(a_j)_{j\in J}$ as follows: 
If $j\in J\backslash E_1'$, set $a_j=0$. 
If $j\in E_n'\backslash E_{n+1}'$, choose $a_j\in \overline{[A_j,A_j]}\cap (A_j)_\sa$ of norm $1$ whose distance to the set $\Gamma_{n,j}$ is $\geq 1/4$. 
Since the family~$(\Gamma_{n,j})_n$ is increasing, this construction has the property that if~$j\in E_n'$,~then the distance from~$a_j$ to~$\Gamma_{n,j}$ is $\geq 1/4$.

Let $a=(a_j)_{j\in J}$.
Observe that the tracial states in $\TT_1(A_j)$, regarded as a subset of $\TT_1(\prod_j A_j)$, vanish on $a$ for all $j$. The same is thus true for the limits $\lim_\filter  \tau_j$, with $\tau_j\in \TT_1(A_j)$ for all $j$. Thus, $\pi_\filter(a)$ is in the kernel of every limit tracial state.  It follows by hypothesis that $\pi_\filter(a)$ is in the kernel of every tracial state of  $\prod_\filter A_j$. 
By \autoref{distance2comms}, we have
\[
\left\| \pi_\filter(a)-\sum_{k=1}^N[b_k,c_k] \right\| < \frac{1}{4}
\] 
for some $N\in \NN$ and  $b_k,c_k\in \prod_\filter A_j$. 
Enlarging $N$ if necessary, let us assume that $\|b_k\|,\|c_k\|\leq 1$ for all $k$.
Choose lifts  $\tilde b_k,\tilde c_k\in \prod_j A_j$ of  $b_k,c_k$ such that  $\|\tilde b_k\|,\|\tilde c_k\|\leq 1$ for all $k$.  
Then the set
\[
E := \left\{ j\in J : \|a_j-\sum_{k=1}^N[(\tilde b_k)_j,(\tilde c_k)_j]\| < \frac{1}{4} \right\}
\]
belongs to $\filter$. 
Now choose $j\in E\cap E_N'$. 
On one hand, $\|a_j-\sum_{k=1}^N[(\tilde b_k)_j,(\tilde c_k)_j]\|<1/4$. 
On the other hand, since $j\in E_N'$, the distance from $a_j$ to $\Gamma_{N,j}$ is $\geq 1/4$.
This is the desired contradiction. 

\smallskip

\emph{We show that~(iii) implies~(iv).}
Let us first prove the inclusion of the left-hand side in the right-hand side. 
Let $a\in \overline{[\prod_\filter A_j,\prod_\filter A_j]}$. 
By \cite[Theorem 1.6]{ArcRobTik17Dixmier}, $a$ can be lifted to $\tilde a\in \overline{[\prod_j A_j,\prod_j A_j]}$. 
The latter element clearly belongs to $\prod_j \overline{[A_j,A_j]}$. Thus, $a$ belongs to the image of $\prod_j \overline{[A_j,A_j]}$ under $\pi_\filter$.

Suppose now that $a\in \prod_\filter \overline{[A_j,A_j]}$. 
Let $\tilde a\in \prod_j \overline{[A_j,A_j]}$ be a lift of $a$.  
Let $\varepsilon>0$. 
Using (ii) and \autoref{easyinduction}, choose $N\in \NN$ and $E\in \filter$ such that
\[
\left\| \tilde a_j-\sum_{k=1}^N [b_{k,j},c_{k,j}] \right\| 
\leq \varepsilon\|\tilde a_j\|,
\]
for all $j\in E$, where $b_{k,j},c_{k,j}\in A_j$ are such that $\|b_{k,j}\|,\|c_{k,j}\|\leq \|\tilde a_j\|^{\frac 12}$. 
Set $b_{k,j}=c_{k,j}=0$ for all $j\notin E$ and all $k$. 
Define $b_k=\pi_{\filter}((b_{k,j})_j)$ and $c_k=\pi_\filter((c_{k,j})_j)$. 
Then
\[
\left\| a-\sum_{k=1}^N [b_k,c_k] \right\|
\leq \varepsilon\|\tilde a\|.
\] 
Since this argument can be applied to every $\varepsilon>0$, we get $a\in \overline{[\prod_\filter A_j,\prod_\filter A_j]}$.

\smallskip

\emph{We show that~(iv) implies~(ii).}
Assume (iv).  
Suppose for the sake of contradiction that there exists $\mu\in \TT_1(\prod_\filter A_j)$ that is not in the weak* closure of $\LimT_1(\prod_\filter A_j)$.  
Observe that the set of limit tracial states is convex. 
Thus, by Hahn--Banach, there exists $b\in \prod_\filter A_j$ separating $\mu$ from $\LimT_1(\prod_\filter A_j)$, that is, such that $\mathrm{Re}(\tau(b))\leq 1$ for all $\tau\in \LimT_1(\prod_\filter A_j)$ and $\mathrm{Re}(\mu(b))\geq 1+\delta$, for some $\delta>0$.
Replacing $b$ by its selfadjoint part, we may assume that it is selfadjoint. 
Translating $b$ by a scalar multiple of $1$ and renormalizing (as in the proof of \autoref{comaprange}), we may further assume that $b$ is positive (this step may change $\delta$). 

Let $(b_j)_j\in \prod_j A_j$ be a positive lift of $b$.
For each $j\in J$, let $\bar \tau_j\in \TT_1(A_j)$ be a tracial state at which the mapping $\TT_1(A_j)\ni \tau\mapsto \tau(b_j)$ attains its maximum.
Since $\lim_\filter \bar\tau_j(b_j)\leq 1$, we have that
\[
E := \big\{ j\in J : \bar\tau_j(b_j)< 1+\tfrac{\delta}{4} \big\} \in \filter.
\]
Thus, $\tau(b_j)<1+\tfrac{\delta}{4}$ for all $\tau\in \TT_1(A_j)$ and $j\in E$.
By \autoref{distance2comms}, the distance from $b_j$ to $\overline{[A_j,A_j]}$ is at most $1+\tfrac{\delta}{4}$.
Hence, for each $j\in E$ there exist $c_j\in A_j$ and $d_j\in \overline{[A_j,A_j]}$ such that 
\[
b_j = c_j + d_j,
\]
and $\|c_j\|\leq 1+\tfrac{\delta}{2}$. 
Set $c_j=d_j=0$ for all $j\notin E$. Observe that $(d_j)_j$ is bounded, since $(b_j)_j$
and $(a_j)_j$ are bounded. By hypothesis, $\pi_\filter((d_j)_j)\in \overline{[\prod_\filter A_j,\prod_\filter A_j]}$. Then
\[
b = \pi_\filter((b_j)_j) = \pi_\filter((c_j)_j) + \pi_\filter((d_j)_j).
\]
The term $\pi_\filter((c_j)_j)$ has norm at most $1+\tfrac{\delta}{2}$,  while the term $\pi_\filter((d_j)_j)$ vanishes on every tracial state of  
$\prod_\filter A_j$. Evaluating both sides on $\mu$ we get a contradiction.
\end{proof}

We state below a similar density theorem for tracial states on the product $\prod_{j=1}^\infty A_j$. 
We omit the proof as the arguments run along the same lines (with some simplifications).

\begin{thm}
\label{prodcomms}
Let $(A_j)_{j\in J}$ be a collection of unital \ca{s} indexed by an infinite set~$J$.
The following are equivalent:
\begin{enumerate}[{\rm (i)}]
\item
The natural map $\coprod_j \TT_1(A_j)\to \TT_1(\prod_j A_j)$ from \eqref{coprod} is an isomorphism.
\item
The set $\mathrm{co}(\bigcup_{j\in J} \TT_1(A_j))$ is dense in $\TT_1(\prod_{j\in J} A_j)$, in the weak* topology.
\item
There exists $N\in \NN$ such that for all $j\in J$ and $a\in \overline{[A_j,A_j]}$ there exist $b_k,c_k\in A_j$ with $\|b_k\|,\|b_k\|\leq \|a\|^{\frac12}$ for $k=1,\ldots,N$ such that
\[
\left\| a-\sum_{k=1}^N [b_k,c_k] \right\| \leq \frac{1}{2}\|a\|.
\]
\item 
We have $\overline{[\prod_j A_j,\prod_j A_j]}=\prod_{j} \overline{[A_j,A_j]}$.
\end{enumerate}
\end{thm}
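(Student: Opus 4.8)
The plan is to establish the cycle of implications $(\mathrm{i})\Leftrightarrow(\mathrm{ii})\Rightarrow(\mathrm{iii})\Rightarrow(\mathrm{iv})\Rightarrow(\mathrm{ii})$, following the proof of \autoref{nosillycomms} step by step but with the ultrafilter bookkeeping (countable incompleteness, limit tracial states) replaced by the infinite index set~$J$ and the convex hull $\mathrm{co}(\bigcup_j\TT_1(A_j))$. The equivalence $(\mathrm{i})\Leftrightarrow(\mathrm{ii})$ is immediate: the map \eqref{coprod} is always injective by \autoref{autoinj}, so it is an isomorphism precisely when surjective, and by \autoref{comaprange} its range is the weak* closure of $\mathrm{co}(\bigcup_j\TT_1(A_j))$; thus surjectivity is exactly the asserted density.

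For $(\mathrm{ii})\Rightarrow(\mathrm{iii})$ I argue by contraposition, and this is the step I expect to be the main obstacle. Writing $\Gamma_{N,j}$ for the set of sums $\sum_{k=1}^N[b_k,c_k]$ with $b_k,c_k\in A_j$ and $\|b_k\|,\|c_k\|\leq 1$, the negation of~(iii) provides, for each $N$, an index $j$ and a selfadjoint $a\in\overline{[A_j,A_j]}$ with $\|a\|=1$ and $\mathrm{dist}(a,\Gamma_{N,j})\geq\tfrac14$ (the reduction to selfadjoint normalized $a$ being as in \autoref{nosillycomms}). The crucial point, and the place where the argument genuinely departs from the ultraproduct case, is to realize these defects on \emph{infinitely many distinct} coordinates $j_1,j_2,\dots$ with defect level tending to infinity; this replaces the decreasing sequence $E_n'\downarrow\varnothing$ extracted from countable incompleteness, and relies on the observation that the defect cannot be concentrated on finitely many factors. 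Granting this, I set $a=(a_j)_j$ with $a_{j_n}$ a norm-one selfadjoint element of $\overline{[A_{j_n},A_{j_n}]}$ at distance $\geq\tfrac14$ from $\Gamma_{n,j_n}$, and $a_j=0$ otherwise. Each coordinate lies in $\overline{[A_j,A_j]}$, so every trace in $\TT_1(A_j)$, and hence every element of $\mathrm{co}(\bigcup_j\TT_1(A_j))$, vanishes on $a$; by~(ii) and weak* closedness every trace on $\prod_jA_j$ vanishes on $a$, so $\widehat a=0$ and $a\in\overline{[\prod_jA_j,\prod_jA_j]}$ by \autoref{distance2comms}. Thus $\|a-\sum_{k=1}^N[b_k,c_k]\|<\tfrac14$ for some fixed $N$ and $b_k,c_k\in\prod_jA_j$, which after enlarging $N$ may be taken with $\|b_k\|,\|c_k\|\leq 1$. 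Evaluating at the coordinate $j_N$ places $a_{j_N}$ within $\tfrac14$ of $\Gamma_{N,j_N}$, contradicting $\mathrm{dist}(a_{j_N},\Gamma_{N,j_N})\geq\tfrac14$.

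The remaining two implications are the promised simplifications of their ultraproduct counterparts. For $(\mathrm{iii})\Rightarrow(\mathrm{iv})$ the inclusion $\overline{[\prod_jA_j,\prod_jA_j]}\subseteq\prod_j\overline{[A_j,A_j]}$ is trivial coordinatewise (no lifting via \cite[Theorem~1.6]{ArcRobTik17Dixmier} is needed here), while for the reverse inclusion I take $a=(a_j)_j$ with each $a_j\in\overline{[A_j,A_j]}$ and, given $\varepsilon>0$, use~(iii) together with \autoref{easyinduction} to approximate each $a_j$ within $\varepsilon\|a_j\|$ by a sum of a fixed number of commutators with norms bounded by $\|a_j\|^{1/2}$; assembling these coordinatewise produces commutators in $\prod_jA_j$ witnessing $a\in\overline{[\prod_jA_j,\prod_jA_j]}$. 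For $(\mathrm{iv})\Rightarrow(\mathrm{ii})$ I argue again by contradiction: if some $\mu\in\TT_1(\prod_jA_j)$ lies outside the weak* closure of the convex set $\mathrm{co}(\bigcup_j\TT_1(A_j))$, then Hahn--Banach furnishes a selfadjoint element $b=(b_j)_j\in\prod_jA_j$, which after translating by a scalar and renormalizing may be taken positive, with $\tau(b)\leq 1$ for all $\tau$ in that set and $\mathrm{Re}\,\mu(b)\geq 1+\delta$. Then $\sup_{\sigma\in\TT_1(A_j)}\sigma(b_j)\leq 1$ for every $j$, so \autoref{distance2comms} lets me write $b_j=c_j+d_j$ with $d_j\in\overline{[A_j,A_j]}$ and $\|c_j\|\leq 1+\varepsilon$; by~(iv) the element $d=(d_j)_j$ lies in $\overline{[\prod_jA_j,\prod_jA_j]}$, whence $\mu(d)=0$ and $\mu(b)=\mu(c)\leq 1+\varepsilon$, contradicting $\mathrm{Re}\,\mu(b)\geq 1+\delta$ as soon as $\varepsilon<\delta$.

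As indicated, the heart of the matter is $(\mathrm{ii})\Rightarrow(\mathrm{iii})$: the analytic content sits entirely in spreading the commutator defect over infinitely many distinct factors so that a single element of $\prod_jA_j$ simultaneously witnesses the failure of comparison at every level, the product analogue of the countable-incompleteness device used in \autoref{nosillycomms}.
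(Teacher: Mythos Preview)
The paper omits the proof of this theorem, pointing to \autoref{nosillycomms} and noting that ``the arguments run along the same lines (with some simplifications).'' Your overall strategy---mirroring the cycle of implications in \autoref{nosillycomms}---is exactly what is intended, and the treatments of $(\mathrm{i})\Leftrightarrow(\mathrm{ii})$, $(\mathrm{iii})\Rightarrow(\mathrm{iv})$, and $(\mathrm{iv})\Rightarrow(\mathrm{ii})$ are correct.

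There is, however, a genuine gap in $(\mathrm{ii})\Rightarrow(\mathrm{iii})$, and it lies precisely where you flag the difficulty. You write that the argument ``relies on the observation that the defect cannot be concentrated on finitely many factors,'' but this observation is false. Take $A_1$ to be the simple unital \ca{} of \autoref{exa:1} (unique tracial state, yet for each $m$ some contraction in $\overline{[A_1,A_1]}$ is at distance $1$ from all $m$-fold sums of commutators) and $A_j=\CC$ for $j\geq 2$. Then $\prod_j A_j \cong A_1\times \ell^\infty$, and one checks directly that
\[
\overline{\big[\textstyle\prod_j A_j,\prod_j A_j\big]} \;=\; \overline{[A_1,A_1]}\times\{0\} \;=\; \textstyle\prod_j \overline{[A_j,A_j]},
\]
so (iv)---hence (i) and (ii)---hold; but (iii) fails at $j=1$. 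In this example the defect is entirely concentrated at a single index, so no spreading over distinct coordinates is possible.

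This shows that condition (iii) as printed is too strong: the correct statement should require the approximation property only for all but finitely many $j\in J$, exactly as in the \CuSgp{} analogue \autoref{prp:CharDensityProduct}. With that amendment the negation of (iii) says that the bad set $E_N=\{j:\text{failure at level }N\}$ is \emph{infinite} for every $N$; since $(E_N)_N$ is decreasing, one can then inductively pick distinct $j_1,j_2,\ldots$ with $j_n\in E_n$, and your construction of $a=(a_j)_j$ goes through verbatim. Likewise $(\mathrm{iii})\Rightarrow(\mathrm{iv})$ survives: on the finite exceptional set one simply approximates each $a_j\in\overline{[A_j,A_j]}$ individually by a finite sum of commutators (with bounded, though not necessarily $\|a_j\|^{1/2}$-bounded, factors) and takes the maximum of the resulting $N$'s.
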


\begin{exa}
\label{exa:1}
In \cite[Theorem~1.4]{Rob15NucDimSumsCommutators} (see also \cite[Example~4.7]{GarThi23arX:GenByCommutators}) an example is given of a simple, unital \ca{} $A$ with a unique tracial state such that for each $m \in \NN$ there exists a contraction $a_m \in \overline{[A,A]}$ whose distance to the set 
\[
\Big\{\sum_{i=1}^m [x_i,y_i]: x_i,y_i\in A\Big\}
\] 
is 1.
Let $\filter$ be a free ultrafilter on $\NN$. 
Observe that, since $\TT_1(A)$ is a singleton set, so is $\LimT_1(\prod_\filter A_j)$ (and it is thus closed).  
On the other hand, since the property in Theorem \ref{nosillycomms}(iii) does not hold, $\TT_1(A_\filter)$ is not a singleton in this case. 
\end{exa}

\begin{exa}
\label{exa:2}
Consider the nc-polynomial in four variables
\[
g = [x_1,x_2][x_3,x_4].
\]
Given a \ca{} $A$, denote by $g(A)$ the range of $g$ on $A$.
Given $n\in \NN$, denote by $\sum^n g(A)$ the set of sums $\sum_{j=1}^n a_j$, with $a_j\in g(A)$ for all $j$. 

Let $n\in \NN$. 
By \cite[Example~3.11]{Rob16LieIdeals}, there exists a unital \ca{} $B_n$ without bounded traces and a projection $b_n \in B_n$, such that the distance from~$b_n$ to the set $\sum^n g(B_n)$ is $1$. 
Fix a free ultrafilter $\filter$ on $\NN$, and set $B=\prod_\filter B_n$. 
Observe that there are no limit traces in $\TT_1(B)$, since $\TT_1(B_n)=\varnothing$ for all $n\in \NN$. Let us argue that~$B$ has a non-zero one-dimensional representation (and in particular $\TT_1(B)$ is non-empty). 

Suppose for a contradiction that $B$  has no one-dimensional representations. 
Then, by \cite[Theorem~A]{GarThi23arX:GenByCommutators}, there exists $N\in \NN$ such that  
$B=\sum^N g(B)$. 
In particular, $\pi_\filter((b_n)_n)$ belongs to $\sum^N g(B)$. We thus get a set of indices
$E\in \filter$ such that the distance from $b_n$ to the set $\sum^N g(B_n)$ is $<1/2$ for all $n\in E$. This, however, contradicts our choice
of $b_n$ for any $n\in E$ such that  $n\geq N$. (Note that $\sum^N g(B_n)$ is contained in $\sum^n g(B_n)$ for $n\geq N$, as $0$ belongs to the range of $g$.)
\end{exa}

\section{Quasitraces and the Cuntz semigroup}
\label{sec:prelim}

In this section we describe the main objects that appear in coming sections of the paper: 
quasitraces on \ca{s}, abstract Cuntz semigroups, and functionals on Cuntz semigroups; see, among others, \cite{CowEllIva08CuInv,AntPerThi18TensorProdCu,EllRobSan11Cone,Rob13Cone}. 

\begin{pgr}[Traces and quasitraces]
\label{pgr:quasitraces}
Let $A$ be a \ca{}. 
We call a map $\tau\colon A_+\to [0,\infty]$ a \emph{trace} (on $A$) if it is additive, linear, and maps $0$ to $0$. 
We denote the set of all lower semicontinuous traces on $A$ by $\TT(A)$. 
This is a cone when endowed with the operations of pointwise  addition and pointwise multiplication by positive scalars.
(In this paper, by a cone we understand a commutative monoid endowed with a scalar multiplication by $(0,\infty)$. Note that we do not define multiplication by $0$. We call the zero element of a cone its origin. We refer to \cite[Section~3.1]{AntPerRobThi21Edwards} for details.)

By a \emph{quasitrace} on $A$ we understand a map $\tau\colon A_+\to [0,\infty]$ whose restriction to the positive part of any commutative sub-\ca{} of $A$ is a trace. 
A \emph{$2$-quasitrace} is a quasitrace that admits an extension to a quasitrace on $M_2(A)_+$. 
We denote by $\QT(A)$ the cone of $[0,\infty]$-valued, lower semicontinuous $2$-quasitraces on~$A$. 
Every lower semicontinuous $2$-quasitrace admits a unique extension to a lower semicontinuous $2$-quasitrace on $A\otimes\mathcal{K}$, where $\mathcal{K}$ denotes the compact operators on $\ell^2(\NN)$.  
We thus regard $(A\otimes\mathcal{K})_+$ as the common domain of the elements of $\QT(A)$. 

The cone $\QT(A)$ can be endowed with a compact Hausdorff topology in which a net~$(\tau_j)_j$ converges to $\tau$ in $\QT(A)$ if and only if for all $a\in A_+$ and $\varepsilon>0$ we have 
\[
\limsup_j \tau_j\big( (a-\varepsilon)_+ \big) \leq \tau(a)\leq \liminf_j \tau_j(a),
\]
where $(a-\varepsilon)_+$ is the \emph{$\varepsilon$-cut-down} of $a$, which is defined by applying continuous functional calculus to $a$ with the function $\RR\to\RR$, $t\mapsto\max\{0,t-\varepsilon\}$;
see \cite[Section~4]{EllRobSan11Cone}. 
\end{pgr}

A very convenient technical tool to deal with quasitraces on a \ca{} is the Cuntz semigroup. 
We give below the axioms used to define the objects of the category $\Cu$ that they belong to.
For further details, we refer to the recent survey \cite{GarPer23arX:ModernCu}.

\begin{pgr}[Cuntz semigroups]
\label{pgr:axioms}
A partially ordered monoid $S$ is \emph{positively ordered} provided that $x\geq 0$ for every element $x\in S$. 
A commutative, positively ordered monoid $S$ is called a \emph{\CuSgp{}} if it satisfies the following axioms: 
\begin{enumerate}
\item[\axiomO{1}]
If $(x_n)_n$ is an increasing sequence in $S$, then $\sup_n x_n$ exists.
\item[\axiomO{2}]
For any $x\in S$ there exists a sequence $(x_n)_n$ such that $x_n\ll x_{n+1}$ for all
$n$ and $x=\sup_n x_n$.
(We say that $(x_n)_n$ is a $\ll$-increasing sequence.)
\item[\axiomO{3}]
If $x_1\ll x_2$ and $y_1\ll y_2$, then $x_1+y_1\ll x_2+y_2$.
\item[\axiomO{4}]
If $(x_n)_n$ and $(y_n)_n$ are increasing sequences in $S$, then $\sup_n (x_n+y_n) = \sup_n x_n + \sup_n y_n$.
\end{enumerate}

The relation $\ll$ in these axioms is defined as follows:
$x\ll y$ if for every increasing sequence $(y_n)_n$ satisfying $y\leq \sup_n y_n$ there exists $n_0\in\NN$ such that $x\leq y_{n_0}$.
The relation $\ll$ is called the \emph{way-below relation}, or \emph{compact containment relation}, and one says that `$x$ is way-below $y$' if $x \ll y$.
An element $u\in S$ such that $u\ll u$ is termed \emph{compact}.

There are additional axioms that we often impose on a \CuSgp:
\begin{enumerate}
\item[\axiomO{5}]
For all $x',x,y$ with $x'\ll x\leq y$ there exists $z$ such that $x'+z\leq y\leq x+z$. 
Moreover, if $x+w\leq y$ for some $w$, and $w'\ll w$, then $z$ may be chosen such that $w'\ll z$. 
\item[\axiomO{6}]
For all $x',x,y,z\in S$ such that $x\leq y+z$ and $x'\ll x$ there exist $y',z'$ such that $x'\leq y'+z'$, such that $y'\leq y,x$, and $z'\leq z,x$. 
\end{enumerate}

Given positive elements $a,b$ in a \ca{} $A$, one says that $a$ is \emph{Cuntz subequivalent} to $b$, denoted $a \precsim b$, if there is a sequence $(r_n)_n$ in $A$ such that $\lim_{n\to\infty}\|a-r_nbr_n^*\|=0$.
Further, $a$ and $b$ are \emph{Cuntz equivalent}, denoted $a \sim b$, if $a \precsim b$ and $b \precsim a$.
These relations were introduced by Cuntz in \cite{Cun78DimFct}. 

The \emph{Cuntz semigroup} of $A$ is defined as $\Cu(A)=(A\otimes \mathcal{K})_+/{\sim}$, equipped with the partial order induced by $\precsim$, and equipped with addition induced by addition of orthogonal positive elements.
It is known that $\Cu(A)$ satisfies \axiomO{1}--\axiomO{6};
see \cite{CowEllIva08CuInv}, \cite{RorWin10ZRevisited}, \cite[Section~4]{AntPerThi18TensorProdCu}, \cite[Proposition 5.1.1]{Rob13Cone}.
Further properties \axiomO{7} and \axiomO{8} for $\Cu(A)$ have been obtained in \cite[Section~2.2]{AntPerRobThi21Edwards} and \cite[Section~7]{ThiVil21arX:NowhereScattered}.

Classes of projections in $A$ are natural examples of compact elements in $\Cu(A)$, and often the only ones;
see \cite{BroCiu09IsoHilbModSF}.

As defined above, \CuSgp{s} are the objects of a category, termed $\Cu$. 
The morphisms in this category are called \emph{\CuMor{s}}. 
By definition, a \CuMor{} between \CuSgp{s} is an order-preserving monoid homomorphism that preserves the relation $\ll$ and suprema of increasing sequences. 
The assignment $A\mapsto\Cu(A)$ is functorial; see \cite{CowEllIva08CuInv}.
\end{pgr}

\begin{pgr}[Functionals on Cuntz semigroups]
\label{pgr:functionals}
Let $S$ be a \CuSgp. We call a map $\lambda\colon S\to [0,\infty]$  a \emph{functional} on $S$ if $\lambda$ is an order-preserving monoid homomorphism that preserves the suprema of increasing sequences. 
The set of functionals on~$S$ is denoted by~$\FF(S)$. 
This set is a cone under pointwise addition of functionals and pointwise scalar multiplication by positive real numbers. Its origin is the zero functional. 
The properties of $\FF(S)$ have been studied in \cite{Rob13Cone} under the additional assumption that $S$ satisfies \axiomO{5}. 
The question of whether \axiomO{5} is necessary for a proper theory of $\FF(S)$ is an interesting one, but we do not take it up here.

The cone $\FF(S)$ has a natural compact Hausdorff topology such that a net $(\lambda_j)_j$ converges to $\lambda$ in $\FF(S)$ if and only if 
\[
\limsup_j \lambda_j(x') \leq \lambda(x) \leq \liminf_j \lambda_j(x),
\]
for all $x'\ll x$ in $S$; 
see \cite[Theorem~4.8]{EllRobSan11Cone}, \cite{Rob13Cone}, and \cite[Theorem~3.17]{Kei17CuSgpDomainThy}. 

Given $x\in S$, we denote by $\widehat{x}\colon \FF(S)\to [0,\infty]$ the function such that $\widehat{x}(\lambda)=\lambda(x)$ for all $\lambda\in \FF(S)$, which is lower semicontinuous, zero-preserving, additive and $(0,\infty)$-homogeneous (see \autoref{pgr:LC} for further details). 
Given $u\in S$, we denote by $\FF_u(S)$ the set of functionals $\lambda\in \FF(S)$ that are normalized at $u$, that is, $\lambda(u)=1$. 
If $\widehat{u}$ is continuous (for example, if $u$ is a compact element of $S$), then $\FF_u(S)$ is a closed, convex subset of $\FF(S)$, and hence a compact convex set. 
\end{pgr}

Below, we will work with limits along ultrafilters. 
We will thus find it convenient to formulate convergence of functionals in those terms:

\begin{lma}
\label{filterconvergence}
Let $S$ be a \CuSgp{} satisfying \axiomO{5}, let $(\lambda_j)_{j\in J}$ be a collection of functionals in $\FF(S)$, and let $\filter$ be an ultrafilter on the set $J$.
Then there is a unique $\lambda\in \FF(S)$ such that $(\lambda_j)_j$ converges to $\lambda$ along $\filter$ in the compact Hausdorff topology of~$\FF(S)$.
This $\lambda$ is given by
\[
\lambda(x)=\sup_{x'\ll x}\lim_{j\to\filter} \lambda_j(x'), \quad \hbox{ for all }x\in S. 
\]
\end{lma}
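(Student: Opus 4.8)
The plan is to obtain existence and uniqueness for free from the topology of $\FF(S)$, and then to pin down the limit through the two semicontinuity properties that characterize that topology. Since $\FF(S)$ carries a compact Hausdorff topology (as recalled in \autoref{pgr:functionals}, using that $S$ satisfies \axiomO{5}), the family $(\lambda_j)_j$ converges along $\filter$ to a unique point; I would \emph{define} $\lambda$ to be this limit, so that $\lambda\in\FF(S)$ and the uniqueness assertion are immediate. The substance is then to verify the displayed formula. Throughout I would write $L(y):=\lim_{j\to\filter}\lambda_j(y)$, which exists for every $y\in S$ because $[0,\infty]$ is compact Hausdorff.

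The first ingredient is that each evaluation map $e_y\colon\FF(S)\to[0,\infty]$, $\mu\mapsto\mu(y)$, is lower semicontinuous; this is precisely the inequality $\lambda(y)\le\liminf_j\lambda_j(y)$ in the net description of the topology. A standard argument then gives $\lambda(y)\le L(y)$ for all $y\in S$: if $\lambda(y)>L(y)$, pick $t$ strictly between, note that $\{\mu:\mu(y)>t\}$ is an open neighbourhood of $\lambda$, so its preimage lies in $\filter$, forcing $L(y)\ge t$, a contradiction.

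The second, and main, ingredient handles the opposite direction and is where the way-below relation enters. For $x'\ll x$ I would first establish the closure relation
\[
\overline{\{\mu:\mu(x')>s\}}\subseteq\{\mu:\mu(x)\ge s\}
\]
for every $s>0$: if $\mu_i\to\nu$ is a net with $\mu_i(x')>s$, then the ``upper'' half of the net description, applied to the pair $x'\ll x$, yields $\limsup_i\mu_i(x')\le\nu(x)$, while $\mu_i(x')>s$ forces $\liminf_i\mu_i(x')\ge s$, whence $\nu(x)\ge s$. Using this I would show $L(x')\le\lambda(x)$ for all $x'\ll x$: if instead $L(x')>\lambda(x)$, choose $s$ strictly between; then $\{j:\lambda_j(x')>s\}\in\filter$, so (as $\lambda$ is the $\filter$-limit) $\lambda$ lies in the closure of the open set $\{\mu:\mu(x')>s\}$, whence $\lambda(x)\ge s$, contradicting $s>\lambda(x)$. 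I expect this to be the main obstacle: the upper control on $\lambda_j(x')$ in terms of $\lambda(x)$ is available only because $x'\ll x$, and it must be transported from the net description of the topology to ultrafilter limits through the closure computation.

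Finally I would combine the two ingredients. The inequality $L(x')\le\lambda(x)$ for all $x'\ll x$ gives $\sup_{x'\ll x}L(x')\le\lambda(x)$. For the reverse inequality, choose by \axiomO{2} a $\ll$-increasing sequence $(x_n)_n$ with $\sup_n x_n=x$; since $\lambda$ is a functional it preserves suprema of increasing sequences, so $\lambda(x)=\sup_n\lambda(x_n)$, and each term satisfies $\lambda(x_n)\le L(x_n)\le\sup_{x'\ll x}L(x')$ by the first ingredient together with $x_n\ll x$. Hence $\lambda(x)=\sup_{x'\ll x}L(x')$, which is the asserted formula. Note that this route identifies the limit without any separate verification that the right-hand side defines a functional, since $\lambda\in\FF(S)$ from the outset.
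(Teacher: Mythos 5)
Your proof is correct and follows essentially the same route as the paper: existence and uniqueness from compactness of $\FF(S)$, then the sandwich $\lambda(x')\le\lim_{j\to\filter}\lambda_j(x')\le\lambda(x)$ for $x'\ll x$ obtained from the two semicontinuity halves of the net description of the topology, and finally $\lambda(x)=\sup_{x'\ll x}\lambda(x')$. The only (cosmetic) difference is that the paper transports the ultrafilter limit to the net characterization via a selection net $(\lambda_{j_E})_{E\in\filter}$, whereas you do it through open sets and closures.
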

\begin{proof}
Since $\FF(S)$ is compact and Hausdorff, the limit $\lambda$ exists and is unique.  
Let $\filter \ni E\mapsto j_E\in E$ be an arbitrary selection. 
Let $x'\ll x$ in $S$. 
Since the net $(\lambda_{j_E})_{E\in \filter}$ converges to~$\lambda$, we have
\[
\lim_{j\to\filter} \lambda_j(x')
= \limsup_E \lambda_{j_E}(x')
\leq \lambda(x),
\]
and
\[
\lambda(x')
\leq \liminf_E \lambda_{j_E}(x')
= \lim_{j\to\filter} \lambda_j(x').
\]
Thus, $\lambda(x')\leq \lim_{j\to\filter} \lambda_j(x')\leq \lambda(x)$.
This, combined with the fact that $\lambda(x)=\sup_{x'\ll x}\lambda(x')$, yields the desired result.
\end{proof}

\begin{pgr}
\label{linkqtracesfunc}
The link between quasitraces and functionals on Cuntz semigroups is as follows: 
For every $\tau\in\QT(A)$, define $d_\tau\colon \Cu(A)\to [0,\infty]$ by 
\[
d_\tau([a])=\lim_n \tau(a^{\frac1n})
\] 
for all positive elements $a\in A\otimes \mathcal{K}$. Then $d_\tau$ is a functional on $\Cu(A)$. Moreover, 
the assignment 
\[
\tau\mapsto d_\tau,
\]
from $\QT(A)$  to $\FF(\Cu(A))$, is an isomorphism of topological cones;
see \cite[Theorem~4.4]{EllRobSan11Cone}.
\end{pgr}

\section{Ultraproducts, limit quasitraces, and limit functionals}
\label{sec:ultrapowers}

In this section we define limit quasitraces and state the density of limit quasitraces problem;
see \autoref{pbm:3.1}. 
We then rephrase this problem in the language of abstract Cuntz semigroups; 
see \autoref{pbm:denseLimitFctl}.

\begin{pgr}[Limit quasitraces]
\label{pgr:limitquasitraces}
Let $(A_j)_{j\in J}$ be a family of \ca{s}. 
Let $\filter$ be a free ultrafilter on~$J$, and let $\prod_\filter A_j$ denote the ultraproduct of the family $(A_j)_j$ along $\filter$. 
Given a selection of $2$-quasitraces $\tau_j\in \QT(A_j)$ for all $j\in J$, let $\bar \tau_j=\tau_j\pi_j$, where $\pi_j\colon\prod_j A_j\to A_j$ is the quotient map. 
Observe that $\bar\tau_j\in \QT(\prod_j A_j)$ for all $j$. 
Define $\bar \tau_\filter\in \QT(\prod_j A_j)$ as the limit of $(\bar{\tau}_j)_j$ along $\filter$, which exists by compactness of $\QT(\prod_j A_j)$. 
More explicitly, it is not difficult to calculate that  $\bar{\tau}_\filter$ is given by
\[
\bar{\tau}_\filter(a)
= \sup_{t>0}\lim_{j\to\filter} \tau_j\big( (a_j-t)_+ \big),
\]
for $a=(a_j)_j$ in $(\prod_j A_j)_+$;
see \autoref{filterconvergence}.
Observe that $\bar{\tau}_\filter$ vanishes on the ideal $c_\filter((A_j)_j)$, and thus induces a lower semicontinuous $2$-quasitrace $\tau_\filter$ on the ultraproduct $\prod_\filter A_j$ such that $\bar{\tau}_\filter=\tau_\filter\pi_\filter$. 
We call $\tau_\filter$ a \emph{limit $2$-quasitrace} on $\prod_\filter A_j$. We denote by  $\LimQT(\prod_\filter A_j)$  the set of all limit $2$-quasitraces.
 
If each $\tau_j$ is a trace, then so is  $\tau_\filter$ and we call it a \emph{limit trace}.
We denote by $\LimT(\prod_\filter  A_j)$ the set of all limit traces on $\prod_\filter A_j$.

Finally, if each  $A_j$ is unital, and each $\tau_j$ is a tracial state, then $\tau_\filter$ is again a tracial state.
In this case the set of limit tracial states agrees with the set $\LimT_1(\prod_\filter A_j)$  that we have already 
introduced in \autoref{limittracialstates}.
\end{pgr}

As mentioned in the introduction, one of the main problems that we address in this paper  is the following:

\begin{pbm}
\label{pbm:3.1}
Retaining the notation from the previous paragraph, under what conditions is the set of limit $2$-quasitraces $\LimQT(\prod_\filter A_j)$ dense in $\QT(\prod_\filter A_j)$?
\end{pbm}

To tackle this problem, we use the correspondence between $2$-quasitraces and functionals on the Cuntz semigroup described in \autoref{linkqtracesfunc}.
This translates the above problem into a question on the density of limit functionals in the cone of functionals of an ultraproduct of Cuntz semigroups. 
In the coming paragraphs we formulate a version of said problem in this setting and, as we shall see, the techniques developed in \cite{AntPerThi20CuntzUltraproducts} play a key role in the solution of \autoref{pbm:3.1}.

\begin{pgr}
For completeness, we give a brief account of the construction of the quotient semigroup by an ideal, which will be used in the sequel. 
For more details see, for example, \cite[5.1.1]{AntPerThi18TensorProdCu}. 
Given a \CuSgp{} $S$, an ideal $I$ of $S$ is a downward-hereditary subset that is closed under addition and under suprema of increasing sequences. Given elements $x,y\in S$, we define $x\leq_I y$ if there is $z\in I$ such that $x\leq y+z$. 
We also set $x\sim_I y$ if both $x\leq_I y$ and $y\leq_I x$ occur. 
Define $S/I=S/{\sim_I}$, which is a \CuSgp{} with the naturally induced addition and order.
The quotient map $\pi_I\colon S\to S/I$ is a surjective \CuMor. 
In the case of a \ca{} $A$ and a closed, two-sided ideal $J$ of $A$, the inclusion of $J$ in $A$ induces an order embedding of $\Cu(J)$ as an ideal of $\Cu(A)$, and the quotient map $A\to A/J$ induces a $\Cu$-isomorphism $\Cu(A)/\Cu(J)\cong \Cu(A/J)$; 
see \cite[Proposition 1]{CiuRobSan10CuIdealsQuot}.
\end{pgr}

\begin{pgr}[Products and ultraproducts of \CuSgp{s}]
\label{pgr:Cuultraproducts} 
Let us review the construction of products and ultraproducts of \CuSgp{s} developed in
\cite{AntPerThi20CuntzUltraproducts}. 
Let $(S_j)_{i\in J}$ be a collection of \CuSgp{s}. 
We denote by $\CatPomProd_j S_j$ their product in the category of positively ordered monoids. 
This is simply the cartesian product endowed with the entrywise order and entrywise addition. 
We denote by $\llpw$ the relation in $\CatPomProd_j S_j$ of entrywise $\ll$-comparison.

By a \emph{path} in $\CatPomProd_j S_j$ we understand a map $\vect{v}\colon (-\infty,0]\to \CatPomProd_j S_j$, $t\mapsto\vect{v}_t$, that satisfies:
\begin{enumerate}
\item
$\vect{v}_s \llpw \vect{v}_t$ for all $s,t\leq 0$ with $s<t$,
\item
$\vect{v}_t = \sup_{t'<t} \vect{v}_{t'}$ for all $t\in (-\infty,0]$. 
\end{enumerate}

In the sequel, given a path $\vect{v}=(\vect{v}_t)_{t\leq 0}$ we shall write $\vect{v}_t=(v_{t,j})_j$ with $v_{t,j}\in S_j$ for each $t\leq 0$ and $j\in J$. 
We define on the set of paths in $\CatPomProd_j S_j$ a preorder relation as follows: 
$(\vect{v}_t)_{t\leq 0}\precsim (\vect{w}_t)_{t\leq 0}$ if for every $s<0$ there exists $t<0$ such that $\vect{v}_s\llpw \vect{w}_t$, that is, $v_{s,j}\ll w_{t,j}$ for all $j\in J$.
We define $(\vect{v}_t)_{t\leq 0}\sim (\vect{w}_t)_{t\leq 0}$ if $(\vect{v}_t)_{t\leq 0}\precsim (\vect{w}_t)_{t\leq 0}$ and $(\vect{w}_t)_{t\leq 0}\precsim (\vect{v}_t)_{t\leq 0}$. 
We denote by $[(\vect{v}_t)_{t\leq 0}]$ the equivalence class of the path~$(\vect{v}_t)_{t\leq 0}$. 

The product $\prod_j S_j$ in the category of \CuSgp{s} is defined as
the set of equivalence classes $[(\vect{v}_t)_{t\leq 0}]$, where $v\colon (-\infty,0]\to \CatPomProd_j S_j$ is a path.
Addition and order on $\prod_j S_j$ are defined by $[(\vect{v}_t)_{t\leq 0}]+[(\vect{w}_t)_{t\leq 0}]=[(\vect{v}_t+\vect{w}_t)_{t\leq 0}]$ and $[(\vect{v}_t)_{t\leq 0}]\leq [(\vect{w}_t)_{t\leq 0}]$ if $(\vect{v}_t)_{t\leq 0}\precsim (\vect{w}_t)_{t\leq 0}$, respectively. Here $\vect{v}_t+\vect{w}_t=(v_{t,j}+w_{t,j})_j$. 
The projection maps $\pi_j\colon \prod_j S_j\to S_j$ are defined as 
\[
\pi_j\big( [(\vect{v}_t)_{t\leq 0}] \big) = v_{0,j}\in S_j,\hbox{ with }\vect{v}_t=(v_{t,j})_j.
\]
It is shown in \cite[Corollary~3.9]{AntPerThi20CuntzUltraproducts} that $\prod_j S_j$ is a \CuSgp{} satisfying the universal property for products in the category of \CuSgp{s}.
(See also \cite[Section 3]{AntPerThi20AbsBivariantCu} for a full account of this construction.)

Let $\filter$ be an ultrafilter on the set $J$. Define $\cc_\filter((S_j)_j)$ as the subset of $\prod_j S_j$ of $[(\vect{v}_t)_{t\leq 0}]$, with $\vect{v}_t=(v_{t,j})_j$ for all $t\leq 0$, such that
\[
\big\{ j\in J : v_{t,j}=0 \big\} \in \filter \quad \hbox{ for all }t<0. 
\]
Then $\cc_\filter((S_j)_j)$ is an ideal of  $\prod_j S_j$. 
Following \cite{AntPerThi20CuntzUltraproducts}, we define the ultraproduct of~$(S_j)_j$ along $\filter$ as follows:
\[
\prod_\filter S_j = \Big(\prod_j S_j\Big)/\cc_\filter\big( (S_j)_j \big).
\]
The natural quotient map $\prod_j S_j\to \prod_\filter S_j$ will be denoted by $\pi_\filter$. 

By \cite[Lemma~7.8]{AntPerThi20CuntzUltraproducts}, the order in the ultraproduct is characterized as follows:
For $[(\vect{v}_t)_{t\leq 0}], [(\vect{w}_t)_{t\leq 0}]\in\prod_j S_j$ with $\vect{v}_t=(v_{t,j})$ and $\vect{w}_t=(w_{t,j})$, we have $\pi_\filter ([(\vect{v}_t)_{t\leq 0}])\leq \pi_\filter([(\vect{w}_t)_{t\leq 0}])$ if, and only if, for every $s<0$, there are $t<0$ and $E\in \filter$ such that $v_{s,j}\ll w_{t,j}$ for each $j\in E$.
\end{pgr}

We are also interested in products and ultraproducts of scaled \CuSgp{s}, as these arise naturally from products and ultraproducts of \ca{s}.

\begin{pgr}[Scales]
\label{pgr:scales}
A \emph{scale} on a \CuSgp{} $S$ is a downward hereditary subset $\Sigma\subseteq S$ that is closed under suprema of increasing sequences and that generates $S$ as an ideal, that is, for every $x', x\in S$ with $x'\ll x$, there are elements $x_1,\dots,x_n\in \Sigma$ such that $x'\leq \sum_{i=1}^nx_i$;
see \cite[Definition~4.1]{AntPerThi20CuntzUltraproducts}.
The pair $(S,\Sigma)$ is referred to as a \emph{scaled \CuSgp}. 
Given scaled \CuSgp{s} $(S,\Sigma)$ and $(T,\Theta)$, a \emph{scaled \CuMor} is a \CuMor{} $\varphi\colon S\to T$ such that $\varphi(\Sigma)\subseteq \Theta$. 
We denote by $\CatCu_\mathrm{sc}$ the category of scaled \CuSgp{s} with scaled \CuMor{s}.

We shall also consider pairs $(S,u)$ of a \CuSgp{} together with a compact full element $u\in S$, that is, $u$ is such that $u\ll u$ and $\infty\cdot u$ is the largest element of~$S$. 
The element $u$ gives rise to a scale on $S$,  namely, $\Sigma_u=\{x\in S:x\leq u\}$. In the sequel, we regard a pair $(S,u)$ as a scaled \CuSgp{} precisely in this fashion. 

For a \ca{} $A$, the set 
\[
\Sigma_A 
:= \left\{ x\in\Cu(A) : \, \parbox{7cm}{for every $x'\in\Cu(A)$ with $x' \ll x$ there exists $a\in A_+$ with $x \leq [a]$} \right\}
\]
is a scale for $\Cu(A)$. 
The \emph{scaled Cuntz semigroup} of $A$ is $\Cu_\mathrm{sc}(A)=(\Cu(A),\Sigma_A)$;
see \cite[4.2]{AntPerThi20CuntzUltraproducts}.
By parts~(1) and~(2) of \cite[Lemma~3.3]{ThiVil22arX:Glimm}, the scale $\Sigma_A$ can also be described as
\begin{align*}
\Sigma_A 
&= \left\{ x\in\Cu(A) :  \, \parbox{7cm}{there exists a $\precsim$-increasing sequence $(a_n)_n$ in~$A_+$ such that $x = \sup_n [a_n]$} \right\} \\
&= \big\{ x\in\Cu(A) : \text{there exists } a\in A_+ \text{ with }x\leq [a] \big\}.
\end{align*}

If $\varphi\colon A\to B$ is a $\ast$-homomorphism, then $\Cu(\varphi)$ maps $\Sigma_A$ into $\Sigma_B$, and thus is a scaled \CuMor. 
One has therefore a functor from the category of \ca{s} to the category $\CatCu_\mathrm{sc}$.
For a unital \ca{} $A$, we obtain a pair $(\Cu(A),[1])$ of a \CuSgp{} with a compact full element $[1]\in \Cu(A)$. 
\end{pgr}

\begin{pgr}[Scaled products and ultraproducts]
\label{scaledultra}
Let $((S_j,\Sigma_j))_{j\in J}$ be a collection of scaled \CuSgp{s}.
Define $\Sigma\subseteq \prod_j S_j$ as  
\[
\Sigma = \big\{ [(\vect{v}_t)_{t\leq 0}]\in \prod_j S_j : v_{t,j}\in \Sigma_j\hbox{ for all $j\in J$ and all $t<0$} \big\}.
\]
The set $\Sigma$ is downward hereditary and closed under passing to suprema of increasing sequences, though possibly not full in $\prod_j S_j$. 
The \emph{scaled product} of $\prod_j (S_j,\Sigma_j)$ is defined as the pair $(S,\Sigma)$, where $S$ is the ideal generated by $\Sigma$ in $\prod_j S_j$. 

Let $\filter$ be an ultrafilter on $J$. 
The \emph{scaled ultraproduct} $(T,\Theta)=\prod_\filter (S_j,\Sigma_j)$ is defined as the images of $S$ and $\Sigma$ under the quotient by $\cc_\filter((S_j)_j)$. 
In the case $S_j=S$ for all $j$, we shall denote the ultrapower $\prod_\filter (S,\Sigma)$ by $(S,\Sigma)_\filter$. 
The reader is referred to \cite[Paragraph~4.5]{AntPerThi20CuntzUltraproducts} for further details on this construction.

Consider now a collection $(S_j,u_j)_{j\in J}$ of \CuSgp{s} together with a full compact element $u_j\in S_j$ for each $j$.
Let $\vect{v}_t = (u_j)_j$, for $t\leq 0$, denote the constant path equal to $(u_j)_j$ in the cartesian product $\CatPomProd_{j\in J} S_j$. 
Let $\bar{v}=[(\vect{v}_t)_{t\leq 0}]$ be the corresponding equivalence class in $\prod_{j\in J} S$. 
Then it is readily verified that $\bar{v}$ is a compact full element of the scaled product $(S,\Sigma)=\prod_{j\in J}  (S_j,\Sigma_{u_j})$. 
We define $(S,\bar{v})=\prod_{j\in J}(S_j,u_j)$.
If $\filter$ is an ultrafilter on $J$, then passing to the quotient by $\cc_\filter\big( (S_j)_j \big)$ we obtain $v=\pi_\filter(\bar{v})$, a compact full element in the ultraproduct $(T,\Theta)=\prod_\filter (S_j,\Sigma_{u_j})$. 
Again, in this case we write $(T,v)=\prod_\filter (S_j,u_j)$. 
For ultrapowers, we denote by  $(S,u)_\filter$ the ultraproduct $\prod_\filter (S,u)$. 

It is proved in \cite[Theorem~5.13]{AntPerThi20CuntzUltraproducts} that the scaled Cuntz semigroup functor preserves products. 
More concretely, given a family $(A_j)_{j\in J}$ of \ca{s}, let $(S,\Sigma)$ be the scaled product of $(\Cu(A_j),\Sigma_{A_j})$ as described in the paragraph above. 
Then $(S,\Sigma)\cong \Cu_\mathrm{sc}(\prod_j A_j)$ as scaled \CuSgp{s}. 
It is also shown in \cite[Theorem~7.5]{AntPerThi20CuntzUltraproducts} that the scaled Cuntz semigroup preserves ultraproducts. 
In other words, given an ultrafilter $\filter$ on a set $J$ and a family of \ca{s} $(A_j)_{j\in J}$, there is an isomorphism $\Cu_\mathrm{sc}(\prod_\filter A_j)\cong \prod_\filter (\Cu(A_j),\Sigma_{A_j})$. In fact, we have the following commutative diagram:
\[
\xymatrix{
\Cu_\mathrm{sc}(\prod_j A_j)\ar[d]_{\Cu_\mathrm{sc}(\pi_\filter)} \ar[r]^{\cong} 
& \prod_j \Cu_\mathrm{sc}(A_j)\ar[d]^{\pi_\filter} \\
\Cu_\mathrm{sc}(\prod_\filter A_j)\ar[r]^{\cong} &  \prod_\filter \Cu_\mathrm{sc}(A_j).
}
\]
In the case $A_j=A$ for all $j$, we shall use $(\Cu(A),\Sigma_A)_\filter$ to denote the scaled ultrapower which, as observed, is isomorphic to $\Cu_{\mathrm{sc}}(A_\filter)$.
\end{pgr}

We now introduce the limit functionals on an ultraproduct of \CuSgp{s}. 

\begin{pgr}
\label{pgr:limit_functionals}
Let us continue to denote by $(S_j)_{j\in J}$ a collection of \CuSgp{s} and by $\filter$ an ultrafilter on $J$. 
Observe that for each $k\in J$ the projection map $\pi_k\colon \prod_j S_j\to S_k$ induces a cone morphism $\FF(\pi_k)\colon \FF(S_k)\to \FF(\prod S_j)$. 
Consider now a selection of functionals $\lambda_j\in \FF(S_j)$ for all~$j$, and set $\bar{\lambda}_j=F(\pi_j)(\lambda_j)$ for all~$j$. 

Let $\bar{\lambda}_\filter$ be the limit of $(\lambda_j)_j$ in $\FF(\prod_j S_j)$ along $\filter$, which exists and is unique, since $\FF(\prod_j S_j)$ is compact and Hausdorff. 
Using \autoref{filterconvergence}, it is readily established that 
\begin{equation}\label{limit_func_formula}
\bar{\lambda}_\filter\big( [(\vect{v}_t)_{t\leq 0}] \big)
= \sup_{t<0} \lim_{j\to\filter} \lambda_j( v_{t,j}),
\end{equation}
for any path $(\vect{v}_t)_{t\leq 0}$ in $\CatPomProd_j S_j$.
\end{pgr}

\begin{lma}
The functional $\bar{\lambda}_\filter$ vanishes on $\cc_\filter((S_j)_j)$.
\end{lma}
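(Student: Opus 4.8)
The plan is to read off the conclusion directly from the explicit formula \eqref{limit_func_formula} for $\bar{\lambda}_\filter$, using two elementary facts: that each functional $\lambda_j$ preserves the origin, and that an ultralimit of a function vanishing on a set in $\filter$ is zero. No serious work beyond unwinding the definition of the ideal $\cc_\filter((S_j)_j)$ is needed.

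First I would fix an element $x\in\cc_\filter((S_j)_j)$ and choose a representative path $(\vect{v}_t)_{t\leq 0}$, with $\vect{v}_t=(v_{t,j})_j$, witnessing its membership in the ideal; by the very definition of $\cc_\filter((S_j)_j)$ this means that for every $t<0$ the set $E_t:=\{j\in J : v_{t,j}=0\}$ belongs to $\filter$. Applying \eqref{limit_func_formula} (established just above using \autoref{filterconvergence}), it then suffices to show that $\lim_{j\to\filter}\lambda_j(v_{t,j})=0$ for each fixed $t<0$, since this gives $\bar{\lambda}_\filter(x)=\sup_{t<0}0=0$.

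For the remaining ultralimit, I would note that each $\lambda_j$ is in particular a monoid homomorphism, hence sends the origin to $0$, so $\lambda_j(v_{t,j})=0$ for every $j\in E_t$. Thus the function $j\mapsto\lambda_j(v_{t,j})$ vanishes on the set $E_t\in\filter$, and since a limit along an ultrafilter is determined by the values on sets in the ultrafilter, we obtain $\lim_{j\to\filter}\lambda_j(v_{t,j})=0$, as required. The only point meriting a word of care is the freedom to select a representative path with the defining vanishing property of the ideal, which is immediate; beyond that there is no genuine obstacle, as all the substance is already contained in the formula \eqref{limit_func_formula}.
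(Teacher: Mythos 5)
Your proof is correct and follows essentially the same route as the paper's: fix a representative path, use that $\{j : v_{t,j}=0\}\in\filter$ for each $t<0$ together with $\lambda_j(0)=0$ to get $\lim_{j\to\filter}\lambda_j(v_{t,j})=0$, and conclude via the formula \eqref{limit_func_formula}. No discrepancies worth noting.
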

\begin{proof}
Let $\vect{v}=(\vect{v}_t)_{t\leq 0}$  be a path in $\CatPomProd_j S_j$ with $\vect{v}_t=(v_{t,j})_j$, and assume that $[\vect{v}]\in \cc_\filter((S_j)_j)$. 
Then $\{j\in J\colon v_{t,j}=0\}\in\filter$ for every $t<0$. 
It follows that $\lim_{j\to\filter} \lambda_j(v_{t,j})=0$ for every $t<0$, and therefore $\bar{\lambda}_\filter([\vect{v}])=0$ by \eqref{limit_func_formula}.
\end{proof}

Since $\bar{\lambda}_\filter$ vanishes on $\cc_\filter((S_j)_j)$, it induces a functional $\lambda_\filter$ on the ultraproduct~$\prod_\filter S_j$, which is simply given by
\[
\lambda_\filter(\pi_\filter\big( [\vect{v}]) \big) 
= \bar{\lambda}_\filter([\vect{v}]) \quad \hbox{ for all }[\vect{v}]\in \prod_j S_j.
\]

\begin{pgr}[Limit functionals]
\label{ntn:limitFctl}
Retain the notation of \autoref{pgr:limit_functionals}.
We call the functional~$\lambda_\filter$ on $\prod_\filter S_j$ defined above the \emph{limit functional} associated to the family~$(\lambda_j)_j$. 
We use $\LimF(\prod_\filter S_j)$ to denote the subset of limit functionals in $\FF(\prod_\filter S_j)$. 

We also call the functional $\bar{\lambda}_\filter$ on $\prod_j S_j $ defined in \autoref{pgr:limit_functionals} a \emph{limit functional}. 
We denote by $\LimF(\prod_j S_j)$ the subset of $\FF(\prod_j S_j)$ consisting of such limit functionals.

Given a scaled \CuSgp{} $(S,\Sigma)$, we set $\FF((S,\Sigma))=F(S)$. That is, when we speak of functionals on a scaled \CuSgp{} $(S,\Sigma)$ we simply mean functionals on $S$. 

Let $\Sigma_j$ be a scale on $S_j$ for each $j$, and let $(S,\Sigma)=\prod_\filter (S_j,\Sigma_j)$ be the scaled ultraproduct.
Recall that $S$ is an ideal in  $\prod_\filter S_j$. 
Thus, functionals on $\prod_\filter S_j$ induce functionals on the scaled ultraproduct by restriction.  
We use $\LimF(\prod_\filter (S_j,\Sigma_j))$ to denote the subset of $\FF(\prod_\filter (S_j,\Sigma_j))$ induced by the limit functionals.  

Suppose now that $(S_j,u_j)_{j\in J}$ is a collection of \CuSgp{s} endowed with full compact elements $u_j\in S_j$ for each $j\in J$. 
We let $\LimF_u(\prod_\filter (S_j,u_j))$ denote the set of limit functionals associated to families $(\lambda_j)_j$ with $\lambda_j \in \FF_{u_j}(S_j)$, that is, normalized at~$u_j$, for each~$j$. 
Notice that every limit functional in $\LimF_u(\prod_\filter (S_j,u_j))$ is normalized at~$v$. 
In fact, it is easily established that
\begin{equation}
\label{normalizedlimits}
\LimF_u\big( \prod_\filter (S_j,u_j) \big)
= \left\{ \lambda\in \LimF(\prod_\filter S_j): \lambda(v)=1 \right\}.
\end{equation}

That is, a limit functional normalized at $v$ is a limit of normalized functionals (and conversely). To see that the right-hand side is contained in the left-hand side,
let $\bar\lambda_\filter = \lim_\filter \lambda_j$, with $\lambda_j\in \FF(S_j)$ be such that $\bar\lambda_\filter(v)=1$. 
We get at once that 
$\lim_{\filter}\lambda_j(u_j)=1$, and after normalizing each $\lambda_j$ (along an index set where $\lambda_j(u_j)<\infty$), we obtain that 
$\bar\lambda_\filter = \lim_\filter \tilde \lambda_j$ where $\tilde\lambda_j\in \FF_{u_j}(S_j)$.
\end{pgr}

\begin{pgr}
Let $(A_j)_{j\in J}$ be a family of \ca{s}. 
Let $\filter$ be a free ultrafilter on the set~$J$.
As mentioned at the end of \autoref{pgr:scales}, $\Cu_\mathrm{sc}(\prod_\filter A_j)$ is isomorphic to the scaled ultraproduct $\prod_\filter\Cu_\mathrm{sc}(A_j)=\prod_\filter (\Cu(A_j),\Sigma_{A_j})$. 
We thus obtain an isomorphism between $\FF(\Cu_\mathrm{sc}(\prod_\filter A_j))$ and $\FF(\prod_\filter \Cu_\mathrm{sc}(A_j))$.
Recall that, for a scaled \CuSgp{} $(S,\Sigma)$, we have defined $\FF(S,\Sigma)=F(S)$. 
Therefore we may identify $\FF(\Cu(\prod_\filter A_j))$ with $\FF(\prod_\filter \Cu_\mathrm{sc}(A_j))$. 

Fix $k\in J$. A $2$-quasitrace $\tau\in \QT(A_k)$ induces a functional $\FF(\Cu(A_k))$ under the correspondence $\tau\mapsto d_\tau$ described in \autoref{linkqtracesfunc}. 
On the other hand, $\tau$ gives rise to  $\bar \tau =\tau\pi_k$ in $\QT(\prod_j A_j)$ via the projection map.
We have the commutative diagram
\[
\xymatrix@C+=1cm{
\QT(A_k)\ar[d]\ar[r]^{\tau\mapsto d_\tau} &F(\Cu(A_k))\ar[d]\\
\QT(\prod_j A_j)\ar[r]^-{\tau\mapsto d_\tau} & \FF(\prod_j \Cu_\mathrm{sc}(A_j)),
}
\]
where the vertical arrows are induced by the projection maps $\pi_k\colon \prod A_j\to A_k$ and $\Cu(\pi_k)\colon \prod_j \Cu(A_j)\to \Cu(A_k)$.
Since $\tau\mapsto d_\tau$ is a homeomorphism, the limit $\bar \tau_\filter=\lim_\filter \bar \tau_j$ associated to a collection $(\tau_j)_j$ is mapped to the limit $\bar \lambda_\filter=\lim_\filter \overline{d_{\tau_j}} $ associated to the functionals $(d_{\tau_j})_j$. 
After factoring both $\bar\tau_\filter$ and $\bar\lambda_\filter$ by suitable ideals, the limit $2$-quasitrace $\tau_\filter$ associated to $(\tau_j)_j$ is mapped to the limit functional~$\lambda_\filter$ associated to $(d_{\tau_j})_j$. 
Notice finally that, if all the \ca{s} $A_j$ are unital and $\tau_j(1)=1$ for all~$j$, then both $\bar{\tau}_j(1)=1$ and $\overline{d_{\tau_j}}([1])=1$. 
Further, $\tau_\filter(1)=1$ and $\lambda_\filter([(1)_j])=1$.
In summary, we have the following theorem:
\end{pgr}

\begin{thm}
\label{QTFbijection}
The isomorphism between $\QT(\prod_j A_j)$ and $\FF(\prod_j \Cu_\mathrm{sc}(A_j))$, given by $\tau\mapsto d_\tau$, restricts to a natural bijection between the set $\LimQT(\prod_j A_j)$ of limit $2$-quasitraces and the set $\LimF (\prod_j \Cu_\mathrm{sc}(A_j))$ of limit functionals.

Similarly, for ultraproducts, $\tau\mapsto d_\tau$ yields a natural bijection
from  the set $\LimQT(\prod_\filter A_j)$  to  the set $\LimF (\prod_\filter \Cu_\mathrm{sc}(A_j))$.

Furthermore, if all the \ca{s} $A_j$ are unital, then $\tau\mapsto d_\tau$ also gives a bijection  between the set  
$\LimQT_1(\prod_\filter A_j)$ of limits of  normalized $2$-quasitraces and the set $\LimF_{[1]}(\prod_\filter (\Cu(A_j), [1]))$ of normalized limit functionals.
\end{thm}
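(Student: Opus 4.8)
The plan is to obtain all three assertions as restrictions of the isomorphism $\tau\mapsto d_\tau$ recalled in \autoref{linkqtracesfunc}, using only its continuity and its naturality together with the product/ultraproduct identifications of \autoref{scaledultra}. The single computational input I would verify at the outset is the commutativity of the square displayed immediately before the statement: for fixed $k\in J$ and $\tau\in\QT(A_k)$, the formula $d_\tau([a])=\lim_n\tau(a^{1/n})$ gives $d_{\tau\pi_k}=d_\tau\circ\Cu(\pi_k)$, so that under the identification $\Cu_\mathrm{sc}(\prod_j A_j)\cong\prod_j\Cu_\mathrm{sc}(A_j)$ of \cite[Theorem~5.13]{AntPerThi20CuntzUltraproducts} the isomorphism $\QT(\prod_j A_j)\cong\FF(\prod_j\Cu_\mathrm{sc}(A_j))$ carries $\bar\tau=\tau\pi_k$ to $\overline{d_\tau}=\FF(\pi_k)(d_\tau)$ as defined in \autoref{pgr:limit_functionals}. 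In short, $d_{\bar\tau}=\overline{d_\tau}$.

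For the first (product) statement, I would combine this with continuity. Since $\tau\mapsto d_\tau$ is a homeomorphism between compact Hausdorff cones, it preserves limits along $\filter$; hence the limit $2$-quasitrace $\bar\tau_\filter=\lim_\filter\bar\tau_j$ is sent to $\lim_\filter d_{\bar\tau_j}=\lim_\filter\overline{d_{\tau_j}}=\bar\lambda_\filter$, the limit functional attached to $(d_{\tau_j})_j$. Letting $(\tau_j)_j$ range over all selections, and using that $\tau\mapsto d_\tau$ is a coordinatewise bijection $\QT(A_j)\to\FF(\Cu(A_j))$, this shows that the (injective) isomorphism restricts to a surjection of $\LimQT(\prod_j A_j)$ onto $\LimF(\prod_j\Cu_\mathrm{sc}(A_j))$, hence to a bijection.

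For the ultraproduct statement, I would pass to quotients. Both $\bar\tau_\filter$ and $\bar\lambda_\filter$ vanish on the respective limit ideals $c_\filter((A_j)_j)$ and $\cc_\filter((S_j)_j)$, and thus descend to $\tau_\filter$ and $\lambda_\filter$. To identify the descended maps, I would again apply naturality of $\tau\mapsto d_\tau$, now to the quotient map $\pi_\filter\colon\prod_j A_j\to\prod_\filter A_j$, obtaining $\FF(\Cu(\pi_\filter))(d_{\tau_\filter})=d_{\bar\tau_\filter}=\bar\lambda_\filter=\FF(\pi_\filter)(\lambda_\filter)$. The commutative square of \autoref{scaledultra} (from \cite[Theorem~7.5]{AntPerThi20CuntzUltraproducts}) identifies $\Cu(\pi_\filter)$ with the Cuntz-semigroup quotient $\pi_\filter$, so the two outer maps agree; since $\FF(\pi_\filter)$ is injective on functionals of the quotient, I conclude $d_{\tau_\filter}=\lambda_\filter$. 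This is the step I expect to require the most care: it amounts to the assertion that the isomorphism of scaled Cuntz semigroups matches $c_\filter((A_j)_j)$-vanishing quasitraces with $\cc_\filter((S_j)_j)$-vanishing functionals, and it is where the abstract ultraproduct machinery of \cite{AntPerThi20CuntzUltraproducts} does the real work. Granting it, the product-level bijection descends (both factorizations being surjective onto the limit sets by construction) to the desired bijection $\LimQT(\prod_\filter A_j)\leftrightarrow\LimF(\prod_\filter\Cu_\mathrm{sc}(A_j))$.

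Finally, for the normalized case with all $A_j$ unital, I would only track the value at the unit. As $\tau_j(1)=1$ exactly when $d_{\tau_j}([1])=1$, and as $\filter$-limits preserve the value at the compact full element $v=[(1)_j]$, the bijection just obtained restricts to one between $\LimQT_1(\prod_\filter A_j)$ and $\{\lambda\in\LimF(\prod_\filter S_j):\lambda(v)=1\}$, which by \eqref{normalizedlimits} is exactly $\LimF_{[1]}(\prod_\filter(\Cu(A_j),[1]))$.
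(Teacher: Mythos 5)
Your proposal is correct and follows essentially the same route as the paper: verify the naturality square $d_{\tau\pi_k}=\overline{d_\tau}$, use that $\tau\mapsto d_\tau$ is a homeomorphism of compact Hausdorff cones to commute it with limits along $\filter$, descend through the ideals $c_\filter$ and $\cc_\filter$ via the identification of \autoref{scaledultra}, and track the value at the unit for the normalized case. The only difference is that you spell out the descent step (naturality with respect to $\pi_\filter$ and injectivity of the pullback of functionals along a surjection) in more detail than the paper, which simply asserts it.
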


In view of the previous theorem,  \autoref{pbm:3.1} is subsumed in the following more general problem: 

\begin{pbm}
\label{pbm:denseLimitFctl}
Retaining the setting from \autoref{ntn:limitFctl}, characterize when the set $\LimF(\prod_\filter S_j)$ is dense in $\FF(\prod_\filter S_j)$.
\end{pbm}

We address this problem in \autoref{sec:main}, together with similar questions for scaled ultrapowers and ultraproducts.

\section{Density of limit functionals}
\label{sec:main}

In this section we solve \autoref{pbm:denseLimitFctl} by characterizing the density of limit functionals in terms of a comparability condition;
see \autoref{prp:CharDensityUltraproduct}.
We study this condition more closely in \autoref{sec:LCBA}.

We start by characterizing when elements in an ultraproduct compare on all functionals in the closure of limit functionals.

\begin{prp}
\label{prp:CompClosureUltraproduct}
Let $(S_j,\Sigma_j)_{j\in J}$ be a collection of scaled \CuSgp{s} that satisfy \axiomO{5}.
Let $\filter$ be a free ultrafilter on $J$.
Let $\gamma\in\RR_+$, and let $x,y\in \prod_\filter (S_j,\Sigma_j)$. 
Suppose that $x=\pi_{\filter}(\tilde x)$ and $y=\pi_{\filter}(y)$, where
\[
\tilde x=[( (x_{t,j})_j )_{t\leq 0}], \andSep
\tilde y=[( (y_{t,j})_j )_{t\leq 0}] 
\]
are elements of the product $\prod_j (S_j,\Sigma_j)$.
The following are equivalent:
\begin{enumerate}[{\rm (i)}]
\item
We have $\widehat{x}(\lambda) \leq \gamma \widehat{y}(\lambda)$ for every $\lambda\in\overline{\LimF(\prod_\filter (S_j,\Sigma_j))}$.
\item
For every $s<0$ and $\gamma'>\gamma$, there exists $t<0$ such that
\[
\big\{ j\in J : \widehat{x_{s,j}} \leq \gamma' \widehat{y_{t,j}} \big\} \in \filter.
\]
\end{enumerate}
\end{prp}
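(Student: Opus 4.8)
The plan is to prove both implications directly from the explicit description of limit functionals in \eqref{limit_func_formula} together with the two-sided convergence that defines the topology on $\FF$. Throughout I would first fix some bookkeeping: for each level $s<0$ I would introduce the \emph{path-cut} element $x_s\in\prod_\filter S_j$ coming from the reparametrized path $(\vect v_{\tau+s})_{\tau\le 0}$, and likewise $y_t$ from $\tilde y$. Using the description of $\llpw$, of suprema, and of the order in the ultraproduct recalled in \autoref{pgr:Cuultraproducts}, these satisfy $x_s\ll x_{s'}$ for $s<s'<0$ and $x=\sup_{s<0}x_s$ (similarly for $y$), and for a limit functional $\lambda_\filter$ of a family $(\lambda_j)_j$ the formula \eqref{limit_func_formula} gives $\lambda_\filter(x_s)=\sup_{\sigma<s}\lim_{j\to\filter}\lambda_j(x_{\sigma,j})$. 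Establishing these facts cleanly (all relying on \axiomO{5} via \autoref{filterconvergence}) is the first task; everything afterwards is phrased in terms of the $x_s$ and $y_t$.

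For \emph{(ii) $\Rightarrow$ (i)} I would proceed in two steps. First, for a limit functional $\lambda_\filter$: fixing $s<0$ and $\gamma'>\gamma$, condition (ii) supplies $t<0$ and $E\in\filter$ with $\lambda_j(x_{s,j})\le\gamma'\lambda_j(y_{t,j})$ for $j\in E$; passing to the ultralimit along $\filter$ and invoking \eqref{limit_func_formula} yields $\lim_{j\to\filter}\lambda_j(x_{s,j})\le\gamma'\widehat y(\lambda_\filter)$, and taking the supremum over $s$ and then letting $\gamma'\downarrow\gamma$ gives $\widehat x(\lambda_\filter)\le\gamma\widehat y(\lambda_\filter)$. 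Second, to reach the closure, take $\mu\in\overline{\LimF}$ with a net $\lambda_i\to\mu$ of limit functionals. Since $\widehat x,\widehat y$ are only lower semicontinuous, the inequality set is not closed, so I would exploit the way-below structure on both sides: fixing $s'<0$ and choosing $s'<s''<0$, the first step at level $s''$ gives $\lambda_i(x_{s''})\le\gamma'\lambda_i(y_t)$ for a suitable $t$; combining the lower-semicontinuity bound $\mu(x_{s'})\le\liminf_i\lambda_i(x_{s'})\le\liminf_i\lambda_i(x_{s''})$ with the ``left-hand'' bound $\limsup_i\lambda_i(y_t)\le\mu(y_{t^+})\le\mu(y)$ (valid because $y_t\ll y_{t^+}$ for $t<t^+<0$) yields $\mu(x_{s'})\le\gamma'\mu(y)$ for every $s'$, and hence $\widehat x(\mu)\le\gamma\widehat y(\mu)$ after $\gamma'\downarrow\gamma$.

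For \emph{(i) $\Rightarrow$ (ii)} I would argue by contraposition. If (ii) fails there are $s_0<0$ and $\gamma'>\gamma$ with $B_t:=\{j:\widehat{x_{s_0,j}}\not\le\gamma'\widehat{y_{t,j}}\}\in\filter$ for every $t<0$. For each $t$ and $j\in B_t$ I would pick a separating functional $\mu_{t,j}\in\FF(S_j)$ and rescale it (using that $\FF(S_j)$ is a cone) so that $\mu_{t,j}(y_{t,j})\le 1$ and $\mu_{t,j}(x_{s_0,j})>\gamma'$, setting $\mu_{t,j}=0$ off $B_t$. Let $\lambda_t\in\LimF$ be the limit functional of $(\mu_{t,j})_j$ along $\filter$, and let $\lambda_\infty=\lim_{t\to 0^-}\lambda_t$ along an ultrafilter on $(-\infty,0)$ containing all tails, so that $\lambda_\infty\in\overline{\LimF}$. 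Choosing $s_0<s_0'<0$, the identity $\lambda_t(x_{s_0'})\ge\lim_{j\to\filter}\mu_{t,j}(x_{s_0,j})\ge\gamma'$ together with $x_{s_0'}\ll x$ gives $\widehat x(\lambda_\infty)\ge\limsup_t\lambda_t(x_{s_0'})\ge\gamma'$, while the monotonicity $y_{t',j}\le y_{t,j}$ for $t'\le t$ and the normalization force $\lambda_t(y_{t'})\le 1$ whenever $t\ge t'$, whence $\widehat y(\lambda_\infty)\le 1$. Thus $\widehat x(\lambda_\infty)\ge\gamma'>\gamma\ge\gamma\,\widehat y(\lambda_\infty)$, contradicting (i).

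The \textbf{main obstacle} I anticipate is the passage to the closure $\overline{\LimF}$ in the forward direction: because $\widehat x$ and $\widehat y$ are merely lower semicontinuous, the locus $\{\widehat x\le\gamma\widehat y\}$ is not closed, so no soft limiting argument applies and one is forced to use the path-cut approximations on both sides together with the slack $\gamma'>\gamma$ in order to bound each function from the correct side at once. A secondary technical point is the two-parameter diagonalization in the reverse direction: the witnessing functionals $\mu_{t,j}$ depend on the level $t$, and a single level does not work because $\widehat y$ at a limit functional is a supremum over all levels; one must therefore take a second ultralimit as $t\to 0^-$, landing in the closure rather than in $\LimF$, with the normalization $\mu_{t,j}(y_{t,j})\le 1$ and the monotonicity of the path keeping $\widehat y(\lambda_\infty)$ bounded.
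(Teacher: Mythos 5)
Your proof is correct, but it routes around the paper's main structural device. The paper first restates condition (i) as a statement about the lifts $\tilde x,\tilde y$ in the product $\prod_j(S_j,\Sigma_j)$ and then invokes the general cone lemma \autoref{prp:CompareClosureCone}, which converts the comparison of $\widehat{x}$ and $\widehat{y}$ on the \emph{closure} of a subcone into a comparison of cut-downs $\widehat{\tilde x_s}\le\gamma'\widehat{\tilde y_t}$ on the subcone $\LimF(\prod_j(S_j,\Sigma_j))$ itself. After that reduction, the remaining equivalence with (ii) needs only a single-level separating-functional argument: one limit functional built from functionals $\lambda_j$ witnessing $\widehat{x_{s,j}}\nleq\gamma'\widehat{y_{t,j}}$ at one fixed $t$ already contradicts the cut-down inequality, so no second limit in $t$ is required. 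You instead establish the closure passage by hand inside the ultraproduct: in (ii)$\Rightarrow$(i) via the two-sided convergence criterion applied to a net of limit functionals (correct, since the auxiliary level $t$ produced by (ii) depends only on $s''$ and $\gamma'$, not on the individual functional), and in (i)$\Rightarrow$(ii) via the extra ultralimit $\lambda_\infty=\lim_{t\to 0^-}\lambda_t$, where the normalization $\mu_{t,j}(y_{t,j})\le 1$ together with the monotonicity of the path keeps $\widehat{y}(\lambda_\infty)\le 1$ while $\widehat{x}(\lambda_\infty)\ge\gamma'$. Both arguments are sound; yours is self-contained, essentially reproving the relevant direction of \autoref{prp:CompareClosureCone} in this special case at the cost of the two-parameter diagonalization you flag, whereas the paper's factorization through the appendix lemma keeps the ultraproduct-specific part to a single level and makes the same mechanism reusable (for instance in \autoref{prp:CompClosureProduct}).
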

\begin{proof}
In terms of the lifts $\tilde x$ and $\tilde y$ of $x$ and $y$ respectively, condition (i) can be restated as follows:
\begin{enumerate}
\item[(i')]
We have $\lambda(\tilde x)\leq \gamma \lambda(\tilde y)$ for every $\lambda\in\overline{\LimF(\prod_j (S_j,\Sigma_j))}$.
\end{enumerate}

Given $s<0$, we let $\tilde x_s$ denote the `cut-down' $\tilde x_s=[( (x_{s+t,j})_j )_{t\leq 0}]$, and similarly denote by $\tilde y_t$ the cut-downs of $\tilde y$.
Applying \autoref{prp:CompareClosureCone}, and using that $\LimF(\prod_j (S_j,\Sigma_j))$ is a subcone of $\FF(\prod_j (S_j,\Sigma_j))$, we see that~(i') is equivalent to:
\begin{enumerate}
\item[(ii')]
For every $s<0$ and $\gamma'>\gamma$, there exists $t<0$ such that
$\lambda(\tilde x_s)\leq \gamma' \lambda(\tilde y_t)$ for every $\lambda\in \LimF(\prod_j (S_j,\Sigma_j))$.
\end{enumerate}

It remains to verify that~(ii) and~(ii') are equivalent.

\smallskip

\emph{We show that~(ii') implies~(ii).}
To verify~(ii), let $s<0$ and $\gamma'>\gamma$.
Pick $s'\in(s,0)$ and $\gamma''\in(\gamma,\gamma')$.
By assumption, there exists $t<0$ such that $\lambda(\tilde x_{s'})\leq \gamma''\lambda(\tilde y_t)$ for every limit functional $\lambda$. 
Let us show that $t$ has the desired properties to verify~(ii).

Suppose that this is not the case.
Using that $\filter$ is an ultrafilter, this means that
\[
E := \big\{ j\in J : \widehat{x_{s,j}} \nleq \gamma' \widehat{y_{t,j}} \big\}
\]
belongs to $\filter$.
For each $j\in E$, choose $\lambda_j\in \FF(S_j)$ such that $\lambda_j(x_{s,j})>\gamma'\lambda_j(y_{t,j})$. 
By rescaling~$\lambda_j$ if necessary, we may assume that 
\[
\lambda_j(x_{s,j}) \geq 1 > \gamma'\lambda_j(y_{t,j})
\] 
for all $j\in E$. 
Set $\lambda_j=0$ for $j\in J\setminus E$, and let $\bar{\lambda}_\filter$ be the limit functional in $\FF(\prod_j(S_j,\Sigma_j))$ associated to $(\lambda_j)_j$.
Then, on the one hand
\begin{align*}
1 
\leq \lim_{j\to\filter}\lambda_j(x_{s,j})
\leq \sup_{s''<s'}\lim_{j\to\filter}\lambda_j(x_{s'',j})
= \bar{\lambda}_\filter(\tilde x_{s'}),
\end{align*}
while on the other hand
\begin{align*}
1 
\geq \gamma' \lim_{j\to\filter}\lambda_j(y_{t,j}) 
\geq \gamma' \bar{\lambda}_\filter(\tilde y_t).
\end{align*}
Thus, $\bar{\lambda}_\filter(\tilde x_{s'})>\gamma''\bar{\lambda}_\filter(\tilde y_t)$, which is the desired contradiction.

\smallskip

\emph{We show that~(ii) implies~(ii').}
Given $s<0$ and $\gamma'>\gamma$, apply the assumption to obtain $t'<0$ such that the set $\{ j\in J : \widehat{x_{s,j}} \leq \gamma' \widehat{y_{t',j}} \}$ belongs to $\filter$.
Then set $t=t'/2$.
To verify~(ii'), let $\lambda_j\in \FF(S_j)$ for each $j$, and let $\bar{\lambda}_\filter$ be the associated limit functional in $\FF(\prod_j (S_j,\Sigma_j))$.
Then
\[
\bar{\lambda}_\filter(x_s)
\leq \lim_{j\to\filter}\lambda_j(x_{s,j})
\leq \lim_{j\to\filter}\gamma'\lambda_j(y_{t',j})
\leq \gamma'\bar{\lambda}_\filter(y_t).
\]
This proves (ii').
\end{proof}

\begin{ntn}
\label{ntn:leqN}
Given elements $x$ and $y$ in a partially ordered semigroup, and given $N\in\NN$, we write $x\leq_N y$ to mean that $nx\leq ny$ for all $n\geq N$. 
\end{ntn}

If elements $x$ and $y$ in a partially ordered semigroup satisfy $(M+1)x\leq My$ for some $M\in\NN$, then for $N:=(M+1)M$ we have $(n+1)x\leq ny$ for all $n\geq N$, and in particular $x\leq_Ny$;
see the proof of \cite[Proposition~5.2.13]{AntPerThi18TensorProdCu}.

The next result describes the extent to which the order in a \CuSgp{} can be recovered by the order on functionals.

\begin{lma}
\label{leqss}
Let $x',x$ and $y$ be elements in a \CuSgp{}.
Assume that $x'\ll x$ and that $\widehat{x}\leq\gamma\widehat{y}$ for some $\gamma\in(0,1)$.
Then there exist $M,N\in\NN$ such that $(M+1)x'\leq My$ and $x'\leq_N y$.
\end{lma}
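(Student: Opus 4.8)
The plan is to reduce everything to producing a single $M$ with $(M+1)x'\le My$; the relation $x'\le_N y$ is then free. Indeed, the observation recorded just before the statement (coming from the proof of \cite[Proposition~5.2.13]{AntPerThi18TensorProdCu}) shows that $(M+1)x'\le My$ forces $(n+1)x'\le ny$, and hence $nx'\le ny$, for all $n\ge N:=M(M+1)$, which is exactly $x'\le_N y$. So I would concentrate entirely on the inequality $(M+1)x'\le My$.

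First I would convert the hypothesis into a pointwise-strict comparison of ranks. Evaluating $\widehat{x}\le\gamma\widehat{y}$ at a functional $\lambda\in\FF(S)$: whenever $0<\widehat{y}(\lambda)<\infty$ one gets $\widehat{x}(\lambda)\le\gamma\widehat{y}(\lambda)<\widehat{y}(\lambda)$, using $\gamma<1$; whenever $\widehat{y}(\lambda)=0$ one gets $\widehat{x}(\lambda)\le\gamma\cdot 0=0$; and when $\widehat{y}(\lambda)=\infty$ there is nothing to check. In particular $\widehat{x}(\lambda)<\infty$ as soon as $\widehat{y}(\lambda)<\infty$, so no functional can send $x$ to $\infty$ while keeping $y$ finite. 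Thus the pair $(x,y)$ satisfies strict rank comparison: $\widehat{x}(\lambda)<\widehat{y}(\lambda)$ for every $\lambda$ with $0<\widehat{y}(\lambda)<\infty$, together with $\widehat{x}(\lambda)=0$ whenever $\widehat{y}(\lambda)=0$. This is the only place the uniform constant $\gamma<1$ is really used.

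The key step, and the main obstacle, is to promote this strict inequality of rank functions to an honest order relation in $S$. This is precisely where the noncancellative functional theory for \CuSgp{s} enters (in the spirit of \cite{Rob13Cone}): for a \CuSgp{} satisfying \axiomO{5}, strict comparison of ranks for $(x,y)$ implies that for each $x'\ll x$ there is some $M\in\NN$ with $(M+1)x'\le My$. Applying this to the given $x'\ll x$ produces the required $M$, and the remark above then furnishes $N$. I would stress that the multiples cannot be removed: without almost unperforation one cannot hope to deduce $x'\le y$ from strict rank comparison (perforated, Villadsen-type \CuSgp{s} obstruct this), which is exactly why the conclusion is stated through $(M+1)x'\le My$ and the derived relation $\le_N$. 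The mechanism behind the promotion is a compactness argument over the normalized functionals, using $x'\ll x$ to control the lower semicontinuous $\widehat{x'}$ uniformly strictly below $\widehat{y}$, and then invoking \axiomO{5} to turn this uniform functional gap into the algebraic comparison.
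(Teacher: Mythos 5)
Your reduction of the $x'\leq_N y$ clause to the single inequality $(M+1)x'\leq My$ is exactly what the paper does, and the paper then disposes of that remaining inequality by citing \cite[Theorem~5.2.18]{AntPerThi18TensorProdCu} or \cite[Proposition~2.2.2]{Rob13Cone} --- results whose hypothesis is precisely the uniform comparison $\widehat{x}\leq\gamma\widehat{y}$ with $\gamma<1$ that you are handed. So in outline you are on the paper's route.

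The gap is in your treatment of that key step. You replace the uniform hypothesis by the pointwise-strict statement that $\widehat{x}(\lambda)<\widehat{y}(\lambda)$ whenever $0<\widehat{y}(\lambda)<\infty$, and you assert that the constant $\gamma$ is ``only'' used to obtain this; that is not so. The engine of the cited results is the uniform constant: from $x'\ll x''\ll x$ and $\widehat{x}\leq\gamma\widehat{y}$ one gets $\widehat{x''}\ll\gamma'\widehat{y}$ in $\Lsc(\FF(S))$ for $\gamma<\gamma'<1$ (\autoref{prp:wayBelowLFS}), and it is this way-below relation among rank functions, combined with \axiomO{5}, that produces $(M+1)x'\leq My$. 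Your proposed recovery of uniformity ``by compactness over the normalized functionals'' does not go through as stated: the set $\{\lambda\in\FF(S):\lambda(y)=1\}$ need not be compact when $\widehat{y}$ is not continuous (only the sublevel set $\{\lambda:\lambda(y)\leq 1\}$ is closed), and $\widehat{x'}$ is merely lower semicontinuous, which bounds it from below rather than above on compact sets; to retrieve a uniform $\gamma'<1$ one needs the specific $\limsup$-form of convergence in $\FF(S)$ applied to a second cut-down $x'\ll x''\ll x$. Since the uniform $\gamma$ is part of the hypothesis, the repair is simply not to discard it: keep $\widehat{x}\leq\gamma\widehat{y}$ and invoke the cited proposition directly.
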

\begin{proof}
The statement for $M$ follows from \cite[Theorem~5.2.18]{AntPerThi18TensorProdCu} or \cite[Proposition~2.2.2]{Rob13Cone}.
As observed above, the statement for $N$ follows immediately.
\end{proof}

The key to the solution of \autoref{pbm:denseLimitFctl} will be to quantify $M$ and~$N$ in \autoref{leqss} depending on $\gamma$, but not the elements $x',x,y$.
In the context of scaled \CuSgp{s}, we also need to record the `size' of $x$ and $y$ as determined by the scale.
To formalize this, given a scaled \CuSgp{} $(S,\Sigma)$, and $d\in\NN$, we define the \emph{$d$-fold amplification} of $\Sigma$ as 
\[
\Sigma^{(d)} = \big\{ x\in S : \text{for each } x'\ll x \text{ there are } x_1,\ldots,x_d\in\Sigma \text{ with } x'\ll x_1+\ldots+x_d \big\},
\]
for $d\geq 1$, and as $\Sigma^{(0)}=\{0\}$.

Note that, for any $x\in S$, if there exists $\tilde{x}$ such that $x\ll\tilde{x}$, then $x\in\Sigma^{(d)}$ for some $d\in\NN$.

\smallskip

Recall that an ultrafilter $\filter$ is said to be \emph{countably incomplete} if there exists a sequence $(E_n)_n$ in $\filter$ with $\bigcap_n E_n=\varnothing$. 

\begin{thm}
\label{prp:CharDensityUltraproduct}
Let $(S_j,\Sigma_j)_{j\in J}$ be a collection of scaled \CuSgp{s} that satisfy \axiomO{5}.
Let $\filter$ be a countably incomplete ultrafilter on $J$.
The following are equivalent:
\begin{enumerate}[{\rm (i)}]
\item
The set of limit functionals $\LimF(\prod_j (S_j,\Sigma_j))$ is dense in $\FF(\prod_\filter (S_j,\Sigma_j))$.
\item
For every $\gamma\in(0,1)$ and $d\in\NN$ there exist $N=N(\gamma,d)\in\NN$ and $E=E(\gamma,d)\in\filter$ such that: 
\[
\widehat{x} \leq \gamma\widehat{y} \quad \text{ implies } \quad x\leq_Ny, \qquad
\text{for all }  j\in E \text{ and } x,y\in \Sigma^{(d)}_j.
\]
\end{enumerate}
\end{thm}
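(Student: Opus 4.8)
My plan is to read density of the limit functionals as the assertion that the two comparison relations ``$\widehat{x}\leq\gamma\widehat{y}$ tested on $\overline{\LimF(\prod_\filter(S_j,\Sigma_j))}$'' and ``$\widehat{x}\leq\gamma\widehat{y}$ tested on all of $\FF(\prod_\filter(S_j,\Sigma_j))$'' coincide, and then to convert each side into coordinatewise data. The equivalence of density with this matching of comparison relations is precisely the type of statement furnished by the Hahn--Banach separation results of \autoref{sec:separateFS}: for the \CuSgp{} $T=\prod_\filter(S_j,\Sigma_j)$, a subcone $C\subseteq\FF(T)$ is dense if and only if, for every $\gamma\in(0,1)$ and all $x,y\in T$ with $x'\ll x$, the inequality $\widehat{x}\leq\gamma\widehat{y}$ on $C$ already forces $x'\leq_N y$ for some $N$ (equivalently $\widehat{x'}\leq\widehat{y}$ on all of $\FF(T)$). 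The left-hand comparison is exactly what \autoref{prp:CompClosureUltraproduct} rewrites as a filter condition on coordinates, while the right-hand algebraic comparison is governed by the order--recovery in \autoref{leqss}. The amplified scale $\Sigma^{(d)}$ enters because any $x$ in the scaled ultraproduct admits $x'\ll x$ with the entries of a lift of $x'$ lying in a fixed $\Sigma_j^{(d)}$.

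For the implication (ii)$\Rightarrow$(i) I would start from $x=\pi_\filter(\tilde x)$ and $y=\pi_\filter(\tilde y)$ with $\widehat{x}\leq\gamma\widehat{y}$ on $\overline{\LimF}$, where $\gamma\in(0,1)$, and aim at $x\leq_N y$. Fixing $s<0$ and $\gamma'\in(\gamma,1)$, \autoref{prp:CompClosureUltraproduct} provides $t<0$ with $\{j:\widehat{x_{s,j}}\leq\gamma'\widehat{y_{t,j}}\}\in\filter$; since $x$ and $y$ lie in the ideal generated by the scales, these entries can be taken in a fixed $\Sigma_j^{(d)}$. Feeding $(\gamma',d)$ into hypothesis (ii) yields a single $N$ and $E\in\filter$ for which $\widehat{x_{s,j}}\leq\gamma'\widehat{y_{t,j}}$ forces $x_{s,j}\leq_N y_{t,j}$ on a filter set, and crucially $N$ does not depend on $s$. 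The order characterization of \autoref{pgr:Cuultraproducts} promotes this coordinatewise $\leq_N$ to $\pi_\filter(\tilde x_s)\leq_N\pi_\filter(\tilde y_t)\leq y$, and taking the supremum over $s<0$ (using \axiomO{4} and that $\pi_\filter$ preserves suprema) gives $x\leq_N y$. By the separation criterion this yields $\overline{\LimF}=\FF(\prod_\filter(S_j,\Sigma_j))$, which is (i).

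For (i)$\Rightarrow$(ii) I would argue by contradiction, fixing $\gamma\in(0,1)$ and $d$ for which no pair $(N,E)$ works. Since $\filter$ is an ultrafilter, the decreasing sets $B_N=\{j:\exists\,x,y\in\Sigma_j^{(d)}\text{ with }\widehat{x}\leq\gamma\widehat{y}\text{ but }x\not\leq_N y\}$ all lie in $\filter$; using countable incompleteness I would pick a decreasing $(E_n)_n$ in $\filter$ with empty intersection and, for $j\in B_N\cap E_N\setminus E_{N+1}$, choose witnesses $x_j,y_j\in\Sigma_j^{(d)}$ with $\widehat{x_j}\leq\gamma\widehat{y_j}$ and $x_j\not\leq_N y_j$, assembling them into paths $\tilde x,\tilde y$ in the scaled product with tops $x_j,y_j$. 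The coordinatewise inequality on a filter set gives, via \autoref{prp:CompClosureUltraproduct}, $\widehat{\pi_\filter\tilde x}\leq\gamma'\widehat{\pi_\filter\tilde y}$ on $\overline{\LimF}$ for each $\gamma'>\gamma$, and density upgrades this to all of $\FF(\prod_\filter(S_j,\Sigma_j))$. Choosing $\gamma'<1$ and a compactly contained $x'\ll\pi_\filter\tilde x$ whose coordinates still witness the failures, \autoref{leqss} produces one uniform $N_0$ with $x'\leq_{N_0}\pi_\filter\tilde y$; unravelling through the ultraproduct order forces $x_{s,j}\leq_{N_0}y_j$ on a filter set, contradicting the failures $x_j\not\leq_N y_j$ for the cofinally many $j\in B_N$ with $N\geq N_0$.

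The crux, and the step I expect to be hardest, is this last contradiction in (i)$\Rightarrow$(ii): one must manufacture a single pair of ultraproduct elements whose coordinate comparison failures escape every fixed level, yet reconcile them with the one uniform bound $N_0$ extracted from \autoref{leqss} in the ultraproduct. The delicate bookkeeping is to keep all chosen witnesses inside the fixed amplification $\Sigma_j^{(d)}$ while working with the path and cut-down formalism, and to transport faithfully both the functional inequality (through \autoref{prp:CompClosureUltraproduct}) and the algebraic relation $\leq_{N_0}$ (through the order description of \autoref{pgr:Cuultraproducts}) back and forth between the coordinates and the ultraproduct, taking care that the slack introduced by passing to $x'\ll x$ and to cut-downs $\tilde y_t$ does not destroy the robust failures.
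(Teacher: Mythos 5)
Your strategy coincides with the paper's: reduce density to an order-comparison statement via the separation results of \autoref{sec:separateFS}, translate functional inequalities into coordinatewise ones with \autoref{prp:CompClosureUltraproduct}, and combine \autoref{leqss} with countable incompleteness for the converse. Two steps, however, would fail as written. In (ii)$\Rightarrow$(i) you assert that the $N$ supplied by hypothesis~(ii) ``does not depend on $s$'' and conclude $x\leq_N y$ by taking the supremum over the cut-downs. But $N=N(\gamma',d)$, where $d$ must be chosen so that $x_{s,j},y_{t,j}\in\Sigma_j^{(d)}$, and this $d$ genuinely depends on $s$: an element of the scaled product lies in the ideal generated by the scale, so each cut-down $\tilde x_s$ is dominated by a finite sum of scale elements, but the number of summands may grow as $s\to0^-$. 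You therefore only obtain $\pi_\filter(\tilde x_s)\leq_{N(s)}y$ with $N(s)$ possibly unbounded, and no single $N$ with $x\leq_N y$. This is repairable --- and is what the paper does --- by evaluating the $s$-level inequality against an arbitrary functional $\lambda$ to get the $N$-free estimate $\lambda(\pi_\filter(\tilde x_s))\leq\tfrac{k}{l}\lambda(y)$ and only then taking suprema; the separation criterion (\autoref{prp:SubconeIsAll}) needs only $\widehat{x}\leq\widehat{y}$, not the algebraic relation $x\leq_N y$.

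In (i)$\Rightarrow$(ii) two pieces of bookkeeping are load-bearing and are missing from the sketch. First, to invoke \autoref{prp:CompClosureUltraproduct} you must produce, for each $s<0$, a \emph{uniform} $t<0$ with $\widehat{x_{s,j}}\leq\gamma'\widehat{y_{t,j}}$ on a filter set; if your paths merely have tops $x_j,y_j$ with $\widehat{x_j}\leq\gamma\widehat{y_j}$, the time $t(j)$ at which a cut-down of $y_j$ satisfies this varies with $j$. The paper forestalls this by choosing, before building the paths, elements $x_j'\ll x_j''\ll x_j$ and $y_j'\ll y_j$ with $\widehat{x_j''}\leq\gamma'\widehat{y_j'}$ (via \autoref{prp:wayBelowLFS}) and planting them at the fixed times $0$ and $-1$. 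Second, you cannot ``unravel $x'\leq_{N_0}\pi_\filter(\tilde y)$ through the ultraproduct order'': that relation is infinitely many inequalities $nx'\leq n\pi_\filter(\tilde y)$, each of which produces its own time parameter and filter set, and their intersection need not lie in $\filter$. One must instead unravel the \emph{single} inequality $(M+1)x'\leq My$ furnished by \autoref{leqss}, obtain $(M+1)x_j'\leq My_j$ at one well-chosen index $j\in E\cap E_{(M+1)M}'$, and only then pass to $x_j'\leq_{(M+1)M}y_j$ inside $S_j$ to contradict the choice of witnesses. With these repairs your argument becomes the paper's proof.
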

\begin{proof}
\emph{We show that~(i) implies~(ii).}
To reach a contradiction, assume that~(ii) does not hold.
Using that~$\filter$ is an ultrafilter, this means that there exist $\gamma\in(0,1)$ and $d\in\NN$ such that for every $N\in\NN$ the set
\[
E_N := \big\{ j\in J : \text{there exist } x,y\in\Sigma^{(d)}_j \text{ with } \widehat{x} \leq \gamma\widehat{y} \text{ but } x\nleq_Ny \big\}
\]
belongs to $\filter$. Using that $\filter$ is countably incomplete, we may choose a decreasing sequence $(E_N')_{N\in\NN}$ in $\filter$ such that $\bigcap_N E_N'=\varnothing$ and  $E_N'\subseteq E_N$ 
for each $N$.

We now pick suitable $x_j',x_j'',x_j,y_j',y_j\in S_j$ for each $j\in J$.
If $j\in J\backslash E_0'$, we simply set $x_j'=x_j''=x_j=0$ and $y_j'=y_j=0$.
If $j\in E_N'\backslash E_{N+1}'$ for $N\geq 0$, then we use that $E_N'\subseteq E_N$ to choose $x_j,y_j\in S_j$ such that
\[
x_j,y_j\in\Sigma^{(d)}_j, \quad
\widehat{x_j} \leq \gamma\widehat{y_j}, \andSep
x_j \nleq_{N}y_j.
\]
Next, choose $x_j',x_j'' \in S$ such that $x_j'\ll x_j''\ll x_j$ and $x_j'\nleq_N y_j$.
Pick $\gamma' \in (\gamma,1)$.
Then $\widehat{x_j''} \ll \gamma' \widehat{y_j}$, by \autoref{prp:wayBelowLFS}. 
This allows us to choose $y_j' \in S_j$ such that $y_j' \ll y_j$ and $\widehat{x_j''} \leq \gamma'\widehat{y_j'}$.

By \cite[Proposition~2.10]{AntPerThi20AbsBivariantCu}, for each $j\in J$ we can choose paths $(x_{t,j})_{t\leq 0}$ and $(y_{t,j})_{t\leq 0}$ in $S_j$ such that 
\[
x_{-2,j} = x_j', \quad
x_{0,j} = x_j'', \quad
y_{-1,j} = y_j', \andSep
y_{0,j} = y_j.
\]
	
Set $\vect{x}_t=(x_{t,j})_j$ and $\vect{y}_t=(y_{t,j})_j$ for $t\leq 0$.
Since $x_{t,j}$ and $y_{t,j}$ belong to~$\Sigma^{(d)}_j$ for each $j$, the elements $\tilde x:=[(\vect{x}_t)_{t\leq 0}]$ and $\tilde y:=[(\vect{y}_t)_{t\leq 0}]$ belong to the scaled product $\prod_j (S_j,\Sigma_j)$. 
Let $x=\pi_\filter(\tilde x)$ and $y=\pi_\filter(\tilde y)$.
We also consider $x_{s}$, $y_{s}$, images of the cut-downs $\tilde x_{s}=[(\vect{x}_{t+s})_{t\leq 0}]$ and $\tilde y_{s}=[(\vect{y}_{t+s})_{t\leq 0}]$ for $s<0$. 
	
Observe that the set of indices $j$ such that $\widehat{x_{0,j}}\leq \gamma'\widehat{y_{-1,j}}$ contains $E_0'$, and thus belongs to $\filter$. By \autoref{prp:CompClosureUltraproduct}, this implies that
$\widehat{x}(\lambda)\leq \gamma' \widehat{y}(\lambda)$ for every functional $\lambda$ in the closure of  $\LimF (\prod_\filter (S_j,\Sigma_j))$.
Since by assumption this set  is all of  $\FF(\prod_\filter (S_j,\Sigma_j))$, we conclude that $\widehat{x} \leq \gamma' \widehat{y}$.
Since $x_{-1}\ll x$,  by \autoref{leqss} there exists $M \in \NN$ such that
\[
(M + 1) x_{-1} \leq M y.
\]

Choose $z=[(\vect{z}_t)_{t\leq 0}]\in\cc_\filter$ such that $(M+1) \tilde x_{-1} \leq M\tilde y +z$.
We have $\tilde x_{-1}=[(\vect{x}_{t-1})_{t\leq 0}]$, and thus for $t=-1$ we obtain $s<0$ such that
\[
(M+1) \vect{x}_{-2} \llpw M\vect{y}_s + \vect{z}_s.
\]
Since $z\in\cc_\filter$, we have $\supp(\vect{z}_s)\notin\filter$.
Using that $E_{(M+1)M}'\in\filter$, we can choose $j\in J$ such that $j\notin\supp(\vect{z}_s)$ and $j\in E_{(M+1)M}'$.

Then
\[
(M+1) x_j'
= (M+1) x_{-2,j} 
\ll M y_{s,j} + z_{s,j}
= M y_{s,j}
\leq M y_j.
\]
As noted above \autoref{leqss}, this implies that $x_j' \leq_{(M+1)M} y_j$.
However, since $j\in E_{(M+1)M}'$, we have  $x_j' \not\leq_{(M+1)M} y_j$  by construction.
This is the desired contradiction.

\smallskip

\emph{We show that~(ii) implies~(i).}
By \autoref{prp:SubconeIsAll}, it suffices to show that for all $x,y\in\prod_\filter (S_j,\Sigma_j)$ with $\widehat x(\lambda)\leq \widehat y(\lambda)$ for all $\lambda$
in the closure of $\LimF(\prod_\filter  (S_j,\Sigma_j))$, we have $\widehat{x}\leq\widehat{y}$.
Thus, let  $x,y\in\prod_\filter (S_j,\Sigma_j)$ be such that $\widehat x(\lambda)\leq \widehat y(\lambda)$ for all $\lambda$
in the closure of $\LimF(\prod_\filter  (S_j,\Sigma_j))$.
Choose $\tilde x=[( (x_{t,j})_j )_{t\leq 0}]$ and $\tilde y=[( (y_{t,j})_j )_{t\leq 0}]$, lifts of $x$ and $y$ in $\prod_j (S_j,\Sigma_j)$.
Given $s<0$, we let $\tilde x_s$ denote the `cut-down' $\tilde x_s=[( (x_{s+t,j})_j )_{t\leq 0}]$, and similarly for $\tilde y_t$ for $t<0$.

Let $s<0$ and $\tfrac{k}{l}>\gamma'>1$ with $k,l\in\NN\setminus\{0\}$.
By \autoref{prp:CompClosureUltraproduct}, there exists $t<0$ such that
\[
E_0 := \big\{ j\in J : \widehat{x_{s,j}} \leq \gamma' \widehat{y_{t,j}} \big\}
\]
belongs to $\filter$.
Choose $d\in\NN$ such that $x_{s,j},y_{t,j}\in\Sigma_j^{(d)}$ for all $j$.
Applying the assumption for $\tfrac{l}{k}\gamma'$ and $d$, we obtain $N\in\NN$ and $E_1\in\filter$ such that 
\[
\widehat{v} \leq \tfrac{l}{k}\gamma' \widehat{w}
\quad \text{ implies } \quad 
v\leq_Nw, \quad \text{ for all }  j\in E_1 \text{ and } v,w\in \Sigma^{(d)}_j.
\]
For $j\in E_0\cap E_1$, we have
\[
\widehat{lx_{s,j}} \leq (\tfrac{l}{k}\gamma') \widehat{ky_{t,j}}
\]
and therefore
\[
lx_{s,j} \leq_N ky_{t,j}.
\]

This implies that $l\pi_\filter(\tilde x_s)\leq_N k \pi_\filter(\tilde y)=ky$.
Given $\lambda\in \FF(\prod_\filter (S_j,\Sigma_j))$, we obtain
\[
\lambda(\pi_\filter(\tilde x_s)) 
\leq \tfrac{k}{l} \lambda(y).
\]
Since this holds for every $s<0$ and for every $k,l$ with $\tfrac{k}{l}>1$, we obtain 
$\lambda(x)\leq\lambda(y)$, as desired.
\end{proof}

Let us now briefly comment on the version of the preceding theorem for functionals on products rather than ultraproducts.
Let $(S_j,\Sigma_j)_{j\in J}$ be a collection of scaled \CuSgp{s} that satisfy \axiomO{5}. Consider their scaled product $\prod_j (S_j,\Sigma_j)$.
For each $k\in J$, the projection map $\pi_k\colon \prod_j (S_j,\Sigma_j)\to S_k$ induces a cone morphism $\FF(\pi_k)\colon \FF(S_k)\to \FF(\prod_j (S_j,\Sigma_j))$, and we let $K_k$ denote the image of $\FF(\pi_k)$.

The next result is proven similarly to \autoref{prp:CompClosureUltraproduct}.
We omit the proof.

\begin{prp}
\label{prp:CompClosureProduct}
Let $\gamma\in\RR_+$, and let $x=[( (x_{t,j})_j )_{t\leq 0}]$ and $y=[( (y_{t,j})_j )_{t\leq 0}]$ in $\prod_j (S_j,\Sigma_j)$.
The following are equivalent:
\begin{enumerate}[{\rm (i)}]
\item
We have $\widehat{x}(\lambda) \leq \gamma\widehat{y}(\lambda)$ for every $\lambda$ in the closed subcone generated by $\bigcup_j K_j$. 
\item
For every $s<0$ and $\gamma'>\gamma$, there exists $t<0$ such that $\widehat{x_{s,j}} \leq \gamma' \widehat{y_{t,j}}$ for every~$j\in J$.
\end{enumerate}
\end{prp}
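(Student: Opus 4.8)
The plan is to mirror the proof of \autoref{prp:CompClosureUltraproduct}, with the closed subcone $\overline{\LimF(\prod_j (S_j,\Sigma_j))}$ there replaced by the closed subcone generated by $\bigcup_j K_j$, and with the ultrafilter bookkeeping replaced by a direct passage through the projections $\pi_k$. Write $C$ for the subcone of $\FF(\prod_j (S_j,\Sigma_j))$ generated by $\bigcup_j K_j$. Since each $K_k$ is the image of the cone morphism $\FF(\pi_k)$ and hence is itself a subcone, $C$ consists precisely of the finite sums of functionals that each factor through a single projection. I would first record that $\prod_j (S_j,\Sigma_j)$ is a \CuSgp{} satisfying \axiomO{5}, which is what makes the path calculus and the separation result from the appendix available; this is the only place where the standing assumption that each $S_j$ satisfies \axiomO{5} is used.

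With $C$ in hand, the core step is to invoke \autoref{prp:CompareClosureCone}. Writing $x_s$ and $y_t$ for the cut-downs $x_s=[( (x_{s+r,j})_j )_{r\le 0}]$ and $y_t=[( (y_{t+r,j})_j )_{r\le 0}]$ of the given path representatives (so that $x_s\ll x$ and $y_t\ll y$), this proposition converts statement~(i), i.e.\ $\widehat{x}(\lambda)\le\gamma\widehat{y}(\lambda)$ for all $\lambda\in\overline{C}$, into the equivalent assertion that for every $s<0$ and every $\gamma'>\gamma$ there is $t<0$ with $\lambda(x_s)\le\gamma'\lambda(y_t)$ for all $\lambda\in C$. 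This is the exact analogue of the passage from~(i') to~(ii') in \autoref{prp:CompClosureUltraproduct}, and it is where the Hahn--Banach type input is spent.

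It then remains to unwind the condition over $C$ into the per-index condition~(ii). Because the inequality $\lambda(x_s)\le\gamma'\lambda(y_t)$ is preserved under sums and positive rescalings of $\lambda$, it holds for all $\lambda\in C$ if and only if it holds for all $\lambda$ in the generating set $\bigcup_j K_j$. For a generator $\lambda=\mu\circ\pi_k$ with $\mu\in\FF(S_k)$ one computes $\pi_k(x_s)=x_{s,k}$ and $\pi_k(y_t)=y_{t,k}$, so the inequality reads $\mu(x_{s,k})\le\gamma'\mu(y_{t,k})$; letting $\mu$ range over $\FF(S_k)$ this is exactly $\widehat{x_{s,k}}\le\gamma'\widehat{y_{t,k}}$, and letting $k$ range over $J$ recovers~(ii). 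Thus~(i) and~(ii) are equivalent. The main obstacle is entirely contained in \autoref{prp:CompareClosureCone}; once that separation result is granted, the product case is strictly simpler than the ultraproduct case, since the delicate construction of functionals as ultrafilter limits of $(\lambda_j)_j$ collapses to the trivial evaluation of the generators of $C$ through the projections.
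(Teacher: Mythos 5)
Your proposal is correct and follows essentially the route the paper intends (it states that \autoref{prp:CompClosureProduct} "is proven similarly to \autoref{prp:CompClosureUltraproduct}"): pass to the cut-downs, apply \autoref{prp:CompareClosureCone} to the subcone generated by $\bigcup_j K_j$, and then reduce the resulting condition to the generators $\mu\circ\pi_k$, where it reads off coordinatewise as $\widehat{x_{s,k}}\leq\gamma'\widehat{y_{t,k}}$. You also correctly observe that the delicate ultrafilter-limit step of the ultraproduct proof degenerates here to evaluation at the generators, which is exactly the "simplification" the paper alludes to.
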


A proof similar to the proof of \autoref{prp:CharDensityUltraproduct}, using \autoref{prp:CompClosureProduct} instead of \autoref{prp:CompClosureUltraproduct}, leads to the next result. 
We omit the proof.

\begin{thm}
\label{prp:CharDensityProduct}
Let $(S_j,\Sigma_j)_{j\in J}$ be a collection of scaled \CuSgp{s} that satisfy \axiomO{5}.
The following are equivalent:
\begin{enumerate}[{\rm (i)}]
\item
The subcone generated by $\bigcup_j K_j$ is dense in $\FF(\prod_j (S_j,\Sigma_j))$.
\item
For every $\gamma\in(0,1)$ and $d\in\NN$ there exists $N=N(\gamma,d)\in\NN$ such that: 
\[
\widehat{x} \leq \gamma\widehat{y}  \quad \text{ implies } \quad x\leq_Ny, \quad
\text{ for all but finitely many }  j\in J \text{ and } x,y\in \Sigma^{(d)}_j.
\]
\end{enumerate}
\end{thm}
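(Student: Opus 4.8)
The plan is to transcribe the proof of \autoref{prp:CharDensityUltraproduct} almost verbatim, replacing the countably incomplete ultrafilter $\filter$ by the cofinite filter on $J$ and invoking \autoref{prp:CompClosureProduct} wherever that argument used \autoref{prp:CompClosureUltraproduct}. The essential structural difference is that the scaled product carries no quotient by a small ideal, so comparisons must be witnessed at \emph{every} coordinate rather than merely on a set in the ultrafilter; this makes the forward implication cleaner (there is no $\cc_\filter$-error term to track) but forces extra care with the finitely many exceptional coordinates in the converse.

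For the implication \textrm{(i)}$\Rightarrow$\textrm{(ii)}, I would argue by contradiction. If (ii) fails, there are $\gamma\in(0,1)$ and $d\in\NN$ such that, for every $N$, the set $E_N=\{j\in J: \text{there exist } x,y\in\Sigma^{(d)}_j \text{ with } \widehat{x}\leq\gamma\widehat{y} \text{ but } x\nleq_N y\}$ is infinite; note that $(E_N)_N$ is decreasing. Since each $E_N$ is infinite I can choose pairwise distinct $j_N\in E_N$, and for each I would pick witnesses $x_{j_N},y_{j_N}$ together with auxiliary elements $x'_{j_N}\ll x''_{j_N}\ll x_{j_N}$ and $y'_{j_N}\ll y_{j_N}$ exactly as in the ultraproduct proof, so that $x'_{j_N}\nleq_N y_{j_N}$ and, for a fixed $\gamma'\in(\gamma,1)$, $\widehat{x''_{j_N}}\leq\gamma'\widehat{y'_{j_N}}$ (using \autoref{prp:wayBelowLFS}); at all remaining coordinates I set the data to $0$. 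Forming paths and the product elements $\tilde x,\tilde y\in\prod_j(S_j,\Sigma_j)$, the coordinatewise inequality $\widehat{x''_{j}}\leq\gamma'\widehat{y'_{j}}$ now holds at \emph{every} $j$ (trivially at the zero coordinates). By \autoref{prp:CompClosureProduct} this yields $\widehat{x}\leq\gamma'\widehat{y}$ on the closed subcone generated by $\bigcup_j K_j$, hence on all of $\FF(\prod_j(S_j,\Sigma_j))$ by (i), and \autoref{leqss} provides $M$ with $(M+1)x_{-1}\leq My$. The simplification over the ultraproduct case is that this holds in the product with no $\cc_\filter$ remainder: unwinding the path order directly gives $s<0$ with $(M+1)x_{-2,j}\ll My_{s,j}$ for \emph{every} $j$. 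Specializing to $j=j_N$ with $N=(M+1)M$ gives $(M+1)x'_{j_N}\leq My_{j_N}$, whence $x'_{j_N}\leq_{(M+1)M}y_{j_N}$, contradicting $x'_{j_N}\nleq_N y_{j_N}$.

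For \textrm{(ii)}$\Rightarrow$\textrm{(i)}, I would invoke \autoref{prp:SubconeIsAll} to reduce to showing that whenever $\widehat{x}(\lambda)\leq\widehat{y}(\lambda)$ for all $\lambda$ in the closed subcone, then $\widehat{x}\leq\widehat{y}$ on all functionals. Fix $s<0$ and $\tfrac{k}{l}>\gamma'>1$. Applying \autoref{prp:CompClosureProduct} at a parameter $s''\in(s,0)$ produces a single $t<0$ with $\widehat{x_{s'',j}}\leq\gamma'\widehat{y_{t,j}}$ for every $j$, and hence also $\widehat{x_{s,j}}\leq\gamma'\widehat{y_{t,j}}$ for every $j$. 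Choosing $d$ with $x_{s,j},y_{t,j}\in\Sigma^{(d)}_j$ uniformly and rewriting this as $\widehat{lx_{s,j}}\leq(\tfrac{l}{k}\gamma')\widehat{ky_{t,j}}$ with $\tfrac{l}{k}\gamma'<1$, the hypothesis (ii), applied with the constant $\tfrac{l}{k}\gamma'$ and $\max(l,k)d$, yields $N$ and a cofinite set of coordinates on which $lx_{s,j}\leq_N ky_{t,j}$.

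The one genuinely new point, and the main obstacle, is the finitely many coordinates left uncontrolled by (ii): unlike in the ultraproduct, these cannot be discarded, since functionals on a product detect individual coordinates. I would handle them one at a time at the \emph{same} parameter $t$: at each such $j$ one has $lx_{s,j}\ll lx_{s'',j}$ together with $\widehat{lx_{s'',j}}\leq(\tfrac{l}{k}\gamma')\widehat{ky_{t,j}}$, so \autoref{leqss} in the single \CuSgp{} $S_j$ gives a finite constant $N_j$ with $lx_{s,j}\leq_{N_j}ky_{t,j}$. Taking $N^{\ast}$ to be the maximum of $N$ and these finitely many $N_j$ yields $lx_{s,j}\leq_{N^{\ast}}ky_{t,j}$ at every coordinate; crucially the comparison is at a fixed $t$, so for every $\sigma<0$ one has $N^{\ast}lx_{s+\sigma,j}\ll N^{\ast}lx_{s,j}\leq N^{\ast}ky_{t,j}$ uniformly in $j$, which is precisely the product-order inequality $lx_s\leq_{N^{\ast}}ky$. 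Consequently $\lambda(x_s)\leq\tfrac{k}{l}\lambda(y)$ for every functional $\lambda$, and letting $s\uparrow 0$ and $\tfrac{k}{l}\downarrow 1$ gives $\widehat{x}\leq\widehat{y}$, as required. The remaining parameter bookkeeping mirrors the ultraproduct argument; the only conceptual change is replacing ``the bad set lies outside $\filter$'' by ``the bad set is finite, hence absorbed by a maximum of finitely many constants.''
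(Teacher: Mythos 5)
Your proposal is correct and follows exactly the route the paper intends: it transcribes the proof of \autoref{prp:CharDensityUltraproduct}, substituting \autoref{prp:CompClosureProduct} for \autoref{prp:CompClosureUltraproduct} (the paper omits the proof for precisely this reason). Your treatment of the only genuinely new point --- absorbing the finitely many exceptional coordinates via \autoref{leqss} at the \emph{same} parameter $t$ and taking a maximum of the resulting comparison amplitudes --- is the right fix, and your use of $\max(k,l)d$ in place of $d$ when invoking (ii) is in fact slightly more careful than the paper's own ultraproduct argument.
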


\section{Locally bounded comparison amplitude}
\label{sec:LCBA}

When specialized to powers and ultrapowers of a given \CuSgp{}, \autoref{prp:CharDensityUltraproduct}(ii) and \autoref{prp:CharDensityProduct}(ii) simplify to the same  comparison property, which we formalize in the following definition:

\begin{dfn}
\label{dfn:LBCA}
We say that a scaled \CuSgp{} $(S,\Sigma)$ has \emph{locally bounded comparison amplitude}, or (LBCA), if for every $\gamma\in(0,1)$ and $d\in\NN$ there exists $N=N(\gamma,d)\in\NN$ such that:
\[ 
\widehat{x} \leq \gamma \widehat{y} \quad \text{ implies } \quad x\leq_N y, \qquad  
\text{for all }  x,y\in \Sigma^{(d)}.
\]
\end{dfn}

Let $(S,\Sigma)$ be a scaled \CuSgp. 
Let $\filter$ be a free ultrafilter on some set.
Recall that we denote by $(S,\Sigma)_\filter$ the scaled \CuSgp{} ultrapower of $(S,\Sigma)$. 
Recall also that $\LimF((S,\Sigma)_\filter)$ denotes the set of limit functionals in $\FF((S,\Sigma)_\filter)$.

The next result follows from Theorems~\ref{prp:CharDensityUltraproduct} and~\ref{prp:CharDensityProduct}.

\begin{thm}
\label{prp:CharDensityUltrapower}
Let $(S,\Sigma)$ be a scaled \CuSgp{} that satisfies \axiomO{5}.
The following are equivalent:
\begin{enumerate}[{\rm (i)}]
\item
$(S,\Sigma)$ has (LBCA):
For every $\gamma\in(0,1)$ and $d\in\NN$ there exists $N=N(\gamma,d)\in\NN$ such that $\widehat{x}\leq\gamma\widehat{y}$ implies $x\leq_Ny$ for all $x,y\in\Sigma^{(d)}$.
\item
For some (equivalently, every) countably incomplete ultrafilter $\filter$, the set of limit functionals
$\LimF((S,\Sigma)_\filter)$ is dense in $\FF((S,\Sigma)_\filter)$.
\item
For some (equivalently, every) infinite set $J$, the subcone generated by $\bigcup_{j\in J} K_j$ is dense in $\FF(\prod_{j\in J} (S,\Sigma))$.
\end{enumerate}	
\end{thm}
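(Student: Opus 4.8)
The plan is to deduce this statement directly from Theorems~\ref{prp:CharDensityUltraproduct} and~\ref{prp:CharDensityProduct}, specialized to the constant collection given by $S_j=S$ and $\Sigma_j=\Sigma$ for all $j$. The crux is that when all members of the collection coincide, the index-dependent comparison conditions in those theorems collapse to the single index-free condition (LBCA); once this is observed, (i)$\Leftrightarrow$(ii) and (i)$\Leftrightarrow$(iii) each become an immediate translation of the relevant theorem.

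First I would record the elementary reduction. Fix $\gamma\in(0,1)$ and $d\in\NN$, and abbreviate by $Q(N,\gamma,d)$ the assertion ``$\widehat{x}\leq\gamma\widehat{y}$ implies $x\leq_Ny$ for all $x,y\in\Sigma^{(d)}$''. This is a property of $(S,\Sigma)$ alone and involves no index $j$. Consequently, for the constant collection, the condition in \autoref{prp:CharDensityUltraproduct}(ii) reads: for every $\gamma,d$ there exist $N$ and $E\in\filter$ such that $Q(N,\gamma,d)$ holds for all $j\in E$. As $\filter$ is proper, every $E\in\filter$ is nonempty, so this amounts to the plain existence of $N$ with $Q(N,\gamma,d)$ (take $E=J$ for the converse), which is exactly (LBCA). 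Similarly, the condition in \autoref{prp:CharDensityProduct}(ii) reads: for every $\gamma,d$ there is $N$ with $Q(N,\gamma,d)$ holding for all but finitely many $j\in J$; since $J$ is infinite and $Q$ is independent of $j$, the cofinite set of indices on which it holds must be all of $J$, so this too is precisely (LBCA).

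With these reductions in place, I would obtain the equivalences by invoking the two theorems verbatim. Applying \autoref{prp:CharDensityUltraproduct} to the constant collection and any fixed countably incomplete ultrafilter $\filter$ yields that $\LimF((S,\Sigma)_\filter)$ is dense in $\FF((S,\Sigma)_\filter)$ if and only if (LBCA) holds. Because the right-hand side is independent of $\filter$, density for one countably incomplete $\filter$ holds exactly when it holds for every such $\filter$, and exactly when (i) holds; this delivers (i)$\Leftrightarrow$(ii) together with its ``some (equivalently, every)'' clause. In the same manner, \autoref{prp:CharDensityProduct} applied to the constant collection over any fixed infinite $J$ shows that the subcone generated by $\bigcup_{j\in J}K_j$ is dense in $\FF(\prod_{j\in J}(S,\Sigma))$ if and only if (LBCA) holds, giving (i)$\Leftrightarrow$(iii) with the corresponding clause. (Here I would note in passing that countably incomplete ultrafilters and infinite index sets exist---for instance any free ultrafilter on $\NN$---so that the existential quantifiers are nonvacuous.)

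I expect the one point requiring care, rather than a genuine obstacle, to be the ``some (equivalently, every)'' bookkeeping: one must use that each cited theorem is proved for a fixed $\filter$ (respectively a fixed $J$), with (LBCA) acting as the common, choice-independent intermediary through which all instances are linked. Once this is made explicit, the collapse of the comparison conditions and the remaining logical steps are routine.
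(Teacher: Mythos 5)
Your proposal is correct and matches the paper's intended argument: the paper simply states that the theorem "follows from Theorems~\ref{prp:CharDensityUltraproduct} and~\ref{prp:CharDensityProduct}," and the content of that deduction is exactly the collapse you describe, namely that for a constant collection the index-dependent comparison conditions (holding on some $E\in\filter$, respectively on a cofinite set) reduce to the single index-free condition (LBCA), which then serves as the choice-independent intermediary for the ``some (equivalently, every)'' clauses.
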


\begin{pgr}[Comparison amplitude]
\label{par:ComparisonAmplitude}
Let $S$ be a \CuSgp.
We define the \emph{comparison amplitude} for $x,y\in S$ as
\[
\CompAmpl(x,y) = \min\{N\in\NN : x\leq_N y \big\},
\]
with the convention that $\CompAmpl(x,y)=\infty$ if there is no $N$ such that $x\leq_Ny$. Let $\Sigma$ be a scale on $S$. 
For $\gamma\in(0,1)$ and $d\in\NN$ consider the set
\[
C_{\gamma,d} = \big\{ (x,y) \in \Sigma^{(d)}\times \Sigma^{(d)} : \widehat{x} \leq \gamma\widehat{y} \big\}.
\]
Note then that $S$ has LCBA if and only if the comparison amplitude is bounded on each set $C_{\gamma,d}$. 
This explains the terminology in \autoref{dfn:LBCA}.

For elements $x$ and $y$ in a partially ordered semigroup, one writes $x<_s y$ if $(n+1)x\leq ny$ for some $n\in\NN$.
Given $x,y\in S$, we have $\widehat{x}<_s\widehat{y}$ if and only if $\widehat{x}\leq\gamma\widehat{y}$ for some $\gamma\in(0,1)$.
Thus, \autoref{leqss} shows that the comparison amplitude $\CompAmpl(x',y)$ is finite whenever $x',x,y$ satisfy $x'\ll x$ and $\widehat{x}<_s\widehat{y}$.
\end{pgr}

A partially ordered semigroup is said to be \emph{almost unperforated} if $x<_s y$ implies $x\leq y$ for all elements $x$ and $y$.
It follows that a \CuSgp{} $S$ is almost unperforated if and only if $\CompAmpl(x,y)=1$ for every $x,y$ with $x<_sy$.
In particular, an almost unperforated Cuntz semigroup has (LBCA) relative to any scale. The converse is not true in general. However,
we do have a converse under the additional assumption of almost divisibility. 
A \CuSgp{} $S$ is called almost divisible if for every $x',x\in S$ with $x'\ll x$ and $n\in \NN$ there exists $y\in S$ such that $ny\leq x$ and $x'\leq (n+1)y$;
see also \autoref{pgr:divisibility}.

\begin{prp}
\label{prp:almostDivCu}
Let $S$ be an almost divisible \CuSgp{} satisfying \axiomO{5}.
Then~$S$ has  (LBCA) for some (equivalently, every) scale on $S$ if and only if $S$ is almost unperforated.
\end{prp}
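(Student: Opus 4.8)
The plan is to prove the two implications separately and then read off the asserted scale-independence. The easy direction is that almost unperforation implies (LBCA) for \emph{every} scale: if $\widehat{x}\le\gamma\widehat{y}$ with $\gamma\in(0,1)$, then for each $x'\ll x$ \autoref{leqss} gives $M$ with $(M+1)x'\le My$, so $x'<_s y$ and hence $x'\le y$ by almost unperforation; taking the supremum over $x'\ll x$ via \axiomO{2} yields $x\le y$, i.e.\ $x\le_1 y$, so $N=1$ always works. The substantive direction is the converse: (LBCA) for \emph{some} scale $\Sigma$ forces $S$ to be almost unperforated. Since almost unperforation refers to no scale, once both implications hold the chain ``every scale $\Rightarrow$ some scale $\Rightarrow$ almost unperforated $\Rightarrow$ every scale'' gives the equivalence of ``some'' and ``every''.

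For the converse, let $x<_s y$, say $(k+1)x\le ky$, and fix $x'\ll x$; it suffices to prove $x'\le y$. First I would shrink the data, choosing $x'\ll\bar x\ll x'''\ll x$ and $y'\ll y''\ll y$ by \axiomO{2}. Then $(k+1)x'''\le ky$ by \axiomO{3}, so $\widehat{\bar x}\ll\widehat{x'''}\le\tfrac{k}{k+1}\widehat{y}$ by the way-below calculus for ranks (\autoref{prp:wayBelowLFS}); fixing $\gamma_0\in(\tfrac{k}{k+1},\gamma)$ with $\gamma\in(\tfrac{k}{k+1},1)$, the same calculus lets me arrange $\widehat{\bar x}\le\gamma_0\widehat{y'}$. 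The key move is to \emph{divide both sides} using almost divisibility: for each large $m$ pick $x_m$ with $m x_m\le\bar x$ and $x'\le(m+1)x_m$, and $z_m$ with $(m+1)z_m\le y''$ and $y'\le(m+2)z_m$. Applying functionals gives $\widehat{x_m}\le\tfrac1m\widehat{\bar x}$ and $\widehat{z_m}\ge\tfrac1{m+2}\widehat{y'}$, whence for all $m$ large enough
\[
\widehat{x_m}\le\tfrac1m\widehat{\bar x}\le\tfrac{\gamma_0}{m}\widehat{y'}\le\tfrac{\gamma}{m+2}\widehat{y'}\le\gamma\,\widehat{z_m}.
\]

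Crucially, $x_m\le\bar x$ and $z_m\le y''$, while $\bar x\ll x'''$ and $y''\ll y$ place $\bar x,y''$ in fixed amplifications $\Sigma^{(d_0)},\Sigma^{(d_1)}$; as each $\Sigma^{(d)}$ is downward hereditary (if $a\le b\in\Sigma^{(d)}$ and $a'\ll a$ then $a'\ll b$), we get $x_m,z_m\in\Sigma^{(d)}$ with $d=\max\{d_0,d_1\}$ \emph{uniformly in $m$}. Now (LBCA) applied to this $\gamma$ and $d$ yields a single $N=N(\gamma,d)$ with $x_m\le_N z_m$ for all large $m$. Taking $m$ large enough that $m+1\ge N$ and the displayed estimate holds, $(m+1)x_m\le(m+1)z_m$ follows, and the chain
\[
x'\le(m+1)x_m\le(m+1)z_m\le y''\le y
\]
gives $x'\le y$, proving almost unperforation.

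The main obstacle, and the step that genuinely uses almost divisibility, is this \emph{de-amplification}: (LBCA) only produces a comparison at some amplitude $N$, and an $N$-fold comparison cannot be cancelled to $x\le y$ in general (indeed almost divisibility alone does not imply almost unperforation, so (LBCA) is essential). The device converting $\le_N$ into $\le$ is the off-by-one slack built into the two divisions --- $x'\le(m+1)x_m$ but only $m x_m\le\bar x$, against $(m+1)z_m\le y$ --- so that the \emph{single} comparison $x'\le y$ is realised through the $(m+1)$-fold comparison of the small pieces, which (LBCA) supplies precisely because $m+1\ge N$. The remaining technical care is to make the rank estimate $\widehat{x_m}\le\gamma\widehat{z_m}$ robust at functionals valued $\infty$ (handled through \autoref{prp:wayBelowLFS}) and to keep $x_m,z_m$ inside one fixed $\Sigma^{(d)}$ so that a single $N$ serves all large $m$.
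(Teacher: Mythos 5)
Your proof is correct and follows essentially the same route as the paper's: the forward direction via \autoref{leqss} and almost unperforation, and the converse by using almost divisibility to split both elements into small pieces lying in a fixed $\Sigma^{(d)}$, applying (LBCA) to the pieces, and exploiting the off-by-one slack in the divisibility estimates to convert the resulting $\leq_N$ comparison into a genuine $\leq$. The only difference is bookkeeping — you let the divisor $m$ grow and take it eventually $\geq N$, whereas the paper fixes $N$ from (LBCA) first and divides by exactly $N-1$ and $N+1$ — which is immaterial.
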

\begin{proof}
If $S$ is almost unperforated, then the comparison amplitude is globally bounded (by $1$), as noted in \ref{par:ComparisonAmplitude}.
In particular, $S$ has (LBCA) for every scale on $S$.

Suppose now that $S$ is almost divisible and let $\Sigma\subseteq S$ be a scale such that $(S,\Sigma)$ has (LBCA).
To verify that $S$ is almost unperforated, let $x,y\in S$ and $n\in \NN$ be such that $(n+1)x\leq ny$. 
Then $\widehat{x}\leq\gamma\widehat{y}$ with $\gamma=\tfrac{n}{n+1}<1$, and we have to show that $x\leq y$.
	
Choose $\gamma',\gamma''$ such that $\gamma<\gamma'<\gamma''<1$.
Let $x',x''$ be such that $x''\ll x'\ll x$. 
By \autoref{prp:wayBelowLFS} applied to $x'\ll x$ and $1<\tfrac{\gamma'}{\gamma}$, we have that $\widehat{x'}\ll \gamma'\widehat{y}$, which allows us to choose $y'',y'\in S$ such that $y'' \ll y'\ll y$ and $\widehat{x'}\leq \gamma'\widehat{y''}$. 
Choose $d\in \NN$ such that $x',y'\in \Sigma^{(d)}$.
By definition of (LBCA) applied to $\gamma''$ and $d$, there exists $N=N(\gamma'',d)\in\NN$ such that $\widehat{v}\leq \gamma'' \widehat{w}$, for $v,w\in \Sigma^{(d)}$, implies that $v \leq_Nw$. Let us increase $N$ if necessary so that we also have that $\gamma'\frac{N+1}{N-1}<\gamma''$. 
	
Applying the almost divisibility assumption to $x''\ll x'$ and $y''\ll y'$, we obtain elements $v$ and $w$ such that 
\[
(N-1)v\leq x', \quad 
x''\leq Nv, \quad 
Nw\leq y', \andSep
y''\leq (N+1)w.
\] 
Then 
\[
(N-1)\widehat{v}\leq \widehat{x'}\leq \gamma'\widehat{y''}\leq \gamma'(N+1)\widehat{w}.
\]
Hence, $\widehat{v}\leq \gamma''\widehat{w}$. 
Since we also have that $v,w\in \Sigma^{(d)}$, we obtain that $v\leq_N w$. 
Therefore,  $x''\leq Nv\leq Nw\leq y'\leq y$. 
Passing to the supremum over all $x''\ll x$, we get that $x\leq y$, as desired. 
\end{proof}

In particular, for a scaled \CuSgp{} $(S,\Sigma)$ that is almost divisible and satisfies \axiomO{5}, the set $\LimF((S,\Sigma)_\filter)$ is dense in $\FF((S,\Sigma)_\filter)$ if and only if $S$ is almost unperforated.

Given a unital \ca{} $A$, we will show in \autoref{BHdensity} that the set $\LDF(A)$ of lower-semicontinuous dimension functions is dense in the space $\DF(A)$ of dimension functions if and only if the comparison amplitude $\CompAmpl(x,y)$ is finite for all $x,y\in W(A)$ such that $y$ is full and $\widehat{x}<_s\widehat{y}$.
Blackadar and Handelman conjectured in \cite{BlaHan82DimFct} that $\LDF(A)$ is always dense in $\DF(A)$, and this has been confirmed for several classes of \ca{s};
see \autoref{pgr:BH}.

\section{A stronger density result and application to C*-algebras}
\label{sec:StrongerDensity}

In the previous section we obtained a characterization of the density of limit functionals on an ultraproduct of \CuSgp{s} satisfying \axiomO{5}. In this section we strengthen this result assuming that the \CuSgp{s} also satisfy \axiomO{6} and Edwards' condition (as defined in \autoref{pgr:Edwards}).

The Cuntz semigroups of \ca{s} always satisfy \axiomO{5}, \axiomO{6} and Edward's condition. Thus, it is this stronger result that we shall apply to the setting of \ca{s}. 
Furthermore, in the \ca{ic} setting the result can be reformulated as a density of limit quasitraces 
in ultraproducts of \ca{s}, by the identification between limit $2$-quasitraces and limit functionals given in  \autoref{QTFbijection}.

\begin{thm}
\label{thm:main2}
$(S_j,\Sigma_j)_{j\in J}$ be a collection of scaled \CuSgp{s} that satisfy  \axiomO{5}, \axiomO{6}, and Edwards' condition.
Let $\filter$ be a countably incomplete ultrafilter on~$J$.  
Then the following are equivalent:
\begin{enumerate}[{\rm (i)}]
\item	
The set of limit functionals $\LimF(\prod_\filter (S_j,\Sigma_j))$ is dense in $\FF(\prod_\filter (S_j,\Sigma_j))$.
\item
For every $\gamma\in(0,1)$ and $d\in\NN$ there exist $N=N(\gamma,d)\in\NN$ and $E=E(\gamma,d)\in\filter$ such that: 
\[
\widehat{x} \leq \gamma\widehat{y} \quad \text{ implies } \quad x\leq_Ny, \qquad
\text{for all }  j\in E \text{ and } x,y\in \Sigma^{(d)}_j.
\]
\item
There exists $M\in\NN$ such that for every $d\in\NN$ there exist  $N=N(d)\in\NN$ and $E=E(d)\in \filter$ such that 
\[
\widehat{x} \leq \widehat{y} 
\quad \text{ implies } Nx \leq  MNy, 
\qquad \text{for all }j\in E\text{ and } x,y\in\Sigma_j^{(d)}.
\] 
\end{enumerate}
\end{thm}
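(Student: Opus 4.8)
The plan is to add condition (iii) to the equivalence $(\mathrm{i})\Leftrightarrow(\mathrm{ii})$ already furnished by \autoref{prp:CharDensityUltraproduct} (which uses only \axiomO{5}); the extra hypotheses \axiomO{6} and Edwards' condition enter solely to unlock the stronger separation machinery of \autoref{sec:strongerSeparateFS}. Since $(\mathrm{i})\Leftrightarrow(\mathrm{ii})$ is in hand, it suffices to prove $(\mathrm{i})\Rightarrow(\mathrm{iii})$ and $(\mathrm{iii})\Rightarrow(\mathrm{ii})$. One half of the latter is elementary, and I would record it first as a scaling observation: (iii) already yields (ii) in the range $\gamma<1/M$. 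Indeed, given $x,y\in\Sigma^{(d)}_j$ with $\widehat x\le\gamma\widehat y$ and $\gamma<1/M$, pick integers $p<q$ with $\gamma<p/q<1/M$, so that $\widehat{qx}\le\widehat{py}$ with $qx,py\in\Sigma^{(qd)}_j$; applying (iii) at amplification level $qd$ gives $N'qx\le MN'py$ on the fibers in some $E\in\filter$, and since $Mp<q$ this forces $x<_s y$ with a witness depending only on $\gamma$ and $d$, hence $x\le_N y$ for a uniform $N$. The barrier $\gamma\ge 1/M$, where this device fails because each use of (iii) reintroduces the factor $M$, is precisely what forces the argument through the density statement.

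For $(\mathrm{i})\Rightarrow(\mathrm{iii})$, which is the crux, I would argue by contradiction along the lines of the implication $(\mathrm{i})\Rightarrow(\mathrm{ii})$ in \autoref{prp:CharDensityUltraproduct}. If no single $M$ worked for all $d$, then, using that $\filter$ is countably incomplete, I would assemble fibers $x_j,y_j\in\Sigma^{(d)}_j$ with $\widehat{x_j}\le\widehat{y_j}$ but with arbitrarily large comparison amplitude, and lift them along paths to produce $x=\pi_\filter(\tilde x)$ and $y=\pi_\filter(\tilde y)$ in $\prod_\filter(S_j,\Sigma_j)$. The construction arranges, via \autoref{prp:CompClosureUltraproduct}, that $\widehat x(\lambda)\le\widehat y(\lambda)$ for every $\lambda$ in the closure of $\LimF(\prod_\filter(S_j,\Sigma_j))$; by the density hypothesis (i) this closure is all of $\FF(\prod_\filter(S_j,\Sigma_j))$, so $\widehat x\le\widehat y$ holds on every functional. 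At this point I would invoke the stronger separation theorem of \autoref{sec:strongerSeparateFS} (which needs \axiomO{6} and Edwards' condition) to upgrade the pointwise rank inequality $\widehat x\le\widehat y$ to a genuine multiplicative comparison $Nx\le MNy$ in the ultraproduct, with $M$ \emph{independent} of the data; transporting this relation down to a set of fibers in $\filter$ then contradicts the assumed failure. The decisive point is exactly this $d$-independence of $M$, which is what the stronger separation buys and which the \axiomO{5}-only arguments of \autoref{sec:main} cannot provide.

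Finally, to close the loop I would deduce $(\mathrm{iii})\Rightarrow(\mathrm{i})$, whence $(\mathrm{iii})\Rightarrow(\mathrm{ii})$ follows through $(\mathrm{i})\Leftrightarrow(\mathrm{ii})$. By \autoref{prp:SubconeIsAll} it is enough to show that $\widehat x\le\widehat y$ on the closure of the limit functionals forces $\widehat x\le\widehat y$ on all functionals. Running \autoref{prp:CompClosureUltraproduct} with $\gamma=1$ produces, for each $s<0$ and each $\gamma'>1$, a level $t<0$ and a set in $\filter$ on which $\widehat{x_{s,j}}\le\gamma'\widehat{y_{t,j}}$; feeding these fiberwise inequalities into (iii) after the rescaling to $qx_{s,j},py_{t,j}$ yields comparisons in the ultraproduct. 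A naive accounting of constants returns only $\widehat x\le M\widehat y$, and the role of the stronger separation results of \autoref{sec:strongerSeparateFS} is again to absorb the multiplicative constant and recover the exact inequality $\widehat x\le\widehat y$. I expect this removal of the $M$-factor, that is, the passage from an approximate to an exact comparison, to be the main obstacle, and the reason the equivalence of (ii) and (iii) cannot be seen without routing through the density statement (i).
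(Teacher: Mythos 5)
Your treatment of (iii)$\Rightarrow$(i) is essentially the paper's argument: one shows that $\lambda(x)\leq\lambda(y)$ for all $\lambda$ in the closure of $\LimF(\prod_\filter(S_j,\Sigma_j))$ forces $\widehat{x}\leq 2M\widehat{y}$, and then \autoref{prp:SubconeIsAll2} (rather than \autoref{prp:SubconeIsAll}, which needs the exact inequality) concludes that the closure is everything. The genuine gap is in your proposed (i)$\Rightarrow$(iii). You build $x,y$ in the ultraproduct with $\widehat{x}\leq\widehat{y}$ and then claim to ``invoke the stronger separation theorem to upgrade the pointwise rank inequality $\widehat{x}\leq\widehat{y}$ to a genuine multiplicative comparison $Nx\leq MNy$ with $M$ independent of the data.'' Nothing in \autoref{sec:strongerSeparateFS} does this. \autoref{prp:SubconeIsAll2} runs in the opposite direction: it takes as \emph{hypothesis} that $\widehat{x}|_K\leq\widehat{y}|_K$ implies $\widehat{x}\leq M\widehat{y}$ and concludes $K=\FF(S)$; it never manufactures an algebraic comparison from a rank inequality. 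The only tool in the paper that does convert rank inequalities into order relations is \autoref{leqss}, and it requires a strict gap $\widehat{x}\leq\gamma\widehat{y}$ with $\gamma<1$ (unavailable here, since your fibers satisfy only $\widehat{x_j}\leq\widehat{y_j}$) and produces constants depending on the elements. A data-independent $M$ turning $\widehat{x}\leq\widehat{y}$ into $Nx\leq MNy$ is essentially the content of condition (iii) itself, so at this point your argument is circular. The same misreading of the separation theorem as a device that ``absorbs multiplicative constants'' also colours your description of (iii)$\Rightarrow$(i); there it is harmless because density is exactly what \autoref{prp:SubconeIsAll2} delivers, but in (i)$\Rightarrow$(iii) it is fatal.

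The missing link is easy and is how the paper closes the cycle: (i)$\Rightarrow$(ii) is already available from \autoref{prp:CharDensityUltraproduct}, and (ii)$\Rightarrow$(iii) holds with $M=2$ by a doubling trick. Given $j\in E$ and $x,y\in\Sigma_j^{(d)}$ with $\widehat{x}\leq\widehat{y}$, the elements $x$ and $2y$ lie in $\Sigma_j^{(2d)}$ and satisfy $\widehat{x}\leq\tfrac{1}{2}\widehat{2y}$, so applying (ii) with $\gamma=\tfrac{1}{2}$ at amplification $2d$ gives $x\leq_N 2y$, hence $Nx\leq 2Ny$. With this replacing your direct attack on (i)$\Rightarrow$(iii), the cycle (ii)$\Rightarrow$(iii)$\Rightarrow$(i)$\Rightarrow$(ii) closes; your preliminary observation that (iii) gives (ii) for $\gamma<1/M$ is correct but then superfluous.
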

\begin{proof}
The equivalence of~(i) and~(ii) is \autoref{prp:CharDensityUltrapower}.

\emph{We show that~(ii) implies~(iii).}
We verify~(iii) with $M=2$. 
Let $d\in\NN$.
Applying~(ii) for $2d$ and $\gamma=\tfrac{1}{2}$, we obtain $N\in\NN$ and $E\in \filter$ such that $\widehat{x}\leq\tfrac{1}{2}\widehat{y}$ implies $x\leq_Ny$ for all $x,y\in\Sigma_j^{(2d)}$. To verify that $N$ and $E$ have the desired properties, let $j\in E$ and $x,y\in\Sigma_j^{(d)}$ satisfy $\widehat{x} \leq \widehat{y}$.
The elements~$x$ and $2y$ belong to $\Sigma^{(2d)}$ and satisfy $\widehat{x}\leq\tfrac{1}{2}\widehat{2y}$.
We thus deduce that $x\leq_N 2y$, and in particular $Nx\leq 2Ny$.

\smallskip

\emph{We show that~(iii) implies~(i).}
The argument is analogous to the proof of the implication `(ii)$\Rightarrow$(i)' in \autoref{prp:CharDensityUltraproduct}.
Let $M\in\NN$ as in~(iii).
By \autoref{prp:SubconeIsAll2}, it suffices to show that for all $x,y\in \prod_\filter (S_j,\Sigma_j)$ with $\lambda(x)\leq\lambda(y)$ for all 
$\lambda$ in the closure of $\LimF(\prod_\filter (S_j,\Sigma_j))$  we have $\widehat{x}\leq 2M\widehat{y}$.

Let $x,y\in\prod_\filter (S_j,\Sigma_j)$ be such that for all $\lambda$ in the closure of $\LimF(\prod_\filter (S_j,\Sigma_j))$, we have $\lambda(x)\leq\lambda(y)$.  
Let $\tilde x=[( (x_{t,j})_j )_{t\leq 0}]$ and $\tilde y=[( (y_{t,j})_j )_{t\leq 0}]$ in $\prod_j (S_j,\Sigma_j)$ be lifts of $x$ and $y$, respectively, that is, $x=\pi_\filter(\tilde x)$ and $y=\pi_\filter(\tilde y)$.
Given $s<0$, we let $\tilde x_s$ denote the `cut-down' $\tilde x_s=[( (x_{s+t,j})_j )_{t\leq 0}]$, and set $x_s=\pi_\filter(\tilde x_s)$. 
We define similarly $\tilde y_t$ and $y_t$ for $t<0$.

Let $s<0$. 
By \autoref{prp:CompClosureUltraproduct}, for $\gamma=1$ and $\gamma'=2$, there exists $t<0$ such that
\[
E_1 := \big\{ j\in J : \widehat{x_{s,j}} \leq 2 \widehat{y_{t,j}} \big\}\in \filter.
\]
Choose $d\in\NN$ such that $x_{s,j},y_{t,j}\in\Sigma_j^{(d)}$ for all $j\in E_1$.
Applying the assumption for $d$, we obtain $N\in \NN$ and $E_2\in \filter$ such that 
\[
\widehat{v} \leq \widehat{w} 
\quad \text{ implies } \quad Nv\leq MNw, 
\quad \text{for all } j\in E_2 \text{ and } v,w\in \Sigma_j^{(d)}.
\]
Let $E=E_1\cap E_2$.
For $j\in E$, we have $\widehat{x_{s,j}} \leq 2 \widehat{y_{t,j}} $, and so $Nx_{s,j} \leq 2MNy_{t,j}$.
This implies that 
\[
Nx_s= N\pi_\filter(\tilde x_s)\leq 2MN\pi_\filter(\tilde y)=2MNy.
\]
Evaluating on any functional $\lambda$ on $\prod_\filter (S_j,\Sigma_j)$, we  deduce that $\lambda(x_s) \leq 2M \lambda(y)$.
Since this holds for every $s<0$, we obtain 
$\lambda(x)\leq 2M\lambda(y)$, as desired.
\end{proof}

For the case of the ultrapower of a  trivially scaled \CuSgp{} (that is, $\Sigma=S$) the previous result adopts the following simpler form:

\begin{cor}
\label{prp:trivialScaleDenseFS}
Let $S$ be a \CuSgp{} satisfying \axiomO{5}, \axiomO{6}, and Edwards' condition. 
Let $\filter$ be a countably incomplete ultrafilter on a set $J$. 
The following are equivalent:
\begin{enumerate}[{\rm (i)}]
\item
The set $\LimF(S_\filter)$ is dense in $\FF(S_\filter)$.
\item
For every $\gamma\in(0,1)$ there exists $N\in\NN$ such that $\widehat{x}\leq\gamma\widehat{y}$ implies $x\leq_Ny$ for all $x,y\in S$.
\item
There exists $M\in \NN$ such that $\widehat x\leq \widehat y$ implies $x\leq My$ for all $x,y\in S$.
\end{enumerate}
\end{cor}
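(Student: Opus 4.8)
The plan is to deduce this corollary from \autoref{thm:main2} (together with \autoref{prp:CharDensityUltrapower}) by specializing to the ultrapower of a single \CuSgp{} equipped with its full scale. Concretely, I would apply the theorem to the constant family $S_j=S$, $\Sigma_j=S$ for all $j\in J$. The first step is to record that with the full scale $\Sigma=S$ the amplifications degenerate: for every $d\geq 1$ one has $\Sigma^{(d)}=S$, since for $x\in S$ and $x'\ll x$ one may take $x_1=x$ and $x_2=\dots=x_d=0$ to witness $x'\ll x_1+\dots+x_d$. Consequently the parameter $d$ drops out of all the conditions of \autoref{thm:main2}, and since $\filter$ is free (so every $E\in\filter$ is nonempty) and all $S_j$ coincide with $S$, the clause ``for all $j\in E$ and $x,y\in\Sigma_j^{(d)}$'' collapses to ``for all $x,y\in S$''. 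I would also note that the scaled ultraproduct $\prod_\filter(S,S)$ is just $S_\filter$ carrying its full scale, so that $\FF(\prod_\filter(S,S))=\FF(S_\filter)$ and the limit functionals coincide with $\LimF(S_\filter)$.

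With these identifications, condition (i) of the corollary is exactly \autoref{thm:main2}(i), and condition (ii) is exactly \autoref{thm:main2}(ii); equivalently, (ii) says that $(S,S)$ has (LBCA), so \autoref{prp:CharDensityUltrapower} already yields (i)$\Leftrightarrow$(ii) from \axiomO{5} alone. The only point needing a small argument is that (iii) here matches the specialization of \autoref{thm:main2}(iii), which reads: there exist $M,N\in\NN$ such that $\widehat{x}\leq\widehat{y}$ implies $Nx\leq MNy$ for all $x,y\in S$. That (iii) implies this is immediate, taking $N=1$. For the converse I would use that $S$ is positively ordered: from $x\leq Nx$ (because $Nx=x+(N-1)x\geq x$) and $Nx\leq MNy$ one gets $x\leq MNy$, so (iii) holds with constant $MN$. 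This closes the equivalence, since \autoref{thm:main2} already provides (i)$\Leftrightarrow$(ii)$\Leftrightarrow$(its~(iii)).

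I expect no serious obstacle: the mathematical content sits in the already-established \autoref{thm:main2} and \autoref{prp:CharDensityUltrapower}, and the corollary only requires unwinding the definitions in the degenerate case $\Sigma=S$. The two things to handle with care are (a) the quantifier bookkeeping when passing from the family version to the ultrapower version---in particular checking that freeness of $\filter$ legitimately removes the ``$E\in\filter$'' clause and that the $d$-dependence genuinely vanishes once $\Sigma^{(d)}=S$---and (b) the absorption of the auxiliary constant $N$ in the specialized form of (iii). Point (b) is precisely what turns the inequality $Nx\leq MNy$ into the cleaner $x\leq MNy$ stated in (iii), and it relies on nothing beyond $S$ being positively ordered.
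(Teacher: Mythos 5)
Your proposal is correct and matches the paper's intent exactly: the corollary is stated as the specialization of \autoref{thm:main2} to the trivially scaled ultrapower, and your observations that $\Sigma^{(d)}=S$ for all $d\geq 1$ and that $x\leq Nx\leq MNy$ (by positivity of the order) absorbs the auxiliary $N$ are precisely the two points needed to unwind the statement. No gaps.
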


\begin{rmk}
We do not have a direct proof of the equivalence of (ii) and (iii) in \autoref{prp:trivialScaleDenseFS} that does not use ultrapowers and density of limit functionals.
\end{rmk}

Let $A$ be a \ca.
We remind the reader that the scale of $A$ is 
\[
\Sigma_A 
:= \left\{ x\in\Cu(A) : \, \parbox{7cm}{for every $x'\in\Cu(A)$ with $x' \ll x$ there exists $a\in A_+$ with $x \leq [a]$} \right\}.
\]
The $d$-fold amplification of $\Sigma_A$ is then defined as
\[
\Sigma_A^{(d)} 
:= \left\{ x\in\Cu(A) : \,
\parbox{7cm}{for each $x'\in\Cu(A)$ with $x' \ll x$ there are $x_1,\ldots,x_d\in\Sigma_A$  with $x' \ll x_1+\ldots+x_d$ }  \right\}.
\]

We noticed in \autoref{pgr:scales} that $\Sigma_A$ admits useful descriptions in terms of Cuntz classes of positive elements in $A$.
The next result shows that a similar result holds for~$\Sigma_A^{(d)}$ in terms of Cuntz classes of positive elements in $M_d(A)$.

\begin{prp}
\label{prp:AmplifiedScale}
Let $A$ be a \ca{} and let $d \in \NN$ with $d \geq 1$.
Then
\begin{align*}
\Sigma_A^{(d)} 
&= \left\{ x\in\Cu(A) : \,
\parbox{7cm}{there exists a sequence $(a_n)_n$ in~$M_d(A)_+$ such that $([a_n])_n$ is $\ll$-increasing with $x = \sup_n [a_n]$ } \right\} \\
&= \big\{ x\in\Cu(A) : \text{there exists } a \in M_d(A)_+ \text{ with }x\leq [a] \big\}.
\end{align*}
\end{prp}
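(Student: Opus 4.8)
The plan is to reduce the whole statement to the case $d=1$, that is, to the two descriptions of the canonical scale $\Sigma_B$ of a \ca{} $B$ quoted from \cite[Lemma~3.3]{ThiVil22arX:Glimm} in \autoref{pgr:scales}, applied to the matrix algebra $B=M_d(A)$. Here I use the stabilization isomorphism $\Cu(M_d(A))\cong\Cu(A)$, under which $[a]$ for $a\in M_d(A)_+$ is computed the same way in either semigroup. Write
\[
Q_d=\big\{x\in\Cu(A):\text{there exists }a\in M_d(A)_+\text{ with }x\leq[a]\big\}.
\]
Under the above identification $Q_d$ is precisely $\Sigma_{M_d(A)}$, so by the two descriptions of $\Sigma_{M_d(A)}$ in \cite[Lemma~3.3]{ThiVil22arX:Glimm} it already coincides with the first displayed set (suprema of $\ll$-increasing sequences of classes $[a_n]$, $a_n\in M_d(A)_+$). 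Hence the only thing left to prove is $\Sigma_A^{(d)}=Q_d$.

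For $Q_d\subseteq\Sigma_A^{(d)}$ the key observation is that a positive matrix is dominated, up to a scalar, by its diagonal. Conjugating $a=(a_{ij})\in M_d(A)_+$ by the $2^d$ self-adjoint diagonal sign unitaries $U_\varepsilon=\mathrm{diag}(\varepsilon_1,\dots,\varepsilon_d)$, $\varepsilon\in\{\pm1\}^d$, and summing kills the off-diagonal entries:
\[
\sum_{\varepsilon}U_\varepsilon a U_\varepsilon=2^d\,\mathrm{diag}(a_{11},\dots,a_{dd}),
\]
so that $a\leq 2^d\,\mathrm{diag}(a_{11},\dots,a_{dd})$. Since each $a_{ii}\in A_+$ and a positive scalar does not change the Cuntz class, this gives $[a]\leq[a_{11}]+\dots+[a_{dd}]$ with each $[a_{ii}]\in\Sigma_A$. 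Now if $x\leq[a]$ and $x'\ll x$, then $x'\ll x\leq[a_{11}]+\dots+[a_{dd}]$ yields $x'\ll[a_{11}]+\dots+[a_{dd}]$, which is exactly the defining condition placing $x$ in $\Sigma_A^{(d)}$.

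For the reverse inclusion $\Sigma_A^{(d)}\subseteq Q_d$, I show that any $x\in\Sigma_A^{(d)}$ satisfies the condition that every $x'\ll x$ is dominated by the class of some element of $M_d(A)_+$; the passage to a single dominating class is then supplied by the identity $\Sigma_{M_d(A)}=Q_d$ of \cite[Lemma~3.3]{ThiVil22arX:Glimm}. Concretely, fix $x'\ll x$. By definition of $\Sigma_A^{(d)}$ there are $x_1,\dots,x_d\in\Sigma_A$ with $x'\ll x_1+\dots+x_d$, and by the description of $\Sigma_A$ each $x_i\leq[b_i]$ for some $b_i\in A_+$. Setting $b:=b_1\oplus\dots\oplus b_d\in M_d(A)_+$ we get $x'\ll x_1+\dots+x_d\leq[b_1]+\dots+[b_d]=[b]$, so $x'\leq[b]$ with $b\in M_d(A)_+$. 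As $x'\ll x$ was arbitrary, $x\in\Sigma_{M_d(A)}=Q_d$.

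The step requiring the most care is the upgrade in the last paragraph from ``every $x'\ll x$ is dominated by a class in $M_d(A)$'' to ``$x$ itself is dominated by a single class in $M_d(A)$'': this is not a formal manipulation but precisely the content of \cite[Lemma~3.3]{ThiVil22arX:Glimm} for the matrix algebra, so the argument depends on invoking that result for $M_d(A)$ rather than reproving it. A minor point is the discrepancy between the $\ll$-increasing sequences in the first displayed set and the increasing-sequence description of $\Sigma_{M_d(A)}$; these describe the same set, since from any increasing sequence of classes with supremum $x$ one extracts, via the cut-downs $(a_n-\tfrac1m)_+\in M_d(A)_+$ and a diagonal argument, a $\ll$-increasing sequence of classes of elements of $M_d(A)_+$ with the same supremum.
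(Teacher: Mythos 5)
Your proof is correct, and its skeleton matches the paper's: both arguments identify the right-hand sets with $\Sigma_{M_d(A)}$ via \cite[Lemma~3.3]{ThiVil22arX:Glimm} applied to $M_d(A)$, and both reduce the proposition to the single identity $\Sigma_A^{(d)}=\Sigma_{M_d(A)}$. The inclusion $\Sigma_A^{(d)}\subseteq\Sigma_{M_d(A)}$ is handled in essentially the same way in both proofs (assemble representatives of $x_1,\dots,x_d$ into a diagonal matrix). Where you genuinely diverge is the reverse inclusion. The paper takes $a\in M_d(A)_+$ with $x\leq[a]$, cuts down to $(a-\varepsilon)_+$, and uses an approximate unit $(u_\lambda)$ of $A$ to get $(a-\varepsilon)_+\precsim uau\precsim u^2\sim u=\mathrm{diag}(u_{\lambda_0},\dots,u_{\lambda_0})$, so that $x'\leq d\,[u_{\lambda_0}]$ with $[u_{\lambda_0}]\in\Sigma_A$. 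You instead average $a$ over the $2^d$ diagonal sign matrices to obtain $a\leq 2^d\,\mathrm{diag}(a_{11},\dots,a_{dd})$, hence $[a]\leq[a_{11}]+\dots+[a_{dd}]$ with each $[a_{ii}]\in\Sigma_A$; this is a correct and rather clean alternative (the sign matrices live in the multipliers of $M_d(A)$, so no unitality of $A$ is needed), and it avoids both the $\varepsilon$-cut-down and the approximate unit at the price of producing the $d$ diagonal entries rather than $d$ copies of a single element --- which is all the definition of $\Sigma_A^{(d)}$ asks for. Your closing remarks correctly identify the two points that need the cited lemma rather than formal manipulation (upgrading ``every $x'\ll x$ is dominated'' to a single dominating class, and reconciling $\precsim$-increasing with $\ll$-increasing sequences), so no gaps remain.
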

\begin{proof}
After identifying $\Cu(A)$ with $\Cu(M_d(A))$, we view $\Sigma_{M_d(A)}$ as the subset 
\[
\big\{ x\in\Cu(A) : \text{for each $x'\in\Cu(A)$ with $x' \ll x$ there is $a\in M_d(A)_+$ with $x \leq [a]$} \big\}
\]
of $\Cu(A)$.
Then, by \cite[Lemma~3.3]{ThiVil22arX:Glimm}, $\Sigma_{M_d(A)}$ agrees with the two displayed sets of the statement.
It remains to verify that $\Sigma_A^{(d)} = \Sigma_{M_d(A)}$,

Let $x \in \Cu(A)$.
To show the inclusion `$\subseteq$', assume that $x \in \Sigma_A^{(d)}$.
Given $x' \in \Cu(A)$ with $x' \ll x$, there are $x_1,\ldots,x_d \in \Sigma_A$ such that $x' \leq x_1+\ldots+x_d$.
We obtain $a_1,\ldots,a_d \in A_+$ such that $x_j \leq [a_j]$ for $j=1,\ldots,d$.
Then the diagonal matrix $a := \textrm{diag}(a_1,\ldots,a_d)$ belongs to $M_d(A)_+$ and we have $x' \leq [a]$.
Since this holds for every $x'$ with $x' \ll x$, we obtain $x \in \Sigma_{M_{d}(A)}$.

To show that other inclusion, assume that $x \in \Sigma_{M_{d}(A)}$.
Pick $a \in M_d(A)_+$ such that $x \leq [a]$.
Given $x' \in \Cu(A)$ with $x' \ll x$, we find $\varepsilon>0$ such that $x' \leq [(a-\varepsilon)_+]$. Using an approximate unit $(u_\lambda)_\lambda$ in $A$, for sufficiently large $\lambda_0$ the diagonal matrix $u=\textrm{diag}(u_{\lambda_0},\dots,u_{\lambda_0})$ satisfies $\|a-uau\|\leq \varepsilon$. 
Then
\[
(a-\varepsilon)_+ 
\precsim uau
\precsim u^2
\sim u,
\]
and it follows that
\[
x' \leq [(a-\varepsilon)_+] \leq [u] = [u_{\lambda_0}]+\ldots+[u_{\lambda_0}],
\]
with $[u_{\lambda_0}] \in \Sigma_A$.
Since this holds for every $x'$ with $x' \ll x$, we get $x \in \Sigma_A^{(d)}$.
\end{proof}

As mentioned in \autoref{pgr:functionals}, given $\tau\in\QT(A)$,  we obtain a functional $d_\tau\in \FF(\Cu(A))$ defined as
$d_\tau([a])=\lim_n\tau(a^{\frac{1}{n}})$ for all $[a]\in \Cu(A)$. 
Moreover, the correspondence $\tau\mapsto d_\tau$ is an isomorphism of topological cones between $\QT(A)$ and $\FF(\Cu(A))$.
Through this identification, the function $\widehat{[a]}$ induced by a Cuntz semigroup element $[a]\in \Cu(A)$ on $\FF(\Cu(A))$ may be regarded as a function on~$\QT(A)$.  
In the sequel we make this identification and thus regard $\widehat{[a]}$ as having domain~$\QT(A)$, that is, $\widehat{[a]}(\tau)=d_\tau(a)$ for $\tau\in \QT(A)$.

\begin{thm}
\label{prp:MainThm}
Let $A$ be a \ca{} and let $\filter$ be a countably incomplete ultrafilter on a set $J$. 
The following are equivalent:
\begin{enumerate}[{\rm (i)}]
\item
The set of limit $2$-quasitraces $\LimQT(A_\filter)$ is dense in $\QT(A_\filter)$.
\item
For every $\gamma\in(0,1)$ and $d\in\NN$ there exists $N=N(\gamma,d)\in\NN$ such that 
\[
\widehat{[a]}\leq\gamma \widehat{[b]} 
\quad \text{ implies } \quad [a]\leq_{N}[b],
\qquad \text{for all } a,b\in M_d(A)_+.
\]
\item 
There exists $M\in\NN$ such that 	for every $d\in\NN$ there exists $N=N(d)\in\NN$ such that 
\[
\widehat{[a]}\leq \widehat{[b]}
\quad \text{ implies } \quad N[a]\leq MN[b],
\qquad \text{for all } a,b\in M_d(A)_+.
\]
\end{enumerate}
\end{thm}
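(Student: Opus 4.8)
The plan is to deduce this from \autoref{thm:main2} applied to the constant family $(S_j,\Sigma_j)=(\Cu(A),\Sigma_A)$, $j\in J$, after transporting the problem to functionals on a Cuntz-semigroup ultrapower. Since $\Cu(A)$ satisfies \axiomO{5}, \axiomO{6} and Edwards' condition, \autoref{thm:main2} is available. First I would record the identification of~(i) with the functional-theoretic statement: by the discussion at the end of \autoref{pgr:scales} (see also \autoref{scaledultra}) we have $\Cu_\mathrm{sc}(A_\filter)\cong(\Cu(A),\Sigma_A)_\filter$, and under the homeomorphism $\tau\mapsto d_\tau$ of \autoref{linkqtracesfunc} together with \autoref{QTFbijection}, the set $\LimQT(A_\filter)$ corresponds to $\LimF((\Cu(A),\Sigma_A)_\filter)$ inside $\QT(A_\filter)\cong\FF((\Cu(A),\Sigma_A)_\filter)$. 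Thus~(i) is equivalent to condition~(i) of \autoref{thm:main2} for the constant family, and I would prove the theorem by running the cycle (i)$\Rightarrow$(ii)$\Rightarrow$(iii)$\Rightarrow$(i).

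The two forward implications are routine. For (i)$\Rightarrow$(ii), \autoref{thm:main2} yields its condition~(ii), which for the constant family $\Sigma_j=\Sigma_A$ says that for every $\gamma\in(0,1)$ and $d$ there is $N$ with $\widehat{x}\leq\gamma\widehat{y}$ implying $x\leq_N y$ for all $x,y\in\Sigma_A^{(d)}$ (the set $E\in\filter$ is irrelevant as the data do not depend on $j$); since $[a]\in\Sigma_A^{(d)}$ for all $a\in M_d(A)_+$ by \autoref{prp:AmplifiedScale}, restricting to classes gives~(ii). For (ii)$\Rightarrow$(iii) I would argue directly in $M_d(A)$ with $M=2$: given $d$, apply~(ii) with $\gamma=\tfrac12$ and $2d$ to get $N$; if $a,b\in M_d(A)_+$ satisfy $\widehat{[a]}\leq\widehat{[b]}$, then $a$ and the block-diagonal $b\oplus b$ lie in $M_{2d}(A)_+$ with $\widehat{[a]}\leq\tfrac12\widehat{[b\oplus b]}$, so $[a]\leq_N[b\oplus b]=2[b]$ and hence $N[a]\leq 2N[b]$.

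The implication (iii)$\Rightarrow$(i) is the crux, and here I would re-run the proof of `(iii)$\Rightarrow$(i)' in \autoref{thm:main2}, the one new ingredient being that (iii) is now available only for classes $[a]$, $a\in M_d(A)_+$, rather than for all of $\Sigma_A^{(d)}$. By \autoref{prp:SubconeIsAll2} it suffices to prove $\widehat{x}\leq 2M\widehat{y}$ whenever $x,y$ in the ultraproduct satisfy $\lambda(x)\leq\lambda(y)$ for all $\lambda$ in the closure of the limit functionals. I would lift $x,y$ to paths $\tilde x,\tilde y$ with entries in a common $\Sigma_A^{(d)}=\Sigma_{M_d(A)}$, fix $\sigma<0$, pick $s\in(\sigma,0)$, and invoke \autoref{prp:CompClosureUltraproduct} (with $\gamma=1$, $\gamma'=2$) to obtain $t<0$ with $\{j:\widehat{x_{s,j}}\leq 2\widehat{y_{t,j}}\}\in\filter$. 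The decisive move is to replace $x_{s,j},y_{t,j}$ by genuine classes using the room supplied by the paths: since $x_{\sigma,j}\ll x_{s,j}$ and $x_{s,j}$ is a supremum of classes $[a_n]$ with $a_n\in M_d(A)_+$ (\autoref{prp:AmplifiedScale}), I can choose $a_{s,j}\in M_d(A)_+$ with $x_{\sigma,j}\leq[a_{s,j}]\leq x_{s,j}$, and symmetrically, fixing $t''\in(t,0)$ and using $y_{t,j}\ll y_{t'',j}$, I can choose $b_{t,j}\in M_d(A)_+$ with $y_{t,j}\leq[b_{t,j}]\leq y_{t'',j}$. Then $\widehat{[a_{s,j}]}\leq 2\widehat{y_{t,j}}\leq\widehat{[b_{t,j}\oplus b_{t,j}]}$, so applying (iii) for $2d$ (with its $N$ uniform in $j$) gives $N[a_{s,j}]\leq 2MN[b_{t,j}]\leq 2MN\,y_{t'',j}$, whence $Nx_{\sigma,j}\leq 2MN\,y_{t'',j}$ on a set of $\filter$. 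Passing to the ultraproduct and evaluating an arbitrary functional yields $\lambda(\pi_{\filter}(\tilde x_\sigma))\leq 2M\lambda(y)$, and taking the supremum over $\sigma<0$ gives $\lambda(x)\leq 2M\lambda(y)$.

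The main obstacle is precisely this last step, namely that one cannot cheaply upgrade (iii) from classes to all of $\Sigma_A^{(d)}$ and then quote \autoref{thm:main2} verbatim. Such an upgrade would require deducing $\widehat{[a_m]}\ll\widehat{y}$ from $[a_m]\ll x$ and $\widehat{x}\leq\widehat{y}$, but the rank map need not carry $\ll$ to $\ll$ when the two rank functions coincide (this is the failure of the factor-one case of \autoref{prp:wayBelowLFS}), and it is the same phenomenon behind the remark after \autoref{prp:trivialScaleDenseFS} that (ii) and (iii) there admit no known direct comparison. What rescues the argument is the path structure of the ultraproduct, which manufactures the missing strict gaps through the cut-down parameters $\sigma<s$ and $t<t''$, thereby making the class-insertion step legitimate.
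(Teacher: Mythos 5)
Your proof is correct, and its skeleton (reduce to functionals via \autoref{QTFbijection}, then run the cycle (i)$\Rightarrow$(ii)$\Rightarrow$(iii)$\Rightarrow$(i)) matches the paper's; the genuine divergence is in how (iii)$\Rightarrow$(i) is executed, and your stated reason for diverging is mistaken. The paper does precisely the ``cheap upgrade'' you declare blocked: given $x,y\in\Sigma_A^{(d)}$ with $\widehat{x}\leq\widehat{y}$, it writes $x=\sup_n[a_n]$ and $y=\sup_n[b_n]$ with $a_n,b_n\in M_d(A)_+$ rapidly increasing (\autoref{prp:AmplifiedScale}), uses \autoref{prp:wayBelowLFS} to get $\widehat{[a_n]}\ll 2\widehat{y}=\sup_m 2\widehat{[b_m]}$, hence after reindexing $\widehat{[a_n]}\leq\widehat{[b_n\oplus b_n]}$ with $a_n, b_n\oplus b_n\in M_{2d}(A)_+$, applies~(iii) at level $2d$, and passes to suprema to get $Nx\leq 2MNy$; this verifies condition~(iii) of \autoref{thm:main2} (with constant $2M$) and one quotes that theorem wholesale. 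Your objection --- that this would require $\widehat{[a_n]}\ll\widehat{y}$, i.e.\ the failing factor-one case of \autoref{prp:wayBelowLFS} --- misdiagnoses the situation: only $\widehat{[a_n]}\ll 2\widehat{y}$ is needed, and the factor $2$ is harmlessly absorbed by doubling $b$ inside a larger matrix algebra, since statement~(iii) tolerates an arbitrary multiplicative constant. Your workaround, which re-runs the ultraproduct argument of \autoref{thm:main2} and inserts genuine classes $x_{\sigma,j}\leq[a_{s,j}]\leq x_{s,j}$ and $y_{t,j}\leq[b_{t,j}]\leq y_{t'',j}$ through the strict gaps $\sigma<s$ and $t<t''$ of the path structure, is nevertheless sound (note that one should choose the amplification degree $d$ uniformly in $j$ only for each fixed pair of cut-down parameters, not for the whole path, but that is all your argument uses); it simply reproves inside the ultraproduct what the paper's one-paragraph upgrade of~(iii) from matrix classes to $\Sigma_A^{(d)}$ accomplishes directly.
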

\begin{proof}
By \autoref{QTFbijection}, statement~(i) is equivalent to proving the density of limit functionals in $\FF((\Cu(A),\Sigma_A)_\filter)$.
Then, using \autoref{thm:main2}, we see that~(i) implies~(ii), and a similar argument as in the proof of said theorem shows that~(ii) implies~(iii). 
Let us show that (iii) implies (i).

Assume (iii).
We will verify that condition (iii) in \autoref{thm:main2} is satisfied, which then implies~(i).
Let $d \in \NN$ and consider $M\in\NN$ and $N:=N(2d)$ as given from the assumption~(iii).
To verify (iii) in \autoref{thm:main2}, let $x,y\in \Sigma^{(d)}_A$ satisfy $\widehat{x} \leq \widehat{y}$.

Applying \autoref{prp:AmplifiedScale}, we can write $x$ and $y$ as suprema of rapidly increasing sequences $x=\sup_{n} [a_n]$ and $y=\sup_n [b_n]$, with $a_n,b_n\in M_{d}(A)_+$. 
Using \autoref{prp:wayBelowLFS} and reindexing conveniently we may assume that $\widehat{[a_n]}\leq 2\widehat{[b_n]}$ for all $n$. 
Since $[a_n]$ and $2[b_n]$ are Cuntz classes of positive elements in $M_{2d}(A)$, we obtain by the choice of $M$ and $N$ that $N[a_n]\leq MN [b_n]$.
Passing to the supremum over $n$, we get $Nx\leq MN y$. 
\end{proof}

A \ca{} $A$ is said to be \emph{stable} if $A\cong A\otimes\mathcal{K}$.

\begin{cor}
\label{prp:denseQTstable}
Let $A$ be a stable \ca, and let $\filter$ be a countably incomplete ultrafilter on a set $J$.
The following are equivalent:
\begin{enumerate}[{\rm (i)}]
\item
The set of limit $2$-quasitraces $\LimQT(A_\filter)$ is dense in $\QT(A_\filter)$.
\item
For every $\gamma\in(0,1)$ there exists $N\in\NN$ such that $\widehat{[a]}\leq\gamma\widehat{[b]}$ implies $[a]\leq_N[b]$ for all $a,b\in A_+$.
\item
There exists $M\in \NN$ such that $\widehat{[a]}\leq \widehat{[b]}$ implies $[a]\leq M[b]$ for all $a,b\in A_+$.
\end{enumerate}
\end{cor}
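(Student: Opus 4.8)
The plan is to deduce the corollary from \autoref{prp:trivialScaleDenseFS} applied to $S=\Cu(A)$, the point being that stability forces the scale of $A$ to be trivial. The key fact about a stable \ca{} that I would use is that every element of $\Cu(A)$ is of the form $[a]$ for some $a\in A_+$; equivalently, the canonical map $A_+\to\Cu(A)$, $a\mapsto[a]$, is surjective. Since $\Sigma_A=\{x\in\Cu(A):x\leq[a]\text{ for some }a\in A_+\}$ by \autoref{pgr:scales}, surjectivity immediately gives $\Sigma_A=\Cu(A)$, i.e.\ the scale is all of $\Cu(A)$. (The same reasoning, using $M_d(A)\cong A\otimes M_d(\CC)\cong A\otimes\KK\cong A$ for stable $A$, shows $\Sigma_A^{(d)}=\Sigma_A$ for every $d$, in accordance with \autoref{prp:AmplifiedScale}.)

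Granting this, I would argue as follows. Because $\Sigma_A=\Cu(A)$, the scaled ultrapower $\Cu_{\mathrm{sc}}(A_\filter)\cong(\Cu(A),\Sigma_A)_\filter$ is the \emph{trivially} scaled ultrapower of $\Cu(A)$, so that $\FF(\Cu_{\mathrm{sc}}(A_\filter))=\FF(\Cu(A)_\filter)$ and, under this identification, $\LimF(\Cu_{\mathrm{sc}}(A_\filter))$ corresponds to $\LimF(\Cu(A)_\filter)$. By \autoref{QTFbijection} the homeomorphism $\tau\mapsto d_\tau$ carries $\LimQT(A_\filter)$ onto $\LimF(\Cu(A)_\filter)$, so statement~(i) is equivalent to the density of $\LimF(\Cu(A)_\filter)$ in $\FF(\Cu(A)_\filter)$. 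As $\Cu(A)$ satisfies \axiomO{5}, \axiomO{6} and Edwards' condition, \autoref{prp:trivialScaleDenseFS} applies and shows this density is equivalent both to the existence, for each $\gamma\in(0,1)$, of $N$ with $\widehat{x}\leq\gamma\widehat{y}\Rightarrow x\leq_Ny$ for all $x,y\in\Cu(A)$, and to the existence of $M$ with $\widehat{x}\leq\widehat{y}\Rightarrow x\leq My$ for all $x,y\in\Cu(A)$. Finally I would invoke surjectivity of $A_+\to\Cu(A)$ to pass from quantification over $x,y\in\Cu(A)$ to quantification over $[a],[b]$ with $a,b\in A_+$: one direction is a trivial restriction, while the converse holds because every pair in $\Cu(A)\times\Cu(A)$ is realized as $([a],[b])$ with $a,b\in A_+$. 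These are exactly conditions~(ii) and~(iii) as stated, which closes the chain of equivalences. Note that it is precisely the matching form of \autoref{prp:trivialScaleDenseFS}(iii), namely $x\leq My$ rather than $Nx\leq MNy$, that makes this route preferable to a direct appeal to \autoref{prp:MainThm}.

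The only genuinely nontrivial input is the reduction to the trivial scale, that is, the fact that over a stable \ca{} every Cuntz class is represented by a positive element of $A$ itself. This is standard, but it merits care in the non-$\sigma$-unital setting; the cleanest justification is that an isomorphism $\phi\colon A\otimes\KK\to A$ induces an \emph{automorphism} $\Cu(\phi)$ of $\Cu(A)$, whose image is therefore all of $\Cu(A)$, and chasing this automorphism through the corner embedding $A\hookrightarrow A\otimes\KK$ shows that the image of $A_+\to\Cu(A)$ is already everything. Once this fact is secured, the remainder is a direct combination of \autoref{QTFbijection} and \autoref{prp:trivialScaleDenseFS}, with no further obstacle.
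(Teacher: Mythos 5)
Your proposal is correct and follows essentially the same route as the paper: the proof in the text observes that stability makes the scale $\Sigma_A$ all of $\Cu(A)$ and then explicitly offers the deduction from \autoref{prp:trivialScaleDenseFS} as its (second) route, which is exactly the argument you give. Your remark that the $x\leq My$ form of \autoref{prp:trivialScaleDenseFS}(iii) matches condition~(iii) more directly than \autoref{prp:MainThm}(iii) is precisely why the paper records that alternative.
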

\begin{proof}
Since $A$ is stable, the scale $\Sigma_A$ in $\Cu(A)$ is all of $\Cu(A)$.
The result then follows from \autoref{prp:MainThm}.
Alternatively, we use the same argument as in the proof of \autoref{prp:MainThm} to deduce the result from \autoref{prp:trivialScaleDenseFS}.
\end{proof}

Using that traces form a closed subset among quasitraces, we obtain:

\begin{cor}
Let $A$ be a \ca{} such that every lower semicontinuous $2$-quasitrace on $A$ is a trace (for example, if $A$ is exact), and let $\filter$ be a countably incomplete ultrafilter on a set $J$.
Assume that $\Cu(A)$ satisfies the conditions of \autoref{prp:MainThm}. 
Then every lower semicontinuous $2$-quasitrace on $A_\filter$ is a trace.
\end{cor}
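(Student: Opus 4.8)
The plan is to combine the density of limit $2$-quasitraces furnished by \autoref{prp:MainThm} with the fact, flagged just above, that traces form a closed subset of the quasitraces.

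Since $\Cu(A)$ satisfies the equivalent conditions of \autoref{prp:MainThm}, condition~(i) of that theorem holds, so the set $\LimQT(A_\filter)$ of limit $2$-quasitraces is dense in $\QT(A_\filter)$. I would next check that every element of $\LimQT(A_\filter)$ is in fact a trace. A limit $2$-quasitrace arises from a sequence $(\tau_j)_j$ with $\tau_j\in\QT(A)$ for each $j$, since $A_\filter=\prod_\filter A$. By hypothesis every lower semicontinuous $2$-quasitrace on $A$ is a trace, so each $\tau_j$ is a trace; as recorded in \autoref{pgr:limitquasitraces}, the associated limit $2$-quasitrace $\tau_\filter$ is then again a trace (a limit trace). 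Writing $\TT(A_\filter)$ for the set of lower semicontinuous traces on $A_\filter$, this gives the inclusion $\LimQT(A_\filter)\subseteq\TT(A_\filter)$.

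To conclude, I would invoke that $\TT(A_\filter)$ is closed in $\QT(A_\filter)$ for the cone topology described in \autoref{pgr:quasitraces}. Taking closures in the inclusion above and using density then yields
\[
\QT(A_\filter)=\overline{\LimQT(A_\filter)}\subseteq\overline{\TT(A_\filter)}=\TT(A_\filter)\subseteq\QT(A_\filter),
\]
whence $\QT(A_\filter)=\TT(A_\filter)$, which is exactly the assertion that every lower semicontinuous $2$-quasitrace on $A_\filter$ is a trace.

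The one substantive ingredient is the closedness of $\TT(A_\filter)$ inside $\QT(A_\filter)$, which is where I expect the only real (albeit standard) work to lie: being a trace amounts to full additivity $\tau(a+b)=\tau(a)+\tau(b)$ on positive elements, a condition stable under the limits defining the quasitrace topology, whereas a general quasitrace is only guaranteed additivity on commuting elements. Everything else is a direct unwinding of the definitions together with \autoref{prp:MainThm}.
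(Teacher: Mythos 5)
Your proposal is correct and follows exactly the route the paper intends: the paper's entire justification is the one-line remark ``Using that traces form a closed subset among quasitraces,'' combined with the density from \autoref{prp:MainThm} and the observation from \autoref{pgr:limitquasitraces} that a limit of traces is a trace. Your unwinding of these three ingredients, including the closedness of $\TT(A_\filter)$ in $\QT(A_\filter)$, matches the paper's argument.
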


\begin{pgr}[Comparison]
\label{pgr:comparison}
Let $S$ be a \CuSgp.
Recall that the relation $<_s$ on $S$ is defined by setting $x<_sy$ if there is $k\in\NN$ such that $(k+1)x\leq ky$. 
Given $m\in \NN$, one says that $S$ has \emph{$m$-comparison} if, for all $x,y_0,\dots,y_m\in S$, the condition $x<_sy_j$ for $j=0,\dots,m$ implies $x\leq\sum_{i=0}^m y_i$;
see \cite[Definition~2.8]{OrtPerRor12CoronaStability}.
Note that $S$ is almost unperforated if and only if it has $0$-comparison. 
\end{pgr}

A \ca{} is said to be \emph{nowhere scattered} if it has no nonzero, elementary ideal-quotients;
see \cite{ThiVil21arX:NowhereScattered}.

The next theorem is essentially a consequence of \autoref{prp:denseQTstable} and of  \cite[Theorem~8.12]{AntPerRobThi22CuntzSR1}. 

\begin{thm}
\label{cor:nowscatt}
Let $A$ be a stable, nowhere scattered \ca{} of stable rank one, and let $\filter$ be a countably incomplete ultrafilter on a set $J$. 
The following are equivalent:
\begin{enumerate}[\rm (i)]
\item
$\LimQT(A_\filter)$ is dense in $\QT(A_\filter)$,
\item
$\Cu(A)$ is almost unperforated (equivalently, $A$ has strict comparison of positive elements).
\end{enumerate}
\end{thm}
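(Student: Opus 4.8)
The plan is to obtain this as a combination of \autoref{prp:denseQTstable}, which already absorbs the hypothesis that $A$ is stable, with the almost divisibility criterion \autoref{prp:almostDivCu}, the missing divisibility input being supplied by \cite[Theorem~8.12]{AntPerRobThi22CuntzSR1}. Since $A$ is stable, its scale is trivial, $\Sigma_A=\Cu(A)$, so $\Sigma_A^{(d)}=\Cu(A)$ for every $d\geq 1$, and the notion of (LBCA) for $(\Cu(A),\Sigma_A)$ collapses to the single condition: for every $\gamma\in(0,1)$ there is $N\in\NN$ with $\widehat{x}\leq\gamma\widehat{y}$ implying $x\leq_Ny$ for all $x,y\in\Cu(A)$. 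This is exactly condition~(ii) of \autoref{prp:denseQTstable}, so that corollary already furnishes the equivalence between statement~(i) of the present theorem and the assertion that $(\Cu(A),\Sigma_A)$ has (LBCA).

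The implication~(ii)$\Rightarrow$(i) is then immediate and uses neither stable rank one nor nowhere scatteredness: if $\Cu(A)$ is almost unperforated, then by definition $x<_sy$ forces $x\leq y$, so $\widehat{x}\leq\gamma\widehat{y}$ with $\gamma\in(0,1)$ gives $x\leq y$ and hence $x\leq_1 y$. Thus (LBCA) holds with $N=1$ for every $\gamma$, and \autoref{prp:denseQTstable} yields the density of $\LimQT(A_\filter)$ in $\QT(A_\filter)$.

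For the converse~(i)$\Rightarrow$(ii) the real issue is to upgrade the a priori only \emph{bounded} comparison coming out of (LBCA) to genuine almost unperforation (comparison constant $1$). Here I would invoke \cite[Theorem~8.12]{AntPerRobThi22CuntzSR1}, which I expect to say that for a nowhere scattered \ca{} of stable rank one the Cuntz semigroup $\Cu(A)$ is almost divisible. Since $\Cu(A)$ always satisfies \axiomO{5}, \autoref{prp:almostDivCu} then applies and shows that, under this almost divisibility, $(\Cu(A),\Sigma_A)$ has (LBCA) \emph{if and only if} $\Cu(A)$ is almost unperforated. Chaining this with the equivalence of~(i) and (LBCA) from the first paragraph closes the loop and shows that density of limit $2$-quasitraces forces almost unperforation. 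The parenthetical reformulation is the standard fact that almost unperforation of $\Cu(A)$ is equivalent to strict comparison of positive elements (equivalently, $0$-comparison in the sense of \autoref{pgr:comparison}), which I would simply cite.

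The main obstacle, and the reason this is only \emph{essentially} a corollary, is precisely this divisibility input: the entire argument reduces to knowing that nowhere scatteredness together with stable rank one forces $\Cu(A)$ to be almost divisible, which is the nontrivial structural content imported from \cite{AntPerRobThi22CuntzSR1} and which is exactly what makes \autoref{prp:almostDivCu} usable. Without it, (LBCA) would a priori yield only a bounded comparison constant $M$ rather than the value $1$ characterizing almost unperforation.
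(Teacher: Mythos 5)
Your overall architecture---reduce (i) to (LBCA) via \autoref{prp:denseQTstable}, get (ii)$\Rightarrow$(i) essentially for free, and for the converse combine (LBCA) with almost divisibility through \autoref{prp:almostDivCu}---is a legitimate alternative to the paper's argument, which instead feeds condition~(iii) of \autoref{prp:denseQTstable} (the bounded comparison statement that $\widehat{x}\leq\widehat{y}$ implies $x\leq My$) directly into \cite[Theorem~8.12]{AntPerRobThi22CuntzSR1}. But there are two problems with how you close the loop. First, \cite[Theorem~8.12]{AntPerRobThi22CuntzSR1} is not an almost-divisibility statement: as this paper uses it (see \autoref{prp:RemoveSeparable}), it is the equivalence, for separable nowhere scattered \ca{s} of stable rank one, of $m$-comparison, bounded comparison on functionals, and almost unperforation of $\Cu(A)$. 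The almost divisibility of $\Cu(A)$ for such algebras is a different (true) result of the same reference, so your route is salvageable, but the theorem you name does not deliver the input you need.

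Second, and more seriously: every structural input available from \cite{AntPerRobThi22CuntzSR1}---whether almost divisibility or the comparison equivalences---is proved there only for \emph{separable} \ca{s}, while the statement being proved carries no separability hypothesis. The paper treats this as a genuine issue and devotes \autoref{prp:RemoveSeparable} to removing separability via the downward L\"{o}wenheim--Skolem theorem and elementary submodels, using that nowhere scatteredness and stable rank one pass to separable elementary submodels $B\subseteq A$ and that the induced map $\Cu(B)\to\Cu(A)$ is an order embedding. Your proof is silent on this point, so as written it only establishes the theorem for separable $A$. A patch along your lines is available---almost divisibility of $\Cu(A)$ can be verified on pairs $x'\ll x$ represented inside a separable elementary submodel, and pushing the divisor forward only requires a $\Cu$-morphism rather than an order embedding---but some such argument must be supplied.
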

\begin{proof}
By \autoref{prp:denseQTstable} the density of $\LimQT(A_\filter)$ in $\QT(A_\filter)$ is equivalent to the statement
\begin{equation}
\tag{$*$}
\text{There exists $M\in \NN$ such that $\widehat{x} \leq \widehat{y}$ implies $x\leq My$ for all $x,y\in \Cu(A)$.}
\end{equation} 
If $\Cu(A)$ is almost unperforated, then ($*$) holds for $M=2$, which shows that~(ii) implies~(i).

On the other hand, it is shown in \cite[Theorem~8.12]{AntPerRobThi22CuntzSR1} that ($*$) implies that $\Cu(A)$ is almost unperforated whenever $A$ is a separable, nowhere scattered \ca{} of stable rank one.
Separability, however, can be dropped, as we show in \autoref{prp:RemoveSeparable} below. 
The result thus follows.
\end{proof}

The next result removes the separability assumption from \cite[Theorem~8.12]{AntPerRobThi22CuntzSR1}. 
To this end we use the model theory of \ca{s}.

\begin{thm}
\label{prp:RemoveSeparable}
Let $A$ be a nowhere scattered \ca{} of stable rank one. 
The following are equivalent:
\begin{enumerate}[\rm (i)]
\item 
The Cuntz semigroup $\Cu(A)$ has $m$-comparison for some $m\geq 0$.
\item 
There exist $M \in \NN$ and $\gamma \in (0,1)$ such that $\widehat{x} \leq \gamma\widehat{y}$ implies that $x \leq M y$ for all $x,y$ in $\Cu(A)$.
\item 
The Cuntz semigroup $\Cu(A)$ is almost unperforated.
\end{enumerate}	
\end{thm}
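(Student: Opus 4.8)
The plan is to prove the cycle of implications (iii)$\Rightarrow$(i)$\Rightarrow$(ii)$\Rightarrow$(iii); only the last implication requires the model theory of \ca{s}, the first two being elementary. The implication (iii)$\Rightarrow$(i) is immediate, since an almost unperforated \CuSgp{} has $0$-comparison (see \autoref{pgr:comparison}), which is $m$-comparison with $m=0$. For (i)$\Rightarrow$(ii), suppose $\Cu(A)$ has $m$-comparison, fix any $\gamma\in(0,1)$, and set $M=m+1$. If $\widehat{x}\leq\gamma\widehat{y}$, then for every $x'\ll x$ \autoref{leqss} yields an $M'$ with $(M'+1)x'\leq M'y$, so that $x'<_sy$; applying $m$-comparison with $y_0=\dots=y_m=y$ gives $x'\leq(m+1)y=My$. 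Since this bound is uniform in $x'$, taking the supremum over all $x'\ll x$ yields $x\leq My$.

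For (ii)$\Rightarrow$(iii) I argue by contradiction and reduce to the separable case. Assume $\Cu(A)$ is not almost unperforated, so there are $x<_sy$ with $x\not\leq y$, say $(n+1)x\leq ny$. Using \axiomO{2} and passing to a suitable $x'\ll x$ (which preserves $(n+1)x'\leq ny$ and, for an appropriate choice, $x'\not\leq y$), together with standard approximation in $\Cu(A)$, I may realize the witnesses by positive elements $a,b\in M_k(A)$ with $(n+1)[a]\leq n[b]$ and $[a]\not\leq[b]$. Working at this finite matrix level is essential, since stable rank one and nowhere scatteredness are inherited by $M_k(A)$ but not by the stabilization. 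Now the downward Löwenheim--Skolem theorem in continuous logic (see \cite{FarHarLupRobTikVigWin21ModelThy}) produces a separable elementary submodel $B\preceq A$ containing the matrix entries of $a$ and $b$, and hence with $a,b\in M_k(B)_+$ and $M_k(B)\preceq M_k(A)$. As stable rank one and nowhere scatteredness are axiomatizable (for the latter see \cite{ThiVil21arX:NowhereScattered}), the algebra $B$ is again of stable rank one and nowhere scattered.

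The crucial point is that $B\preceq A$ forces $\Cu(B)\to\Cu(A)$ to be an order-embedding: the Cuntz comparison $(a-\varepsilon)_+\precsim b$ is witnessed by an element $r$ of controlled norm with $\|(a-\varepsilon)_+-rbr^*\|$ small, which is a bounded-quantifier condition in continuous logic and is therefore reflected by the submodel. Consequently $(n+1)[a]\leq n[b]$ and $[a]\not\leq[b]$ already hold in $\Cu(B)$. Moreover, condition~(ii) transfers from $A$ to $B$ with the same $M$ and $\gamma$: every $\tau\in\QT(A)$ restricts to an element of $\QT(B)$, so every functional on $\Cu(A)$ pulls back along $\Cu(B)\to\Cu(A)$ to a functional on $\Cu(B)$; hence if $\widehat{u}\leq\gamma\widehat{w}$ holds in $\Cu(B)$ for $u,w\in\Cu(B)$, the same holds for their images in $\Cu(A)$, whereupon (ii) for $A$ gives the corresponding inequality with $M$ in $\Cu(A)$, and order-reflection returns $u\leq Mw$ in $\Cu(B)$. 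Thus $B$ is separable, nowhere scattered, of stable rank one, and satisfies~(ii), so \cite[Theorem~8.12]{AntPerRobThi22CuntzSR1} shows $\Cu(B)$ is almost unperforated. Then $(n+1)[a]\leq n[b]$ forces $[a]\leq[b]$ in $\Cu(B)$, contradicting $[a]\not\leq[b]$.

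The main obstacle is exactly the model-theoretic input: one must verify that Cuntz comparison of fixed positive elements is an elementary (bounded-quantifier) condition, so that $\Cu(B)\to\Cu(A)$ is order-reflecting, and that stable rank one and nowhere scatteredness are axiomatizable so that they descend to the separable submodel. Once these facts are available, the transfer of the comparison hypothesis and the reflection of the witnessing relations are purely formal, and the separable case \cite[Theorem~8.12]{AntPerRobThi22CuntzSR1} closes the argument; this in turn removes the separability assumption in \autoref{cor:nowscatt}.
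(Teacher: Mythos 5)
Your treatment of the crucial implication (ii)$\Rightarrow$(iii) is essentially the paper's: apply the downward L\"owenheim--Skolem theorem to obtain a separable elementary submodel $B\preceq A$ containing the witnesses, note that stable rank one and nowhere scatteredness descend to $B$, use that $\Cu(B)\to\Cu(A)$ is an order embedding (\cite[Theorem~8.1.3]{FarHarLupRobTikVigWin21ModelThy}), transfer condition~(ii) to $\Cu(B)$ by pulling functionals back along the embedding, and invoke the separable case \cite[Theorem~8.12]{AntPerRobThi22CuntzSR1}. Your direct proofs of (iii)$\Rightarrow$(i) and (i)$\Rightarrow$(ii) (the latter via \autoref{leqss} and $m$-comparison with $y_0=\dots=y_m=y$, i.e.\ the argument of \autoref{rmk:mcomparison}) are correct, where the paper simply cites \cite[Theorem~8.12]{AntPerRobThi22CuntzSR1} for both.

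One assertion you make is false, and it drives an unnecessary and under-justified detour: stable rank one \emph{does} pass to the stabilization (by Rieffel's formula $\mathrm{sr}(M_n(A))=\lceil(\mathrm{sr}(A)-1)/n\rceil+1$ and permanence under inductive limits), and nowhere scatteredness is likewise invariant under stabilization, being a condition on ideal-quotients. Indeed the paper's proof begins ``we may assume that $A$ is stable,'' which immediately yields $a,b\in A_+$ with $x=[a]$, $y=[b]$ and removes any need to descend to $M_k(A)$. Your alternative --- realizing the failure of almost unperforation by elements of a fixed matrix algebra --- can be made to work, but it is not ``standard approximation'' as stated: one must interpose $x'\ll x''\ll x$ with $x'\not\leq y$, replace $x''$ and a suitable $y'\ll y$ by classes of cut-downs lying in $M_k(A)$, and check that the relations $(n+1)[a]\leq n[b]$ and $[a]\not\leq[b]$ survive on \emph{both} sides simultaneously. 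I recommend replacing that reduction (and the erroneous justification for it) by the stabilization step; the rest of your argument then goes through as written.
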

\begin{proof}
It is shown in \cite[Theorem~8.12]{AntPerRobThi22CuntzSR1} that~(i) implies~(ii), and that~(iii) implies~(i).
It remains to prove that~(ii) implies~(iii).
We may assume that $A$ is stable.
Assume (ii) and suppose that $x,y\in\Cu(A)$ satisfy $x<_s y$.
Choose $a, b\in A_+$ with $x=[a]$ and $y=[b]$.
We need to show that $[a] \leq [b]$.
By assumption, there exist $M \in \NN$ and $\gamma \in (0,1)$ such that $\widehat{v} \leq \gamma\widehat{w}$ implies that $v \leq Mw$ for all $v,w\in\Cu(A)$.

Apply the downward L\"owenheim--Skolem theorem for \ca{s}, \cite[Theorem~2.6.2]{FarHarLupRobTikVigWin21ModelThy}, to obtain a separable sub-\ca{} $B\subseteq A$ that is an elementary submodel of $A$, and that contains $a$ and $b$.
By \cite[Lemma~3.8.2]{FarHarLupRobTikVigWin21ModelThy} and \cite[Proposition~4.11]{ThiVil21arX:NowhereScattered}, $B$ is nowhere scattered and has stable rank one. 
Further, by \cite[Theorem~8.1.3]{FarHarLupRobTikVigWin21ModelThy}, the induced map $\Cu(B)\to \Cu(A)$ is an order embedding.

Let us verify that $\Cu(B)$ satisfies~(ii) for the given $M$ and $\gamma$.
So let $v,w \in \Cu(B)$ satisfy $\lambda(v) \leq \gamma\lambda(w)$ for all $\lambda\in F(\Cu(B))$.
The proof of \cite[Lemma~9.2]{AntPerRobThi22CuntzSR1} is easily adapted to show that this implies that $\lambda(v) \leq \gamma\lambda(w)$ for all $\lambda\in F(\Cu(A))$.
By choice of $M$ and $\gamma$, we obtain that $v \leq Mw$ in $\Cu(A)$.
Since $\Cu(B)\to \Cu(A)$ is an order embedding, we get $v \leq Mw$ in $\Cu(B)$.

We can now apply \cite[Theorem~8.12]{AntPerRobThi22CuntzSR1} to $B$ to show that $\Cu(B)$ is almost unperforated. 
Since the induced map $\Cu(B)\to \Cu(A)$ is an order embedding by \cite[Theorem~8.1.3]{FarHarLupRobTikVigWin21ModelThy}, we obtain that $x <_s y$ in $\Cu(B)$ and hence $x\leq y$ in $\Cu(B)$, which in turn gives $x \leq y$ in $\Cu(A)$, as desired. 
\end{proof}

\section{Density of normalized limit quasitraces and a conjecture of Blackadar--Handelman}
\label{sec:BH}

We now turn to the question of density of limit functionals normalized at a full compact element, and similarly to the question about the density of normalized limit quasitraces for a unital \ca. 
Here we prove the first part of \autoref{thmtraces} from the introduction.
We focus on ultraproducts over a free ultrafilter, but similar results are valid for products.

Let $S$ be a \CuSgp{} satisfying \axiomO{5}. Let $u\in S$ be a compact, full element. Recall that we regard the pair $(S,u)$ as a scaled \CuSgp{}  endowed with the scale $\Sigma_u=\{x : x\leq u\}$. 

Let $((S_j,u_j))_{j\in J}$ be a family of pairs of a \CuSgp{} and a full compact element. 
Let $\filter$ be an ultrafilter on $J$. 
In \autoref{scaledultra} we have defined $\prod_\filter (S_j,u_j)$ as the pair $(S,v)$, where $(S,\Sigma_v)=\prod_\filter (S_j,\Sigma_{u_j})$ and  $v\in S$ is the element induced in the ultraproduct by the constant path $(\vect{u})_{t\leq 0}$, with  $\vect{u}=(u_j)_{j}$ in $\CatPomProd_j S_j$.  

\begin{thm}
\label{thm:charFuDense}
Let $((S_j,u_j))_{j\in J}$ be a collection of pairs of a \CuSgp{} satisfying \axiomO{5} and a full compact element. 
Let $\filter$ be a countably incomplete ultrafilter on $J$. 
Let $(S,v)=\prod_\filter (S_j,u_j)$.
The following are equivalent:
\begin{enumerate}[{\rm (i)}]
\item
The set $\LimF_v(\prod_\filter (S_j,u_j))$ is dense in $\FF_v(\prod_\filter S_j)$.
\item
For every $\gamma\in(0,1)$ and $d\in\NN$ there exist $N=N(\gamma,d)\in\NN$ and $E=E(\gamma,d)\in \filter$ such that 
\[
\widehat{x} \leq \gamma \widehat{y} 
\quad \text{ implies } \quad 
x\leq_N y,
\]
for all $j \in E$ and all $x,y \in S_j$ such that $x,y \leq du_j$ and $u_j \leq dy$.
\item
There exists $M\in \NN$ such that for every $d\in\NN$ there exist $N=N(d)\in\NN$ and $E=E(d)\in \filter$ such that 
\[
\widehat{x} \leq \widehat{y}
\quad \text{ implies } \quad 
Nx\leq MNy,
\]
for all $j\in E$ and  all $x,y \in S_j$ with $x,y \leq du_j$ and $u_j \leq dy$.
\end{enumerate}
\end{thm}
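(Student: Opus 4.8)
The plan is to prove the theorem by a \emph{sliced} version of the arguments in \autoref{prp:CharDensityUltraproduct} and \autoref{thm:main2}, carrying the normalization at the compact full element throughout. First I would record the basic dictionary. By \eqref{normalizedlimits} the set $\LimF_v(\prod_\filter (S_j,u_j))$ is exactly the slice $\{\lambda\in\LimF(\prod_\filter S_j):\lambda(v)=1\}$, and since $v$ is compact the function $\widehat{v}$ is continuous, so $\FF_v(\prod_\filter S_j)$ is a compact convex set on which $\widehat{v}\equiv 1$. Because $\LimF$ is a subcone and rescaling by $\widehat{v}$ is continuous where $\widehat{v}$ is finite and nonzero, density of $\LimF_v$ in the slice is equivalent to the closed subcone $\overline{\LimF(\prod_\filter S_j)}$ containing the whole slice. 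I would also note that here $\Sigma_{u_j}^{(d)}=\{x\in S_j:x\le du_j\}$ (the analogue of \autoref{prp:AmplifiedScale} for an abstract scale), so the side conditions ``$x,y\le du_j$'' record membership in the amplified scale, while the extra condition ``$u_j\le dy$'' records that $y$ is \emph{seen} by the normalized functionals, forcing $\widehat{y}(\lambda)\ge\tfrac1d$ whenever $\lambda(u_j)=1$.

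For (i)$\Leftrightarrow$(ii) I would run the proof of \autoref{prp:CharDensityUltraproduct} inside the slice. In the direction (i)$\Rightarrow$(ii), assuming (ii) fails I would, using countable incompleteness, select componentwise witnesses $x_j,y_j\in S_j$ with $x_j,y_j\le du_j$, $u_j\le dy_j$, $\widehat{x_j}\le\gamma\widehat{y_j}$ but $x_j\nleq_N y_j$, assemble them into paths exactly as there, and pass to $x=\pi_\filter(\tilde x)$ and $y=\pi_\filter(\tilde y)$. The one new point is where fullness enters: the separating functionals $\lambda_j$ produced on each $S_j$ may be \emph{normalized at $u_j$} (this is possible precisely because $u_j\le dy_j$ and $\lambda_j\neq 0$ force $0<\lambda_j(u_j)<\infty$), so the resulting limit functional lands in the slice and a sliced version of \autoref{prp:CompClosureUltraproduct} applies against $\overline{\LimF_v}$; then \autoref{leqss} and \autoref{prp:wayBelowLFS} deliver the comparison contradicting the choice of the witnesses. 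For (ii)$\Rightarrow$(i) I would use the basic (\axiomO{5}) separation \autoref{prp:SubconeIsAll} in its sliced form: it suffices to show that $\widehat{x}(\lambda)\le\widehat{y}(\lambda)$ for all $\lambda\in\overline{\LimF_v}$ implies $\widehat{x}\le\widehat{y}$ on the whole slice, obtained as in \autoref{prp:CharDensityUltraproduct} by cutting down, invoking \autoref{prp:CompClosureUltraproduct}, and applying (ii) on a filter set, the cut-downs $y_{t,j}$ inheriting the fullness $u_j\le d\,y_{t,j}$ that lets (ii) be invoked.

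The equivalence (ii)$\Leftrightarrow$(iii) I expect to be the straightforward algebraic part, mirroring \autoref{thm:main2}. For (ii)$\Rightarrow$(iii) I would take $M=2$: given $d$, apply (ii) with amplitude $2d$ and $\gamma=\tfrac12$ to the pair $(x,2y)$, using that $x,2y\le 2du_j$ and $u_j\le dy\le 2d\cdot(2y)$ hold and that $\widehat{x}\le\widehat{y}=\tfrac12\widehat{2y}$, to deduce $x\le_N 2y$ and hence $Nx\le 2Ny$.

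The main obstacle is closing the loop via (iii)$\Rightarrow$(i), since only \axiomO{5} is assumed, so the stronger separation \autoref{prp:SubconeIsAll2} (which in \autoref{thm:main2} required \axiomO{6} and Edwards' condition) is unavailable. The plan is to exploit that $v$ is compact and full, so $\FF_v$ is a genuine compact convex slice. Mirroring the path/cut-down argument, condition (iii) together with \autoref{prp:CompClosureUltraproduct} (applied with $\gamma=1$ and $\gamma'=1+\varepsilon$ realized through a rational $k/l$) yields, for every $x,y$ with $\lambda(x)\le\lambda(y)$ on $\overline{\LimF_v}$, the uniform estimate $\lambda(x)\le M\lambda(y)$ on the whole slice, the fullness of the cut-downs $y_{t,j}$ being exactly what permits (iii) to be applied. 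The delicate final step is to upgrade this $M$-fold comparison on the compact slice to the \emph{exact} comparison required by \autoref{prp:SubconeIsAll}; I expect this to be where compactness of $\FF_v$ and a geometric iteration in the gap $\gamma<1$ are essential, and it is the step I would scrutinize most carefully, since it is precisely the analogue of the point that forced the extra axioms in \autoref{thm:main2}.
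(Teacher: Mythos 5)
Your treatment of (i)$\Rightarrow$(ii) and (ii)$\Rightarrow$(iii) matches the paper's proof in all essentials: the paper likewise assembles componentwise witnesses into paths, invokes \autoref{prp:CompClosureUltraproduct} and \autoref{leqss}, and proves (ii)$\Rightarrow$(iii) with $M=2$ by feeding the pair $(x,2y)$ into (ii) at amplitude $2d$. One small remark: for (i)$\Rightarrow$(ii) no ``sliced'' version of \autoref{prp:CompClosureUltraproduct}, and no normalization of separating functionals, is needed. The paper applies the unsliced proposition to get $\widehat{x}\leq\gamma'\widehat{y}$ on the closure of $\LimF(\prod_\filter (S_j,u_j))$, observes that by hypothesis this closure contains all of $\FF_v$, and then extends the inequality to every functional by rescaling and by using $v\leq d y$ when $\lambda(v)=\infty$ --- an extension you would in any case need before \autoref{leqss} can be applied, since that lemma requires the comparison against all of $\FF$.

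The genuine gap is in (iii)$\Rightarrow$(i), exactly where you said you would scrutinize most carefully. As you observe, the argument reduces to: if $\widehat{x}(\lambda)\leq\widehat{y}(\lambda)$ for all $\lambda$ in the closed convex set $\overline{\LimF_v}$ (with $y$ full), then $\widehat{x}\leq 2M\widehat{y}$ everywhere; one must then conclude $\overline{\LimF_v}=\FF_v$. Your plan to ``upgrade this $M$-fold comparison on the compact slice to the exact comparison required by \autoref{prp:SubconeIsAll}'' by a geometric iteration in the gap cannot work: a uniform multiplicative loss of $M$ is not removable by iteration (if it were, \autoref{prp:SubconeIsAll2} would follow trivially from \autoref{prp:SubconeIsAll} and the extra axioms there would never be needed). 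What the paper actually supplies is a dedicated separation theorem for normalized functionals, \autoref{MdensityFu}: for a closed convex $K\subseteq\FF_u(S)$, if $\widehat{x}|_K\leq\widehat{y}|_K$ with $y$ full always implies $\widehat{x}\leq M\widehat{y}$, then $K=\FF_u(S)$ --- under \axiomO{5} alone. Its proof is not an iteration but a Krein--Milman argument: for an extreme point $\mu\notin K$ one forms the function $\sigma_\mu$ of \autoref{pgr:ray}, which takes only the values $0$ and $\infty$ on $K$, so that the factor $M$ is absorbed via $(M+1)\sigma_\mu|_K=\sigma_\mu|_K$; one approximates $\sigma_\mu|_{\FF_u(S)}$ from below by continuous affine functions on the compact convex slice, extends these to full elements of $\LL(\FF(S))$ by \cite[Proposition~6.9]{AntPerRobThi22CuntzSR1}, and derives $(M+1)\sigma_\mu\leq M\sigma_\mu$, absurd at $\mu$. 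This is the missing ingredient; without it (or an equivalent), your proof of (iii)$\Rightarrow$(i) does not close.
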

\begin{proof}
Set $\bar{v}=[(\vect{u})_{t\leq 0}]$ in $\prod_j S$, so that $v=\pi_\filter(\bar{v})$ in $\prod_\filter S_j$.

\emph{We show that~(i) implies~(ii).}
The proof proceeds as in the proof of `(i)$\Rightarrow$(ii)' of \autoref{prp:CharDensityUltraproduct}, with minor modifications.

To reach a contradiction, assume that (ii) does not hold. Using that $\filter$ is an ultrafilter, this means that there exist $\gamma\in (0,1)$ and $d\in \NN$ such that for every $N\in \NN$ the set
\[
E_N := \big\{ j\in J : \text{there are $x,y \in S_j$ with $x,y\leq du_j$, $u_j \leq dy$, $\widehat{x}\leq\gamma\widehat{y}$, and $x\nleq_N y$} \big\}
\]	
belongs to $\filter$. 
Using that $\filter$ is countably incomplete, we may choose a decreasing sequence $(E_N')_{N\in\NN}$ in $\filter$ such that $\bigcap_N E_N'=\varnothing$ and $E_N'\subseteq E_N$ for each $N$.

We now pick suitable $x_j',x_j'',x_j,y_j',y_j\in S_j$ for each $j\in J$.
If $j\in J\backslash E_0'$, we simply set $x_j'=x_j''=x_j=0$ and $y_j'=y_j=u_j$.
If $j\in E_N'\backslash E_{N+1}'$ for $N\geq 0$, then we use that $E_N'\subseteq E_N$ to choose $x_j,y_j \in S_j$ such that
\[
x_j,y_j \leq d u_j, \quad 
u_j \leq dy_j, \quad
\widehat{x_j} \leq \gamma\widehat{y_j}, \andSep
x_j \nleq_{N}y_j.
\]
Next, choose $x_j', x_j'' \in S_j$ such that $x_j'\ll x_j''\ll x_j$ and $x_j'\nleq_N y_j $. 
Set $\gamma'=(1+\gamma)/2$.
Then $\widehat{x_j''} \ll \gamma' \widehat{y_j}$, by \autoref{prp:wayBelowLFS}. 
This allows us to choose $y_j' \in S_j$ such that
\[
y_j' \ll y_j, \quad 
u_j \leq dy_j', \andSep
\widehat{x_j''} \leq \gamma'\widehat{y_j'}.
\]

By \cite[Proposition~2.10]{AntPerThi20AbsBivariantCu}, for each $j\in J$ we can choose paths $(x_{t,j})_{t\leq 0}$ and $(y_{t,j})_{t\leq 0}$ in $S_j$ such that 
\[
x_{-2,j} = x_j', \quad
x_{0,j} = x_j'', \quad
y_{-1,j} = y_j', \andSep
y_{0,j} = y_j.
\]

Since $x_{t,j},y_{t,j} \leq du_j$ and $u_j \leq d y_{-1,j}$ for each $j$, the elements $\bar{x} := [(\vect{x}_t)_{t\leq 0}]$ and $\bar{y} := [(\vect{y}_t)_{t\leq 0}]$ in $\prod_j(S_j,u_j)$ satisfy that $\bar{x}, \bar{y} \leq d\bar{v}$ and $\bar{v} \leq d\bar{y}$. 
Set $x := \pi_\filter(\bar{x})$ and $y := \pi_\filter(\bar{y})$. 
The set of indices $j$ such that $\widehat{x_{0,j}} \leq \gamma'\widehat{y_{-1,j}}$ contains $E_0'$, and thus belongs to $\filter$. 
By \autoref{prp:CompClosureUltraproduct}, this implies that $\widehat{x}(\lambda)\leq \gamma' \widehat{y}(\lambda)$ for every functional $\lambda$ in the closure of $\LimF(\prod_\filter (S_j,u_j))$. 
Since, by assumption, this set contains the set of all normalized functionals $\FF_v(\prod_\filter S_j)$, the inequality $\widehat{x}(\lambda)\leq \gamma' \widehat{y}(\lambda)$ holds for all $\lambda\in \FF(\prod_\filter S_j)$ such that $\lambda(v)<\infty$. 
On the other hand, since $v\leq dy$, the same inequality is trivially valid for all $\lambda$ such that  $\lambda(v)=\infty$. 
We thus conclude that $\widehat{x}\leq \gamma'\widehat{y}$.
The remainder of the proof follows verbatim the proof of `(i)$\Rightarrow$(ii)' in \autoref{prp:CharDensityUltraproduct}.

\smallskip

\emph{We show that~(ii) implies~(iii).}
Let us prove that (iii) is valid with $M=2$. 
Let $d\in \NN$. 
By (ii), applied with $2d$ and  $\gamma=\tfrac{1}{2}$, there exist $N\in\NN$ and $E\in \filter $ such that for each $j\in E$ if $x,y\in S_j$ are such that $x,y\in (2d)u_j$, $u_j\leq (2d)y$, and $\widehat{x} \leq \tfrac{1}{2} \widehat{y}$, then $x \leq_N y$.
Then, for the same $N$ and $E$, we clearly have that if $x,y \in S_j$ are such that $x,y\leq du_j$, $u_j\leq dy$, and $\widehat x\leq \widehat y$, then $Nx \leq 2Ny$. 

\smallskip

\emph{We show that~(iii) implies~(i).}
The proof proceeds as in the proof of `(iii)$\Rightarrow$(i)' of \autoref{thm:main2}, with minor modifications.

By \autoref{MdensityFu}, it suffices to show that for all $x,y\in\prod_\filter (S_j,u_j)$, with $y$ full,  such that $\widehat x(\lambda)\leq \widehat y(\lambda)$ for all $\lambda$ in the closure of $\LimF_v(\prod_\filter (S_j,u_j))$, we have $\widehat{x}\leq 2M\widehat{y}$.
Thus, let  $x,y\in\prod_\filter (S_j,u_j)$ be such that $y$ is full and $\widehat x(\lambda)\leq \widehat y(\lambda)$ for all~$\lambda$ in the closure of $\LimF_v(\prod_\filter (S_j,u_j))$.

By \eqref{normalizedlimits}, we have that $\widehat x(\lambda)\leq \widehat y(\lambda)$ for all $\lambda\in \LimF(\prod_\filter (S_j,u_j))$ such that $\lambda(v)=1$. 
This easily extends to all $\lambda$ in $\overline{\LimF(\prod_\filter (S_j,u_j))}$ such that $\lambda(v)=1$. 
The latter equality can be relaxed to $\lambda(v)<\infty$.
On the other hand, since $y$ is full, $v\leq dy$ for some $d$, and so $\widehat x(\lambda)\leq \widehat y(\lambda)=\infty$ is valid for all $\lambda$ such that $\lambda(v)=\infty$. 
In summary, we have shown that $\widehat x(\lambda)\leq \widehat y(\lambda)$ for all $\lambda$ in the closure of $\LimF(\prod_\filter (S_j,u_j))$.

Choose $\bar{x} = [( (x_{t,j})_j )_{t\leq 0}]$ and $\bar{y} = [( (y_{t,j})_j )_{t\leq 0}]$, lifts of $x$ and $y$ in $\prod_j (S_j,u_j)$.
Given $s<0$, we consider the `cut-down' $\bar{x}_s := [( (x_{s+t,j})_j )_{t\leq 0}]$ and set $x_s := \pi_\filter(\bar{x}_s)$.
Since $y$ is full, there exists $d_0\in \NN$ such that $v\leq d_0 y$. 
Hence, there exist $t_0<0$ such that
\[
E_0 := \big\{ j\in J : u_j\leq d_0y_{t_0,j} \big\} \in \filter.
\]

Let $s<0$. 
Applying \autoref{prp:CompClosureUltraproduct} for the given $s$, as well as $\gamma=1$ and $\gamma'=2$, we obtain $t<0$ such that
\[
E_1 := \big\{ j\in J : \widehat{x_{s,j}} \leq 2 \widehat{y_{t,j}} \big\} \in \filter.
\]
We may assume that $t_0 \leq t$.

Choose $d\geq d_0$ such that $x_{s,j},y_{t,j}\leq du_j$ for all $j\in E_1$.
Applying the assumption~(iii) for~$2d$, we obtain $N\in \NN$ and $E_2\in \filter$ such that 
\[
\widehat{v} \leq \widehat{w} 
\quad \text{ implies } \quad Nv\leq MNw, 
\]
for all $j \in E_2$ and all $v,w \in S_j$ with $v,w \leq (2d)u_j$ and $u_j \leq (2d)w$.

Set $E := E_0\cap E_1\cap E_2$.
Let $j \in E$.
We have $x_{s,j},y_{t,j} \leq du_j$ and consequently $x_{s,j}, 2y_{t,j} \leq (2d)u_j$.
Since $j \in E_0$, $t_0 \leq t$ and $d_0 \leq d$, we also have $u_j \leq dy_{t,j} \leq (2d)y_{t,j}$.
Since $j \in E_1$, we further have $\widehat{x_{s,j}} \leq \widehat{2y_{t,j}} $.
For $j \in E_2$, we get $Nx_{s,j} \leq MN2y_{t,j}$.

This implies that 
\[
Nx_s= N\pi_\filter(\tilde x_s)\leq 2MN\pi_\filter(\tilde y)=2MNy.
\]
Evaluating on any functional $\lambda$ on $\prod_\filter (S_j,\Sigma_j)$, we  deduce that $\lambda(x_s) \leq 2M \lambda(y)$.
Since this holds for every $s<0$, we obtain 
$\lambda(x)\leq 2M\lambda(y)$, as desired.
\end{proof}

The next result follows from \autoref{thm:charFuDense} by specializing to the case of ultrapowers.
We will refer to condition~(ii) in \autoref{prp:CharFuDenseUpower} by saying that $(S,u)$ has \emph{(LBCA) for uniformly full elements}.

\begin{thm}
\label{prp:CharFuDenseUpower}
Let $(S,u)$ be a \CuSgp{} satisfying \axiomO{5} together with a full compact element $u \in S$.
Let $\filter$ be a countably incomplete ultrafilter on a set $J$, and consider the ultrapower $(S,u)_\filter$ with its canonical full compact element $v$.
The following are equivalent:
\begin{enumerate}[{\rm (i)}]
\item
The set $\LimF_v((S,u)_\filter)$ is dense in $\FF_v((S,u)_\filter)$.
\item
For every $\gamma\in(0,1)$ and $d\in\NN$ there exists $N=N(\gamma,d)\in\NN$ such that 
\[
\widehat{x} \leq \gamma \widehat{y} 
\quad \text{ implies } \quad 
x\leq_N y, \quad
\text{for all $x,y \in S$ with $x,y \leq du$ and $u \leq dy$}.
\]
\item
There exists $M\in \NN$ such that for every $d\in\NN$ there exists $N=N(d)\in\NN$ such that 
\[
\widehat{x} \leq \widehat{y}
\quad \text{ implies } \quad 
Nx\leq MNy, \quad
\text{for all $x,y \in S$ with $x,y \leq du$ and $u \leq dy$}.
\]
\end{enumerate}
\end{thm}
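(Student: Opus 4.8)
The plan is to deduce this statement directly from \autoref{thm:charFuDense} by specializing the family $((S_j,u_j))_{j\in J}$ to the constant family with $S_j=S$ and $u_j=u$ for every $j\in J$. First I would record the relevant identifications: for the constant family, the scaled ultraproduct $\prod_\filter(S_j,u_j)$ is exactly the ultrapower $(S,u)_\filter$ described in \autoref{scaledultra}, its canonical full compact element is $v$, and consequently $\LimF_v(\prod_\filter(S_j,u_j))$ and $\FF_v(\prod_\filter S_j)$ coincide with $\LimF_v((S,u)_\filter)$ and $\FF_v((S,u)_\filter)$. With these identifications, condition~(i) of the present theorem is verbatim condition~(i) of \autoref{thm:charFuDense}, so no argument is needed there.

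The only genuine task is to verify that conditions~(ii) and~(iii) of \autoref{thm:charFuDense} collapse to conditions~(ii) and~(iii) of the present statement. I would argue this for~(ii); the case of~(iii) is identical. In \autoref{thm:charFuDense}(ii) one asks, for each $\gamma\in(0,1)$ and $d\in\NN$, for the existence of $N$ and a set $E\in\filter$ such that for all $j\in E$ and all $x,y\in S_j$ with $x,y\leq du_j$ and $u_j\leq dy$, the implication $\widehat{x}\leq\gamma\widehat{y}\Rightarrow x\leq_N y$ holds. Since $S_j=S$ and $u_j=u$ do not depend on $j$, the quantified inner implication is the same statement for every index~$j$. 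Because $\filter$ is a proper filter, every $E\in\filter$ is nonempty, so quantifying the (index-independent) implication over $j\in E$ is equivalent to asserting the implication on $S$ itself; conversely, if it holds on~$S$, one takes $E=J\in\filter$. This is exactly condition~(ii) here.

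Finally I would check that the standing hypotheses of \autoref{thm:charFuDense} are met by the constant family: each $S_j=S$ satisfies \axiomO{5}, each $u_j=u$ is full and compact, and $\filter$ is countably incomplete --- all by the assumptions on $(S,u)$ and~$\filter$. The equivalences of \autoref{thm:charFuDense} therefore transfer directly. I do not anticipate any real obstacle, since all the substantive work --- the Hahn--Banach separation input and the passage between comparison conditions and density of limit functionals --- has already been carried out in \autoref{thm:charFuDense}; the present proof is purely the bookkeeping of specialization, the one point requiring care being the elimination of the ``$E\in\filter$'' quantifier via the properness of~$\filter$.
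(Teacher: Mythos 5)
Your proposal is correct and matches the paper exactly: the paper derives \autoref{prp:CharFuDenseUpower} from \autoref{thm:charFuDense} by specializing to the constant family, with the only point of substance being precisely the elimination of the ``$E\in\filter$'' quantifier via nonemptiness of the sets in a (proper) ultrafilter, which you handle correctly.
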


Applying the above \autoref{prp:CharFuDenseUpower} in combination with \autoref{QTFbijection} for the Cuntz semigroup of a unital \ca{}, we deduce the equivalence of (i)--(iii) in \autoref{thmtraces}.

\begin{pgr}[A conjecture of Blackadar--Handelman]
\label{pgr:BH}
Let $A$ be a unital \ca, and let $M_{\infty}(A)=\bigcup_{n=1}^\infty M_n(A)$, where $M_n(A)$ is regarded as a subalgebra of $M_{n+1}(A)$ through the upper-left corner embedding.
Following \cite[Section 3]{Cun78DimFct}, let us call a map $d\colon M_\infty(A)_+\to [0,\infty)$ a normalized \emph{dimension function} if $d(a\oplus b)=d(a)+d(b)$ for all $a,b\in M_\infty(A)_+$, $d(a)\leq d(b)$ if $a\precsim b$, and $d(1_A)=1$.
Let us endow the set $\DF(A)$ of normalized dimension functions with the topology of pointwise convergence.

Let $W(A)$ denote the classical (non-complete) Cuntz semigroup of $A$.
This is the subsemigroup of $\Cu(A)$ consisting of those elements that admit a representative in $M_\infty(A)$ (regarded as a subalgebra of $A\otimes\mathcal K$). 
Note that $\DF(A)$ is the set of normalized states on the partially ordered semigroup $W(A)$.   

Let $\LDF(A)$ denote the subset of $\DF(A)$ of lower semicontinuous (normalized) dimension functions. 
Blackadar and Handelman conjectured in \cite{BlaHan82DimFct} that $\LDF(A)$ is always dense in $\DF(A)$,  and verified this in the commutative case; 
see \cite[Theorem~I.2.4]{BlaHan82DimFct}.
The conjecture was also verified for simple, exact, $\mathcal{Z}$-stable \ca{s} in \cite[Theorem~B]{BroPerTom08CuElliottConj}, and this was further generalized in \cite[Theorem~5.2.5]{Sil16Thesis} to include (not necessarily simple) \ca{s} with finite radius of comparison.

In the result below we offer a characterization of when $\LDF(A)$ is dense in $\DF(A)$ in terms of finiteness of the comparison amplitude.
Another characterization was obtained in \cite[Theorem~5.1.1]{Sil16Thesis}. 
Some parts of our argument follow a similar approach, which we include for completeness.
\end{pgr}

\begin{thm}
\label{BHdensity}
Let $A$ be a unital \ca. 
The following are equivalent:
\begin{enumerate}[{\rm (i)}]
\item 
The set $\LDF(A)$ is dense in $\DF(A)$.
\item
For any $x,y\in W(A)$ with $y$ full, $x<_sy$ if and only if $\widehat{x}<_s\widehat{y}$.
\end{enumerate}
\end{thm}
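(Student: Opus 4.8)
The plan is to dualize. View $\DF(A)$ as the set of normalized states on the positively ordered monoid $W(A)$ with order unit $u:=[1]$, a compact convex set in the topology of pointwise convergence, and $\LDF(A)$ as the convex subset of those states induced by normalized functionals on $\Cu(A)$ (equivalently, via $\tau\mapsto d_\tau$ of \autoref{linkqtracesfunc}, by the elements of $\QT_1(A)$). I will repeatedly use the reformulation of \autoref{par:ComparisonAmplitude}: for $x,y\in W(A)$ one has $\widehat{x}<_s\widehat{y}$ iff $\widehat{x}\le\gamma\widehat{y}$ for some $\gamma\in(0,1)$; since functionals on $\Cu(A)$ form a cone whose normalization at $u$ is $\LDF(A)$, this is the same as: there is $\gamma\in(0,1)$ with $d(x)\le\gamma d(y)$ for all $d\in\LDF(A)$. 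Because $x<_s y$ trivially implies $\widehat{x}<_s\widehat{y}$, condition (ii) is equivalent to the single implication
\[
\text{(ii$'$)}\qquad
\widehat{x}<_s\widehat{y}\ \Longrightarrow\ x<_s y,
\qquad\text{for all }x,y\in W(A)\text{ with }y\text{ full.}
\]

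The one nontrivial ingredient, which I expect to be the main obstacle, is a state-comparison lemma of Goodearl--Handelman / Blackadar--Handelman type: \emph{if $y\in W(A)$ is full and there is $\gamma\in(0,1)$ with $d(x)\le\gamma d(y)$ for \textbf{every} $d\in\DF(A)$, then $x<_s y$.} Here the hypothesis upgrades, using fullness (so $d(y)$ is bounded below uniformly) and $x\le mu$ (so $d(x)$ is bounded above), to a uniform strict gap $d(y)-d(x)\ge c\,d(u)$ with $c>0$; choosing a rational $a/b<c$ and setting $p:=bx+au$, $q:=by$ one gets $s(q)-s(p)>0$ uniformly over all states. The Goodearl--Handelman theorem that an element strictly positive at every state of an order-unit monoid is itself an order unit then furnishes $M$ with $u+Mp\le Mq$; since $x\le mu$, a short manipulation (multiply through so that the surplus $u$-term dominates one copy of $x$) converts this into $(k+1)x\le ky$, i.e.\ $x<_s y$. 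Reconstructing order from states with no (un)perforation hypothesis is precisely the delicate point; I would either cite \cite{BlaHan82DimFct} (and Goodearl's monograph) for this or carry out the enveloping-group argument just sketched.

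Granting the lemma, (i)$\Rightarrow$(ii$'$) is immediate: assume $\LDF(A)$ is dense and let $y$ be full with $\widehat{x}<_s\widehat{y}$, so $d(x)\le\gamma d(y)$ on $\LDF(A)$ for some $\gamma<1$. The set $\{d\in\DF(A):d(x)\le\gamma d(y)\}$ is closed in the pointwise topology, so by density the inequality holds on all of $\DF(A)$; the state-comparison lemma then yields $x<_s y$.

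Finally, for (ii$'$)$\Rightarrow$(i), suppose $\LDF(A)$ is not dense and choose $d_0\in\DF(A)\setminus\overline{\LDF(A)}$. As $\DF(A)$ is compact convex in $\RR^{W(A)}$ and $\overline{\LDF(A)}$ is closed convex, Hahn--Banach gives a strictly separating continuous affine functional; every such functional is of the form $d\mapsto c_0+\sum_i c_i\,d(x_i)$ with finitely many $x_i\in W(A)$. After approximating the $c_i$ by rationals, clearing denominators, absorbing $c_0$ through $d(u)=1$, and collecting signs via additivity of states, I obtain $p,q\in W(A)$ with $d(p)\le d(q)$ for all $d\in\LDF(A)$ but $d_0(p)>d_0(q)$. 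Fix $m$ with $q\le m u$. For each $n$ put $x:=np$ and $y:=nq+u$; then $y$ is full, and for every $d\in\LDF(A)$,
\[
d(np)\le n\,d(q)=d(nq+u)-1\le\Big(1-\tfrac{1}{nm+1}\Big)d(nq+u),
\]
so $\widehat{np}<_s\widehat{nq+u}$. By (ii$'$) we get $np<_s nq+u$, whence $d_0(np)<d_0(nq+u)$, that is $d_0(p)<d_0(q)+\tfrac1n$. Letting $n\to\infty$ gives $d_0(p)\le d_0(q)$, contradicting the choice of $d_0$. Hence $\LDF(A)$ is dense, completing the proof.
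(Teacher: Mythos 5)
Your proof is correct and follows essentially the same route as the paper's: the forward direction rests on the same Goodearl--Handelman/Blackadar--R{\o}rdam state-comparison fact for order-unit monoids (the paper simply cites \cite[Proposition~5.2.13]{AntPerThi18TensorProdCu} where you sketch the enveloping-group argument), and the converse uses the same device of separating a putative $d_0$ from $\overline{\LDF(A)}$, adding copies of $[1]$ and scaling by $n$ to upgrade a non-strict inequality on $\LDF(A)$ to a strict comparison of rank functions, then applying (ii) and letting $n\to\infty$ (the paper packages the separation step via \cite[Lemma~2.9]{BlaRor92ExtendStates}). The only point to watch is your blanket claim that $\widehat{x}<_s\widehat{y}$ is equivalent to $d(x)\le\gamma d(y)$ for all $d\in\LDF(A)$: the backward implication requires $y$ full so that functionals with $\lambda([1])=\infty$ but $\lambda(y)<\infty$ cannot occur, which holds everywhere you invoke it since your right-hand elements contain a copy of $[1]$ (the paper handles the same issue by adding $\widehat{[1]}$ to both sides).
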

\begin{proof}
In (ii), we only need to prove the backwards implication.

\emph{We show that~(i) implies~(ii).}
Let $x,y\in W(A)$ be such that $y$ is full and $\widehat{x}<_s\widehat{y}$.
Then there is $k\in\NN$ such that $(k+1)\widehat{x}\leq k\widehat{y}$ and thus $(k+1)d(x)\leq kd(y)$ for every $d\in\LDF(A)$.
Since $\LDF(A)$ is dense in $\DF(A)$, this implies that $(k+1)d(x)\leq kd(y)$ for any $d\in\DF(A)$.
Since $y$ is full, this implies $x<_sy$;
see \cite[Proposition~5.2.13]{AntPerThi18TensorProdCu}.

\smallskip

\emph{We show that~(ii) implies~(i).}
Let $K=\overline{\LDF(A)}$ in $\DF(A)$. 
Using \cite[Lemma~2.9]{BlaRor92ExtendStates}, we need to show that, for $x,y\in W(A)$, if $d(x)<d(y)$ for every $d\in K$, then $d(x)<d(y)$ for every $d\in \DF(A)$.

Thus, let $x,y\in W(A)$ such that $d(x)<d(y)$ for all $d\in K$. 
The function $K\to \RR$ given by $d\mapsto d(y)-d(x)$ is strictly positive and continuous, hence there is $\delta>0$ such that $d(y)-d(x)>\delta$. 
Choose $n\in\NN$ such that $n\delta >1$ and we get
\[
nd(x)+1<nd(y)\text{ for all }d\in K.
\]
This implies that
\[
n\lambda(x) + \lambda([1]) \leq n\lambda(y)
\]
for every $\lambda\in \FF_{[1]}(\Cu(A))$, and consequently for every $\lambda\in \FF(\Cu(A))$ such that $\lambda([1])<\infty$.  
Adding $\lambda([1])$ on both sides extends the inequality to all functionals $\lambda\in \FF(\Cu(A))$, since both sides are then $\infty$ whenever $\lambda([1])=\infty$. 
Hence,
\[
n\widehat{x}+2\widehat{[1]} \leq n\widehat {y} + \widehat{[1]}.
\]
Given any $k\in\NN$, we deduce that $k(n\widehat{x}+2\widehat{[1]}) <_s (k+1)(n\widehat {y} + \widehat{[1]})$, and using the assumption at the second step, we obtain
\[
knx + 2k[1] 
= k(nx+2[1]) 
<_s (k+1)(ny+[1])
= kny + ny + (k+1)[1].
\]

Since $y\in W(A)$, there exists $k$ such that $ny\leq (k-2)[1]$, and therefore 
\[
ny + (k+1)[1] \leq (2k-1)[1].
\] 
With this choice of $k$ we get
\[
knx + 2k[1] 
<_s kny + ny + (k+1)[1]
\leq kny + (2k-1)[1].
\]
Evaluating at any $d\in\DF(A)$, we see that $d(x)<d(y)$, as desired.
\end{proof}

\begin{thm}
\label{BHdensitycor}
Let $A$ be a unital \ca{} such that $(\Cu(A),[1])$ has (LBCA) for uniformly full elements.
Then $\LDF(A)$ is dense in $\DF(A)$. 
\end{thm}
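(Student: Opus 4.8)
The plan is to deduce the statement from the characterization of density of dimension functions in \autoref{BHdensity}. Thus it suffices to verify condition~(ii) there: for all $x,y\in W(A)$ with $y$ full, one has $x<_sy$ if and only if $\widehat{x}<_s\widehat{y}$. The forward implication is automatic, since applying any $\lambda\in\FF(\Cu(A))$ to a witnessing inequality $(k+1)x\le ky$ yields $(k+1)\widehat{x}\le k\widehat{y}$. So the whole content lies in the backward implication $\widehat{x}<_s\widehat{y}\Rightarrow x<_sy$, and this is where the hypothesis that $(\Cu(A),[1])$ has (LBCA) for uniformly full elements --- that is, condition~(ii) of \autoref{prp:CharFuDenseUpower} with $u=[1]$ --- will be used.

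Fix $x,y\in W(A)$ with $y$ full and $\widehat{x}<_s\widehat{y}$. By \autoref{par:ComparisonAmplitude} we may write $\widehat{x}\le\gamma\widehat{y}$ for some $\gamma\in(0,1)$. The difficulty is that (LBCA) only delivers $x\le_N y$, which is strictly weaker than $x<_sy$; to bridge this gap I first create strict slack by rescaling. Choose $n\in\NN$ large enough that $\gamma<\tfrac{n}{n+1}$ and set $\gamma':=\tfrac{(n+1)\gamma}{n}\in(0,1)$. Then, using additivity of $\widehat{\freeVar}$,
\[
\widehat{(n+1)x}=(n+1)\widehat{x}\le(n+1)\gamma\,\widehat{y}=\gamma'\,\widehat{ny}.
\]
Because $x$ and $y$ lie in $W(A)$ they are dominated by multiples of $[1]$, and because $y$ is full there is $d_0\in\NN$ with $[1]\le d_0 y\le d_0(ny)$; hence I can choose a single $d\in\NN$ for which the elements $(n+1)x$ and $ny$ meet the uniform fullness constraints $(n+1)x,\,ny\le d[1]$ and $[1]\le d(ny)$. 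Applying the hypothesis for $\gamma'$ and this $d$ produces $N\in\NN$ with $(n+1)x\le_N ny$.

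It remains to repackage $\le_N$ into a single strict comparison. Fix any $m\ge N$; then $m(n+1)x\le mny$, that is $(mn+m)x\le(mn)y$. Writing $p:=mn$ and using $m\ge 1$ to bound $(p+1)x\le(p+m)x$, we obtain $(p+1)x\le py$, which is exactly $x<_sy$. This verifies condition~(ii) of \autoref{BHdensity}, and the conclusion follows. The main obstacle is precisely this mismatch between the relation $\le_N$ furnished by (LBCA) and the relation $<_s$ required as input to \autoref{BHdensity}; the rescaling to $(n+1)x$ and $ny$, which turns the slack $\gamma<1$ into honest room for a strict inequality, is what resolves it, and one must take care that the rescaled elements still satisfy the uniform fullness bounds so that the hypothesis is applicable.
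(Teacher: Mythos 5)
Your proposal is correct and follows essentially the same route as the paper's proof: both rescale $x$ and $y$ by integers to turn the strict functional inequality into one of the form $\widehat{v}\leq\gamma'\widehat{w}$ with the rescaled elements satisfying the uniform fullness bounds, apply (LBCA) for uniformly full elements to get $v\leq_N w$, and then unwind this to $x<_s y$. The only difference is cosmetic (the paper uses $(m+2)x$ versus $(m+1)y$ where you use $(n+1)x$ versus $ny$), and your explicit final repackaging of $\leq_N$ into $<_s$ is a welcome elaboration of a step the paper leaves implicit.
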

\begin{proof}
It suffices to verify~(ii) of \autoref{BHdensity}.
Let $x,y\in W(A)$ such that $y$ is full and $\widehat{x}<_s\widehat{y}$.
We need to prove that $x <_s y$.

Using that $\widehat{x}<_s\widehat{y}$, we can choose $m\in\NN$ such that $(m+2)\widehat{x}\leq m\widehat{y}$.
Set $\gamma := \tfrac{m}{m+1}$ and notice that then $(m+2)\widehat{x}\leq\gamma(m+1)\widehat{y}$.
Since every element $z$ in $W(A)$ satisfies $z\ll\infty[1]$, and since $y$ is full, we can choose $d\in\NN$ such that $(m+2)x,(m+1)y\leq d[1]$ and $[1]\leq d(m+1)y$.

Applying that $(\Cu(A),[1])$ has (LBCA) for uniformly full elements for $\gamma$ and $d$, we obtain $N=N(\gamma,d)$, which we can apply to $(m+2)x$ and $(m+1)y$ to obtain $(m+2)x\leq_N(m+1)y$, whence $x<_s y$.
\end{proof}

\begin{pgr}[Radius of comparison]
Let $S$ be a \CuSgp, and let $u\in S$ be a compact, full element.
Following \cite[Deﬁnition~3.2.2]{BlaRobTikTomWin12AlgRC}, the \emph{radius of comparison} of~$(S,u)$, denoted by $\rc(S,u)$, is defined as the infimum over all $r\in[0,\infty)$ such that the following holds:
If $x,y\in S$ satisfy $\widehat{x}+r\widehat{u}\leq\widehat{y}$, then $x\leq y$.

The radius of comparison of a unital \ca{} $A$ is $\rc(A)=\rc(\Cu(A),[1])$.
\end{pgr}

\begin{lma}
\label{prp:FiniteRC-LBCAfull}
Let $S$ be a \CuSgp{} satisfying \axiomO{5}, let $u\in S$ be a compact, full element, and assume that $(S,u)$ has finite radius of comparison.
Then $(S,u)$ has (LBCA) for uniformly full elements.
\end{lma}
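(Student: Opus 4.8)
The plan is to verify condition~(ii) of \autoref{prp:CharFuDenseUpower} directly, extracting from the finiteness of the radius of comparison the single comparison inequality that does all the work. First I would fix a real number $r$ with $\rc(S,u) < r < \infty$. Since the set of admissible constants in the definition of the radius of comparison is upward closed (a larger constant only strengthens the hypothesis $\widehat{v} + r\widehat{u} \leq \widehat{w}$), this $r$ itself enjoys the comparison property: whenever $v,w \in S$ satisfy $\widehat{v} + r\widehat{u} \leq \widehat{w}$, then $v \leq w$.

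Next, given $\gamma \in (0,1)$ and $d \in \NN$, I would choose any integer $N=N(\gamma,d)$ with $N \geq \frac{rd}{1-\gamma}$ and $N \geq 1$. The main step is then the following computation for elements $x,y \in S$ with $u \leq dy$ and $\widehat{x} \leq \gamma\widehat{y}$, and for any $n \geq N$:
\[
\widehat{nx} + r\widehat{u}
= n\widehat{x} + r\widehat{u}
\leq n\gamma\widehat{y} + rd\widehat{y}
= (n\gamma + rd)\widehat{y}
\leq n\widehat{y}
= \widehat{ny}.
\]
Here the first inequality uses $\widehat{x} \leq \gamma\widehat{y}$ scaled by $n$ together with $\widehat{u} \leq d\widehat{y}$, the latter obtained by evaluating the relation $u \leq dy$ at every functional; the final inequality uses $n\gamma + rd \leq n$, which holds because $n \geq N \geq \frac{rd}{1-\gamma}$. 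Applying the comparison property of $r$ to the pair $nx,ny$ yields $nx \leq ny$, and since this holds for all $n \geq N$ we conclude $x \leq_N y$. This is precisely condition~(ii) of \autoref{prp:CharFuDenseUpower}, so $(S,u)$ has (LBCA) for uniformly full elements.

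I do not anticipate a genuine obstacle here; the only point requiring a moment's care is that $\widehat{y}$ is $[0,\infty]$-valued, so I would note that the scalar inequality $(n\gamma + rd)\widehat{y} \leq n\widehat{y}$ is read pointwise and remains valid where $\widehat{y}(\lambda) \in \{0,\infty\}$ (in the former case $\widehat{x}(\lambda) = \widehat{u}(\lambda) = 0$ as well). It is worth remarking that the hypothesis $x,y \leq du$ appearing in the uniform fullness condition is not actually needed: only the lower bound $u \leq dy$ enters, through the control $\widehat{u} \leq d\widehat{y}$ of the order unit by $\widehat{y}$.
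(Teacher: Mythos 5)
Your proof is correct and follows essentially the same route as the paper: both arguments combine $\widehat{x}\leq\gamma\widehat{y}$ with $\widehat{u}\leq d\widehat{y}$ into an inequality of the form $\widehat{v}+r\widehat{u}\leq\widehat{w}$ and invoke the (upward-closed) radius-of-comparison property. The only cosmetic difference is that the paper applies the property once to obtain $(N'+1)x\leq N'y$ and then passes to $x\leq_N y$ via the standard implication, whereas you apply it separately for each $n\geq N$, which yields a smaller constant and, as you correctly observe, never uses the upper bound $x,y\leq du$.
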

\begin{proof}
Choose $R\in\NN$ with $\rc(S,u)<R$.
Let $\gamma\in(0,1)$ and $d\in\NN$.
Choose $n=n(\gamma,d)\in\NN$ large enough such that
\[
\gamma < \frac{n}{n+d+1}.
\]
Then set $N'=R(n+d)$ and $N=N'(N'+1)$.

To see that $N$ has the desired properties, let $x,y\in S$ such that $x,y\leq du$ and $u\leq dy$ and $\widehat{x}\leq \gamma\widehat{y}$. 
Then $\widehat{x}\leq \frac{n}{n+d+1} \widehat{y}$, and we get
\[ 
(n+d+1)\widehat{x} \leq n\widehat{y}.
\]

Adding $\widehat{u}\leq d\widehat{y}$ and multiplying everything by $R$ we have 
\[ 
R(n+d+1)\widehat{x}+R\widehat{u} \leq R(n+d)\widehat{y}.
\]
Using $\rc(S,u)<R$, we get $R(n+d+1)x\leq R(n+d)y$, which implies $(N'+1)x\leq N'y$, and consequently $x\leq_N y$. 
\end{proof}

We recover \cite[Theorem 5.2.5 (1)]{Sil16Thesis} and \cite[Theorem 3.19]{ArcRobTik17Dixmier} for exact \ca{s}.

\begin{thm}
Let $A$ be a unital \ca{} with finite radius of comparison.
Then the set of limit $2$-quasitracial states $\LimQT_1(A_\filter)$ is dense in the set $\QT_1(A_\filter)$ of $2$-quasitracial states.
Further, $\LDF(A)$ is dense in $\DF(A)$.
\end{thm}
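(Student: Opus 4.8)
The plan is to derive the theorem as a direct corollary of the machinery already assembled, with the finite radius of comparison hypothesis serving only as the point of entry. First I would unwind the definitions: since $\rc(A)=\rc(\Cu(A),[1])$, the hypothesis says precisely that $(\Cu(A),[1])$ has finite radius of comparison. Recalling from \autoref{pgr:axioms} that $\Cu(A)$ satisfies \axiomO{5} and that $[1]$ is a full compact element, I would apply \autoref{prp:FiniteRC-LBCAfull} with $S=\Cu(A)$ and $u=[1]$ to conclude that $(\Cu(A),[1])$ has (LBCA) for uniformly full elements, i.e.\ condition~(ii) of \autoref{prp:CharFuDenseUpower} holds. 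This single observation feeds both assertions of the theorem.

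For the first assertion, I would invoke the equivalence (i)$\Leftrightarrow$(ii) of \autoref{prp:CharFuDenseUpower} applied to $(\Cu(A),[1])$, obtaining that $\LimF_v((\Cu(A),[1])_\filter)$ is dense in $\FF_v((\Cu(A),[1])_\filter)$. Transporting this across the homeomorphism $\tau\mapsto d_\tau$ of \autoref{linkqtracesfunc}, and using the identification $\Cu_\mathrm{sc}(A_\filter)\cong (\Cu(A),\Sigma_A)_\filter$ together with the bijection of \autoref{QTFbijection} (third part) between $\LimQT_1(A_\filter)$ and $\LimF_{[1]}(\prod_\filter(\Cu(A),[1]))$, the density statement is carried over verbatim to the tracial side. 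This is exactly the chain that establishes the equivalence of~(i) and~(ii) in \autoref{thmtraces}; so the conclusion is that $\LimQT_1(A_\filter)$ is dense in $\QT_1(A_\filter)$.

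For the second assertion, having already established in the first step that $(\Cu(A),[1])$ has (LBCA) for uniformly full elements, I would simply quote \autoref{BHdensitycor}, whose hypothesis is precisely this property, to conclude that $\LDF(A)$ is dense in $\DF(A)$.

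There is no genuine obstacle here, since all the heavy lifting has been done in the preceding results; the only point requiring care is bookkeeping across the three identifications, namely that the scaled Cuntz semigroup ultrapower agrees with $\Cu_\mathrm{sc}(A_\filter)$, that the canonical full compact element $v$ of the ultrapower plays the role of the class of the unit of $A_\filter$, and that $\tau\mapsto d_\tau$ sends normalized limit $2$-quasitraces to limit functionals normalized at $v$ and $\QT_1$ to $\FF_{[1]}$. Once these normalizations are aligned, both assertions follow immediately and the theorem recovers the cited results of \cite{Sil16Thesis} and \cite{ArcRobTik17Dixmier} in the exact case.
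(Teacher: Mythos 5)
Your proposal is correct and follows exactly the same route as the paper's proof: apply \autoref{prp:FiniteRC-LBCAfull} to get (LBCA) for uniformly full elements, invoke \autoref{prp:CharFuDenseUpower} together with \autoref{QTFbijection} for the density of limit $2$-quasitracial states, and quote \autoref{BHdensitycor} for the Blackadar--Handelman statement. The extra care you take with the normalization bookkeeping is sound but not needed beyond what the cited results already provide.
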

\begin{proof}
By \autoref{prp:FiniteRC-LBCAfull}, $(\Cu(A),[1])$ has (LBCA) for uniformly full elements, that is, condition~(ii) of \autoref{prp:CharFuDenseUpower} is satisfied.
Using \autoref{QTFbijection}, it follows from \autoref{prp:CharFuDenseUpower}(i) that $\LimQT_1(A_\filter)$ is dense in $\QT_1(A_\filter)$.
Further, it follows from \autoref{BHdensitycor} that $\LDF(A)$ is dense in $\DF(A)$.
\end{proof}

\section{Applications to simple, pure C*-algebras}
\label{sec:pure}

In this section, we show that every simple \ca{} that is $(m,n)$-pure in the sense of Winter is already pure;
see \autoref{pureReduction}.
An important ingredient in the proof is that $m$-comparison implies (LBCA);
see \autoref{prp:ComparisonLBCA}.

\begin{pgr}[Divisibility]
\label{pgr:divisibility}
Let $S$ be a \CuSgp.
Given $n\in\NN$, an element $x\in S$ is \emph{$n$-almost divisible} if for every $k\in\NN$ and every $x' \in S$ with $x' \ll x$, there exists $z\in S$ such that $kz\leq x$ and $x'\leq (k+1)(n+1)z$. 
If all elements in $S$ are $n$-almost divisible, then $S$ is said to be $n$-almost divisible.
One says that~$S$ is \emph{almost divisible} if it is $0$-almost divisible.

This notion of ($n$-)almost divisibility differs slightly from other notions considered in the literature, but it has been considered, for example, in \cite[Section~2.3]{RobTik17NucDimNonSimple} and \cite[Definition~7.3.4]{AntPerThi18TensorProdCu}. 
It is a more convenient notion as it behaves well with respect to natural constructions such as ultraproducts and direct limits.
\end{pgr}

\begin{rmk}
\label{rmk:mcomparison}
We remark that a \CuSgp{} $S$ has $m$-comparison if, and only if, for $x,y_0,\dots,y_m\in S$, the condition $\widehat{x} \leq \gamma\widehat{y_j}$ for some $\gamma<1$ and for $j=0,\ldots,m$ implies $x\leq\sum_{j=0}^m y_j$. 
This was observed in \cite[Lemma~2.1]{Rob11NuclDimComp}, and we offer a short sketch of the argument for completeness:
The backward implication is an immediate application of the definition. 
For the forward direction, if $S$ has $m$-comparison and $x,y_0,\dots,y_m\in S$ are such that $\widehat{x} \leq \gamma\widehat{y_i}$ for some $\gamma<1$ and all~$j$, let $x' \in S$ with $x' \ll x$ and apply \autoref{leqss} to conclude that $x' <_s y_j$ for each $j$. 
It then follows that $x' \leq \sum_{j=0}^m y_j$, and the desired inequality follows by passing to the supremum over all $x'$ with $x' \ll x$. 

In particular, if $S$ has $m$-comparison and $\widehat{x}\leq\widehat{y}$, we have $x\leq 2(m+1)y$. 
This follows from the previous argument applied to $\widehat{x}\leq \tfrac{1}{2}\widehat{2y}$.
\end{rmk}

\begin{prp}
\label{prp:ComparisonLBCA}
Let $S$ be a \CuSgp{} satisfying \axiomO{5}, \axiomO{6}, and Edwards' condition, and assume that $S$ has $m$-comparison for some $m$.
Then $S$ has (LBCA).
\end{prp}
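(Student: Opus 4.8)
The plan is to reduce the statement to the already-established equivalence in \autoref{prp:trivialScaleDenseFS}, whose standing hypotheses --- \axiomO{5}, \axiomO{6}, and Edwards' condition --- coincide exactly with those assumed here. The key observation is that $m$-comparison \emph{immediately} supplies the globally bounded comparison appearing as condition~(iii) of that corollary, while the genuinely hard passage (upgrading bounded comparison to the $\gamma$-and-$N$ form that constitutes (LBCA)) is precisely the content of \autoref{prp:trivialScaleDenseFS}, which has already been proved through the ultrapower separation machinery of \autoref{sec:main}.

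First I would invoke \autoref{rmk:mcomparison}. Since $S$ has $m$-comparison, applying the $\gamma$-form of $m$-comparison recorded there to the inequality $\widehat{x} \leq \tfrac{1}{2}\widehat{2y}$ (that is, taking all $m+1$ test elements equal to $2y$ and $\gamma=\tfrac12$) shows that $\widehat{x} \leq \widehat{y}$ implies $x \leq 2(m+1)y$, for all $x,y \in S$. Thus condition~(iii) of \autoref{prp:trivialScaleDenseFS} holds with $M := 2(m+1)$.

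Next, because $S$ satisfies \axiomO{5}, \axiomO{6}, and Edwards' condition, \autoref{prp:trivialScaleDenseFS} is applicable and delivers the equivalence of its conditions~(ii) and~(iii). Hence condition~(ii) holds: for every $\gamma \in (0,1)$ there exists $N \in \NN$ such that $\widehat{x} \leq \gamma\widehat{y}$ implies $x \leq_N y$, for all $x,y \in S$. Since this implication is valid for \emph{all} elements of $S$, it holds in particular for $x,y \in \Sigma^{(d)}$ for any scale $\Sigma$ on $S$ and any $d\in\NN$; therefore $(S,\Sigma)$ has (LBCA) for every scale $\Sigma$, which is the desired conclusion.

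The main --- and essentially only --- obstacle is conceptual rather than computational. One must recognize that the factor $2(m+1)$ lost in a single application of $m$-comparison cannot be removed by a naive splitting argument: decomposing $ny$ into $m+1$ multiples of $y$ and comparing each to $x$ breaks down as soon as $\gamma \geq \tfrac{1}{m+1}$, so a direct elementary route to the exact inequalities $nx\leq ny$ fails for $\gamma$ close to~$1$. The correct strategy is therefore to produce only the bounded comparison by hand and to delegate the upgrade to (LBCA) to \autoref{prp:trivialScaleDenseFS}. As the authors themselves remark immediately after that corollary, no proof of the implication (iii)$\Rightarrow$(ii) avoiding ultrapowers is known, so citing it directly is both the cleanest and the most honest route.
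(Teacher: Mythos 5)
Your proposal is correct and follows exactly the paper's own (very short) proof: apply \autoref{rmk:mcomparison} to get the global bounded comparison $\widehat{x}\leq\widehat{y}\Rightarrow x\leq 2(m+1)y$, then invoke the equivalence of conditions~(ii) and~(iii) in \autoref{prp:trivialScaleDenseFS} to upgrade this to (LBCA). Your added remarks on why a naive direct argument fails and on the role of the ultrapower machinery are accurate but not needed.
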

\begin{proof}
This follows from \autoref{rmk:mcomparison} and \autoref{prp:trivialScaleDenseFS}.
\end{proof}

\begin{pgr}[Pure \ca{s}]
A \ca{} $A$ is said to be \emph{$(m,n)$-pure} provided $\Cu(A)$ has $m$-com\-par\-i\-son and is $n$-almost divisible. 
This notion was considered by Winter in \cite[Section~3]{Win12NuclDimZstable} in the context of the non-complete Cuntz semigroup $W(A)$, and replacing the condition $x'\leq (k+1)(n+1)z$ as above with the stronger inequality $x\leq (k+1)(n+1)z$. 
As defined here, this concept was introduced in \cite[Paragraph~2.3]{RobTik17NucDimNonSimple}. 
Note that, in this terminology, $(0,0)$-pure means that $\Cu(A)$ is almost unperforated and almost divisible. 
As in \cite{Win12NuclDimZstable}, a $(0,0)$-pure \ca{} will be called \emph{pure}.
\end{pgr}

Winter proved in \cite[Corollary~7.2]{Win12NuclDimZstable} that if $A$ is a unital, simple, separable \ca{} with locally finite nuclear dimension and which is $(m,n)$-pure for some $m,n\in\NN$, then $A$ is $\mathcal{Z}$-stable.
Using results of R{\o}rdam from \cite{Ror04StableRealRankZ}, this in turn implies that $A$ is pure;
see also \cite[Proposition 3.7]{Win12NuclDimZstable}
In \cite{Tik14NucDimZstable}, Tikuisis showed that the existence of a unit can be dropped.

We show here that $(m,n)$-pureness still implies pureness after dropping the assumptions of separability and of locally finite nuclear dimension.
Note that pureness is the \CuSgp{} analogue of $\mathcal{Z}$-stability, in the sense that it characterizes the \CuSgp{s} that tensorially absorb $\Cu(\mathcal{Z})$; 
see \cite[Theorem~7.3.11]{AntPerThi18TensorProdCu}.

\begin{thm}
\label{pureReduction}
A simple $(m,n)$-pure \ca{} is pure.
\end{thm}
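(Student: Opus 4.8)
The plan is to verify the two defining properties of pureness for $S := \Cu(A)$ separately, namely that $S$ is almost divisible and that it is almost unperforated. First I would reduce to the stable case by replacing $A$ with $A\otimes\KK$: this preserves simplicity and leaves $\Cu(A)$ unchanged (hence both the hypothesis of $(m,n)$-pureness and the desired conclusion are unaffected), while making the scale $\Sigma_A$ equal to all of $S$, as in the proof of \autoref{prp:denseQTstable}. In this trivially scaled situation the equivalences of \autoref{prp:trivialScaleDenseFS} and the criterion of \autoref{prp:almostDivCu} are available without having to carry along the scale. I would also dispose of a degenerate case at the outset: if $S$ had a minimal nonzero element $u$, then testing the $n$-almost divisibility condition of \autoref{pgr:divisibility} on $x=u$ (so $x'=u\ll u$) with $k\geq 2$ forces an admissible $z$ to be $0$, whence $u\leq 0$, a contradiction. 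Thus $S$ has no minimal nonzero element, so below every nonzero element there are strictly smaller nonzero elements, and every nonzero element is full.

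With the reduction in place, the comparison hypothesis becomes quantitative. Since $\Cu(A)$ always satisfies \axiomO{5}, \axiomO{6} and Edwards' condition, and $S$ has $m$-comparison, \autoref{prp:ComparisonLBCA} gives that $S$ has (LBCA). Granting for the moment that $S$ is almost divisible, \autoref{prp:almostDivCu} then upgrades (LBCA) to almost unperforation, which is precisely $0$-comparison; together with almost divisibility this says that $S$ is $(0,0)$-pure, i.e. that $A$ is pure. Hence the whole theorem reduces to one assertion: a simple $S$ with no minimal nonzero element, with (LBCA), and with $n$-almost divisibility, is in fact almost divisible.

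The heart of the matter, and the step I expect to be the main obstacle, is this perfection of divisibility: passing from the factor $(n+1)$ in the inequality $x'\leq (k+1)(n+1)z$ down to the sharp factor $1$ in $x'\leq (k+1)z$. The difficulty is genuine, because a naive regrouping of an $n$-almost division merely reshuffles the loss: applying $n$-almost divisibility to $x$ with a large parameter yields a $z$ with $kz\leq x$ whose rank is pinned down only up to the multiplicative factor $(n+1)$ — its rank may be as small as $\widehat{x}/\big(k(n+1)\big)$ — and comparison can never enlarge such a $z$. The way around this is to exploit simplicity. Because $S$ has no minimal nonzero element and enjoys (LBCA), the order on $S$ is governed by ranks through \autoref{leqss} and \autoref{rmk:mcomparison}, and full elements of prescribed rank lie densely. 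Concretely, given $x'\ll x$ and $k$, I would combine $n$-almost divisibility applied with a large auxiliary parameter with the quantitative comparison of \autoref{prp:ComparisonLBCA} to manufacture a full atom whose rank is trapped in the narrow window $\big[\widehat{x'}/(k+1),\ \widehat{x}/k\big]$, rather than merely bounded below by $\widehat{x}/\big(k(n+1)\big)$, and then assemble the correct number of copies of this atom, using comparison on the assembled element to certify simultaneously $kz\leq x$ and $x'\leq (k+1)z$. Locating the atom inside that window — exactly the place where the $n$-almost divisible datum must be refined using comparison and the non-elementary structure of simple Cuntz semigroups — is the technical crux, and is where I expect the argument to require the most care.

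Finally, with almost divisibility in hand I would assemble the pieces: (LBCA) from \autoref{prp:ComparisonLBCA}, almost unperforation from \autoref{prp:almostDivCu}, and hence pureness of $A$. As an internal consistency check, the divisibility step must fail for $S=\Cu(\KK)=\overline{\NN}$, which is almost unperforated but not almost divisible; this is consistent with the above, since $\overline{\NN}$ has a minimal nonzero element and so, as observed in the first paragraph, is not $n$-almost divisible for any $n$ and is therefore excluded from the hypotheses.
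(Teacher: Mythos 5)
Your overall skeleton agrees with the paper's: reduce to the stable case, note that $m$-comparison plus \axiomO{5}, \axiomO{6} and Edwards' condition gives (LBCA) via \autoref{prp:ComparisonLBCA}, reduce everything to proving almost divisibility of $\Cu(A)$, and then invoke \autoref{prp:almostDivCu} to upgrade (LBCA) to almost unperforation. But the step you yourself identify as ``the heart of the matter'' --- passing from $n$-almost divisibility to genuine almost divisibility --- is left as a heuristic, and this is a genuine gap rather than a routine verification. You correctly diagnose the obstruction (the $z$ produced by $n$-almost divisibility may have rank as small as $\widehat{x}/(k(n+1))$, and comparison can only shrink, never enlarge), but your proposed remedy --- ``manufacture a full atom whose rank is trapped in the window $[\widehat{x'}/(k+1),\,\widehat{x}/k]$'' --- presupposes exactly the ability to produce elements of prescribed rank, which is what almost divisibility asserts and what $n$-almost divisibility does not give. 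Nothing in (LBCA), \autoref{leqss}, or \autoref{rmk:mcomparison} creates such elements; those results only convert rank inequalities into order inequalities for elements that already exist. No purely $\CatCu$-semigroup-level construction of the required atom is known, and your sketch does not supply one.

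The paper closes this gap with genuinely C*-algebraic and ultrapower input: for a positive contraction $a\in A_+$ it forms the generalized central sequence algebra $C/I$ with $C=\{a\}'\cap A_\filter$ and $I=\{a\}^\perp\cap A_\filter$, invokes \cite[Corollary~7.6]{RobTik17NucDimNonSimple} (which uses the full $(m,n)$-pureness hypothesis) to obtain a unital embedding of the Jiang--Su algebra into $C/I$, uses almost divisibility of $[1]\in\Cu(\mathcal{Z})$ to divide $[1]$ in $\Cu(C/I)$, transports the resulting element $e$ to $b=a\bar e$ to divide $[a]$ in $\Cu(A_\filter)$, and then lifts the finitely many Cuntz-comparison relations back to $\Cu(A)$ by an $\varepsilon$--$\delta$ argument with representatives in $\prod_n A$. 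It also treats separately the case where $\QT(A)$ is trivial (showing $A$ is then purely infinite, using $n$-almost divisibility to produce a nonzero $z$ with $2(m+1)z\le y$ and then $m$-comparison), a case your rank-window strategy cannot handle since there are no functionals to measure ranks against. Your consistency check with $\overline{\NN}$ is a nice sanity observation, but as it stands the central divisibility step of your argument is not established.
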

\begin{proof}
Let $A$ be a simple \ca{} that is $(m,n)$-pure for some $m,n \in \NN$.
Assume, without loss of generality, that $A$ is stable.

Let us consider first the case that $A$ has no nontrivial lower semicontinuous $2$-quasitraces.
By the isomorphism between $\QT(A)$ and $\FF(\Cu(A))$ (\autoref{QTFbijection}), this means that $\Cu(A)$ only has the zero and the $\infty$ functionals.
Let us show that this implies that~$A$ is purely infinite (hence, pure).
Indeed, let $x,y\in \Cu(A)$ be nonzero elements.  
Using $n$-almost divisibility, find a nonzero $z\in \Cu(A)$ such that $2(m+1)z\leq y$.
Since $\widehat{x}\leq \widehat{z}$, we have by $m$-comparison that $x\leq 2(m+1)z$;
see \autoref{rmk:mcomparison}.
Thus, $x\leq y$.  
Since $x,y$ are arbitrary, we obtain $\Cu(A)=\{0,\infty\}$.

Let us now assume that $\FF(\Cu(A))$ has at least one element other than the $0$ and $\infty$ functionals. 
Since $\Cu(A)$ has $m$-comparison, it has $(LBCA)$ by \autoref{prp:ComparisonLBCA}.
It suffices now to show that $\Cu(A)$ is almost divisible, since then almost unperforation follows from \autoref{prp:almostDivCu}.

Fix a free ultrafilter $\filter$ on $\NN$.
Let $a\in A_+$ be a positive contraction. 
Let $C=\{a\}'\cap A_\filter$ and $I=\{a\}^\perp \cap A_{\filter}$ denote the commutant and annihilator of $\{a\}$ in~$A_\filter$, respectively. 
By \cite[Corollary~7.6]{RobTik17NucDimNonSimple}, there exists a unital embedding of the Jiang--Su algebra in $C/I$. 
(In the notation of \cite{RobTik17NucDimNonSimple}, $C/I$ is $\FF(C^*(a),A)$, which is a generalized central sequence algebra of the type studied by Kirchberg.) 
In particular, since $[1]\in \Cu(\mathcal{Z})$ is almost divisible (by \cite[Lemma~4.2]{Ror04StableRealRankZ}), so is the case for $[1]\in\Cu(C/I)$. 

Thus, for given $k\in\NN$ there exists $e\in C/I$ such that $k[e]\leq [1]\leq (k+1)[e]$ in $\Cu(C/I)$. 
Choose any positive lift $\bar e\in C\subseteq A_\filter$, and consider the element $b=a\bar e\in A_{\filter}$. 
Since $\Cu(C/I)\cong\Cu(C)/\Cu(I)$, induced by the quotient map $C\to C/I$, the inequality $k[e]\leq [1]$ means that $k[\bar{e}]\leq [1]+[z]$ in $\Cu(C)$, for some $[z]\in\Cu(I)$; 
see the comments prior to \autoref{pgr:Cuultraproducts}. 
Using that $b=a\bar{e}=\bar{e}a$ and that $az=za=0$, we obtain $k[b]\leq [a]$.
Likewise, it follows from $[1]\leq (k+1)[e]$ that $[a]\leq (k+1)[b]$ in $\Cu(C)$. 
Then $k[b]\leq [a]\leq (k+1)[b]$ in $\Cu(A_\filter)$, and thus $[a]$ is almost divisible in $\Cu(A_\filter)$.

We now show that $[a]$ is almost divisible as an element of $\Cu(A)$. 
Let $\varepsilon>0$. 
Choose $\delta>0$ such that $[(a-\varepsilon)_+] \leq (k+1)[(b-\delta)_+]$ in $\Cu(A_\filter)$, where $[b]$ is as in the previous paragraph. 
Then there exist $x,y\in M_{k+1}(A_\filter)$ such that 
\[
\|b\otimes 1_k - xax^*\| < \delta, \andSep
\|(a-\varepsilon)_+-y((b-\delta)_+\otimes 1_{k+1})y^*\| < \varepsilon.
\]
Let $(b_n)_n \in (\prod_n A)_+$, and $(x_n)_n,(y_n)_n \in \prod_n M_{k+1}(A)$ be lifts of $b$, $x$, and $y$. 
Then, with $b'=b_n$, $x'=x_n$, and $y'=y_n$ for sufficiently large $n$, we have that 
\[
\|b'\otimes 1_k - x'ax'^*\| < \delta, \andSep
\|(a-\varepsilon)_+-y'((b'-\delta)_+\otimes 1_{k+1})y'^*\| < \varepsilon.
\]
Now, working in $\Cu(A)$, we deduce from the first inequality that $k[(b'-\delta)_+]\leq [a]$ and from the second one that $[(a-2\varepsilon)_+]\leq (k+1)[(b'-\delta)_+]$. 
This shows that $[a]$ is almost divisible in $\Cu(A)$, as desired.
\end{proof}

\appendix

\section{Separation of functionals}
\label{sec:separateFS}

For a \CuSgp{} $S$ satisfying \axiomO{5}, we prove in this appendix a version of the Hahn--Banach separation theorem for $\FF(S)$;
see \autoref{prp:Separation}.
We deduce a version of the bipolar theorem, characterizing when a functional in $\FF(S)$ belongs to the closed cone generated by a subset of $\FF(S)$;
see \autoref{prp:Bipolar}.

Throughout this appendix we make the blanket assumption that $S$ is a \CuSgp{} satisfying \axiomO{5}.

Let us start with some preliminary definitions and lemmas. 
By a \emph{subcone} of a cone~$C$ we understand a subset $D\subseteq C$ that is closed under addition and multiplication by strictly positive scalars and that is a monoid.
Note that a subcone is not necessarily a submonoid since its origin may be different from the origin of the containing cone.

We say that a cone $C$ is \emph{cancellative} if $x+z=y+z$ implies $x=y$, for all $x,y,z\in C$.
Every $\RR$-vector space is a cancellative cone.
More generally, every subcone of an $\RR$-vector space is cancellative.
Using the Grothendieck completion, one sees that the converse also holds:
A cone is cancellative if and only if it is a subcone of an $\RR$-vector space.

We will use the following version of the Hahn--Banach Separation Theorem.

\begin{prp}
\label{prp:HB-Cones}
Let $F$ and $P$ be cancellative cones, let $\langle\freeVar,\freeVar\rangle\colon F\times P\to\RR$ be a map that is additive and $(0,\infty)$-homogeneous in each variable.
Let $D\subseteq F$ be a subcone that is closed in the $\sigma(F,P)$ topology associated to the pairing $\langle\freeVar,\freeVar\rangle$,  and that contains the origin of $F$.  Let $\mu\in F\setminus D$. 
Then there exist $f_1,f_2\in P$ such that
\[
\langle\lambda, f_1\rangle \leq \langle\lambda, f_2\rangle \hbox{ for all $\lambda\in D$},\andSep
\langle\mu, f_1\rangle > \langle\mu, f_2\rangle.
\]
\end{prp}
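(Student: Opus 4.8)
The plan is to reduce the statement to the classical Hahn--Banach separation theorem for a point and a closed convex set in a locally convex space, exploiting the fact (recorded just before the statement) that a cancellative cone embeds as a subcone of an $\RR$-vector space via its Grothendieck completion. Write $V_F$ and $V_P$ for the $\RR$-vector spaces into which $F$ and $P$ embed, so that $F\subseteq V_F$ and $P\subseteq V_P$, with the origin of each cone sent to $0$. First I would extend the given pairing to a genuine bilinear form $\langle\freeVar,\freeVar\rangle\colon V_F\times V_P\to\RR$ by setting $\langle\lambda_1-\lambda_2,\,f_1-f_2\rangle = \langle\lambda_1,f_1\rangle-\langle\lambda_1,f_2\rangle-\langle\lambda_2,f_1\rangle+\langle\lambda_2,f_2\rangle$; additivity and $(0,\infty)$-homogeneity of the original pairing in each variable make this well defined on difference classes. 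This realizes $(V_F,V_P)$ as a dual pair (possibly degenerate, which causes no harm).

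Next I would record two topological facts. Equipping $V_F$ with the weak topology $\sigma(V_F,V_P)$ induced by the functionals $\langle\freeVar,v\rangle$ for $v\in V_P$, the continuous linear functionals on $(V_F,\sigma(V_F,V_P))$ are exactly the maps $\langle\freeVar,v\rangle$ with $v\in V_P$; this is the standard identification of the dual of a space carrying a weak topology. Moreover, the subspace topology that $\sigma(V_F,V_P)$ induces on $F$ coincides with $\sigma(F,P)$, because each $v=f_2-f_1\in V_P$ satisfies $\langle\freeVar,v\rangle=\langle\freeVar,f_2\rangle-\langle\freeVar,f_1\rangle$ on $F$, so the two families of functionals generate the same topology. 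Consequently, if $\bar D$ denotes the closure of $D$ in $(V_F,\sigma(V_F,V_P))$, then $\bar D\cap F=\mathrm{cl}_F(D)=D$, since $D$ is closed in $F$; as $\mu\in F\setminus D$, this yields $\mu\notin\bar D$.

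Finally, $\bar D$ is a closed convex cone in $V_F$ containing $0$ (it is the closure of a subcone containing the origin of $F$), and $\mu\notin\bar D$, so the Hahn--Banach separation theorem supplies a $\sigma(V_F,V_P)$-continuous linear functional $\phi$ and reals $\alpha<\beta$ with $\phi(d)\leq\alpha<\beta\leq\phi(\mu)$ for all $d\in\bar D$. Since $\bar D$ is scale-invariant, boundedness of $\phi$ from above on $\bar D$ forces $\phi\leq 0$ there, while $0\in\bar D$ gives $\alpha\geq 0$, hence $\phi(\mu)>0\geq\phi(d)$ for every $d\in D$. By the dual identification, $-\phi=\langle\freeVar,w\rangle$ for some $w\in V_P$; writing $w=f_2-f_1$ with $f_1,f_2\in P$ then gives $\langle\lambda,f_1\rangle\leq\langle\lambda,f_2\rangle$ for all $\lambda\in D$ and $\langle\mu,f_1\rangle>\langle\mu,f_2\rangle$, exactly as required. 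The genuinely delicate points are the two topological identifications in the second step---that the weak dual is precisely $V_P$ and that the subspace topology on $F$ is $\sigma(F,P)$, the latter underpinning the key equality $\bar D\cap F=D$---since everything afterward is the textbook separation argument; I expect checking well-definedness of the extended pairing and this closure identity to be the main bookkeeping obstacle.
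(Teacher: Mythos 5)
Your proposal is correct and follows essentially the same route as the paper's proof: both pass to vector spaces via Grothendieck completion and then invoke a classical separation theorem for the resulting dual pair. The only cosmetic difference is that the paper completes only $P$ and embeds $F$ into the algebraic dual $V^*$ via $\kappa(\lambda)(g_1-g_2)=\langle\lambda,g_1\rangle-\langle\lambda,g_2\rangle$, applying the bipolar theorem there, whereas you complete both cones and separate $\mu$ from $\overline{D}$ directly in $V_F$; the key verifications (agreement of the topologies, $\mu$ lying outside the relevant closure, upgrading boundedness of the separating functional on a cone to a sign condition, and writing the functional as a difference of elements of $P$) are the same in both versions.
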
	
\begin{proof}
Let $V$ denote the Grothendieck completion of $P$.
Then $V$ is an $\RR$-vector space.
Since $P$ is cancellative, the canonical map $P\stackrel{i}{\to} V$ is injective. Let us  use this map to identify $P$ with a subset of $V$.
Let $V^*$ denote the algebraic dual of all $\RR$-linear maps $V\to\RR$.
Define $\kappa\colon F\to V^*$ by
\[
\kappa(\lambda)(g_1-g_2) := \langle \lambda, g_1\rangle - \langle \lambda, g_2\rangle 
\]
for $\lambda\in F$ and $g_1,g_2\in P$.
One verifies that $\kappa$ is well defined, additive and $(0,\infty)$-homogeneous.
Let us denote by $\langle\freeVar,\freeVar\rangle_{V^*,V}\colon V^*\times V\to\RR$ the natural pairing given by evaluation. From our definitions, it is clear that the diagram
\[
\xymatrix@+1pc{
F\times P\ar[dr]^{\langle\freeVar,\freeVar\rangle}\ar[d]_{\kappa\times i} & \\
V^*\times V \ar[r]_{\quad\langle\freeVar,\freeVar\rangle_{V^*,V}}& \RR
}
\]
is commutative, that is, $\langle \lambda,g\rangle=\langle\kappa(\lambda),g \rangle_{V^*,V}$ whenever $\lambda\in F$ and $g\in P$.

Note that $\kappa(D)\subseteq V^*$ is a subcone containing the origin of $V^*$.
Let $\overline{\kappa(D)}\subseteq V^*$ be the closure of $\kappa(D)$ in the weak*-topology $\sigma(V^*,V)$.
Then $\overline{\kappa(D)}$ is a subcone that is closed in the $\sigma(V^*,V)$-topology and that contains the origin of $V^*$.

Let us verify that $\kappa(\mu)\notin\overline{\kappa(D)}$.
By assumption, $D\subseteq F$ is closed in the $\sigma(F,P)$ topology and $\mu\in F\setminus D$. 
Thus, there exist $g_1,\ldots,g_m\in P$ and $t_1,\ldots,t_m\in(0,\infty)$ such that the set
\[
U = \big\{ \lambda\in F : |\langle \lambda, g_j \rangle - \langle \mu, g_j \rangle |<t_j \text{ for } j=1,\ldots,m \big\}
\]
is disjoint from $D$. (Note that the sets of the form as $U$ above form a neighborhood basis of $\mu$, for different choices of $g_1,\ldots,g_m$ in $P$ and $t_1,\ldots,t_m$ in $(0,\infty)$.)
Now, the set
\[
U'=\big\{ \Lambda\in V^* : |\langle \Lambda, g_j \rangle_{V^*,V} - \langle \kappa(\mu), g_j \rangle_{V^*,V} |<t_j \text{ for } j=1,\ldots,m \big\}
\]
is a subset of $V^*$ that is open for $\sigma(V^*,V)$ and contains $\kappa(\mu)$. 
Since, as observed above, $\langle \lambda,g\rangle=\langle\kappa(\lambda),g \rangle_{V^*,V}$ for any $\lambda\in F$, $g\in P$, and $U$ is disjoint from $D$, we conclude that $U'$ is disjoint from $\kappa(D)$, as desired.

As a consequence of the Bipolar Theorem (see  \cite[Theorem~5, p.~62]{Gro73TVS}) applied to  the pair $V^*, V$, there exists $f\in V$ such that $\langle\Lambda,f\rangle_{V^*,V}\geq -1$ for all $\Lambda\in \overline{\kappa(D)}$ and $\langle\kappa(\mu),f\rangle<-1$.
(Note that a subcone of a $\RR$-vector space is called a convex cone in \cite{Gro73TVS}.)

Since $\overline{\kappa(D)}$ is a cone, we get from the first inequality that $\langle\Lambda,f\rangle\geq 0$ for all $\Lambda\in \overline{\kappa(D)}$. 
(Indeed, if $\langle\Lambda,f\rangle<0$ for some $\Lambda\in \overline{\kappa(D)}$, then $-1\leq \langle t\Lambda,f\rangle=t\langle \Lambda,f\rangle<0$ for all $t>0$, which is impossible.) 
Since $P$ spans $V$, we may write $f=f_2-f_1$, with $f_1,f_2\in P$. 
Then
\[
\langle\Lambda, f_1\rangle_{V^*,V} \leq \langle\Lambda, f_2\rangle_{V^*,V}
\]
for all $\Lambda\in\overline{\kappa(D)}$, and
\[
\langle\kappa(\mu), f_1\rangle_{V^*,V} > \langle\kappa(\mu), f_2\rangle_{V^*,V}.
\]
Now $f_1$ and $f_2$ have the desired properties.
\end{proof}

\begin{pgr}
\label{pgr:LC}
An \emph{algebraically ordered, compact cone} is a cone $C$ such that the algebraic pre-order is antisymmetric (if $\lambda+\lambda'=\mu$ and $\mu+\mu'=\lambda$, then $\lambda=\mu$) endowed with a compact, Hausdorff topology such that addition and scalar multiplication become jointly continuous;
see \cite[Section~3.1]{AntPerRobThi21Edwards}.
We use $\Lsc(C)$ to denote the set of maps $C\to [0,\infty]$ that are lower semicontinuous, zero-preserving, additive, and $(0,\infty)$-homogeneous.

For $f,g\in\Lsc(C)$, we write $f\lhd g$ provided there is $\varepsilon>0$ such that $f\leq (1-\varepsilon)g$ and $f$ is continuous at $\lambda\in C$ whenever $g(\lambda)<\infty$.
We use $\LL(C)$ to denote the set of functions in $\Lsc(C)$ that are suprema of $\lhd$-increasing sequences in $\Lsc(C)$.

Let $S$ be a \CuSgp{} satisfying \axiomO{5}.
Then $\FF(S)$ is an algebraically ordered, compact cone (see \cite[Proposition 2.2.3]{Rob13Cone} and \cite[Section 4]{EllRobSan11Cone}).
Given $x\in S$, recall that we denote by $\widehat{x}\colon \FF(S)\to [0,\infty]$ the function such that $\widehat{x}(\lambda)=\lambda(x)$ for all $\lambda\in \FF(S)$.
Then $\widehat{x}\in \LL(\FF(S))$ for all $x\in S$ (\cite{Rob13Cone}).
By \cite[Theorem~3.2.1]{Rob13Cone}, $\LL(\FF(S))$ is also the smallest subset of $\Lsc(\FF(S))$ containing $\widehat{x}$ for all $x\in S$ and closed under multiplication by scalars in $(0,\infty)$ and by suprema of increasing sequences.
Moreover, for each $f\in \LL(\FF(S))$ we have $f=\sup \frac{\widehat{x_n}}{k_n}$ for suitable $x_n\in S$ and $k_n\in\NN$ such that the sequence $(\frac{\widehat{x_n}}{k_n})_n$ is $\ll$-increasing.
It follows from \cite[Proposition~3.1.1, Theorem~3.2.1]{Rob13Cone} that $\LL(\FF(S))$ is a \CuSgp.

Given $u\in S$, recall from \autoref{pgr:functionals} that $\FF_u(S)$ denotes the convex set of functionals normalized at $u$. If $\widehat u$ is a continuous function on $\FF(S)$, then  $\FF_u(S)$ is a closed (hence compact) subset of $\FF(S)$. In particular, if $u$ is a compact element of $S$, then $\widehat u$ is continuous and  $\FF_u(S)$ is a compact subset of $\FF(S)$. 
\end{pgr}

We will make use of the following lemmas, which we state here for convenience. We remind the reader that we assume throughout the appendix that $S$ is a \CuSgp{} satisfying \axiomO{5}.

\begin{lma}[{\cite[Lemma~2.2.5]{Rob13Cone}}]
\label{prp:wayBelowLFS}
Let $x\ll y$ in $S$ and let $\alpha<\beta$ in $(0,\infty)$. 
Then $\alpha\widehat{x}\ll\beta\widehat{y}$ in $\Lsc(\FF(S))$ (and consequently also in $\LL(\FF(S))$).
\end{lma}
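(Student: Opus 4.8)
The plan is to establish the stronger relation $\alpha\widehat{x}\lhd\beta\widehat{y}$ in $\Lsc(\FF(S))$ and then invoke the description of the way-below relation on $\LL(\FF(S))$ recalled in \autoref{pgr:LC} (from \cite{Rob13Cone}), by which $f\lhd g$ forces $f\ll g$. Since $\LL(\FF(S))$ is closed under suprema of increasing sequences and these agree with the pointwise suprema computed in $\Lsc(\FF(S))$, and since every $\ll$-increasing sequence in $\LL(\FF(S))$ is in particular increasing in $\Lsc(\FF(S))$, the relation $\alpha\widehat{x}\ll\beta\widehat{y}$ in $\Lsc(\FF(S))$ is a formally stronger statement that yields the same relation in $\LL(\FF(S))$; this accounts for the parenthetical assertion.

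Unpacking $\lhd$, I need two ingredients: a uniform gap $\alpha\widehat{x}\leq(1-\varepsilon)\beta\widehat{y}$ for some $\varepsilon>0$, and continuity of $\alpha\widehat{x}$ at every $\lambda\in\FF(S)$ with $\beta\widehat{y}(\lambda)<\infty$. The gap is immediate: from $x\ll y$ we get $x\leq y$, hence $\widehat{x}\leq\widehat{y}$ pointwise, so with $\varepsilon:=1-\tfrac{\alpha}{\beta}\in(0,1)$ (here $\alpha<\beta$ is used) we have $\alpha\widehat{x}\leq\alpha\widehat{y}=(1-\varepsilon)\beta\widehat{y}$.

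The crux is continuity of $\widehat{x}$ at a point $\lambda_{0}$ with $\widehat{y}(\lambda_{0})<\infty$. Lower semicontinuity holds for every $\widehat{x}$, so only upper semicontinuity is at issue: for a net $\lambda_{j}\to\lambda_{0}$ I must show $\limsup_{j}\lambda_{j}(x)\leq\lambda_{0}(x)$. The defining property of the topology on $\FF(S)$ applied to $x\ll y$ yields directly $\limsup_{j}\lambda_{j}(x)\leq\lambda_{0}(y)$, which is too weak. To sharpen it I would first use \axiomO{2} to interpolate $x\ll y'\ll y$ (writing $y=\sup_{n}y_{n}$ with $y_{n}\ll y_{n+1}$, one has $x\leq y_{n_{0}}$, whence $x\ll y_{n_{0}+1}=:y'\ll y$), obtaining $\limsup_{j}\lambda_{j}(x)\leq\lambda_{0}(y')$ with $\lambda_{0}(y')\uparrow\lambda_{0}(y)$ as $y'$ runs up the sequence. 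The remaining gap $\lambda_{0}(y')-\lambda_{0}(x)$ is then to be controlled by an \axiomO{5}-complementation argument: from $x\ll y'$ one extracts $z$ with $x+z\leq y'$, and, combining the enhanced clause of \axiomO{5} with the reverse inequality $y'\leq x+z$ it also provides, one makes the complement almost tight so that $\lambda_{0}(z)$ approximates $\lambda_{0}(y')-\lambda_{0}(x)$; feeding this back bounds $\limsup_{j}\lambda_{j}(x)$ by $\lambda_{0}(x)$ up to an error that vanishes along the interpolation.

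The main obstacle is exactly this last squeeze. The plain topological estimate only delivers the bound $\lambda_{0}(y)$, and replacing it by $\lambda_{0}(x)$ must absorb the slack inherent in \axiomO{5}-complementation (exact complements need not exist in a \CuSgp{}) and verify that the approximation errors are uniform enough to survive the passage to the limit. This is also precisely the step where $x\ll y$ is essential rather than merely $x\leq y$: without compact containment, $\widehat{x}$ can genuinely fail to be continuous at finite points of a dominating element, so the argument has to exploit that $x$ is way-below $y$ to exclude such jumps. Once continuity and the gap are secured, $\alpha\widehat{x}\lhd\beta\widehat{y}$ holds, and the characterization of $\ll$ on $\LL(\FF(S))$ from \autoref{pgr:LC} finishes the proof.
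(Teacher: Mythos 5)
The paper itself does not prove this lemma (it is quoted from \cite[Lemma~2.2.5]{Rob13Cone}), so I am assessing your argument on its own terms: it has a genuine gap, and in fact the intermediate claim you aim for is false. The relation $\alpha\widehat{x}\lhd\beta\widehat{y}$ requires $\widehat{x}$ to be continuous at every $\lambda_0$ with $\lambda_0(y)<\infty$, and $x\ll y$ does \emph{not} guarantee this. Take $S=\Lsc([0,1],\overline{\NN})\cong\Cu(C[0,1])$, which satisfies \axiomO{5}, with $x=1_{(1/4,3/4)}$ and $y=1_{(0,1)}$, so that $x\ll y$. The point evaluations $\lambda_j=\mathrm{ev}_{3/4-1/j}$ are functionals converging in $\FF(S)$ to $\lambda_0=\mathrm{ev}_{3/4}$ (one checks $\limsup_j w'(3/4-1/j)\leq w(3/4)\leq\liminf_j w(3/4-1/j)$ for all $w'\ll w$), and $\lambda_0(y)=1<\infty$; yet $\lambda_j(x)=1$ for all $j\geq 3$ while $\lambda_0(x)=0$. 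This is the familiar fact that $\mu\mapsto\mu(U)$ is only lower semicontinuous under weak${}^*$ convergence when mass accumulates on $\partial U$; compact containment does not exclude such jumps. Consequently the \axiomO{5}-complementation step you flag as ``the main obstacle'' cannot be carried out: the bound $\limsup_j\lambda_j(x)\leq\lambda_0(y')$ for interpolants $x\ll y'\ll y$ is optimal, and it cannot be improved to $\lambda_0(x)$.

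The statement is nevertheless true, and the repair is to aim not at $\lhd$ but at condition~(ii) of \autoref{prp:CharWayBelowLscC}, which is where the hypothesis $\alpha<\beta$ earns its keep. One has $\set(\alpha\widehat{x})=\{\lambda : \lambda(x)>1/\alpha\}$; if $\lambda_j\to\lambda_0$ with $\lambda_j(x)>1/\alpha$ for all $j$, then the defining property of the topology on $\FF(S)$ applied to $x\ll y$ gives $\lambda_0(y)\geq\limsup_j\lambda_j(x)\geq 1/\alpha>1/\beta$, whence $\lambda_0\in\set(\beta\widehat{y})$. Thus $\overline{\set(\alpha\widehat{x})}\subseteq\set(\beta\widehat{y})$, and the unconditional implications (ii)$\Rightarrow$(iii)$\Rightarrow$(iv) of \autoref{prp:CharWayBelowLscC} yield $\alpha\widehat{x}\ll\beta\widehat{y}$ in $\Lsc(\FF(S))$. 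The pieces of your argument that do work are the reduction of the $\LL(\FF(S))$-statement to the $\Lsc(\FF(S))$-statement and the norm gap $\alpha\widehat{x}\leq(1-\varepsilon)\beta\widehat{y}$ with $\varepsilon=1-\alpha/\beta$; note, however, that in the corrected proof the gap is absorbed into the level-set computation above rather than used as one half of $\lhd$.
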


Following \cite[Section~5.1]{EllRobSan11Cone}, we define $\set(f)=\{\lambda\in C : f(\lambda)>1\}$ for $f\in\Lsc(C)$.
Large parts of the next result are shown in \cite[Proposition~5.1]{EllRobSan11Cone}.
We include a complete proof for the convenience of the reader.

\begin{lma}
\label{prp:CharWayBelowLscC}
Let $C$ be an algebraically ordered, compact cone.
(For example, $C=F(S)$ for a \CuSgp{} $S$ satisfying \axiomO{5}.)
Let $f,g\in \Lsc(C)$.
Consider the following statements:
\begin{enumerate}[{\rm (i)}]
\item
There exists $g'\in\Lsc(C)$ such that $f\leq g'\lhd g$.
\item
We have $\overline{\set(f)}\subseteq\set(g)$.
\item
The function $f$ is non-sequentially way-below $g$ in $\Lsc(C)$, that is, whenever an increasing net $(h_j)_j$ in $\Lsc(C)$ satisfies $g\leq\sup_jh_j$ then there exists $j'$ such that $f\leq h_{j'}$.
\item
We have $f\ll g$, that is, $f$ is sequentially way-below $g$ in $\Lsc(C)$.
\end{enumerate}
Then the implications `(i)$\Rightarrow$(ii)$\Rightarrow$(iii)$\Rightarrow$(iv)' hold.
If $g$ belongs to $\LL(C)$, then~(iv) implies~(i) and then all statements are equivalent.
\end{lma}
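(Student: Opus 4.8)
The plan is to establish the linear chain (i)$\Rightarrow$(ii)$\Rightarrow$(iii)$\Rightarrow$(iv) and then close the loop with (iv)$\Rightarrow$(i) under the extra hypothesis $g\in\LL(C)$. The only structural features of $C$ that I expect to use are its compactness and the fact that all functions in sight are lower semicontinuous and $(0,\infty)$-homogeneous. For (i)$\Rightarrow$(ii) I would argue by contradiction. Given $g'$ with $f\le g'\lhd g$, fix $\varepsilon>0$ with $g'\le(1-\varepsilon)g$ and $g'$ continuous at every point where $g$ is finite. Since $f\le g'$ we have $\set(f)\subseteq\set(g')$, hence $\overline{\set(f)}\subseteq\overline{\set(g')}$. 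If some $\lambda\in\overline{\set(f)}$ had $g(\lambda)\le 1$, then $g(\lambda)<\infty$ would force $g'$ to be continuous at $\lambda$ with $g'(\lambda)\le(1-\varepsilon)g(\lambda)\le 1-\varepsilon<1$; but a net in $\set(g')$ converging to $\lambda$ has $g'$-values $>1$, contradicting continuity. Thus $g(\lambda)>1$, which is exactly $\overline{\set(f)}\subseteq\set(g)$.

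The heart of the argument is (ii)$\Rightarrow$(iii). Assume $\overline{\set(f)}\subseteq\set(g)$ and let $(h_j)_j$ be an increasing net in $\Lsc(C)$ with $g\le\sup_j h_j$. The set $\overline{\set(f)}$ is closed in the compact cone $C$, hence compact, and on it $\sup_j h_j\ge g>1$. Using lower semicontinuity of each $h_j$, the open sets $\{h_j>1\}$ cover $\overline{\set(f)}$; since the net is increasing these sets are directed, so a single index $j'$ satisfies $h_{j'}>1$ on $\overline{\set(f)}\supseteq\set(f)$. The decisive step is to upgrade this level-set inequality to the pointwise inequality $f\le h_{j'}$: for $\mu$ with $f(\mu)>0$ and any $t>1/f(\mu)$ (reading $1/\infty=0$), homogeneity gives $f(t\mu)>1$, hence $h_{j'}(t\mu)>1$, i.e. $h_{j'}(\mu)>1/t$; letting $t\downarrow 1/f(\mu)$ yields $h_{j'}(\mu)\ge f(\mu)$, while the case $f(\mu)=0$ is trivial. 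This establishes (iii). The implication (iii)$\Rightarrow$(iv) is then immediate, since a sequence is a particular net, so the net statement specializes to the sequential way-below relation $f\ll g$.

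Finally, for (iv)$\Rightarrow$(i) under the assumption $g\in\LL(C)$, I would invoke the definition of $\LL(C)$ to write $g=\sup_n g_n$ for a $\lhd$-increasing sequence $(g_n)_n$ in $\Lsc(C)$. This is an increasing sequence with $g\le\sup_n g_n$, so (iv) produces an index $n$ with $f\le g_n$. It then remains to verify that $g_n\lhd g$: if $g_n\lhd g_{n+1}$ with constant $\varepsilon$, then $g_n\le(1-\varepsilon)g_{n+1}\le(1-\varepsilon)g$, and whenever $g(\lambda)<\infty$ we also have $g_{n+1}(\lambda)\le g(\lambda)<\infty$, so the continuity of $g_n$ guaranteed by $g_n\lhd g_{n+1}$ holds at $\lambda$. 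Taking $g':=g_n$ gives $f\le g'\lhd g$, which is (i).

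The main obstacle I anticipate is the passage from a single level set to a global pointwise bound in (ii)$\Rightarrow$(iii): without exploiting $(0,\infty)$-homogeneity one only controls $h_{j'}$ on $\{f>1\}$ and cannot reach the large values of $f$. The rescaling argument above is precisely what removes this difficulty, and it is also the reason the single level $1$ (rather than every level $r>0$) already suffices.
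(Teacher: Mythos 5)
Your proof is correct and follows essentially the same route as the paper's: the same continuity argument for (i)$\Rightarrow$(ii), the same compactness/directedness argument on $\overline{\set(f)}$ for (ii)$\Rightarrow$(iii), and the same use of a $\lhd$-increasing sequence for (iv)$\Rightarrow$(i). You merely make explicit two steps the paper leaves implicit — the homogeneity rescaling that upgrades $\set(f)\subseteq\set(h_{j'})$ to $f\leq h_{j'}$, and the verification that $g_n\lhd g_{n+1}\leq g$ yields $g_n\lhd g$ — both of which are correct.
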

\begin{proof}
To verify that~(i) implies~(ii), let $(\lambda_j)_j$ be a net in $\set(f)$ converging to $\lambda\in C$.
We need to show $\lambda\in\set(g)$, that is, $g(\lambda)>1$.
This is clear if $g(\lambda)=\infty$.
On the other hand, if $g(\lambda)<\infty$, then $g'$ is continuous at $\lambda$ and therefore
\[
g'(\lambda)=\lim_j g'(\lambda_j)\geq \liminf_j f(\lambda_j) \geq 1.
\]
Since $g'\lhd g$, there is $\varepsilon>0$ such that $g' \leq (1-\varepsilon)g$, and so $g(\lambda) \geq \tfrac{1}{1-\varepsilon} > 1$.

To verify that~(ii) implies~(iii), let $(h_j)_j$ be an increasing net in $\Lsc(C)$ satisfying $g\leq\sup_jh_j$.
Then $(\set(h_j))_j$ is an increasing net of open subsets of $C$ satisfying $\set(g)\subseteq\bigcup_j\set(h_j)$.
Using that $\overline{\set(f)}$ is compact, we get $j'$ such that $\overline{\set(f)}\subseteq\set(h_{j'})$, which implies that $f\leq h_{j'}$.

It is clear that~(iii) implies~(iv).
Lastly,  assuming that $g$ belongs to $\LL(C)$, let us show that~(iv) implies~(i).
By definition of $\LL(C)$, there exists a $\lhd$-increasing sequence $(g_n)_n$ in $\Lsc(C)$ with supremum $g$.
Since $f\ll g$, we obtain $m$ such that $f\leq g_m$.
Then $f\leq g_m\lhd g$, as desired.
\end{proof}

Let $K\subseteq \FF(S)$ be a closed subcone.
Then $K$ is an algebraically ordered, compact cone.
Further, for each $f\in\Lsc(\FF(S))$ the restriction $f|_K$ belongs to $\Lsc(K)$.

\begin{lma}
\label{prp:restrictWayBelowLFS}
Let $K\subseteq \FF(S)$ be a closed subcone, and let $f,g\in \LL(\FF(S))$ satisfy $f\ll g$.
Then $f|_K$ is non-sequentially way-below $g|_K$ in $\Lsc(K)$ (and hence also $f|_K\ll g|_K$ in $\Lsc(K)$).
\end{lma}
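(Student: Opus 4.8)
The plan is to route everything through the characterization of the way-below relation in \autoref{prp:CharWayBelowLscC}, exploiting that its condition~(i) (the existence of an intermediate function for the relation $\lhd$) is manifestly stable under restriction to a closed subcone, whereas the way-below relations appearing in conditions~(iii) and~(iv) are not obviously so. This is the conceptual point that makes the argument short.

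First I would produce a witness on all of $\FF(S)$. Since $g\in\LL(\FF(S))$, \autoref{prp:CharWayBelowLscC} applied to $C=\FF(S)$ shows that its four conditions are equivalent, so the hypothesis $f\ll g$ yields condition~(i): there exists $g'\in\Lsc(\FF(S))$ with $f\leq g'\lhd g$. Unwinding the definition of $\lhd$, I would fix $\varepsilon>0$ with $g'\leq(1-\varepsilon)g$ such that $g'$ is continuous at every $\lambda\in\FF(S)$ with $g(\lambda)<\infty$.

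Next I would restrict this witness to $K$ and check that it still witnesses $\lhd$ there. As recorded just before the statement, $g'|_K\in\Lsc(K)$ and $K$ is itself an algebraically ordered, compact cone, so \autoref{prp:CharWayBelowLscC} is available for $C=K$. Clearly $f|_K\leq g'|_K$, and the inequality $g'|_K\leq(1-\varepsilon)g|_K$ restricts verbatim. The only point requiring a word is the continuity clause: if $\lambda\in K$ satisfies $g|_K(\lambda)=g(\lambda)<\infty$, then $g'$ is continuous at $\lambda$ as a function on $\FF(S)$, and continuity at a point is inherited by the subspace topology, so $g'|_K$ is continuous at $\lambda$ in $K$. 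Hence $g'|_K\lhd g|_K$, that is, $f|_K\leq g'|_K\lhd g|_K$, which is precisely condition~(i) of \autoref{prp:CharWayBelowLscC} for the pair $f|_K,g|_K\in\Lsc(K)$.

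Finally I would feed this back into the chain of implications. By the always-valid part of \autoref{prp:CharWayBelowLscC}, condition~(i) implies condition~(iii), namely that $f|_K$ is non-sequentially way-below $g|_K$ in $\Lsc(K)$, which is the assertion; the parenthetical claim $f|_K\ll g|_K$ is then the further implication (iii)$\Rightarrow$(iv). I do not expect any genuine obstacle here: the whole content is the observation that the $\lhd$-witness survives restriction, the single nontrivial verification being the elementary fact that pointwise continuity passes to subspaces.
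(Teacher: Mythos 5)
Your proposal is correct and is essentially the paper's own proof: the paper likewise extracts a witness $g'\in\Lsc(\FF(S))$ with $f\leq g'\lhd g$ (directly from the definition of $\LL(\FF(S))$, which is exactly how the implication (iv)$\Rightarrow$(i) of \autoref{prp:CharWayBelowLscC} is proved), observes that $f|_K\leq g'|_K\lhd g|_K$ survives restriction to the closed subcone, and concludes via (i)$\Rightarrow$(iii) of \autoref{prp:CharWayBelowLscC} applied to $C=K$.
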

\begin{proof}
Using that $g$ is the supremum of a $\lhd$-increasing sequence in $\Lsc(\FF(S))$, we obtain $g' \in \Lsc(\FF(S))$ such that $f\leq g'\lhd g$.
Then $f|_K\leq g'|_K\lhd g|_K$.
By \autoref{prp:CharWayBelowLscC}, we get that $f|_K$ is non-sequentially way-below $g|_K$.
\end{proof}

\begin{pgr}
\label{pgr:FI-PI}
Let $I\subseteq S$ be an ideal.
Let $\lambda_I\in \FF(S)$ denote the functional that is~$0$ on~$I$ and~$\infty$ otherwise. (Note that, with this notation, $\lambda_S$ is the zero functional.)
Define
\begin{equation}
\label{coneFI}
\FF_I(S) = \lambda_I + \big\{ \lambda\in \FF(S) : \lambda(x')<\infty\hbox{ whenever } x'\ll x \text{ for some } x\in I  \big\}.
\end{equation}

Then $\FF_I(S)$ is a subcone of $\FF(S)$ with origin $\lambda_I$. As noted in \cite[Proposition~3.2.3]{Rob13Cone}, $\FF_I(S)$ is cancellative.

For each $\lambda\in \FF(S)$ there exists a unique ideal $I\subseteq S$ such that $\lambda\in \FF_I(S)$; namely, the ideal generated by the set $\{x\in S:\lambda(x)<\infty\}$. 
This ideal is termed the \emph{support ideal} of $\lambda$; 
see \cite{AntPerRobThi21Edwards}.
In this way, the cone $\FF(S)$ is decomposed into the disjoint union of the cancellative subcones $\FF_I(S)$, where $I$ ranges through the ideals of $S$. 
\end{pgr}

We need a few more lemmas for the proof of \autoref{prp:SubconeIsAll}.

\begin{lma}
\label{prp:CharSupportIdeal}
Let $\mu\in \FF(S)$ with support ideal $I$, and let $x\in S$.
Then $\widehat{x}(\lambda_I)=0$ if and only if $\mu(x')<\infty$ for every $x' \in S$ satisfying $x' \ll x$.
\end{lma}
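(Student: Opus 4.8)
The plan is to first unwind the definition of $\lambda_I$ so as to reduce the stated equivalence to a purely order-theoretic statement about the support ideal, and then to prove the two resulting implications using the two complementary descriptions of the support ideal recalled in \autoref{pgr:FI-PI}.

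Since $\lambda_I$ equals $0$ on $I$ and $\infty$ off $I$, I would begin by noting that $\widehat{x}(\lambda_I) = \lambda_I(x) = 0$ holds precisely when $x \in I$. Thus the lemma is equivalent to the assertion
\[
x \in I \quad \Longleftrightarrow \quad \mu(x') < \infty \text{ for every } x' \ll x.
\]
Here I would exploit that the support ideal $I$ of $\mu$ admits two descriptions: on the one hand it is the ideal generated by $\{y \in S : \mu(y) < \infty\}$, and on the other hand $\mu \in \FF_I(S)$, so that $\mu = \lambda_I + \lambda'$ for some $\lambda' \in \FF(S)$ with $\lambda'(z') < \infty$ whenever $z' \ll z$ for some $z \in I$.

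For the implication `$\Rightarrow$', I would take $x \in I$ and $x' \ll x$. As $\ll$ implies $\leq$ and $I$ is downward hereditary, $x' \in I$, whence $\lambda_I(x') = 0$; and the defining property of $\lambda'$, applied with $z = x \in I$, gives $\lambda'(x') < \infty$. Adding pointwise yields $\mu(x') = \lambda_I(x') + \lambda'(x') < \infty$. For the reverse implication, I would assume $\mu(x') < \infty$ for every $x' \ll x$. Each such $x'$ then lies in $\{y \in S : \mu(y) < \infty\} \subseteq I$. Invoking \axiomO{2}, I would pick a $\ll$-increasing sequence $(x_n)_n$ with $x = \sup_n x_n$; since $x_n \ll x_{n+1} \leq x$ forces $x_n \ll x$, every $x_n$ satisfies $\mu(x_n) < \infty$ and hence lies in $I$. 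Closedness of $I$ under suprema of increasing sequences then gives $x = \sup_n x_n \in I$, so that $\widehat{x}(\lambda_I) = 0$.

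This argument is short and amounts to careful bookkeeping, so I do not anticipate a serious obstacle; the only point requiring attention is to invoke the correct description of the support ideal in each direction — the additive decomposition $\mu = \lambda_I + \lambda'$ for the forward implication, and the generation of $I$ by the finiteness set of $\mu$ together with the approximation $x = \sup_n x_n$ for the backward one.
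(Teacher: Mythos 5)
Your proposal is correct. The forward implication is essentially the paper's argument: both decompose $\mu=\lambda_I+\mu_0$ using $\mu\in\FF_I(S)$ and add the two finite values. The backward implication takes a genuinely different (though equally short) route. The paper exploits the idempotency of $\lambda_I$ to write $\mu=\lambda_I+\mu$, so that $\mu(x')<\infty$ forces $\lambda_I(x')=0$ for each $x'\ll x$, and then passes to the supremum using that $\lambda_I$ preserves suprema. You instead invoke the description of the support ideal as the ideal generated by $\{y\in S:\mu(y)<\infty\}$: each $x_n$ in an \axiomO{2}-sequence for $x$ lies in the generating set, hence in $I$, and $x=\sup_n x_n\in I$ because ideals are closed under suprema of increasing sequences. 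Both arguments rest on the same two facts recalled in \autoref{pgr:FI-PI} (the additive decomposition and the generation of $I$ by the finiteness set); your version makes the membership $x\in I$ explicit via the order structure of ideals, while the paper's stays entirely at the level of functionals. Neither buys anything substantial over the other here, and your bookkeeping (in particular the step $x_n\ll x_{n+1}\leq x$ implies $x_n\ll x$) is sound.
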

\begin{proof}
Since $I$ is the support ideal of $\mu$, we have $\mu\in \FF_I(S)$, and thus $\mu=\lambda_I+\mu_0$, where $\mu_0\in \FF(S)$ satisfies $\mu_0(x')<\infty$ whenever $x'\ll x$ and $x\in I$. 
Since $\lambda_I$ is idempotent, we also have $\mu=\lambda_I+\mu$.

Now assume that $\lambda_I(x)=0$ and let $x' \in S$ satisfy $x' \ll x$. 
Then $x' \leq x \in I$ and therefore $\lambda_I(x')=0$.
Since also $\mu_0(x')<\infty$, we have $\mu(x')=\lambda_I(x')+\mu_0(x')<\infty$.

Conversely, assume that $\mu(x')<\infty$ for every $x' \in S$ with $x' \ll x$.
Then from $\mu=\lambda_I+\mu$ we deduce that $\lambda_I(x')=0$ for every $x'$ way-below $x$.
Passing to the supremum over all such $x'$, we obtain $\lambda_I(x)=0$.
\end{proof}

\begin{lma}
\label{prp:technical1} 
Let $K$ be a closed subcone of $\FF(S)$ with $0\in K$.
Let $I$ be an ideal of~$S$. 
Suppose that for all $x,y\in S$ with $\widehat{x}|_{K}\leq\widehat{y}|_{K}$, we have $\widehat{x}(\lambda_I)\leq\widehat{y}(\lambda_I)$. 
Then $\lambda_I\in K$.
\end{lma}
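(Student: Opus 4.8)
The plan is to realize $\lambda_I$ as a limit, inside the closed cone $K$, of the ``origin-type'' functionals $\lambda_J$ attached to ideals $J$. The starting point is a scaling observation. For $\nu\in\FF(S)$ the set $\ker\nu:=\{x\in S:\nu(x)=0\}$ is an ideal (it is downward hereditary, additively closed, and closed under suprema, since $\nu$ is a functional), and from the convergence criterion for $\FF(S)$ one checks directly that $t\nu\to\lambda_{\ker\nu}$ as $t\to\infty$: on $x\in\ker\nu$ every $t\nu$ vanishes, while on $x\notin\ker\nu$ we have $\nu(x)>0$, so $t\nu(x)\to\infty$. As $K$ is a subcone that is closed in $\FF(S)$, this gives $\lambda_{\ker\nu}\in K$ for every $\nu\in K$. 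Moreover $\lambda_{J_1}+\lambda_{J_2}=\lambda_{J_1\cap J_2}$, so $\ker(\sum_{\nu\in G}\nu)=\bigcap_{\nu\in G}\ker\nu$ for finite families $G$, and for a downward directed net of ideals $(J_\alpha)_\alpha$ one has $\lambda_{J_\alpha}\to\lambda_{\bigcap_\alpha J_\alpha}$ (again immediate from the convergence criterion). Thus $\{J:\lambda_J\in K\}$ is closed under intersections and under decreasing limits.

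Next I would set $A:=\{\nu\in K:\nu|_I=0\}$ and $\hat I:=\bigcap_{\nu\in A}\ker\nu$. Since $0\in A$, the set $A$ is nonempty; for each finite $G\subseteq A$ the element $\sum_{\nu\in G}\nu$ lies in $K$, so by the previous paragraph $\lambda_{\bigcap_{\nu\in G}\ker\nu}=\lambda_{\ker(\sum_{G}\nu)}\in K$. Letting $G$ increase and invoking the decreasing-limit fact yields $\lambda_{\hat I}\in K$. Every $\nu\in A$ satisfies $I\subseteq\ker\nu$, so $I\subseteq\hat I$. Hence the lemma reduces to the reverse inclusion $\hat I\subseteq I$: once $\hat I=I$ is established, $\lambda_I=\lambda_{\hat I}\in K$. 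To prove $\hat I\subseteq I$ it suffices, for each $z\notin I$, to exhibit $\nu\in A$ with $\nu(z)>0$, for then $z\notin\ker\nu\supseteq\hat I$.

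The construction of such a $\nu$ is the heart of the proof and the only place the hypothesis enters. Fix $z\notin I$ and choose $w'\in S$ with $w'\ll z$ and $w'\notin I$ (possible since $z=\sup_{w'\ll z}w'$ and $I$ is closed under suprema). Consider the directed set of pairs $(F,m)$, where $F\subseteq I$ is finite and $m\in\NN$, ordered by $F\subseteq F'$ and $m\le m'$, and write $y_F:=\sum_{y\in F}y\in I$. Applying the hypothesis with $x=w'$ and $y=m\,y_F$: here $\widehat{w'}(\lambda_I)=\infty$ while $\widehat{m y_F}(\lambda_I)=0$, so the conclusion $\widehat{w'}(\lambda_I)\le\widehat{m y_F}(\lambda_I)$ fails, whence its antecedent must fail; that is, there is $\lambda_{F,m}\in K$ with $\lambda_{F,m}(w')>m\,\lambda_{F,m}(y_F)\ge m\,\lambda_{F,m}(y)$ for every $y\in F$. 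After rescaling we may assume $\lambda_{F,m}(w')=1$, and then $\lambda_{F,m}(y)<1/m$ for each $y\in F$. By compactness of $\FF(S)$ I pass to a subnet $\lambda_{F,m}\to\nu\in K$. For fixed $y\in I$ we have $\lambda_{F,m}(y)<1/m\to0$ eventually along the net, so the convergence criterion gives $\nu(y)\le\liminf_{(F,m)}\lambda_{F,m}(y)=0$; hence $\nu|_I=0$, i.e.\ $\nu\in A$. On the other hand, since $w'\ll z$ the same criterion yields $1=\limsup_{(F,m)}\lambda_{F,m}(w')\le\nu(z)$, so $\nu(z)\ge1>0$, as required.

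The main obstacle is the rescaling step $\lambda_{F,m}(w')=1$: the hypothesis only provides $\lambda_{F,m}(w')>0$, and this value may equal $\infty$, in which case it cannot be normalized to $1$ by scaling. I would handle this exactly in the spirit of the cone-topology arguments of this appendix, replacing $\lambda_{F,m}$ by a functional beneath it that is finite and positive on $w'$ while keeping both the smallness on $y_F$ and the lower bound transported to $z$. Concretely, using \axiomO{5} one interpolates $w''\ll w'\ll z$, and then the characterization of the way-below relation in $\Lsc(\FF(S))$ in \autoref{prp:CharWayBelowLscC}, together with \autoref{prp:wayBelowLFS}, lets one pass to a rescaled approximant finite on $w''$ whose value on $w''$ furnishes the same lower bound on $\nu(z)$ through $w''\ll z$. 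Apart from this bookkeeping, the argument above is complete.
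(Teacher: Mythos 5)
Your proof is correct in outline and follows the same two-stage strategy as the paper's, with different implementations of each stage. The paper first forms $C=K\cap\{\lambda:\lambda\leq\lambda_I\}$ --- which coincides with your set $A$, since $\lambda\leq\lambda_I$ amounts to $\lambda|_I=0$ --- takes $\lambda=\sup C\in C$, and uses $2\lambda=\lambda$ to identify it as $\lambda_J$ for an ideal $J\supseteq I$; you instead reach the same functional as $\lambda_{\hat I}$ with $\hat I=\bigcap_{\nu\in A}\ker\nu$, via the scaling limit $t\nu\to\lambda_{\ker\nu}$ and closedness of $K$ under sums and decreasing limits of ideal-functionals. Both work. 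For the second stage the paper indexes its net by the idempotent elements $x$ of $I$, so that $\lambda_x(x)=0$ comes for free (the strict inequality $\lambda_x(x)<\lambda_x(y')$ rules out the value $\infty$ for the idempotent quantity $\lambda_x(x)$), whereas you index by pairs $(F,m)$ and use the element $m\,y_F$ to force $\lambda_{F,m}(y)<1/m$ on $F$; again both are valid, and your version avoids the idempotent trick at the cost of a slightly larger directed set.

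The one step that needs repair is the normalization $\lambda_{F,m}(w')=1$, which you correctly flag: it is impossible when $\lambda_{F,m}(w')=\infty$. Your proposed fix via $w''\ll w'$ and the way-below machinery is more elaborate than necessary. All the argument needs is $\lambda_{F,m}(w')\geq 1$ together with $\lambda_{F,m}(y_F)<1/m$, and this can always be arranged by a scalar: from $\lambda(w')>m\lambda(y_F)$ one gets $\lambda(y_F)<\infty$; if $\lambda(w')<\infty$, divide by $\lambda(w')$; if $\lambda(w')=\infty$ and $\lambda(y_F)=0$, no scaling is needed; if $\lambda(w')=\infty$ and $\lambda(y_F)>0$, divide by $2m\lambda(y_F)$. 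In every case the rescaled functional satisfies both estimates, the limit $\nu$ of a convergent subnet vanishes on $I$ and satisfies $\nu(z)\geq\limsup_{(F,m)}\lambda_{F,m}(w')\geq 1$, and the proof closes. (This is also how the paper sidesteps the same issue: it only ever asks for $\widehat{y'}(\lambda_x)\geq 1$, never for equality.)
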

\begin{proof}
Set
\[
C = K\cap \big\{ \lambda\in \FF(S) : \lambda\leq\lambda_I \big\}.
\]
Observe that $0\in C$, since $0\in K$ and $0\leq \lambda_I$. 
Further, $C$ is closed under sums and a closed subset of $\FF(S)$, as it is the intersection of two subsets with these properties. 
In particular, $C$ is upward directed. 
Set $\lambda=\sup C$, which is the limit of a net of elements in $C$, and thus belongs to $C$. 

Since $2\lambda\in C$, we have $2\lambda=\lambda$, which in turn implies that $\lambda=\lambda_J$ for some ideal~$J$ of~$S$; namely, $J=\{x\in S\colon \lambda(x)=0\}$. 
Further, since $\lambda_J\leq \lambda_I$, we have that $I\subseteq J$. 
We will show that $I=J$, and thus $\lambda_I\in K$. 
To reach a contradiction, suppose that $I\neq J$, and take $y\in J\setminus I$.
Choose $y'\in S$ such that $y'\ll y$ and $y'\notin I$.

If $x\in I$, then $\widehat{x}(\lambda_I)=0$, while  $\widehat{y'}(\lambda_I)=\infty$.
Thus, $\widehat{x}|_{K}\ngeq\widehat{y'}|_{K}$ (since otherwise $\widehat{x}(\lambda_I)\geq\widehat{y'}(\lambda_I)$ by assumption).
Choose $\lambda_x\in K$ such that $\widehat{x}(\lambda_x)<\widehat{y'}(\lambda_x)$.
Scaling the functional~$\lambda_x$ if necessary, we may assume that $1\leq \widehat{y'}(\lambda_x)$. 
Denote by $E(I)=\{x\in I\colon x=2x\}$, the set of idempotent elements in $I$. 
If now $x\in E(I)$, we have $\widehat{x}$ is also idempotent and thus $\widehat{x}(\lambda_x)=0$.

Note that $E(I)$ is an upward directed set. 
Since $\FF(S)$ is compact, there exists a convergent subnet $(\lambda_{x(j)})_j$ of $(\lambda_x)_{x\in E(I)}$. 
Let $\bar{\lambda}$ be its limit. 
As $K$ is closed, $\bar{\lambda}\in K$. 

Fix $x\in E(I)$. Then, for every $y\in E(I)$ with $x\leq y$, we have $\widehat{x}(\lambda_y)\leq\widehat{y}(\lambda_y)=0$.
Using that $\widehat{x}$ is lower semicontinuous, it follows that $\widehat{x}(\bar{\lambda})=0$.
We deduce that $\bar{\lambda}$ vanishes on $I$, and thus $\bar{\lambda}\leq\lambda_I$.
By definition, we get $\bar{\lambda}\in C$, and so $\bar{\lambda}\leq\lambda_J$.

On the other hand, using that $y'\ll y$ and that $\bar{\lambda}=\lim_j \lambda_{x(j)}$, we have
\[
1\leq  \limsup_j \lambda_{x(j)}(y') 
\leq \bar{\lambda}(y). 
\]
Since $y$ belongs to $J$, we have $\bar{\lambda}(y)\leq\lambda_J(y)=0$, a contradiction.
Thus, $I=J$.
\end{proof}

For \autoref{prp:SubconeIsAll} below, we shall only need the case $M=1$ of the next two results.
The general versions will be used later in the proof of \autoref{prp:SubconeIsAll2}.

\begin{lma}
\label{prp:technical2} 
Let $K$ be a closed subcone of $\FF(S)$ with $0\in K$. 
Let $\mu\in \FF(S)$ with support ideal $I$.
Suppose that there is $M\in (0,\infty)$ such that for all $x,y\in S$ with $\widehat{x}|_{K}\leq\widehat{y}|_{K}$ we have $\widehat{x}(\mu)\leq M\widehat{y}(\mu)$.
Then $\lambda_I\in K$.
\end{lma}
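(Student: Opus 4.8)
The plan is to reduce \autoref{prp:technical2} to \autoref{prp:technical1} by verifying that the hypothesis of the latter holds for the support ideal $I$ of $\mu$. Since $\lambda_I$ takes only the values $0$ and $\infty$, the inequality $\widehat{x}(\lambda_I)\leq\widehat{y}(\lambda_I)$ can fail only when $\widehat{y}(\lambda_I)=0$ and $\widehat{x}(\lambda_I)=\infty$. Hence it suffices to show that whenever $x,y\in S$ satisfy $\widehat{x}|_K\leq\widehat{y}|_K$ and $\widehat{y}(\lambda_I)=0$, then also $\widehat{x}(\lambda_I)=0$. By \autoref{prp:CharSupportIdeal}, the assumption $\widehat{y}(\lambda_I)=0$ means $\mu(y')<\infty$ for all $y'\ll y$, while the desired conclusion $\widehat{x}(\lambda_I)=0$ amounts to $\mu(x')<\infty$ for all $x'\ll x$. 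So I fix $x'\ll x$ and aim to bound $\mu(x')$.

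The key step is to replace $y$ by a suitable cut-down $y'\ll y$, so that the hypothesis of \autoref{prp:technical2} becomes applicable. By \autoref{prp:wayBelowLFS} (with $\alpha=1<\beta=2$) we have $\widehat{x'}\ll 2\widehat{x}$ in $\LL(\FF(S))$, and then \autoref{prp:restrictWayBelowLFS} shows that $\widehat{x'}|_K$ is non-sequentially way-below $2\widehat{x}|_K$ in $\Lsc(K)$. Now the net $(2\widehat{y'}|_K)_{y'\ll y}$, indexed by the upward directed set $\{y'\in S:y'\ll y\}$, is increasing with supremum $2\widehat{y}|_K$, and on $K$ we have $2\widehat{x}|_K\leq 2\widehat{y}|_K$. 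The non-sequential way-below relation, in the form of \autoref{prp:CharWayBelowLscC}(iii), then produces some $y'\ll y$ with $\widehat{x'}|_K\leq 2\widehat{y'}|_K=\widehat{2y'}|_K$.

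With this $y'$ in hand, I apply the standing hypothesis of \autoref{prp:technical2} to the pair $(x',2y')$, which satisfies $\widehat{x'}|_K\leq\widehat{2y'}|_K$, to conclude $\mu(x')\leq M\mu(2y')=2M\mu(y')$. Since $y'\ll y$ gives $\mu(y')<\infty$, we obtain $\mu(x')<\infty$. As $x'\ll x$ was arbitrary, \autoref{prp:CharSupportIdeal} yields $\widehat{x}(\lambda_I)=0$, which completes the verification of the hypothesis of \autoref{prp:technical1}. Applying that lemma gives $\lambda_I\in K$, as desired.

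I expect the main obstacle to be precisely the boundary phenomenon that $\widehat{y}(\lambda_I)=0$ forces $\mu(y')<\infty$ only for $y'\ll y$, and not necessarily $\mu(y)<\infty$; this is what blocks a naive application of the hypothesis directly to $(x,y)$. The cutting-down argument via non-sequential way-below (\autoref{prp:restrictWayBelowLFS} together with \autoref{prp:CharWayBelowLscC}) is what circumvents this difficulty and is the one genuinely nontrivial ingredient. The remaining bookkeeping with the scalar factor $2$ and the directed net is routine.
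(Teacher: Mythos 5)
Your proof is correct and follows essentially the same route as the paper's: reduce to \autoref{prp:technical1}, handle only the case $\widehat{y}(\lambda_I)=0$, and for each $x'\ll x$ use \autoref{prp:wayBelowLFS} and \autoref{prp:restrictWayBelowLFS} to find a cut-down $y'\ll y$ with $\widehat{x'}|_K\leq 2\widehat{y'}|_K$, so that the hypothesis gives $\mu(x')\leq 2M\mu(y')<\infty$ and \autoref{prp:CharSupportIdeal} finishes. The only cosmetic difference is that the paper uses a $\ll$-increasing sequence $(y_n)_n$ with supremum $y$ where you use the directed net of all $y'\ll y$; both work, and your intermediate step through $2\widehat{x}|_K\leq 2\widehat{y}|_K$ is if anything slightly more carefully justified.
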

\begin{proof}
We will show that for all $x,y\in S$ with $\widehat{x}|_{K}\leq\widehat{y}|_{K}$, we have $\widehat{x}(\lambda_I)\leq\widehat{y}(\lambda_I)$.
It then follows from \autoref{prp:technical1} that $\lambda_I\in K$.

So let $x,y\in S$ satisfy $\widehat{x}|_{K}\leq\widehat{y}|_{K}$.
If $\widehat{y}(\lambda_I)=\infty$, then clearly $\widehat{x}(\lambda_I)\leq\widehat{y}(\lambda_I)$.
Thus, we may assume that $\widehat{y}(\lambda_I)=0$.
Choose a $\ll$-increasing sequence $(y_n)_n$ in~$S$ with supremum $y$.
By \autoref{prp:CharSupportIdeal}, we have $\mu(y_n)<\infty$ for every $n\in\NN$.
Let $x'\in S$ satisfy $x'\ll x$.
By \autoref{prp:wayBelowLFS}, we have $\widehat{x'}\ll 2\widehat{y}$ in $\Lsc(\FF(S))$.
Applying \autoref{prp:restrictWayBelowLFS}, we obtain $\widehat{x'}|_K\ll 2\widehat{y}|_K$ in $\Lsc(K)$, and we get $m\in\NN$ such that $\widehat{x'}|_K\leq 2\widehat{y_m}|_K$.
Using the assumption at the first step, we have
\[
\widehat{x'}(\mu)\leq 2M\widehat{y_m}(\mu) < \infty.
\]
Using \autoref{prp:CharSupportIdeal} again, it follows that $\widehat{x}(\lambda_I)=0$, and so $\widehat{x}(\lambda_I)\leq\widehat{y}(\lambda_I)$.
\end{proof}

\begin{lma}
\label{prp:separationLFStoS}
Let $K\subseteq \FF(S)$ be a closed subset, let $\mu\in \FF(S)$, and let $M\in (0,\infty)$. 
Suppose that $\widehat{x}|_K\leq \widehat{y}|_K$ implies $\widehat{x}(\mu)\leq M\widehat{y}(\mu)$ for all $x,y\in S$. 
Then $f|_K\leq g|_K$ implies $f(\mu)\leq Mg(\mu)$ for all  $f,g\in \LL(\FF(S))$.
\end{lma}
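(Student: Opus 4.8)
The plan is to reduce the claim for general $f,g\in\LL(\FF(S))$ to the hypothesis, which only concerns functions $\widehat{x}$ with $x\in S$, using the description of $\LL(\FF(S))$ from \autoref{pgr:LC}. I would write $f=\sup_n\tfrac{\widehat{x_n}}{k_n}$ and $g=\sup_m\tfrac{\widehat{y_m}}{l_m}$ as pointwise suprema of $\ll$-increasing sequences, with $x_n,y_m\in S$ and $k_n,l_m\in\NN$. Evaluating at $\mu$, it then suffices to prove $\tfrac{1}{k_n}\widehat{x_n}(\mu)\leq Mg(\mu)$ for each fixed $n$; taking the supremum over $n$ yields $f(\mu)\leq Mg(\mu)$. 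If $g(\mu)=\infty$ there is nothing to prove, so the content lies in the case $g(\mu)<\infty$.

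For fixed $n$, set $p=\tfrac{\widehat{x_n}}{k_n}\in\LL(\FF(S))$. Since $f$ is the supremum of the $\ll$-increasing sequence containing $p$, we have $p\ll f$ in $\LL(\FF(S))$. The crux is that, although $p\ll f$ holds globally, $f$ is only known to satisfy $f|_K\leq g|_K$, so the comparison must be carried out inside $K$; I would therefore invoke \autoref{prp:restrictWayBelowLFS}, which upgrades $p\ll f$ to the statement that $p|_K$ is \emph{non-sequentially} way-below $f|_K$ in $\Lsc(K)$. The sequence $(\tfrac{\widehat{y_m}}{l_m}|_K)_m$ is increasing in $\Lsc(K)$ with supremum $g|_K$, and $f|_K\leq g|_K$ by hypothesis, so the non-sequential way-below relation produces a single index $m$ with
\[
p|_K \leq \tfrac{\widehat{y_m}}{l_m}\Big|_K .
\]
Clearing denominators and using that $x\mapsto\widehat{x}$ is additive turns this into $\widehat{l_m x_n}|_K\leq\widehat{k_n y_m}|_K$, a comparison between $\widehat{\freeVar}$-functions of elements of $S$. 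Applying the hypothesis with $x=l_m x_n$ and $y=k_n y_m$ gives $\widehat{l_m x_n}(\mu)\leq M\,\widehat{k_n y_m}(\mu)$, that is $\tfrac{1}{k_n}\widehat{x_n}(\mu)\leq M\,\tfrac{1}{l_m}\widehat{y_m}(\mu)\leq Mg(\mu)$, as needed.

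The main obstacle is exactly this passage from the global relation $p\ll f$ to a single-index domination $p|_K\leq\tfrac{\widehat{y_m}}{l_m}|_K$ that is valid pointwise on $K$, including where the functions take the value $\infty$: knowing only $f|_K\leq g|_K$, one cannot compare $p$ with the approximants of $g$ on all of $\FF(S)$. This is precisely what \autoref{prp:restrictWayBelowLFS} delivers, and it is where the cone structure of $K$ enters—through the characterization in \autoref{prp:CharWayBelowLscC}, homogeneity of the functions in $\Lsc(K)$ is what converts the set-containment $\overline{\set(p|_K)}\subseteq\set(\tfrac{\widehat{y_m}}{l_m}|_K)$ obtained by compactness into a genuine pointwise inequality. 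Accordingly I would run the argument for $K$ a closed subcone of $\FF(S)$ containing $0$, as in the companion lemmas \autoref{prp:technical1} and \autoref{prp:technical2}; the remaining verifications (that $\tfrac{\widehat{x_n}}{k_n}\in\LL(\FF(S))$, that restriction commutes with the relevant pointwise suprema, and the additivity of $x\mapsto\widehat{x}$) are routine from \autoref{pgr:LC}.
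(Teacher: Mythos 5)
Your proof is correct and is essentially the paper's own argument: both decompose $f$ and $g$ into $\ll$-increasing sequences $(\tfrac{\widehat{x_n}}{k_n})_n$ and $(\tfrac{\widehat{y_m}}{l_m})_m$, use \autoref{prp:restrictWayBelowLFS} to transfer the way-below relation to $\Lsc(K)$, extract a single index $m$ with $\widehat{l_m x_n}|_K\leq\widehat{k_n y_m}|_K$, and then apply the hypothesis and pass to the supremum. Your observation that the argument really uses that $K$ is a closed subcone (as in \autoref{prp:restrictWayBelowLFS}) rather than an arbitrary closed subset is also consistent with how the lemma is applied in the paper.
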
	
\begin{proof}
Let $f,g\in \LL(\FF(S))$ satisfy $f|_K\leq g|_K$. 
Choose sequences $(x_n)_n$ and $(y_n)_n$ in~$S$ and natural numbers $(k_n)_n$ and $(l_n)_n$ such that $(\tfrac{\widehat{x_n}}{k_n})_n$ and $(\tfrac{\widehat{y_n}}{l_n})_n$  are $\ll$-increasing sequences in $\LL(\FF(S))$ with suprema $f$ and $g$, respectively;
see \autoref{pgr:LC}.
Applying \autoref{prp:restrictWayBelowLFS}, it follows that the sequences $(\tfrac{\widehat{x_n}}{k_n}|_{K})_n$ and $(\tfrac{\widehat{y_n}}{l_n}|_{K})_n$ are $\ll$-increasing in $\Lsc(K)$, with suprema $f|_{K}$ and $g|_{K}$, respectively.

Fix $m\in\NN$. 
Since $f|_K\leq g|_K$, there exists $n$ such that $\tfrac{\widehat{x_{m}}}{k_{m}}|_K \leq \tfrac{\widehat{y_n}}{l_n}|_K$, that is, $\widehat{l_nx_m}|_K\leq \widehat{k_my_n}|_K$. 
Hence, by assumption, $\widehat{l_nx_m}(\mu)\leq M \widehat{k_my_n}(\mu)$. 
Therefore 
\[
\frac{\widehat{x_{m}}}{k_{m}}(\mu) \leq M\frac{\widehat{y_n}}{l_n}(\mu)\leq Mg(\mu).
\] 
Passing to the supremum over all $m\in \NN$ we get $f(\mu)\leq Mg(\mu)$, as desired.
\end{proof}

For an ideal $I$ of $S$ define
\begin{equation}
\label{spacePI}
\PP_I(S)
:= \big\{ f'\in \LL(\FF(S)) : f'\lhd f\ll \widehat{x}\text{ for some } f\in \LL(\FF(S))\text{ and } x\in I \big\}
\end{equation}
and
\[
\widetilde \PP_I(S) 
:= \big\{ f|_{\FF_I(S)} : f\in \PP_I(S) \big\}.
\]
As established in the proof of \cite[Proposition~3.2.3]{Rob13Cone}, the functions in $\PP_I(S)$ are finite on $\FF_I(S)$.
It follows that $\widetilde \PP_I(S)$ is a subcone of the vector space of maps $\FF_I(S)\to\RR$.
In particular, $\widetilde \PP_I(S)$ is a cancellative cone.
We define a pairing $\langle\freeVar,\freeVar\rangle\colon \FF_I(S)\times \widetilde \PP_I(S)\to\RR$ by setting $\langle \lambda,f\rangle=f(\lambda)$.
This map is additive and $(0,\infty)$-homogeneous in each variable.

The restriction of the topology of $\FF(S)$ to $\FF_I(S)$ agrees with the $\sigma(\FF_I(S),\widetilde \PP_I(S))$ topology.
In other words, if a net $(\lambda_j)_j$ and a functional $\lambda$ are in $\FF_I(S)$, then $\lambda_j\to \lambda$ if and only if $f(\lambda_j)\to f(\lambda)$ for all $f\in \PP_I(S)$.
Indeed,  the forward implication follows since one can check that every function in $\PP_I(S)$ is continuous on $\FF_I(S)$.
The other implication is proven in \cite[Proposition~3.2.3]{Rob13Cone}.

An ideal $I$ of $S$ is called \emph{countably generated} if it is the smallest ideal containing a countable set $\{x_1,x_2,\ldots\}$.
In this case, $I$ is also singly generated by the element $x=\sum_{j=1}^\infty x_j$, and further $\infty\cdot x$ is the largest element in $I$.

We are now ready to prove the first separation result for subcones of $\FF(S)$.

\begin{thm}
\label{prp:Separation}
Let $K$ be a closed subcone of $\FF(S)$ with $0\in K$, and let $\mu\in \FF(S)\backslash K$. 
Then there exist $x,y\in S$ such that $\widehat{x}|_K\leq \widehat{y}|_K$ and $\widehat{x}(\mu)>\widehat{y}(\mu)$.
\end{thm}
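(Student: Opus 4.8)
The plan is to argue by contraposition and reduce the separation to the Hahn--Banach theorem for cancellative cones, \autoref{prp:HB-Cones}, applied on a single cancellative slice of $\FF(S)$. Suppose the conclusion fails, so that
\[
\widehat{x}|_K \leq \widehat{y}|_K \quad\text{implies}\quad \widehat{x}(\mu)\leq \widehat{y}(\mu), \qquad \text{for all } x,y\in S;
\]
I would then derive $\mu\in K$, contradicting $\mu\notin K$. Let $I$ be the support ideal of $\mu$, so that $\mu\in \FF_I(S)$ (see \autoref{pgr:FI-PI}). The displayed hypothesis is exactly the case $M=1$ of \autoref{prp:technical2}, which yields $\lambda_I\in K$; hence $D:=K\cap \FF_I(S)$ is a closed subcone of $\FF_I(S)$ containing its origin $\lambda_I$. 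Since $\FF_I(S)$ and $\widetilde{\PP}_I(S)$ are cancellative cones paired by $\langle\lambda,f\rangle = f(\lambda)$, and the subspace topology on $\FF_I(S)$ agrees with $\sigma(\FF_I(S),\widetilde{\PP}_I(S))$, I am in position to invoke \autoref{prp:HB-Cones}.

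Applying \autoref{prp:HB-Cones} to $D$ and $\mu\in \FF_I(S)\setminus D$ produces $f_1,f_2\in \widetilde{\PP}_I(S)$ with $f_1\leq f_2$ on $D$ and $f_1(\mu)>f_2(\mu)$. Choosing lifts $\tilde f_1,\tilde f_2\in \PP_I(S)\subseteq \LL(\FF(S))$ (recall $\tilde f_i\lhd g_i\ll \widehat{x_i}$ for some $g_i\in\LL(\FF(S))$ and $x_i\in I$), the values at $\mu$ are unchanged, since $\mu\in \FF_I(S)$, so $\tilde f_1(\mu)>\tilde f_2(\mu)$. On the other hand, upgrading the displayed hypothesis through \autoref{prp:separationLFStoS} (again with $M=1$) gives that $f|_K\leq g|_K$ implies $f(\mu)\leq g(\mu)$ for all $f,g\in \LL(\FF(S))$. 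Thus, once I establish $\tilde f_1|_K\leq \tilde f_2|_K$, I obtain $\tilde f_1(\mu)\leq \tilde f_2(\mu)$, the desired contradiction, and the theorem follows.

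The heart of the matter — and the step I expect to be the main obstacle — is precisely this last inequality $\tilde f_1|_K\leq \tilde f_2|_K$. On $K\cap \FF_I(S)=D$ it holds by construction, since $\tilde f_i$ restricts to $f_i$ there. The difficulty lies on the complementary set $K\setminus \FF_I(S)$, that is, on functionals $\lambda\in K$ whose support ideal $I'$ differs from $I$: the Hahn--Banach separation only controls $\tilde f_1$ and $\tilde f_2$ on the single slice $\FF_I(S)$, and I must transfer the inequality across the decomposition of $\FF(S)$ into the disjoint cancellative pieces $\FF_J(S)$ of \autoref{pgr:FI-PI}. Here I would exploit the special form of the functions in $\PP_I(S)$: from $\tilde f_i\lhd g_i\ll \widehat{x_i}$ with $x_i\in I$ one gets $\tilde f_i\leq \widehat{x_i}$, hence $\tilde f_i(\lambda)\leq \lambda(x_i)$, while the continuity encoded in the relation $\lhd$ (via \autoref{prp:CharWayBelowLscC}) together with the characterization of support ideals in \autoref{prp:CharSupportIdeal} pins down the behaviour of $\tilde f_i$ on functionals supported off $I$.

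The concrete strategy for $K\setminus\FF_I(S)$ would be to reduce the comparison at such a $\lambda$ to a comparison already known on $D$: since $\lambda_I\in K$ and $K$ is a subcone, the functional $\lambda+\lambda_I$ also lies in $K$ and agrees with $\lambda$ on $I$, where the $\widehat{x_i}$ (and hence the $\tilde f_i$) are insensitive to adding $\lambda_I$; controlling the discrepancy between $\lambda$ and this $I$-supported companion — using \autoref{prp:wayBelowLFS} and \autoref{prp:restrictWayBelowLFS} to move between $\widehat{\,\cdot\,}$ and its restriction to closed subcones — is what forces $\tilde f_1(\lambda)\leq \tilde f_2(\lambda)$. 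I anticipate that verifying this transfer cleanly, rather than the Hahn--Banach step itself, is where the technical weight of the proof resides.
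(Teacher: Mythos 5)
Your overall architecture matches the paper's: argue by contradiction, upgrade the hypothesis to $\LL(\FF(S))$ via \autoref{prp:separationLFStoS}, obtain $\lambda_I\in K$ from \autoref{prp:technical2}, and then separate inside a cancellative slice via \autoref{prp:HB-Cones}. But the step you yourself single out as the heart of the matter --- transferring the inequality $\tilde f_1\leq \tilde f_2$ from the slice to all of $K$ --- is a genuine gap, and the mechanism you sketch for it does not work. By the description of $\FF_I(S)$ in \autoref{pgr:FI-PI}, one has $\lambda+\lambda_I\in\FF_I(S)$ if and only if $\lambda(x')<\infty$ whenever $x'\ll x$ for some $x\in I$; so as soon as $\lambda\in K$ is infinite at such an $x'$, the functional $\lambda+\lambda_I$ does \emph{not} lie in $\FF_I(S)$, hence not in $D$, and the Hahn--Banach output gives no information about it. There is then no reason for $\tilde f_1(\lambda)\leq \tilde f_2(\lambda)$ to hold. (Note also that, since $\lambda_I\in K$, your $D=K\cap\FF_I(S)$ already equals $(\lambda_I+K)\cap\FF_I(S)$, so adding $\lambda_I$ buys nothing new on the slice itself.)

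The paper's resolution needs two ingredients absent from your plan. First, a compactness argument over the directed family of countably generated ideals contained in $I$ produces a \emph{countably generated} $J\subseteq I$ with $\lambda_J+\mu\notin\lambda_J+K$, and the separation is performed for $\lambda_J+\mu$ against $D=(\lambda_J+K)\cap\FF_J(S)$ rather than at the level of $I$. Second --- and this is the decisive device --- one does not use $f_1,f_2$ themselves but $g_i=f_i+h$, where $h=\sum_{n}\beta_n\widehat{z_n}$ is built from a $\ll$-increasing sequence $(z_n)_n$ whose supremum is the largest element of $J$, with the $\beta_n>0$ chosen so that $h(\mu)\leq 1$. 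Since $h(\lambda_J)=0$, the strict inequality at $\mu$ and the inequality on $D$ survive adding $h$; and for every $\lambda\in K$ with $\lambda_J+\lambda\notin\FF_J(S)$ one gets $\lambda(z_n)=\infty$ for some $n$, hence $g_1(\lambda)=g_2(\lambda)=\infty$ and the comparison is trivially true there. Countable generation of $J$ is exactly what allows $h$ to exist in $\LL(\FF(S))$ and to detect every such $\lambda$; working with $I$ itself, as you propose, this correction is unavailable when $I$ is not countably generated, and your argument stalls precisely at the point you anticipated.
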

\begin{proof}
To reach a contradiction, we assume that for all $x,y\in S$ with $\widehat{x}|_{K}\leq\widehat{y}|_{K}$ we have $\widehat{x}(\mu)\leq\widehat{y}(\mu)$. 
It then follows from \autoref{prp:separationLFStoS} (with $M=1$) that for all $f,g\in \LL(\FF(S))$ with $f|_K\leq g|_K$ we have $f(\mu)\leq g(\mu)$. Our goal is to reach a contradiction.
Let $I$ be the support ideal of $\mu$, so that $\mu\in \FF_I(S)$.
Applying \autoref{prp:technical2} (with $M=1$), we have $\lambda_I\in K$.

\textit{Claim: }\emph{There is a countably generated ideal $J\subseteq I$ such that $\lambda_J+\mu \notin \lambda_J+K$.
}

To prove the claim, let $\mathcal{S}$ denote the family of countably generated ideals contained in~$I$.
Ordered by inclusion, $\mathcal{S}$ is upward directed with $I=\bigcup\mathcal{S}$. Hence, $\lim_{J\in\mathcal{S}}\lambda_J=\lambda_I$.

To reach a contradiction, assume that for every $J\in\mathcal{S}$ we have $\lambda_J+\mu \in \lambda_J+K$, that is, there exists $\nu_J\in K$ such that $\lambda_J+\mu=\lambda_J+\nu_J$.
Since $K$ is compact, there exists a convergent subnet $(\nu_{J_\alpha})_\alpha$. Denote its limit by $\nu\in K$.
We have $\lim_{\alpha}\lambda_{J_\alpha}=\lambda_I$.
Using at the first step that $\mu\in \FF_I(S)$, and using at the last step that $\lambda_I\in K$, we get
\[
\mu
= \lambda_I+\mu
= \lim_{\alpha}( \lambda_{J_\alpha}+\mu )
= \lim_{\alpha}( \lambda_{J_\alpha}+\nu_{J_\alpha} )
=\lambda_I+\nu
\in K.
\]
This is the desired contradiction that proves the claim.

Fix $J$ as in the claim and set $D := (\lambda_J+K)\cap \FF_J(S)$. Then $D$  is a subcone of $\FF_J(S)$ closed in the $\sigma(\FF_J(S),\widetilde \PP_J(S))$ topology and containing the origin $\lambda_J$.  Since $\lambda_J+\mu\in \FF_J(S)\backslash D$, we can apply \autoref{prp:HB-Cones}  to the pairing between $\FF_J(S)$ and $\widetilde \PP_J(S)$ to obtain  $\tilde f_1,\tilde f_2\in \widetilde \PP_J(S)$ such that $\tilde f_1|_D\leq \tilde f_2|_D$ and $\tilde f_1(\lambda_J+\mu)>\tilde f_2(\lambda_J+\mu)$.  Choose $f_1,f_2\in \PP_J(S)$ such that $f_1|_{\FF_J(S)}=\tilde f_1$ and $f_2|_{\FF_J(S)}=\tilde f_2$

Using that $J$ is countably based, choose a $\ll$-increasing sequence $(z_n)_n$ in $J$ whose supremum is the largest element of $J$.
Note that $J$ is the support ideal of $\lambda_J+\mu$.
Given $n\in\NN$, we have $\widehat{z_n}(\lambda_J)=\lambda_J(z_n)=0$, and thus
\[
\widehat{z_n}(\mu)
=\widehat{z_n}(\lambda_J+\mu)
<\infty.
\]

Define 
\begin{equation*}
h=\sum_{n=0}^\infty \beta_n\widehat{z_n}\in \LL(\FF(S)),
\end{equation*}
where the scalars $(\beta_n)_n$ are strictly positive and chosen so that $h(\mu)\leq 1$. 
Now set
\[
g_1 = f_1 + h \andSep
g_2 = f_2 + h.
\]

Since $f_1,f_2\in \PP_J(S)$ (see \eqref{spacePI}), we have $f_1(\lambda_J)=f_2(\lambda_J)=0$.
Using that $h(\mu)<\infty$, we deduce that 
\[
g_1(\mu)
= f_1(\mu)+h(\mu)
= f_1(\lambda_J+\mu)+h(\mu)
> f_2(\lambda_J+\mu)+h(\mu)
= g_2(\mu).
\]

Let us show that $g_1|_K \leq g_2|_K$, which will yield the desired contradiction. 
Let $\lambda\in K$.
Assume first that $\lambda_J+\lambda\in \FF_J(S)$. 
Then $\lambda_J+\lambda\in D$.
Hence, $f_1(\lambda_J+\lambda)\leq f_2(\lambda_J+\lambda)$. 
Using that $h(\lambda_J)=0$, which is clear from the definition of $h$, we get 
\begin{align*}
g_1(\lambda)
&= f_1(\lambda)+h(\lambda)\\
&= f_1(\lambda+\lambda_J) + h(\lambda+\lambda_J)\\
&\leq f_2(\lambda+\lambda_J) + h(\lambda+\lambda_J)
= g_2(\lambda).
\end{align*}

Assume now that $\lambda_J+\lambda\notin \FF_J(S)$. 
From the definition of $\FF_J(S)$ (see \eqref{coneFI}) we deduce that
\[
\lambda \notin \big\{ \lambda'\in \FF(S) : \lambda'(x')<\infty\text{ whenever }x'\ll x \text{ for some }x\in J \big\}.
\]
Recall that $(z_n)_{n}$ is an increasing sequence with supremum the largest element of~$J$. 
Hence, we must have that $\lambda(z_n)=\infty$ for some $n$, and thus $h(\lambda)=\infty$. 
Then, $g_1(\lambda)=\infty=g_2(\lambda)$.
\end{proof}

\begin{cor}
\label{prp:SubconeIsAll}
Let $K\subseteq \FF(S)$ be a closed subcone containing $0$.
Assume that $\widehat{x}|_K\leq \widehat{y}|_K$ implies $\widehat{x}\leq\widehat{y}$, for all $x,y\in S$.
Then $K=\FF(S)$.
\end{cor}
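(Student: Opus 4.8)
The plan is to deduce this directly from the separation theorem \autoref{prp:Separation}, which was just established. The statement is essentially its contrapositive, so no new machinery is needed; the argument is a short logical contraposition.

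First I would argue by contradiction. Suppose that $K\neq\FF(S)$. Since $K$ is a closed subcone of $\FF(S)$ containing $0$, I may pick some $\mu\in\FF(S)\setminus K$ and apply \autoref{prp:Separation} to $K$ and $\mu$. This yields elements $x,y\in S$ with the separating property, namely
\[
\widehat{x}|_K \leq \widehat{y}|_K \andSep \widehat{x}(\mu) > \widehat{y}(\mu).
\]

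Next I would invoke the hypothesis of the corollary. Since $\widehat{x}|_K\leq\widehat{y}|_K$ holds, the assumption forces $\widehat{x}\leq\widehat{y}$ as functions on all of $\FF(S)$. Evaluating this global inequality at the particular functional $\mu$ gives $\widehat{x}(\mu)\leq\widehat{y}(\mu)$, which directly contradicts the strict inequality $\widehat{x}(\mu)>\widehat{y}(\mu)$ produced by the separation theorem. This contradiction shows that no $\mu\in\FF(S)\setminus K$ can exist, whence $K=\FF(S)$.

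There is essentially no obstacle here, since all the real work is carried out in \autoref{prp:Separation}; the only point to be careful about is that the corollary's hypothesis is stated for generators $\widehat{x},\widehat{y}$ with $x,y\in S$ (rather than for arbitrary elements of $\LL(\FF(S))$), and this matches exactly the form of the conclusion of \autoref{prp:Separation}, so the two slot together without any approximation step. The hypotheses $0\in K$ and $K$ closed are precisely those required to apply the separation theorem, so the argument closes cleanly.
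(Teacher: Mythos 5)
Your argument is correct and is exactly the intended one: the paper states this as an immediate corollary of \autoref{prp:Separation} without writing out a proof, and the contrapositive you give is precisely how the two statements fit together. Nothing is missing.
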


\begin{exa}
Let $S=\{0,\infty\}$. 
Then $\FF(S)$ contains only two elements: the zero functional and the functional $\lambda_\infty$ that satisfies $\lambda_\infty(\infty)=\infty$. 
Set $K=\{\lambda_\infty\}$. 
Then~$K$ is a proper closed subcone of $\FF(S)$, such that for all $x,y\in S$ with $\widehat{x}_{|K}\leq\widehat{y}_{|K}$ we have $\widehat{x}\leq\widehat{y}$. 
Thus, the assumption that $K$ contains $0$ cannot be removed from \autoref{prp:SubconeIsAll}.
\end{exa}

We derive a kind of bipolar theorem for subsets of $\FF(S)$.

\begin{thm}
\label{prp:Bipolar} 
Let $K$ be a subset of $\FF(S)$, and let $\mu\in \FF(S)$.
The following are equivalent:
\begin{enumerate}[{\rm (i)}]
\item
The element $\mu$ belongs to the closed cone generated by $K\cup\{0\}$.
\item 
For all $x,y',y\in S$ with $\widehat{x}|_{K}\leq\widehat{y'}|_{K}$ and $y'\ll y$, we have $\widehat{x}(\mu)\leq\widehat{y}(\mu)$.
\item
For all $x',x,y',y\in S$ and $\gamma\in(0,1)$ satisfying $x'\ll x$, $\widehat x|_K\leq \gamma \widehat{y'}|_K$ and $y'\ll y$, we have $\widehat{x'}(\mu)\leq\widehat{y}(\mu)$.
\end{enumerate}
\end{thm}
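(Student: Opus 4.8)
The plan is to write $\overline{C}$ for the closed cone generated by $K\cup\{0\}$ inside $\FF(S)$, where $0$ is the origin (the zero functional); this is a closed subcone of the compact cone $\FF(S)$ containing the origin, hence exactly the object to which \autoref{prp:Separation} applies, and statement~(i) reads $\mu\in\overline{C}$. I would establish the four implications (i)$\Rightarrow$(ii), (ii)$\Rightarrow$(iii), (iii)$\Rightarrow$(ii) and (ii)$\Rightarrow$(i); the first two are soft, and the substance lies in (iii)$\Rightarrow$(ii) and in the separation argument (ii)$\Rightarrow$(i). Throughout I would use that each $\widehat{x}$ is additive, $(0,\infty)$-homogeneous and lower semicontinuous on $\FF(S)$, together with the explicit net description of the topology of $\FF(S)$ recalled in \autoref{pgr:functionals}.

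For (i)$\Rightarrow$(ii), suppose $\widehat{x}|_K\leq\widehat{y'}|_K$ and $y'\ll y$. By additivity and homogeneity the inequality $\widehat{x}\leq\widehat{y'}$ propagates from $K$ to the whole (unclosed) cone $C_0$ generated by $K\cup\{0\}$. Writing $\mu=\lim_j\mu_j$ with $\mu_j\in C_0$, lower semicontinuity of $\widehat{x}$ gives $\widehat{x}(\mu)\leq\liminf_j\widehat{x}(\mu_j)\leq\liminf_j\widehat{y'}(\mu_j)$, while the convergence criterion applied to $y'\ll y$ gives $\limsup_j\widehat{y'}(\mu_j)\leq\widehat{y}(\mu)$; chaining these yields $\widehat{x}(\mu)\leq\widehat{y}(\mu)$. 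The implication (ii)$\Rightarrow$(iii) is immediate: from $\gamma<1$ we get $\widehat{x}|_K\leq\gamma\widehat{y'}|_K\leq\widehat{y'}|_K$, so (ii) gives $\widehat{x}(\mu)\leq\widehat{y}(\mu)$, and then $\widehat{x'}(\mu)\leq\widehat{x}(\mu)$ because $x'\ll x$.

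The key maneuver, shared by the two remaining implications, is a rational rescaling that trades the non-strict comparisons coming from hypotheses or from \autoref{prp:Separation} for the strict and way-below data the conditions demand. For (iii)$\Rightarrow$(ii), given $\widehat{x}|_K\leq\widehat{y'}|_K$ and $y'\ll y$, I fix a rational $\tfrac{q}{p}\in(0,1)$ and observe $\widehat{qx}|_K\leq q\widehat{y'}|_K=\tfrac{q}{p}\widehat{py'}|_K$; since $y'\ll y$ implies $py'\ll py$, and $qx''\ll qx$ for every $x''\ll x$, condition~(iii) with $\gamma=\tfrac{q}{p}$ yields $\widehat{x''}(\mu)\leq\tfrac{p}{q}\widehat{y}(\mu)$. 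Taking the supremum over $x''\ll x$ and then letting $\tfrac{q}{p}\uparrow 1$ gives $\widehat{x}(\mu)\leq\widehat{y}(\mu)$ (the case $\widehat{y}(\mu)=\infty$ being trivial).

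Finally, for (ii)$\Rightarrow$(i) I would argue by contraposition. If $\mu\notin\overline{C}$, then \autoref{prp:Separation} produces $x,y\in S$ with $\widehat{x}|_{\overline{C}}\leq\widehat{y}|_{\overline{C}}$ and $\widehat{x}(\mu)>\widehat{y}(\mu)$, so in particular $\widehat{y}(\mu)<\infty$. I would choose $p>q$ in $\NN$ with $q\widehat{x}(\mu)>p\widehat{y}(\mu)$ and then pick $x'\ll x$ with $q\widehat{x'}(\mu)>p\widehat{y}(\mu)$. The crux is the following: by \autoref{prp:wayBelowLFS}, $x'\ll x$ and $1<\tfrac{p}{q}$ give $\widehat{x'}\ll\tfrac{p}{q}\widehat{x}$, and restricting to the closed subcone $\overline{C}$ via \autoref{prp:restrictWayBelowLFS} makes $\widehat{x'}|_{\overline{C}}$ non-sequentially way-below $\tfrac{p}{q}\widehat{x}|_{\overline{C}}\leq\sup_m\tfrac{p}{q}\widehat{y_m}|_{\overline{C}}$, where $y=\sup_m y_m$ is $\ll$-increasing; hence $\widehat{x'}|_{\overline{C}}\leq\tfrac{p}{q}\widehat{y_{m_0}}|_{\overline{C}}$ for a single $m_0$, that is, $\widehat{qx'}|_K\leq\widehat{py_{m_0}}|_K$. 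Since $py_{m_0}\ll py$, condition~(ii) then forces $q\widehat{x'}(\mu)\leq p\widehat{y}(\mu)$, contradicting the choice of $x'$. I expect this last step to be the main obstacle, precisely because \autoref{prp:Separation} only delivers a non-strict inequality on $\overline{C}$: the fixed factor $\tfrac{p}{q}>1$ (absorbed by rescaling $x'$ and $y_{m_0}$) together with the way-below restriction lemma is what converts this into the single-index, way-below comparison that (ii) can consume, while the strict inequality at $\mu$ survives because the rescaling factor is kept fixed.
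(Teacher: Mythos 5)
Your proposal is correct and follows essentially the same route as the paper: the separation theorem (\autoref{prp:Separation}), the way-below lemmas \autoref{prp:wayBelowLFS} and \autoref{prp:restrictWayBelowLFS}, and a rational rescaling to land the comparison on a single $y_{m_0}$ way-below $y$. The only difference is organizational — the paper closes the cycle via (iii)$\Rightarrow$(i) directly (using the constants $m$, $m+1$, $m+2$ to manufacture the $\gamma<1$ that (iii) requires), whereas you route through (iii)$\Rightarrow$(ii)$\Rightarrow$(i), which lets your separation step target the slightly weaker condition (ii) — but the substance is identical.
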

\begin{proof}
To show that~(i) implies~(ii), let $C$ be the cone generated by $K\cup\{0\}$, that is,
\[
C = \big\{ t_1\lambda_1+\ldots+t_n\lambda_n : t_j\in(0,\infty),\lambda_j\in K\cup\{0\} \big\}.
\]
By assumption, $\mu\in\overline{C}$.

Let $x,y',y\in S$ satisfy $\widehat{x}|_{K}\leq\widehat{y'}|_{K}$ and $y'\ll y$.
We need to verify $\widehat{x}(\mu)\leq\widehat{y}(\mu)$.
Using that $\widehat{x}$ and $\widehat{y'}$ are linear and $(0,\infty)$-homogeneous, it follows that $\widehat{x}(\lambda)\leq\widehat{y'}(\lambda)$ for every $\lambda\in C$.
Let $(\lambda_j)_j$ be a net in $C$ that converges to $\mu$.
Then
\[
\widehat{x}(\mu)
= \mu(x)
\leq \liminf_j \lambda_j(x)
\leq \limsup_j \lambda_j(y') 
\leq \mu(y) 
= \widehat{y}(\mu).
\]

It is clear that~(ii) implies~(iii).
To show that~(iii) implies~(i), let $L$ be the closed cone generated by $K\cup\{0\}$.
To reach a contradiction, assume $\mu\notin L$.
Applying \autoref{prp:Separation}, we obtain $v,w\in S$ such that
\[
\widehat{v}|_L\leq \widehat{w}|_L, \andSep 
\widehat{v}(\mu)>\widehat{w}(\mu).
\]
Using that $\mu$ preserves suprema of increasing sequences, we can choose $v'$ such that
\[
v'\ll v, \andSep
\widehat{v'}(\mu)>\widehat{w}(\mu).
\]
Choose $v''\in S$ and $m\in\NN$ such that
\[
v'\ll v''\ll v, \andSep
m\widehat{v'}(\mu)>(m+2)\widehat{w}(\mu).
\]
By \autoref{prp:wayBelowLFS}, we have $m\widehat{v''}\ll(m+1)\widehat{v}$ in $\LL(\FF(S))$.
Let $(w_n)_n$ be a $\ll$-increasing sequence in $S$ with supremum $w$.
Applying \autoref{prp:restrictWayBelowLFS} at the first step, we get
\[
m\widehat{v''}|_L 
\ll (m+1)\widehat{v}|_L
\leq (m+1)\widehat{w}|_L
= \sup_n (m+1)\widehat{w_n}|_L
\]
in $\Lsc(L)$, which allows us to choose $l\in\NN$ such that $m\widehat{v''}|_L\leq (m+1)\widehat{w_l}|_L$.
Then $x'=mv'$, $x=mv''$, $y'=(m+2)w_l$, $y=(m+2)w$, and $\gamma=\tfrac{m+1}{m+2}$ satisfy
\[
x'\ll x, \quad
\widehat{x}|_K\leq \gamma\widehat{y'}|_K, \quad
y'\ll y, \andSep
\widehat{x'}(\mu)>\widehat{y}(\mu),
\]
which is the desired contradiction.
\end{proof}

\begin{prp}
\label{prp:CompareClosureCone}
Let $K$ be a subcone of $\FF(S)$ with closure $\overline K$.
Let $x,y\in S$ and $\gamma \in \RR_+$.
The following are equivalent:
\begin{enumerate}[{\rm (i)}]
\item
We have $\widehat{x}|_{\overline{K}} \leq \gamma \widehat{y}|_{\overline{K}}$.
\item
For every $x' \in S$ with $x' \ll x$ and every $\gamma'>\gamma$ there exists $y' \in S$ such that $y' \ll y$ and $\widehat{x'}|_K \leq \gamma'\widehat{y'}|_K$.
\end{enumerate}
\end{prp}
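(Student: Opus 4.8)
The plan is to establish the two implications separately. Throughout, the key device is to pass from the pointwise inequality of $\widehat{x}$ and $\widehat{y}$ on $\overline{K}$ to a way-below relation between their restrictions in $\Lsc(\overline{K})$, combined with the fact that, by \axiomO{2}, the element $y$ is the supremum of a $\ll$-increasing sequence $(y_n)_n$ in $S$, so that $\widehat{y}|_{\overline K}=\sup_n \widehat{y_n}|_{\overline K}$ since each functional preserves suprema of increasing sequences.

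For the implication (i)$\Rightarrow$(ii), I would fix $x'\ll x$ and $\gamma'>\gamma$ and choose $c>1$ with $c\gamma\le\gamma'$. Since $x'\ll x$ and $1<c$, \autoref{prp:wayBelowLFS} yields $\widehat{x'}\ll c\widehat{x}$ in $\LL(\FF(S))$, and then \autoref{prp:restrictWayBelowLFS} upgrades this to the statement that $\widehat{x'}|_{\overline K}$ is \emph{non-sequentially} way-below $c\widehat{x}|_{\overline K}$ in $\Lsc(\overline K)$. Using hypothesis (i), we have $c\widehat{x}|_{\overline K}\le c\gamma\widehat{y}|_{\overline K}=\sup_n c\gamma\widehat{y_n}|_{\overline K}$, an increasing supremum; the non-sequential way-below relation then produces an index $n$ with $\widehat{x'}|_{\overline K}\le c\gamma\widehat{y_n}|_{\overline K}\le\gamma'\widehat{y_n}|_{\overline K}$. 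Setting $y'=y_n$ and restricting to $K\subseteq\overline K$ gives (ii). It is essential here that \autoref{prp:restrictWayBelowLFS} delivers the non-sequential form of way-below, since the supremum defining $\widehat{y}|_{\overline K}$ must be compared against an increasing net rather than a single function.

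For (ii)$\Rightarrow$(i), I would fix $\lambda\in\overline K$ and a net $(\mu_j)_j$ in $K$ with $\mu_j\to\lambda$, and reduce to showing $\lambda(x')\le\gamma\lambda(y)$ for every $x'\ll x$ (since $\lambda(x)=\sup_{x'\ll x}\lambda(x')$). Given such $x'$ and any $\gamma'>\gamma$, hypothesis (ii) supplies $y'\ll y$ with $\mu(x')\le\gamma'\mu(y')$ for all $\mu\in K$. The convergence criterion for $\FF(S)$ recalled in \autoref{pgr:functionals} gives $\lambda(x')\le\liminf_j\mu_j(x')$ and, because $y'\ll y$, $\limsup_j\mu_j(y')\le\lambda(y)$; chaining these through $\mu_j(x')\le\gamma'\mu_j(y')$ yields $\lambda(x')\le\gamma'\lambda(y)$. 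Letting $\gamma'\downarrow\gamma$ then gives $\lambda(x')\le\gamma\lambda(y)$, and taking the supremum over $x'\ll x$ completes the argument.

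The main obstacle I anticipate is bookkeeping with the value $\infty$. In the last step, letting $\gamma'\downarrow\gamma$ is immediate when $\lambda(y)<\infty$, while the case $\lambda(y)=\infty$ must be dispatched by noting that the right-hand side $\gamma\lambda(y)$ is then already infinite, which is precisely where the positivity of $\gamma$ enters (indeed one checks with $S=\{0,\infty\}$-type examples that the equivalence genuinely fails at $\gamma=0$ when $\widehat{y}(\lambda)=\infty$). A second point requiring care is matching the way-below direction with the normalization constants $c$ and $\gamma'$, and verifying that $\overline K$ is a closed subcone so that \autoref{prp:restrictWayBelowLFS} applies; both are routine once the non-sequential way-below relation is in hand.
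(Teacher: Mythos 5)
Your argument is correct and coincides with the paper's own proof in both directions: for (i)$\Rightarrow$(ii) the same passage through \autoref{prp:wayBelowLFS} and \autoref{prp:restrictWayBelowLFS} followed by comparison against a $\ll$-increasing sequence $(y_n)_n$ with supremum $y$, and for (ii)$\Rightarrow$(i) the same $\liminf$/$\limsup$ chain along a net in $K$ converging to $\lambda\in\overline{K}$, with the supremum over $x'\ll x$ and the infimum over $\gamma'>\gamma$ taken at the end. The only quibble is your claim that the non-sequential way-below relation is \emph{essential}: since by \axiomO{2} one has $\widehat{y}|_{\overline K}=\sup_n\widehat{y_n}|_{\overline K}$ as a supremum of an increasing \emph{sequence}, the sequential relation $\widehat{x'}|_{\overline K}\ll\gamma'\widehat{y}|_{\overline K}$ already suffices, which is exactly what the paper uses.
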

\begin{proof}
\emph{We show that~(ii) implies~(i).}
To verify~(i), let $\lambda\in \overline{K}$. 
Choose a net $(\lambda_j)_j$ in $K$ that converges to $\lambda$.
Let $x' \in S$ satisfy $x' \ll x$, and let $\gamma' > \gamma$.
By assumption, we obtain $y' \in S$ such that $y' \ll y$ and $\gamma\widehat{x'}|_K \leq \widehat{y'}|_K$. 
Then 
\[
\lambda(x') 
\leq \liminf_j \lambda_j(x')
\leq \limsup_j \gamma'\lambda_j(y')
\leq \gamma'\lambda(y). 
\]

Passing to the supremum over all $x'$ way-below $x$ on the left hand side, and to the infimum over all $\gamma'>\gamma$ on the right hand side, we get that $\lambda(x)\leq \lambda(y)$.

\smallskip

\emph{We show that~(i) implies~(ii).}
Suppose that $\widehat{x}|_{\overline{K}} \leq \gamma\widehat{y}|_{\overline{K}}$. 
Let $x' \in S$ satisfy $x' \ll x$ and let $\gamma'>\gamma$.
Then $\widehat{x'} \ll \tfrac{\gamma'}{\gamma}\widehat{x}$ in $\Lsc(\FF(S))$ (and hence in $\LL(\FF(S))$ by \autoref{prp:wayBelowLFS}.
Using \autoref{prp:restrictWayBelowLFS} at the first step, it follows that 
\[
\widehat{x'}|_{\overline{K}} 
\ll \frac{\gamma'}{\gamma}\widehat{x}|_{\overline{K}}
\leq \frac{\gamma'}{\gamma}\gamma\widehat{y}|_{\overline{K}} 
= \gamma' \widehat{y}|_{\overline{K}}
\]
in $\Lsc(\overline{K})$.
Choose a $\ll$-increasing sequence $(y_n)_n$ in $S$ with supremum $y$. 
Then $\widehat{x}|_{\overline{K}} \ll \sup_n \gamma'\widehat{y_n}|_{\overline{K}}$, and we obtain $n$ such that $\widehat{x'}|_{\overline{K}} \leq \gamma' \widehat{y_n}|_{\overline{K}}$.
Then $y' := y_n$ has the desired properties.
\end{proof}


\section{A stronger separation theorem}
\label{sec:strongerSeparateFS}

Our goal in this appendix is to obtain an improved version of \autoref{prp:SubconeIsAll}  imposing further properties on $S$. 
This is achieved in \autoref{prp:SubconeIsAll2}.

\begin{pgr}
\label{pgr:Edwards}
We say that a \CuSgp{} $S$ satisfies \emph{Edwards' condition} if, for any $\lambda\in \FF(S)$ and $x,y\in S$, one has
\[
\inf \big\{ \lambda_1(x)+\lambda_2(y) : \lambda_1+\lambda_2=\lambda \big\}
= \sup \big\{ \lambda(z) : z \leq x,y \big\};
\]
see \cite[Definition~4.1]{AntPerRobThi21Edwards}, \cite[Section~4]{Thi20RksOps} and \cite[6.3]{AntPerRobThi22CuntzSR1}. 
We remark that the expression on the left hand side of the above equality is equal to the infimum of the functions $\widehat{x}$ and $\widehat{y}$, taken in $\Lsc(\FF(S))$, evaluated at $\lambda$;
see \cite[Lemma~3.4]{AntPerRobThi21Edwards}.
The Cuntz semigroup of a \ca{} satisfies Edwards' condition;
see \cite[Theorem~5.3]{AntPerRobThi21Edwards}.
\end{pgr}

\begin{lma}
\label{prp:SREC}
Let $S$ be a \CuSgp{} satisfying \axiomO{5}, \axiomO{6} and Edwards' condition. 
Then this is also the case for $\LL(\FF(S))$.
\end{lma}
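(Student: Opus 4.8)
The plan is to verify the three extra axioms for $T:=\LL(\FF(S))$, using that $T$ is already a \CuSgp{} (so axioms \axiomO{1}--\axiomO{4} hold) by \cite{Rob13Cone}; see \autoref{pgr:LC}. The organizing tool is a biduality between $\FF(S)$ and $\FF(T)$. Writing $\iota\colon S\to T$, $x\mapsto\widehat{x}$, for the canonical \CuMor{} and $\mathrm{ev}_\lambda\in\FF(T)$ for evaluation at $\lambda\in\FF(S)$, I would first show that $\lambda\mapsto\mathrm{ev}_\lambda$ is a bijection $\FF(S)\to\FF(T)$ under which decompositions correspond. Injectivity holds because the functions $\widehat{x}$ separate points of $\FF(S)$; for surjectivity, given $\Lambda\in\FF(T)$ the assignment $x\mapsto\Lambda(\widehat{x})$ defines $\lambda:=\Lambda\circ\iota\in\FF(S)$, and since every $f\in T$ is a supremum of a $\ll$-increasing sequence $(\widehat{x_n}/k_n)_n$ (\autoref{pgr:LC}) and both $\Lambda$ and $\mathrm{ev}_\lambda$ preserve suprema of increasing sequences (such suprema in $T$ being pointwise), one gets $\Lambda=\mathrm{ev}_\lambda$. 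Consequently every decomposition $\Lambda_1+\Lambda_2=\mathrm{ev}_\lambda$ in $\FF(T)$ is of the form $\mathrm{ev}_{\lambda_1}+\mathrm{ev}_{\lambda_2}$ with $\lambda_1+\lambda_2=\lambda$ in $\FF(S)$.

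With this in hand, Edwards' condition for $T$ becomes, for $\lambda\in\FF(S)$ and $f,g\in T$, the identity
\[
\inf\{f(\lambda_1)+g(\lambda_2):\lambda_1+\lambda_2=\lambda\}=\sup\{h(\lambda):h\in T,\ h\leq f,\ h\leq g\}.
\]
I would establish this first for $f=\widehat{x}$, $g=\widehat{y}$ with $x,y\in S$: the left-hand side is then exactly the left-hand side of Edwards' condition for $S$ (\autoref{pgr:Edwards}); the inequality ``$\geq$'' of the right-hand side is immediate by taking $h=\widehat{z}$ for $z\leq x,y$, while for ``$\leq$'' any $h\leq\widehat{x},\widehat{y}$ satisfies $h(\lambda)=h(\lambda_1)+h(\lambda_2)\leq\lambda_1(x)+\lambda_2(y)$, so Edwards' condition for $S$ yields equality. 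Rescalings $\widehat{x}/k$ are absorbed by passing to a common denominator and using $(0,\infty)$-homogeneity. For general $f=\sup_n f_n$ and $g=\sup_n g_n$ with $f_n,g_n$ rescaled generators, I would pass to the limit: ``$\geq$'' is direct, and for ``$\leq$'' the key point is that the infimum over the compact set $D=\{(\lambda_1,\lambda_2):\lambda_1+\lambda_2=\lambda\}$ of the increasing sequence of lower semicontinuous integrands $(\lambda_1,\lambda_2)\mapsto f_n(\lambda_1)+g_n(\lambda_2)$ commutes with the supremum in $n$, by a Dini-type compactness argument (the open sets $\{(\lambda_1,\lambda_2):f_n(\lambda_1)+g_n(\lambda_2)>a\}$ increase to cover $D$ once $a$ is below the infimum for $f,g$).

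Finally, \axiomO{5} for $T$ is part of the structure theory of $\LL(\FF(S))$ and may be cited from \cite{Rob13Cone} (or verified from the cone structure, using \axiomO{5} for $S$). The substantive point is \axiomO{6}. Here one cannot simply pull the hypothesis $f\leq g+k$ back to $S$, since the functions $\widehat{x}$ only record comparison by functionals, which is strictly weaker than the order of $S$; and the naive pointwise choice $g'=\min(f,g)$ fails because a pointwise minimum of additive functions need not be additive. Instead I would use Edwards' condition on $S$ to manufacture genuine elements of $\Lsc(\FF(S))$: given $f\leq g+k$ and $f'\ll f$ in $T$, first replace $f$ by $\tilde f$ with $f'\leq\tilde f\lhd f$ via \autoref{prp:CharWayBelowLscC}, then take the ``meets'' of $f$ with $g$ and of $f$ with $k$ computed as infima inside $\Lsc(\FF(S))$, whose values are controlled by Edwards' condition, and use \axiomO{6} for $S$ on the approximating generators to split $f'$ below these two meets, invoking \autoref{prp:wayBelowLFS} to arrange the way-below refinement. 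I expect the construction of the decomposing functions $g',k'$ — checking that they lie in $\LL(\FF(S))$, dominate $f'$ in sum, and satisfy the way-below clause of \axiomO{6} — to be the main obstacle, since this is precisely the step where additivity (not merely homogeneity) of the meet must be extracted from Edwards' condition and \axiomO{6} on $S$.
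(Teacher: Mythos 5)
Your treatment of Edwards' condition is correct and runs essentially parallel to the paper's. The identification of $\FF(\LL(\FF(S)))$ with point evaluations is exactly the paper's starting point (quoted from the proof of \cite[Proposition~3.1.1]{Rob13Cone}), the reduction to $f=\widehat{x}/k$, $g=\widehat{y}/l$ and the common-denominator computation via Edwards' condition for $S$ is the same, and your Dini-type compactness argument for commuting the infimum over $D=\{(\lambda_1,\lambda_2):\lambda_1+\lambda_2=\lambda\}$ with the supremum over $n$ is a valid, self-contained substitute for the step the paper outsources to \cite[Theorem~3.5]{AntPerRobThi21Edwards} (namely that the left-hand side is $(f\wedge g)(\lambda)$ with the meet taken in $\Lsc(\FF(S))$, and that this meet commutes with suprema of $\ll$-increasing sequences); compactness of $D$ does hold since addition on $\FF(S)$ is jointly continuous. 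Citing \cite{Rob13Cone} for \axiomO{1}--\axiomO{5} is also what the paper does.

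The genuine gap is \axiomO{6}. You correctly diagnose that it cannot be pulled back to $S$ naively and that a pointwise minimum of additive functions is not additive, but you then leave the construction of $g'$ and $k'$ as an admitted ``main obstacle'': you never produce elements of $\LL(\FF(S))$ satisfying $f'\leq g'+k'$ with $g'\leq g,f$ and $k'\leq k,f$. In particular, even granting that the meets $f\wedge g$ and $f\wedge k$ exist in $\Lsc(\FF(S))$ and are additive (which is what \cite[Theorem~3.5]{AntPerRobThi21Edwards} provides), the inequality $f'\leq (f\wedge g)+(f\wedge k)$ is a Riesz-type estimate that still has to be proved, and one must further check that these meets lie in $\LL(\FF(S))$ rather than merely in $\Lsc(\FF(S))$. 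The paper sidesteps all of this: \axiomO{6} for $\LL(\FF(S))$ is exactly \cite[Lemma~4.0.1]{Rob13Cone} and is simply cited, with no hypothesis of Edwards' condition needed for that step. So your proof is incomplete as written; the missing step is known and citable, but your proposed route to it is not carried out and its feasibility is not established.
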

\begin{proof}
Set $T=\LL(\FF(S))$.
By \cite[Proposition~3.1.1, Theorem~3.2.1]{Rob13Cone}, $T$ is a \CuSgp{} satisfying \axiomO{5}.
By \cite[Lemma~4.0.1]{Rob13Cone}, $T$ satisfies \axiomO{6}. 
It remains to prove Edwards' condition for $T$.

For each $\Lambda\in \FF(T)$ there exists a unique $\lambda\in \FF(S)$ such that $\Lambda(h)=h(\lambda)$ for all $h\in T$, and this assignment is moreover additive; see the last paragraph of the proof of \cite[Proposition~3.1.1]{Rob13Cone}. 
That is, the functionals on $T$ arise as point evaluations on~$\FF(S)$. 
We use this below.

Given $\Lambda\in \FF(T)$, let $\lambda\in \FF(S)$ such that $\Lambda(h)=h(\lambda)$ for all $h\in T$.
To prove Edwards' condition for $\Lambda$, we must show that
\begin{equation}
\label{mustproveedwards}
\inf \big\{ f(\lambda_1)+g(\lambda_2) : \lambda_1+\lambda_2=\lambda \big\}
= \sup \big\{ h(\lambda) : h\leq f,g \big\},
\end{equation}
for all $f,g\in T$.
It is straightforward to show that the right hand side is dominated by the left hand side. 
Let us prove the opposite inequality.

By \cite[Theorem~3.5]{AntPerRobThi21Edwards}, the left hand side of \eqref{mustproveedwards}
is equal to $(f\wedge g)(\lambda)$, where $f\wedge g$ is the infimum of $f$ and $g$ in $\Lsc(\FF(S))$.
Choose sequences $(x_n)_n$ and~$(y_n)_n$ in $S$, and sequence $(k_n)_n$ and $(l_n)_n$ in $\NN\setminus\{0\}$, such that $(\tfrac{\widehat{x_n}}{k_n})_n$ and $(\tfrac{\widehat{y_n}}{l_n})_n$ are $\ll$-increasing sequences in $\Lsc(\FF(S))$ with suprema $f$ and   $g$, respectively; see \autoref{pgr:LC}. 
By \cite[Theorem~3.5]{AntPerRobThi21Edwards}, 
we have
\[
(f\wedge g)(\lambda) = \sup_n \Big(\tfrac{\widehat{x_n}}{k_n}\wedge \tfrac{\widehat{y_n}}{l_n}\Big)(\lambda),
\]
where the infima on both sides are taken in $\Lsc(\FF(S))$. 

This makes it clear that it is enough to prove $\leq$ in \eqref{mustproveedwards} for the case $f=\tfrac{\widehat{x}}{k}$ and~$g=\tfrac{\widehat{y}}{l}$ for $x,y \in S$ and $k,l \in\NN\setminus\{0\}$.
So assume that $f$ and $g$ are of this form.
Then, applying Edward's condition to $mx,ny\in S$ at the last equality, we have
\begin{align*}
(f\wedge g)(\lambda)  
&= \inf \big\{ f(\lambda_1)+g(\lambda_2) : \lambda=\lambda_1+\lambda_2 \big\} \\
&= \inf \left\{ \frac{\lambda_1(x)}{k}+ \frac{\lambda_2(y)}{l} : \lambda=\lambda_1+\lambda_2 \right\} \\
&= \frac{1}{kl} \inf \big\{ \lambda_1(lx)+\lambda_2(ky) \colon \lambda=\lambda_1+\lambda_2 \big\} \\
&= \frac{1}{kl} \sup \big\{ \lambda(z) : z\leq lx,ky \big\} \\
&\leq \sup \left\{ \frac{\widehat{z}}{kl}(\lambda) : \frac{\widehat{z}}{kl} \leq \frac{\widehat{x}}{k},\frac{\widehat{y}}{l} \right\}
\leq \sup \big\{ h(\lambda) : h\leq f,g \big\}
\end{align*}
as desired. 
The result thus follows.
\end{proof}

\begin{pgr}
\label{pgr:ray}
A \emph{ray} in a cancellative cone $C$ is a subset of the form $\RR_+\lambda$, for a non-zero element $\lambda\in C$. 
A ray $R$ is said to be \emph{extreme} if for all $\mu\in R$, whenever $\mu=\mu_1+\mu_2$ for some $\mu_1,\mu_2\in C$ we have $\mu_1,\mu_2\in R\cup\{0\}$;
see, for example, \cite[p. 79]{Phe01LNMChoquet}.

Let $S$ be a \CuSgp{} and let $I$ be an ideal of $S$. 
Let $\mu\in \FF_I(S)\setminus\{\lambda_I\}$ be a functional generating an extreme ray of $\FF_I(S)$. 
Define $\sigma_\mu\colon \FF(S)\to [0,\infty]$ as
\[
\sigma_\mu(\lambda)=
\begin{cases}
0 & \text{ if }\lambda\leq \lambda_I,\\
t & \text{ if }\lambda+\lambda_I=t\mu,\hbox{ where }t\in (0,\infty),\\
\infty & \text{ otherwise.}
\end{cases}
\]
\end{pgr}

The result below is proved for the Cuntz semigroup of a \ca{} in \cite[Proposition~7.4]{AntPerRobThi22CuntzSR1}. 
We follow here a similar argument in the context of \CuSgp{s}.

\begin{lma}
\label{prp:chiselLFS}
Let $S$ be a \CuSgp{} satisfying \axiomO{5}, \axiomO{6}, and Edwards' condition. 
Let $I$ be an ideal of $S$ and let $\mu\in \FF_I(S)\setminus\{\lambda_I\}$ be a functional generating an extreme ray of $\FF_I(S)$ 
Then $\sigma_\mu$ defined as above is the supremum of an increasing net of functions in $\LL(\FF(S))$.
\end{lma}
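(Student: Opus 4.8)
The plan is to realize $\sigma_\mu$ as the pointwise supremum of the family
\[
\mathcal{D} = \big\{ f\in \LL(\FF(S)) : f\leq \sigma_\mu \big\},
\]
and to show that $\mathcal{D}$ is upward directed. Since the supremum of an increasing net of additive lower semicontinuous functions is again additive and lower semicontinuous, this simultaneously yields the assertion and (as a bonus) shows $\sigma_\mu\in\Lsc(\FF(S))$, so I would not verify the latter separately. First I would record the useful subfamily $\{\widehat{x} : x\in I,\ \mu(x)\leq 1\}\subseteq\mathcal{D}$: for $x\in I$ one has $\widehat{x}(\lambda)=\lambda(x)=0$ whenever $\lambda\leq\lambda_I$ (since $\lambda_I(x)=0$), while on the ray $\widehat{x}(t\mu)=t\mu(x)\leq t=\sigma_\mu(t\mu)$ because $t\mu+\lambda_I=t\mu$, and off the ray $\sigma_\mu=\infty$ bounds everything; hence indeed $\widehat{x}\leq\sigma_\mu$. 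Scaling $\widehat{x}$ by $1/\mu(x)$ for some $x\in I$ with $0<\mu(x)<\infty$ (which exists as $\mu\neq\lambda_I$) shows $\mathcal{D}$ contains a function of ray-slope $1$, so $\sup\mathcal{D}$ already equals $t$ on $t\mu$ and $0$ on $\{\lambda\leq\lambda_I\}$.

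The heart of the pointwise identity is to show that $\sup\{\widehat{x}(\lambda) : x\in I,\ \mu(x)\leq 1\}=\infty$ for every $\lambda$ with $\sigma_\mu(\lambda)=\infty$, and this is where extremality enters. I would split into two cases. If $\lambda\notin\FF_I(S)$, then by the description of $\FF_I(S)$ in \autoref{pgr:FI-PI} there are $z\in I$ and $z'\ll z$ with $\lambda(z')=\infty$, while $\mu(z')<\infty$; scaling $z'$ by a small positive factor keeps $\mu$-value at most $1$ but forces the $\lambda$-value to $\infty$. If instead $\lambda\in\FF_I(S)\setminus\RR_+\mu$, suppose for contradiction the supremum were some finite $C$. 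By $(0,\infty)$-homogeneity this gives $\lambda(x)\leq C\mu(x)$ for all $x\in I$, that is $\lambda\leq C\mu$ in the cancellative cone $\FF_I(S)$; writing $C\mu=\lambda+\nu$ and using that $\mu$, hence $C\mu$, generates an extreme ray (as in \autoref{pgr:ray}) forces $\lambda\in\RR_+\mu\cup\{\lambda_I\}$, contradicting the choice of $\lambda$. Thus $\sup\mathcal{D}=\sigma_\mu$ pointwise.

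The main obstacle is the directedness of $\mathcal{D}$, and this is precisely where I would invoke \autoref{prp:SREC}: the \CuSgp{} $T:=\LL(\FF(S))$ again satisfies \axiomO{5}, \axiomO{6}, and Edwards' condition. Given $f_1,f_2\in\mathcal{D}$, the sum $f_1+f_2$ is a common upper bound in $T$, and Edwards' condition for $T$ (via the identification of $\FF(T)$ with point evaluations on $\FF(S)$) computes the infimum as $(f_1\wedge f_2)(\lambda)=\inf\{f_1(\lambda_1)+f_2(\lambda_2):\lambda_1+\lambda_2=\lambda\}$; combining this with the Riesz-type structure from \axiomO{5} and \axiomO{6} lets one produce $f_3\in T$ dominating $f_1,f_2$ with $f_1+f_2=(f_1\wedge f_2)+f_3$. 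To see $f_3\in\mathcal{D}$ I would again use extremality: on the ray, any decomposition $\lambda_1+\lambda_2=t\mu$ has $\lambda_1,\lambda_2\in\RR_+\mu\cup\{\lambda_I\}$, so writing $f_i(t\mu)=c_it$ with $c_i\leq 1$ gives $(f_1\wedge f_2)(t\mu)=\min(c_1,c_2)\,t$, whence $f_3(t\mu)=\max(c_1,c_2)\,t\leq t$; off the ray $\sigma_\mu=\infty$ and on $\{\lambda\leq\lambda_I\}$ one has $f_3\leq f_1+f_2=0$. Therefore $f_3\leq\sigma_\mu$, so $\mathcal{D}$ is upward directed and $\sigma_\mu=\sup\mathcal{D}$ is the supremum of an increasing net in $\LL(\FF(S))$. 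I expect the genuinely delicate point to be the rigorous construction of this join tight on the extreme ray inside $\LL(\FF(S))$ — that is, extracting $f_3$ from $f_1+f_2$ and $f_1\wedge f_2$ in a non-cancellative cone using only \axiomO{5}, \axiomO{6}, and the infimum formula, rather than any of the routine value computations.
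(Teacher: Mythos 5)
Your overall strategy is the paper's: realize $\sigma_\mu$ as the supremum of an upward directed family of minorants in $\LL(\FF(S))$, use extremality of the ray for the pointwise identification, and use Edwards' condition for $\LL(\FF(S))$ (via \autoref{prp:SREC}) together with \axiomO{5} for directedness. The pointwise computation you sketch is essentially the paper's three-case argument and is fine modulo small bookkeeping (one should compare $\lambda+\lambda_I$, not $\lambda$, with $\FF_I(S)$, and in a \CuSgp{} one can only rescale elements of $I$ by positive rationals, not arbitrary positive reals; both are easily repaired).

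The genuine gap is in the directedness step, and it comes from your choice of index set. You work with $\mathcal{D}=\{f\in\LL(\FF(S)): f\leq\sigma_\mu\}$, which contains minorants with $f(\mu)=1$ (e.g.\ $\widehat{x}/\mu(x)$ for $x\in I$ with $0<\mu(x)<\infty$). Your construction of a common upper bound asks for an \emph{exact} complement $f_3$ with $f_1+f_2=(f_1\wedge f_2)+f_3$ and $f_3\geq f_1,f_2$; no such exact decomposition is available from \axiomO{5}, \axiomO{6}, and Edwards' condition in the noncancellative cone $\LL(\FF(S))$. What one actually gets (this is \cite[Lemma~3.3.2]{Rob13Cone}, applied to $g'\lhd g\ll f_1,f_2$ with $g'(\mu)>(f_1\wedge f_2)(\mu)-\varepsilon$) is an $h$ with $g'+h=f_1+f_2$ and $g'\leq Ch$, whence $h\geq f_1,f_2$ but only $h(\mu)<\max(f_1(\mu),f_2(\mu))+\varepsilon$. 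If $f_1(\mu)=f_2(\mu)=1$ this bound is $1+\varepsilon$, so $h\nleq\sigma_\mu$ and $h\notin\mathcal{D}$: your argument does not close. This is precisely why the paper replaces $\mathcal{D}$ by $X=\{f\in\LL(\FF(S)):f(\mu)<1\}$; the strict inequality leaves room to absorb the unavoidable $\varepsilon$, $X$ is then upward directed, and $\sup X=\sigma_\mu$ still holds pointwise. With that single substitution (and the exact-complement claim replaced by the approximate one above), your proof becomes the paper's.
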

\begin{proof}
Consider the set 
\[
X = \big\{ f\in \LL(\FF(S)) \colon f(\mu)<1 \big\}.
\]

\textit{Claim~1: }\emph{Let $f_1,f_2\in X$ satisfy $f_1(\mu)\leq f_2(\mu)$. 
Then}
\[
f_1(\mu)
= \inf \big\{ f_1(\lambda_1)+f_2(\lambda_2) \colon \lambda_1+\lambda_2=\mu \big\}.
\]

The inequality `$\geq$' follows using $\lambda_1=\mu$ and $\lambda_2=0$.
To show the converse inequality, let $\lambda_1,\lambda_2\in \FF(S)$ satisfy $\lambda_1+\lambda_2=\mu$.
Then $\lambda_1+\lambda_I$ and $\lambda_2+\lambda_I$ belong to $\FF_I(S)$.
Since
\[
\tfrac{1}{2}\mu
= \tfrac{1}{2}(\lambda_1+\lambda_I) + \tfrac{1}{2}(\lambda_2+\lambda_I)
\]
and since $\mu$ generates an extreme ray of $\FF_I(S)$, we see that $\lambda_1+\lambda_I$ and $\lambda_2+\lambda_I$ are scalar multiples of $\mu$.
Say $\lambda_1+\lambda_I=t_1 \mu$ and $\lambda_2+\lambda_I=t_2\mu$. 
Now $(t_1+t_2)\mu=(\lambda_1+\lambda_I)+(\lambda_2+\lambda_I)=\mu$, and since $\mu\neq\lambda_I$, we have $t_1+t_2=1$. 
Thus, $\lambda_1+\lambda_I=t \mu$ and $\lambda_2+\lambda_I=(1-t)\mu$ for some $t\in[0,1]$ (where we use the convention that $0\cdot \mu=\lambda_I$, the neutral element of $\FF_I(S)$).
Using that $f_1(\mu),f_2(\mu)<\infty$ and $\lambda_I+\mu=\mu$, it follows that $f_1(\lambda_I)=f_2(\lambda_I)=0$.
Then
\begin{align*}
f_1(\lambda_1)+f_2(\lambda_2)
&= f_1(\lambda_1+\lambda_I)+f_2(\lambda_2+\lambda_I) \\
&= tf_1(\mu)+(1-t)f_2(\mu) \\
&\geq tf_1(\mu)+(1-t)f_1(\mu)=f_1(\mu). 
\end{align*}
This proves the claim. 

\textit{Claim~2: }\emph{$X$ is upward directed.}
To prove the claim, let $f_1,f_2\in X$.
Without loss of generality, we may assume that $f_1(\mu)\leq f_2(\mu)$.
By \autoref{prp:SREC}, $\LL(\FF(S))$ satisfies Edwards' condition.
Using this at the second step (see \eqref{mustproveedwards}), and using Claim~1 at the first step, we get
\[
f_1(\mu) = 
\inf \big\{ f_1(\lambda_1)+f_2(\lambda_2) \colon \lambda_1+\lambda_2=\mu \big\}
= \sup \big\{ g(\mu) \colon g\leq f_1,f_2,\, g\in \LL(\FF(S)) \big\}.
\]

Choose $\varepsilon>0$ such that $f_2(\mu)+\varepsilon<1$. 
Then choose $g',g\in \LL(\FF(S))$ such that
\[
g'\lhd g\ll f_1, f_2, \andSep 
g'(\mu) > f_1(\mu)-\varepsilon.
\]
Applying \cite[Lemma~3.3.2]{Rob13Cone} to $g'\lhd g\ll f_1+f_2$, we obtain $h\in \LL(\FF(S))$ and $C\in(0,\infty)$ such that
\[
g'+h=f_1+f_2, \andSep g'\leq Ch.
\]
We have
\[
f_1+h\geq g'+h=f_1+f_2.
\]
If $\lambda\in \FF(S)$ satisfies $h(\lambda)<\infty$, then $g'(\lambda)<\infty$, whence $f_1(\lambda)<\infty$.
This allows us to cancel $f_1(\lambda)$ to conclude that $h(\lambda)\geq f_2(\lambda)$.
If on the other  hand $h(\lambda)=\infty$, then again
$h(\lambda)\geq f_2(\lambda)$. Hence,  $h\geq f_2$,  and symmetrically   $h\geq f_1$. 
On the other hand, 
\[
f_1(\mu)-\varepsilon+h(\mu)
\leq g'(\mu)+h(\mu)
=f_1(\mu)+f_2(\mu),
\]
from which we deduce that $h(\mu)\leq f_2(\mu)+\varepsilon<1$. 
Thus, $h$ is an upper bound for~$f_1$ and~$f_2$ in~$X$.
This proves the claim.

Let us show that $\sup_{f\in X} f(\lambda)=\sigma_\mu(\lambda)$ for all $\lambda\in \FF(S)$, from which the lemma readily follows by the claim that we have just established.
We distinguish the following three cases:

\textit{Case~1: }\emph{Let $\lambda\in \FF(S)$ satisfy $\lambda\leq \lambda_I$.}
Given $f\in X$, using that $f(\mu)<1$ and $\mu+\lambda_I=\mu$, we have $f(\lambda_I)=0$, and so $f(\lambda)=0$.
This implies that $\sup_{f\in X} f(\lambda)=0=\sigma_\mu(\lambda)$.

\textit{Case~2: }\emph{Let $\lambda\in \FF(S)$ satisfy $\lambda+\lambda_I=t\mu$ for some $t\in (0,\infty)$.} 
Given $f\in X$, we saw in Case~1 that $f(\lambda_I)=0$, whence
\[
f(\lambda)=f(\lambda+\lambda_I)=f(t\mu)=tf(\mu)<t=\sigma_\mu(\lambda).
\]
This shows that $\sup_{f\in X} f(\lambda)\leq\sigma_\mu(\lambda)$.
To show the converse, note that there exists $g\in X$ with $g(\mu)>0$.
(Otherwise, $\mu$ would only take values in $\{0,\infty\}$ on $\LL(\FF(S))$, which would imply $\mu=2\mu$, a contradiction.)
Then, $f_n=\tfrac{n}{(n+1)g(\mu)}g$ belongs to $X$ and satisfies $f_n(\lambda)=\tfrac{n}{n+1}t$.
Therefore, $\sup_{f\in X} f(\lambda)\geq\sup_nf_n(\lambda)=t=\sigma_\mu(\lambda)$.

\textit{Case~3: }\emph{Suppose that we are in neither one of the two cases above.}
Then $\sigma_\mu(\lambda)=\infty$, and we need to show that $\sup_{f\in X} f(\lambda)=\infty$. 
Let $C\in(0,\infty)$.
It will suffice to argue that there exists $f\in X$ such that $f(\lambda)>C$. 
Since $\lambda+\lambda_I$ is not a scalar multiple of $\mu$ and the latter generates an extreme ray, we have $\lambda\not \leq 2C\mu$. 
Let $y\in S$ be such that $2C\mu(y)<\lambda(y)$. 
If $\mu(y)=0$ and $\lambda(y)=\infty$, then $f=\widehat{y}$ is as desired, and if $\mu(y)=0$ and $0<\lambda(y)<\infty$, then $f=\tfrac{2C}{\lambda(y)}\widehat{y}$ is as desired. 
Finally,~if~$\mu(y)>0$, then $f=\tfrac{1}{2\mu(y)}\widehat{y}$ satisfies
\[
f(\mu)=\tfrac{1}{2}<1, \andSep
f(\lambda)= \tfrac{\lambda(y)}{2\mu(y)}>C.
\]
Hence, $f$ is as desired.
\end{proof}

Let $C$ be a cone embedded in a locally convex topological $\RR$-vector space. 
A subset~$K$ of~$C$ is called a \emph{cap} if $K$ is compact, convex, and $C\backslash K$ is also convex. 
The cone~$C$ is said to be \emph{well capped} if it is the union of its caps; 
see, for example, \cite[p.~80]{Phe01LNMChoquet}. 
It was proved in \cite[Proposition~3.11]{AntPerRobThi21Edwards} that if $I$ is a countably generated ideal of a \CuSgp{} satisfying \axiomO{5}, then the cone $\FF_I(S)$ is well-capped.

The next result is an improved version of \autoref{prp:SubconeIsAll}.

\begin{thm}
\label{prp:SubconeIsAll2}
Let $S$ be a \CuSgp{} satisfying \axiomO{5}, \axiomO{6}, and Edwards' condition, and let $K$ be a closed subcone of $\FF(S)$ with $0\in K$.
Let $M\in (0,\infty)$. 
Assume that $\widehat{x}|_K\leq \widehat{y}|_K$ implies $\widehat{x}\leq M \widehat{y}$, for all $x,y\in S$. 
Then $K=\FF(S)$.
\end{thm}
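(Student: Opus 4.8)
The plan is to argue by contradiction, following the skeleton of the proofs of \autoref{prp:Separation} and \autoref{prp:SubconeIsAll}, but to replace the Hahn--Banach functional (which only separates ``linearly'' and therefore cannot beat the multiplicative constant $M$) by the sharp indicator function $\sigma_\rho$ attached to an \emph{extreme} ray generator $\rho\notin K$. The point is that $\sigma_\rho$ restricted to $K$ takes only the values $0$ and $\infty$, and it is this infinite separation ratio that defeats the fixed constant $M$. So suppose $K\neq\FF(S)$ and fix $\mu\in\FF(S)\setminus K$ with support ideal $I$. By \autoref{prp:separationLFStoS} the hypothesis lifts to $\LL(\FF(S))$, namely $f|_K\leq g|_K$ implies $f(\mu)\leq Mg(\mu)$ for all $f,g\in\LL(\FF(S))$, and by \autoref{prp:technical2} we obtain $\lambda_I\in K$. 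I would then reproduce verbatim the Claim in the proof of \autoref{prp:Separation} (it does not involve $M$): using that $\lambda_I=\lim_J\lambda_J\in K$ along the upward directed family of countably generated ideals $J\subseteq I$, together with the compactness of $K$, one finds a countably generated $J\subseteq I$ with $\lambda_J+\mu\notin\lambda_J+K$. From here on I work inside $\FF_J(S)$, which is well-capped by \cite[Proposition~3.11]{AntPerRobThi21Edwards} because $J$ is countably generated. Put $\nu:=\lambda_J+\mu$ and $D:=(\lambda_J+K)\cap\FF_J(S)$, a closed subcone with origin $\lambda_J$ and $\nu\notin D$.

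The geometric core is to produce an extreme ray generator outside $K$. Choosing a cap of $\FF_J(S)$ that contains $\nu$, and separating $\nu$ from the closed cone $D$ by \autoref{prp:HB-Cones} applied to the pairing between $\FF_J(S)$ and $\widetilde{\PP}_J(S)$, I obtain a continuous linear functional $\phi$ on $\FF_J(S)$ with $\phi\leq 0$ on $D$ and $\phi(\nu)>0$. Maximizing the continuous functional $\phi$ over the compact convex cap, the maximum is attained at an extreme point; since $\phi(\lambda_J)=0<\phi(\nu)$ this extreme point is not the origin and hence generates an extreme ray $\RR_+\rho$ of $\FF_J(S)$ with $\phi(\rho)\geq\phi(\nu)>0$, so $\rho\notin D$. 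As $\rho\in\FF_J(S)$ this gives $\rho\notin\lambda_J+K$, and therefore $\rho\notin K$. A direct computation from the definition of $\sigma_\rho$ in \autoref{pgr:ray} then shows that $\sigma_\rho|_K$ is $\{0,\infty\}$-valued: if $\sigma_\rho(\lambda)\in(0,\infty)$ for some $\lambda\in K$, then $\lambda+\lambda_J=t\rho$ forces $\rho=\tfrac1t\lambda+\lambda_J\in\lambda_J+K$, a contradiction; thus $\sigma_\rho(\lambda)=0$ when $\lambda\leq\lambda_J$ and $\sigma_\rho(\lambda)=\infty$ otherwise.

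To extract the contradiction, I would invoke \autoref{prp:chiselLFS} (whose hypotheses hold, and pass to $\LL(\FF(S))$ via \autoref{prp:SREC}) to write $\sigma_\rho=\sup_\alpha f_\alpha$ with $f_\alpha\in\LL(\FF(S))$ and $\sigma_\rho(\rho)=1$. Applying \autoref{prp:separationLFStoS} once more, now with $\rho$ in place of $\mu$ (legitimate since the standing hypothesis gives $\widehat{x}(\rho)\leq M\widehat{y}(\rho)$ whenever $\widehat{x}|_K\leq\widehat{y}|_K$), any $g\in\LL(\FF(S))$ with $g|_K\geq\sigma_\rho|_K$ satisfies $f_\alpha(\rho)\leq Mg(\rho)$ for every $\alpha$, whence $1=\sigma_\rho(\rho)\leq Mg(\rho)$, that is $g(\rho)\geq 1/M$. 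On the other hand, because $\sigma_\rho|_K$ is $\{0,\infty\}$-valued it is unchanged under multiplication by $\tfrac1{2M}$, so $\tfrac1{2M}\sigma_\rho$ dominates $\sigma_\rho$ on $K$ while taking the value $\tfrac1{2M}<\tfrac1M$ at $\rho$. Realizing such a dominating function honestly inside $\LL(\FF(S))$ — i.e.\ producing $g\in\LL(\FF(S))$ with $g|_K\geq\sigma_\rho|_K$ and $g(\rho)<1/M$ — contradicts the bound $g(\rho)\geq 1/M$ and completes the proof.

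The main obstacle is exactly this last step: converting the net representation $\sigma_\rho=\sup_\alpha f_\alpha$ into a single function $g\in\LL(\FF(S))$ that is $+\infty$ on $\{\lambda\in K:\lambda\not\leq\lambda_J\}$ yet stays below $1/M$ at $\rho$. This is where I expect the countable generation of $J$ to be used decisively, allowing one to replace the net by a cofinal sequence so that the supremum remains in $\LL(\FF(S))$ by \axiomO{1}, and where the stronger axioms \axiomO{6} and Edwards' condition on $\LL(\FF(S))$ enter. Everything else is a faithful adaptation of \autoref{prp:Separation}; the genuinely new ingredient is that the value $0$ of $\sigma_\rho$ on $\{\lambda\leq\lambda_J\}$ together with the value $\infty$ everywhere else on $K$ renders the multiplicative constant $M$ irrelevant.
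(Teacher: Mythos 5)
Your central idea is the right one and matches the paper's: for an extreme ray generator $\rho\notin K$ the function $\sigma_\rho$ restricted to $K$ takes only the values $0$ and $\infty$, and this infinite separation is what defeats the fixed constant $M$. Your route to producing such a $\rho$ (reduce to a countably generated $J$ with $\lambda_J+\mu\notin\lambda_J+K$, then separate inside a cap of $\FF_J(S)$ and invoke Bauer's maximum principle together with the fact that nonzero extreme points of a cap lie on extreme rays) is more roundabout than, but compatible with, the paper's architecture, which instead shows directly that $K$ contains \emph{every} extreme ray of $\FF_I(S)$ for every countably generated $I$ and then uses well-cappedness and \cite[p.~81]{Phe01LNMChoquet} to conclude $\FF_I(S)\subseteq K$.

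The genuine gap is the step you yourself flag as ``the main obstacle,'' and your proposed fix does not work. You need a single $g\in\LL(\FF(S))$ with $g|_K\geq\sigma_\rho|_K$ and $g(\rho)<1/M$; the natural candidate $\tfrac{1}{2M}\sigma_\rho$ lies in $\Lsc(\FF(S))$ but there is no reason for it to lie in $\LL(\FF(S))$, since \autoref{prp:chiselLFS} only exhibits $\sigma_\rho$ as the supremum of an increasing \emph{net} (indexed by the directed set $X=\{f\in\LL(\FF(S)):f(\rho)<1\}$), and $\LL$ is only closed under suprema of increasing \emph{sequences}. Countable generation of $J$ gives you no cofinal sequence in $X$, so the reduction you hope for is unjustified — and without $g\in\LL(\FF(S))$ you cannot invoke \autoref{prp:separationLFStoS}. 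The paper closes exactly this gap by a different device: it never manufactures a single dominating $g$. Instead, for $h\ll f_{j_0}$ it observes that $(M+1)h|_K$ is \emph{non-sequentially} way below $(M+1)\sigma_\rho|_K=\sigma_\rho|_K=\sup_j f_j|_K$ in $\Lsc(K)$ (via \autoref{prp:restrictWayBelowLFS} and \autoref{prp:CharWayBelowLscC}, using compactness of $\overline{\set((M+1)h|_K)}$), hence $(M+1)h|_K\leq f_j|_K$ for a single member $f_j$ of the net; applying the hypothesis to the pair $(M+1)h,\,f_j\in\LL(\FF(S))$ gives $(M+1)h\leq Mf_j\leq M\sigma_\rho$, and evaluating at $\rho$ and passing to suprema over $h$ and $j_0$ yields $M+1\leq M$. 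Without this (or an equivalent) mechanism your argument does not reach a contradiction.
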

\begin{proof}
By \autoref{prp:separationLFStoS}, $f|_K\leq g|_K$ implies $f\leq Mg$, for all $f,g\in \LL(\FF(S))$. 

Let $I$ be a countably generated ideal of $S$. 
By \autoref{prp:technical2}, we have $\lambda_I\in K$.
We claim that $K$ contains every extreme ray of the cone $\FF_I(S)$ (see \autoref{pgr:ray}). 
To this end, let $\mu\in \FF_I(S)\setminus \{\lambda_I\}$ be a functional generating an extreme ray of $\FF_I(S)$ and assume, for the sake of contradiction, that $\mu\notin K$.

Let $\sigma_\mu$ be as defined in \autoref{pgr:ray}.
If $\lambda\in K$ and $\lambda+\lambda_I=t\mu$ for some $t>0$, then this implies that $\mu\in K$, contrary to our assumption. 
Hence, by the definition of $\sigma_\mu$, we have $\sigma_\mu(\lambda)\in \{0,\infty\}$ for all $\lambda\in K$.
Put differently, $(M+1)\sigma_\mu|_K=\sigma_\mu|_K$. 

By \autoref{prp:chiselLFS}, there is an increasing net $(f_j)_j$ in $\LL(\FF(S))$ with supremum~$\sigma_\mu$ in $\Lsc(\FF(S))$.
Fix an index $j_0$, and let $h\in \LL(\FF(S))$ be such that $h\ll f_{j_0}$.
Then $(M+1)h\ll(M+1)f_{j_0}$.
Let us use $\lll$ to denote the non-sequential way-below relation.
Applying \autoref{prp:restrictWayBelowLFS} at the first step, we get
\[
(M+1)h|_K 
\lll (M+1)f_{j_0}|_K 
\leq (M+1)\sigma_\mu|_K 
= \sigma_\mu|_K 
= \sup_j f_j|_K
\]
in $\Lsc(K)$.
Hence, $(M+1)h|_K\leq f_{j}|_K$ for some $j$. 
It follows from our assumption on $K$  that $(M+1)h\leq Mf_j\leq M\sigma_\mu$. 
Evaluating both sides at $\mu$, and using that $\sigma_\mu(\mu)=1$, we get $(M+1)h(\mu)\leq M$. 
Since $\LL(\FF(S))$ is a \CuSgp{}, $f_{j_0}$ is the supremum of all $h\in \LL(\FF(S))$ satisfying $h\ll f_{j_0}$.
Passing to the supremum over all $h$ way-below $f_{j_0}$, we get $(M+1)f_{j_0}(\mu)\leq M$. 
Now passing to the supremum over all $j_0$ and using again that $\sigma_\mu(\mu)=1$ we get $M+1\leq M$. 
This is the desired contradiction. 

We have thus shown that $K$ contains every extreme ray of $\FF_I(S)$. 
Since $I$ is countably generated, we have by \cite[Proposition~3.11]{AntPerRobThi21Edwards} that $\FF_I(S)$ is well capped. 
Therefore, $K$ contains all of $\FF_I(S)$ by \cite[p.~81]{Phe01LNMChoquet}. 

As at the end of the proof of \autoref{prp:SubconeIsAll}, it now follows that $K=F(S)$.
\end{proof}


\begin{cor}
\label{prp:SubconeDense2}
Let $S$ be a \CuSgp{} satisfying \axiomO{5}, \axiomO{6}, and Edwards' condition. 
Let $K$ be a subcone of $\FF(S)$ with $0\in K$.
Let $M\in (0,\infty)$.
Suppose that for all $x,y',y\in S$ with $\widehat{x}|_K\leq \widehat{y'}|_K$ and $y'\ll y$, we have $\widehat{x}\leq M\widehat{y}$.
Then $K$ is dense in $\FF(S)$.
\end{cor}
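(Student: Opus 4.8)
The plan is to pass to the closure $\overline{K}$ of $K$ in $\FF(S)$ and then apply \autoref{prp:SubconeIsAll2}. Since $\FF(S)$ is an algebraically ordered, compact cone, its addition and scalar multiplication are jointly continuous, so $\overline{K}$ is again a subcone, it is closed, and it contains $0$ because $0\in K\subseteq\overline{K}$. It therefore suffices to verify the hypothesis of \autoref{prp:SubconeIsAll2} for $\overline{K}$, namely that $\widehat{x}|_{\overline{K}}\leq\widehat{y}|_{\overline{K}}$ implies $\widehat{x}\leq M\widehat{y}$ for all $x,y\in S$; granting this, \autoref{prp:SubconeIsAll2} yields $\overline{K}=\FF(S)$, which is exactly the density of $K$.

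To verify the hypothesis, I would fix $x,y\in S$ with $\widehat{x}|_{\overline{K}}\leq\widehat{y}|_{\overline{K}}$ and apply \autoref{prp:CompareClosureCone} with $\gamma=1$. This provides, for every $x'\ll x$ and every rational $\gamma'=k/l>1$ with $k,l\in\NN\setminus\{0\}$, an element $y'\in S$ with $y'\ll y$ and $\widehat{x'}|_K\leq\gamma'\widehat{y'}|_K$, which I rewrite as $\widehat{lx'}|_K\leq\widehat{ky'}|_K$. Since $y'\ll y$, repeated use of \axiomO{3} gives $ky'\ll ky$. Applying the standing hypothesis of the corollary to the pair $lx'$ and $ky'$, together with $ky'\ll ky$, then yields $\widehat{lx'}\leq M\widehat{ky}$, that is, $\widehat{x'}\leq M\gamma'\widehat{y}$ as functions on $\FF(S)$.

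The last step is to let these inequalities propagate from $x',\gamma'$ to $x,y$. Fixing $\lambda\in\FF(S)$ and taking the infimum over rational $\gamma'>1$ gives $\widehat{x'}(\lambda)\leq M\widehat{y}(\lambda)$ (the cases $\widehat{y}(\lambda)\in\{0,\infty\}$ are immediate). Choosing, via \axiomO{2}, a $\ll$-increasing sequence $(x_n)_n$ with $x=\sup_n x_n$ and using that $\lambda$ preserves suprema of increasing sequences, one obtains $\widehat{x}(\lambda)=\sup_n\widehat{x_n}(\lambda)\leq M\widehat{y}(\lambda)$. As $\lambda$ was arbitrary, $\widehat{x}\leq M\widehat{y}$, completing the verification.

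Once \autoref{prp:CompareClosureCone} is in hand the argument is largely bookkeeping, and I expect no serious obstacle. The one point requiring care is the conversion of the scaled comparison $\widehat{x'}|_K\leq\gamma'\widehat{y'}|_K$ into a genuine inequality $\widehat{lx'}|_K\leq\widehat{ky'}|_K$ between hats of honest semigroup elements, since the hypothesis of the corollary is stated for elements of $S$ rather than for scalar multiples of functions; this is precisely what forces the passage through rational $\gamma'$ and the integers $k,l$, and what makes the preservation of $\ll$ under multiplication by a positive integer (via \axiomO{3}) a necessary ingredient.
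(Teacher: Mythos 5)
Your proof is correct and follows essentially the same route as the paper: both reduce to \autoref{prp:SubconeIsAll2} applied to the closed subcone $\overline{K}$, after verifying that $\widehat{x}|_{\overline{K}}\leq\widehat{y}|_{\overline{K}}$ forces a global inequality between $\widehat{x}$ and a multiple of $\widehat{y}$. The only difference is cosmetic: the paper invokes \autoref{prp:wayBelowLFS} and \autoref{prp:restrictWayBelowLFS} directly with a factor of $2$ and verifies the hypothesis of \autoref{prp:SubconeIsAll2} with constant $2M$, whereas your rational-approximation argument via \autoref{prp:CompareClosureCone} yields the (immaterially) sharper constant $M$.
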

\begin{proof}
By \autoref{prp:SubconeIsAll2}, it suffices to show that $\widehat{x}|_{\overline{K}}\leq \widehat{y}|_{\overline{K}}$ implies $\widehat{x}\leq 2M\widehat{y}$, for all $x,y\in S$. 
So let $x,y\in S$ satisfy $\widehat{x}|_{\overline{K}}\leq \widehat{y}|_{\overline{K}}$. 
Let $x'\in S$ satisfy $x'\ll x$.
By \autoref{prp:wayBelowLFS}, we get $\widehat{x'}\ll 2\widehat{x}$.
Applying \autoref{prp:restrictWayBelowLFS}, we obtain $\widehat{x'}|_{\overline{K}}\ll 2\widehat{y}|_{\overline{K}}$ in $\Lsc(\overline{K})$.
This allows us to choose $y'\in S$ such that
\[
\widehat{x'}|_{\overline{K}}\leq 2\widehat{y'}|_{\overline{K}}, \andSep
y'\ll y.
\]
By assumption, we get $\widehat{x'}\leq 2M\widehat{y}$. 
Passing to the supremum over all $x'$ such that $x'\ll x$, we obtain that $\widehat{x}\leq 2M\widehat{y}$, as desired.
\end{proof}


\section{Separation of normalized functionals}\label{sec:sepnormalized}

In this section we obtain a result on the separation of functionals similar to \autoref{prp:SubconeIsAll2}, but in the context of normalized functionals. 
This time we rely on standard tools from the theory of compact convex sets.

Recall that an element $x$ in a \CuSgp{} $S$ is called full if it generates $S$ as an ideal.

\begin{thm}
\label{MdensityFu}
Let $S$ be a \CuSgp{} satisfying \axiomO{5}. 
Let $u\in S$ be a full compact element, and let $K\subseteq \FF_u(S)$ be a closed convex subset.
Let $M\in (0,\infty)$. 
Suppose that for all $x,y\in S$ with $y$ full and with $\widehat{x}|_K \leq \widehat{y}|_K$, we have $\widehat{x}\leq M\widehat{y}$. 
Then $K = \FF_u(S)$.
\end{thm}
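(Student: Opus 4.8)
The plan is to argue by contradiction using Hahn--Banach separation inside the compact convex set $\FF_u(S)$. Since $u$ is compact, $\widehat{u}$ is continuous on $\FF(S)$ (see \autoref{pgr:LC}), so $\FF_u(S)=\{\lambda:\widehat{u}(\lambda)=1\}$ is a compact convex set, and $K$, being closed and convex, is a compact convex subset. Suppose $K\neq\FF_u(S)$ and fix $\mu_0\in\FF_u(S)\setminus K$. By the Hahn--Banach separation theorem there is a continuous affine function $h\in\Aff(\FF_u(S))$ with $h|_K\leq 0<h(\mu_0)$; after subtracting $\sup_K h$ we may assume $\sup_K h=0$.

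The first substantive step is to locate a point at which the inequality $\widehat{x}\leq M\widehat{y}$ will be violated. Let $\nu_0\in\FF_u(S)$ be a point where $h$ attains its maximum $c:=h(\nu_0)\geq h(\mu_0)>0$; such a point exists by compactness and continuity. The key observation is that the concave (upper) envelope of the convex continuous function $\varphi:=\max\{-h,0\}$ vanishes at $\nu_0$: any probability measure $\rho$ on $\FF_u(S)$ with barycenter $\nu_0$ satisfies $\int h\,d\rho=h(\nu_0)=\max h$, which (as $h\leq c$) forces $\rho$ to be concentrated on $\{h=c\}\subseteq\{h>0\}$, where $\varphi=0$, so $\int\varphi\,d\rho=0$. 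Consequently, for any prescribed $\varepsilon>0$ there is a continuous affine majorant $\psi\geq\varphi$ on $\FF_u(S)$ with $\psi(\nu_0)<\varepsilon$; I would fix such a $\psi$ with $\varepsilon<c/(M+1)$. By construction $\psi\geq 0$ and the minuend $p:=\psi+h$ satisfies $p\geq 0$, with $p|_K\leq\psi|_K$ (because $h|_K\leq 0$) while $p(\nu_0)=\psi(\nu_0)+c$.

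It remains to transfer this separation of nonnegative continuous affine functions back to $S$. Invoking the function-representation theory for $\FF(S)$ under \axiomO{5} (cf.\ \autoref{pgr:LC}), which represents continuous affine functions on $\FF_u(S)$ as uniform limits of scaled rank functions $\tfrac1n\widehat{z}$ with $z\in S$, I would choose, for large $n$, elements $x_n,y_n\in S$ with $\tfrac1n\widehat{x_n}\approx p$ and $\tfrac1n\widehat{y_n}\approx\psi$ uniformly on $\FF_u(S)$, and set $y:=y_n+u$, which is full since $u$ is. Because $p|_K\leq\psi|_K$ and $\widehat{u}|_K=1$, the small uniform approximation error is absorbed by the summand $u$, yielding $\widehat{x_n}|_K\leq\widehat{y}|_K$. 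On the other hand $\widehat{x_n}(\nu_0)\approx n(c+\psi(\nu_0))\geq nc$, whereas $\widehat{y}(\nu_0)\approx n\psi(\nu_0)+1\leq n\varepsilon+1$; hence for $n$ large the ratio $\widehat{x_n}(\nu_0)/\widehat{y}(\nu_0)$ tends to $c/\varepsilon>M+1$, so $\widehat{x_n}\not\leq M\widehat{y}$. This contradicts the hypothesis applied to $x_n$ and the full element $y$, and therefore $K=\FF_u(S)$.

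I expect the realization step to be the main obstacle: one must uniformly approximate the prescribed nonnegative continuous affine functions $p$ and $\psi$ by scaled rank functions $\widehat{z}$, while simultaneously keeping the subtrahend small at $\nu_0$ and dominating $\widehat{x_n}$ on $K$ after adding $u$ for fullness. The delicate bookkeeping is ensuring that $\widehat{x_n}|_K\leq\widehat{y}|_K$ holds \emph{exactly} while the violation at $\nu_0$ survives in the limit; the choice of $\nu_0$ as a maximizer of $h$---where the concave envelope of $\max\{-h,0\}$ vanishes---is precisely what makes the subtrahend negligible there and drives the contradiction, overcoming the fact that at a generic interior point the achievable comparison ratio would be bounded.
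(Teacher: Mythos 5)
Your overall strategy---separate $\mu_0$ from $K$ by a continuous affine function $h$, manufacture nonnegative continuous affine functions $p,\psi$ on $\FF_u(S)$ with $p|_K\leq\psi|_K$ but $p(\nu_0)/\psi(\nu_0)>M$, and contradict the hypothesis---is sound in outline, and the Choquet-theoretic step (the upper envelope of $\max\{-h,0\}$ vanishing at a maximizer of $h$) is correct. But the transfer step, which you yourself flag as the main obstacle, is a genuine gap rather than a technicality. The hypothesis concerns elements $x,y\in S$, and you propose to reach it by approximating $p$ and $\psi$ \emph{uniformly} on $\FF_u(S)$ by scaled rank functions $\tfrac1n\widehat{z}$ with $z\in S$. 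Nothing available for a general \CuSgp{} satisfying only \axiomO{5} provides such approximation: \autoref{pgr:LC} only says that elements of $\LL(\FF(S))$ are suprema of $\lhd$-increasing sequences $\tfrac{\widehat{x_n}}{k_n}$, i.e.\ pointwise limits from below of lower semicontinuous functions, which is very far from uniform approximation of a prescribed continuous affine function by a single $\tfrac1n\widehat{z}$. Deciding which (even approximately) continuous affine functions on $\FF_u(S)$ arise as ranks of elements of $S$ is the rank problem, which requires strong regularity hypotheses already for Cuntz semigroups of C*-algebras. So the contradiction at $\nu_0$ cannot be reached along the route you describe.

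The paper bridges this gap with two ingredients you would need to import. First, it adapts \autoref{prp:separationLFStoS} to extend the hypothesis from pairs in $S$ to pairs $f,g\in\LL(\FF(S))$ with $g$ full; this uses only $\ll$-increasing approximation inside $\LL(\FF(S))$, never uniform approximation. Second, it invokes \cite[Proposition~6.9]{AntPerRobThi22CuntzSR1} to extend strictly positive continuous affine functions on $\FF_u(S)$ to full elements of $\LL(\FF(S))$ by homogeneity. With these two facts your argument would in fact close (apply the extended hypothesis to the extensions of $p+\varepsilon'$ and $\psi+\varepsilon'$ and evaluate at $\nu_0$), and would be a genuinely different---and arguably more direct---route than the paper's, which instead runs Krein--Milman over the extreme points of $\FF_u(S)$, uses the function $\sigma_\mu$ (identically $\infty$ on $K$ and equal to $1$ at $\mu$) together with an increasing net of continuous affine functions converging up to it, and derives the absurdity $(M+1)\sigma_\mu\leq M\sigma_\mu$. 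As written, however, your proof rests on an approximation statement that is not available, so it does not constitute a proof.
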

\begin{proof}
The proof of \autoref{prp:separationLFStoS} is easily adapted to show that, under the present hypotheses,  $f|_K\leq g|_K$ implies $f\leq Mg$ for all $f,g\in \LL(\FF(S))$ with $g$ full in $\LL(\FF(S))$.

We will show that $K$ contains every extreme point of $\FF_u(S)$. 
Then, by the Krein--Milman Theorem, it will follow that $K = \FF_u(S)$.  
Let $\mu\in \FF_u(S)$ be an extreme point, and define $\sigma_\mu$ as in \autoref{pgr:ray}.
Then $\sigma_\mu|_{\FF_u(S)}$ is a strictly positive, lower semicontinuous, affine function.
Applying \cite[Corollary~I.1.4]{Alf71CpctCvxSets}, we find a net of continuous, affine functions $(f_j)_j$ defined on~$\FF_u(S)$ and with supremum $\sigma_\mu|_{\FF_u(S)}$.
We can also arrange for the functions $f_j$ to be strictly positive.

By \cite[Proposition~6.9]{AntPerRobThi22CuntzSR1}, each function $f_j$ can be extended to a full function $\tilde{f}_j\in \LL(\FF(S))$. 
More explicitly, as shown in the proof \cite[Proposition~6.9]{AntPerRobThi22CuntzSR1}, we have
\[
\tilde{f}_j(\lambda)=\begin{cases}
\infty & \text{ if }\lambda(u)=\infty\\
\lambda(u)f_j(\frac{\lambda}{\lambda(u)}) & \text{ if }0<\lambda(u)<\infty\\
0 & \text{ if }\lambda(u)=0.
\end{cases}
\]

Since the functions $\tilde{f}_j$ are full, they are infinite on all $\lambda\in \FF(S)$ such that $\lambda(u)=\infty$.
It readily follows that  $(\tilde f_j)_j$ is an increasing net of functions in $\LL(\FF(S))$ with supremum $\sigma_\mu$. 

The rest of the argument is very similar to the proof of \autoref{prp:SubconeIsAll2}. 
We sketch it here: 
To reach a contradiction, assume that $\mu\notin K$. 
Fix an index $j_0$ and let $h \in \LL(\FF(S))$ satisfy $h \ll \tilde f_{j_0}$. 
Then use \autoref{prp:restrictWayBelowLFS} to find $j>j_0$ such that $(M+1)h|_K\leq \tilde f_j|_K$, and hence $(M+1)h\leq M\tilde f_j\leq M\sigma_\mu$. 
Passing to the supremum over all $h$ way-below $\tilde f_{j_0}$ and then over all $j_0$, we get $(M+1)\sigma_\mu\leq M\sigma_\mu$, which implies $M+1\leq M$ after evaluating at $\mu$, an absurdity.
\end{proof}


\begin{thebibliography}{BRT{\etalchar{+}}12}

\bibitem[Alf71]{Alf71CpctCvxSets}
\bgroup\scshape{}E.~M. Alfsen\egroup{}, \emph{Compact convex sets and boundary
  integrals}, Springer-Verlag, New York-Heidelberg, 1971, Ergebnisse der
  Mathematik und ihrer Grenzgebiete, Band 57.

\bibitem[APP18]{AntPerPet18PerfConditionsCu}
\bgroup\scshape{}R.~Antoine\egroup{}, \bgroup\scshape{}F.~Perera\egroup{}, and
  \bgroup\scshape{}H.~Petzka\egroup{}, Perforation conditions and almost
  algebraic order in {C}untz semigroups,  \emph{Proc. Roy. Soc. Edinburgh Sect.
  A} \textbf{148} (2018), 669--702.

\bibitem[APRT21]{AntPerRobThi21Edwards}
\bgroup\scshape{}R.~Antoine\egroup{}, \bgroup\scshape{}F.~Perera\egroup{},
  \bgroup\scshape{}L.~Robert\egroup{}, and \bgroup\scshape{}H.~Thiel\egroup{},
  Edwards' condition for quasitraces on \ca{s},  \emph{Proc. Roy. Soc.
  Edinburgh Sect. A} \textbf{151} (2021), 525--547.

\bibitem[APRT22]{AntPerRobThi22CuntzSR1}
\bgroup\scshape{}R.~Antoine\egroup{}, \bgroup\scshape{}F.~Perera\egroup{},
  \bgroup\scshape{}L.~Robert\egroup{}, and \bgroup\scshape{}H.~Thiel\egroup{},
  \ca{s} of stable rank one and their {C}untz semigroups,  \emph{Duke Math. J.}
  \textbf{171} (2022), 33--99.

\bibitem[APT18]{AntPerThi18TensorProdCu}
\bgroup\scshape{}R.~Antoine\egroup{}, \bgroup\scshape{}F.~Perera\egroup{}, and
  \bgroup\scshape{}H.~Thiel\egroup{}, Tensor products and regularity properties
  of {C}untz semigroups,  \emph{Mem. Amer. Math. Soc.} \textbf{251} (2018),
  viii+191.

\bibitem[APT20a]{AntPerThi20AbsBivariantCu}
\bgroup\scshape{}R.~Antoine\egroup{}, \bgroup\scshape{}F.~Perera\egroup{}, and
  \bgroup\scshape{}H.~Thiel\egroup{}, Abstract bivariant {C}untz semigroups,
  \emph{Int. Math. Res. Not. IMRN} (2020), 5342--5386.

\bibitem[APT20b]{AntPerThi20AbsBivarII}
\bgroup\scshape{}R.~Antoine\egroup{}, \bgroup\scshape{}F.~Perera\egroup{}, and
  \bgroup\scshape{}H.~Thiel\egroup{}, Abstract bivariant {C}untz semigroups
  {II},  \emph{Forum Math.} \textbf{32} (2020), 45--62.

\bibitem[APT20c]{AntPerThi20CuntzUltraproducts}
\bgroup\scshape{}R.~Antoine\egroup{}, \bgroup\scshape{}F.~Perera\egroup{}, and
  \bgroup\scshape{}H.~Thiel\egroup{}, Cuntz semigroups of ultraproduct \ca{s},
  \emph{J. Lond. Math. Soc. (2)} \textbf{102} (2020), 994--1029.

\bibitem[ART17]{ArcRobTik17Dixmier}
\bgroup\scshape{}R.~Archbold\egroup{}, \bgroup\scshape{}L.~Robert\egroup{}, and
  \bgroup\scshape{}A.~Tikuisis\egroup{}, The {D}ixmier property and tracial
  states for \ca{s},  \emph{J. Funct. Anal.} \textbf{273} (2017), 2655--2718.

\bibitem[BF15]{BicFar15Trace}
\bgroup\scshape{}T.~Bice\egroup{} and \bgroup\scshape{}I.~Farah\egroup{},
  Traces, ultrapowers and the {P}edersen-{P}etersen \ca{s},  \emph{Houston J.
  Math.} \textbf{41} (2015), 1175--1190.

\bibitem[BH82]{BlaHan82DimFct}
\bgroup\scshape{}B.~Blackadar\egroup{} and
  \bgroup\scshape{}D.~Handelman\egroup{}, Dimension functions and traces on
  \ca{s},  \emph{J. Funct. Anal.} \textbf{45} (1982), 297--340.

\bibitem[BRT{\etalchar{+}}12]{BlaRobTikTomWin12AlgRC}
\bgroup\scshape{}B.~Blackadar\egroup{}, \bgroup\scshape{}L.~Robert\egroup{},
  \bgroup\scshape{}A.~P. Tikuisis\egroup{}, \bgroup\scshape{}A.~S.
  Toms\egroup{}, and \bgroup\scshape{}W.~Winter\egroup{}, An algebraic approach
  to the radius of comparison,  \emph{Trans. Amer. Math. Soc.} \textbf{364}
  (2012), 3657--3674.

\bibitem[BR92]{BlaRor92ExtendStates}
\bgroup\scshape{}B.~Blackadar\egroup{} and
  \bgroup\scshape{}M.~R{\o}rdam\egroup{}, Extending states on preordered
  semigroups and the existence of quasitraces on \ca{s},  \emph{J. Algebra}
  \textbf{152} (1992), 240--247.

\bibitem[BBS{\etalchar{+}}19]{BBSTWW19}
\bgroup\scshape{}J.~Bosa\egroup{}, \bgroup\scshape{}N.~P. Brown\egroup{},
  \bgroup\scshape{}Y.~Sato\egroup{}, \bgroup\scshape{}A.~Tikuisis\egroup{},
  \bgroup\scshape{}S.~White\egroup{}, and \bgroup\scshape{}W.~Winter\egroup{},
  Covering dimension of \ca{s} and 2-coloured classification,  \emph{Mem. Amer.
  Math. Soc.} \textbf{257} (2019), vii+97.

\bibitem[BC09]{BroCiu09IsoHilbModSF}
\bgroup\scshape{}N.~P. Brown\egroup{} and
  \bgroup\scshape{}A.~Ciuperca\egroup{}, Isomorphism of {H}ilbert modules over
  stably finite \ca{s},  \emph{J. Funct. Anal.} \textbf{257} (2009), 332--339.

\bibitem[BPT08]{BroPerTom08CuElliottConj}
\bgroup\scshape{}N.~P. Brown\egroup{}, \bgroup\scshape{}F.~Perera\egroup{}, and
  \bgroup\scshape{}A.~S. Toms\egroup{}, The {C}untz semigroup, the {E}lliott
  conjecture, and dimension functions on \ca{s},  \emph{J. Reine Angew. Math.}
  \textbf{621} (2008), 191--211.

\bibitem[CRS10]{CiuRobSan10CuIdealsQuot}
\bgroup\scshape{}A.~Ciuperca\egroup{}, \bgroup\scshape{}L.~Robert\egroup{}, and
  \bgroup\scshape{}L.~Santiago\egroup{}, The {C}untz semigroup of ideals and
  quotients and a generalized {K}asparov stabilization theorem,  \emph{J.
  Operator Theory} \textbf{64} (2010), 155--169.

\bibitem[Con76]{Con76ClassifInjFactors}
\bgroup\scshape{}A.~Connes\egroup{}, Classification of injective factors.
  {C}ases {$II\sb{1},$} {$II\sb{\infty },$} {$III\sb{\lambda },$} {$\lambda
  \not=1$},  \emph{Ann. of Math. (2)} \textbf{104} (1976), 73--115.

\bibitem[CEI08]{CowEllIva08CuInv}
\bgroup\scshape{}K.~T. Coward\egroup{}, \bgroup\scshape{}G.~A.
  Elliott\egroup{}, and \bgroup\scshape{}C.~Ivanescu\egroup{}, The {C}untz
  semigroup as an invariant for \ca{s},  \emph{J.\ Reine Angew.\ Math.}
  \textbf{623} (2008), 161--193.

\bibitem[Cun78]{Cun78DimFct}
\bgroup\scshape{}J.~Cuntz\egroup{}, Dimension functions on simple \ca{s},
  \emph{Math. Ann.} \textbf{233} (1978), 145--153.

\bibitem[CP79]{CunPed79EquivTraces}
\bgroup\scshape{}J.~Cuntz\egroup{} and \bgroup\scshape{}G.~K.
  Pedersen\egroup{}, Equivalence and traces on \ca{s},  \emph{J. Funct. Anal.}
  \textbf{33} (1979), 135--164.

\bibitem[ERS11]{EllRobSan11Cone}
\bgroup\scshape{}G.~A. Elliott\egroup{}, \bgroup\scshape{}L.~Robert\egroup{},
  and \bgroup\scshape{}L.~Santiago\egroup{}, The cone of lower semicontinuous
  traces on a \ca{},  \emph{Amer. J. Math.} \textbf{133} (2011), 969--1005.

\bibitem[FHL{\etalchar{+}}21]{FarHarLupRobTikVigWin21ModelThy}
\bgroup\scshape{}I.~Farah\egroup{}, \bgroup\scshape{}B.~Hart\egroup{},
  \bgroup\scshape{}M.~Lupini\egroup{}, \bgroup\scshape{}L.~Robert\egroup{},
  \bgroup\scshape{}A.~Tikuisis\egroup{}, \bgroup\scshape{}A.~Vignati\egroup{},
  and \bgroup\scshape{}W.~Winter\egroup{}, Model theory of \ca{s},  \emph{Mem.
  Amer. Math. Soc.} \textbf{271} (2021), viii+127.

\bibitem[GKL19]{GarKalLup19ModelThyRokhlinDim}
\bgroup\scshape{}E.~Gardella\egroup{}, \bgroup\scshape{}M.~Kalantar\egroup{},
  and \bgroup\scshape{}M.~Lupini\egroup{}, Model theory and {R}okhlin dimension
  for compact quantum group actions,  \emph{J. Noncommut. Geom.} \textbf{13}
  (2019), 711--767.

\bibitem[GL18]{GarLup18ApplModelThyCDynamics}
\bgroup\scshape{}E.~Gardella\egroup{} and \bgroup\scshape{}M.~Lupini\egroup{},
  Applications of model theory to {${\rm C}^*$}-dynamics,  \emph{J. Funct.
  Anal.} \textbf{275} (2018), 1889--1942.

\bibitem[GP23]{GarPer23arX:ModernCu}
\bgroup\scshape{}E.~Gardella\egroup{} and \bgroup\scshape{}F.~Perera\egroup{},
  The modern theory of {C}untz semigroups of \ca{s}, preprint (arXiv:2212.02290
  [math.OA]), 2023.

\bibitem[GT23]{GarThi23arX:GenByCommutators}
\bgroup\scshape{}E.~Gardella\egroup{} and \bgroup\scshape{}H.~Thiel\egroup{},
  Rings and \ca{s} generated by commutators, preprint (arXiv:2301.05958
  [math.RA]), 2023.

\bibitem[Gro73]{Gro73TVS}
\bgroup\scshape{}A.~Grothendieck\egroup{}, \emph{Topological vector spaces},
  Gordon and Breach Science Publishers, New York-London-Paris, 1973, Translated
  from the French by Orlando Chaljub, Notes on Mathematics and its
  Applications.

\bibitem[Haa14]{Haa14Quasitraces}
\bgroup\scshape{}U.~Haagerup\egroup{}, Quasitraces on exact \ca{s} are traces,
  \emph{C. R. Math. Acad. Sci. Soc. R. Can.} \textbf{36} (2014), 67--92.

\bibitem[{Kei}17]{Kei17CuSgpDomainThy}
\bgroup\scshape{}K.~{Keimel}\egroup{}, The {C}untz semigroup and domain theory,
   \emph{Soft Comput.} \textbf{21} (2017), 2485--2502.

\bibitem[KR14]{KirRor14CentralSeq}
\bgroup\scshape{}E.~Kirchberg\egroup{} and
  \bgroup\scshape{}M.~R{\o}rdam\egroup{}, Central sequence \ca{s} and tensorial
  absorption of the {J}iang-{S}u algebra,  \emph{J. Reine Angew. Math.}
  \textbf{695} (2014), 175--214.

\bibitem[McD69]{McD69UncountablyManyII1}
\bgroup\scshape{}D.~McDuff\egroup{}, Uncountably many {$\mathrm{II}_1$}
  factors,  \emph{Ann. of Math. (2)} \textbf{90} (1969), 372--377.

\bibitem[NR16]{NgRob16CommutatorsPureCa}
\bgroup\scshape{}P.~W. Ng\egroup{} and \bgroup\scshape{}L.~Robert\egroup{},
  Sums of commutators in pure \ca{s},  \emph{M\"{u}nster J. Math.} \textbf{9}
  (2016), 121--154.

\bibitem[OPR12]{OrtPerRor12CoronaStability}
\bgroup\scshape{}E.~Ortega\egroup{}, \bgroup\scshape{}F.~Perera\egroup{}, and
  \bgroup\scshape{}M.~R{\o}rdam\egroup{}, The corona factorization property,
  stability, and the {C}untz semigroup of a \ca{},  \emph{Int. Math. Res. Not.
  IMRN} (2012), 34--66.

\bibitem[Oza13]{Oza13DixmApproxSymmAmen}
\bgroup\scshape{}N.~Ozawa\egroup{}, Dixmier approximation and symmetric
  amenability for \ca{s},  \emph{J. Math. Sci. Univ. Tokyo} \textbf{20} (2013),
  349--374.

\bibitem[Ped79]{Ped79CAlgsAutGp}
\bgroup\scshape{}G.~K. Pedersen\egroup{}, \emph{\ca{s} and their automorphism
  groups}, \emph{London Mathematical Society Monographs} \textbf{14}, Academic
  Press, Inc. [Harcourt Brace Jovanovich, Publishers], London-New York, 1979.

\bibitem[Phe01]{Phe01LNMChoquet}
\bgroup\scshape{}R.~R. Phelps\egroup{}, \emph{Lectures on {C}hoquet's theorem},
  second ed., \emph{Lecture Notes in Mathematics} \textbf{1757},
  Springer-Verlag, Berlin, 2001.

\bibitem[Rob11]{Rob11NuclDimComp}
\bgroup\scshape{}L.~Robert\egroup{}, Nuclear dimension and {$n$}-comparison,
  \emph{M\"{u}nster J. Math.} \textbf{4} (2011), 65--71.

\bibitem[Rob13]{Rob13Cone}
\bgroup\scshape{}L.~Robert\egroup{}, The cone of functionals on the {C}untz
  semigroup,  \emph{Math. Scand.} \textbf{113} (2013), 161--186.

\bibitem[Rob15]{Rob15NucDimSumsCommutators}
\bgroup\scshape{}L.~Robert\egroup{}, Nuclear dimension and sums of commutators,
   \emph{Indiana Univ. Math. J.} \textbf{64} (2015), 559--576.

\bibitem[Rob16]{Rob16LieIdeals}
\bgroup\scshape{}L.~Robert\egroup{}, On the {L}ie ideals of \ca{s},  \emph{J.
  Operator Theory} \textbf{75} (2016), 387--408.

\bibitem[RT17]{RobTik17NucDimNonSimple}
\bgroup\scshape{}L.~Robert\egroup{} and \bgroup\scshape{}A.~Tikuisis\egroup{},
  Nuclear dimension and {$\mathcal{Z}$}-stability of non-simple \ca{s},
  \emph{Trans. Amer. Math. Soc.} \textbf{369} (2017), 4631--4670.

\bibitem[R{\o}r04]{Ror04StableRealRankZ}
\bgroup\scshape{}M.~R{\o}rdam\egroup{}, The stable and the real rank of
  {$\mathcal{Z}$}-absorbing \ca{s},  \emph{Internat. J. Math.} \textbf{15}
  (2004), 1065--1084.

\bibitem[RW10]{RorWin10ZRevisited}
\bgroup\scshape{}M.~R{\o}rdam\egroup{} and \bgroup\scshape{}W.~Winter\egroup{},
  The {J}iang-{S}u algebra revisited,  \emph{J. Reine Angew. Math.}
  \textbf{642} (2010), 129--155.

\bibitem[dS16]{Sil16Thesis}
\bgroup\scshape{}K.~de~Silva\egroup{}, \emph{Rank constrained homotopies of
  matrices and the {B}lackadar-{H}andelman conjectures on {C}*-algebras},
  ProQuest LLC, Ann Arbor, MI, 2016, Thesis (Ph.D.)--Purdue University.

\bibitem[Thi20]{Thi20RksOps}
\bgroup\scshape{}H.~Thiel\egroup{}, Ranks of operators in simple \ca{s} with
  stable rank one,  \emph{Comm. Math. Phys.} \textbf{377} (2020), 37--76.

\bibitem[TV21a]{ThiVil21DimCu2}
\bgroup\scshape{}H.~Thiel\egroup{} and \bgroup\scshape{}E.~Vilalta\egroup{},
  Covering dimension of {C}untz semigroups {II},  \emph{Internat. J. Math.}
  \textbf{32} (2021), 27~p., Paper No. 2150100.

\bibitem[TV21b]{ThiVil21arX:NowhereScattered}
\bgroup\scshape{}H.~Thiel\egroup{} and \bgroup\scshape{}E.~Vilalta\egroup{},
  Nowhere scattered \ca{s}, J. Noncommut. Geom. (to appear), preprint
  (arXiv:2112.09877 [math.OA]), 2021.

\bibitem[TV22a]{ThiVil22DimCu}
\bgroup\scshape{}H.~Thiel\egroup{} and \bgroup\scshape{}E.~Vilalta\egroup{},
  Covering dimension of {C}untz semigroups,  \emph{Adv. Math.} \textbf{394}
  (2022), 44~p., Article No.~108016.

\bibitem[TV22b]{ThiVil22arX:Glimm}
\bgroup\scshape{}H.~Thiel\egroup{} and \bgroup\scshape{}E.~Vilalta\egroup{},
  The {G}lobal {G}limm {P}roperty, Trans. Amer. Math. Soc. (to appear),
  preprint (arXiv:2204.13059 [math.OA]), 2022.

\bibitem[Tho95]{Tho95TracesCrProdZ}
\bgroup\scshape{}K.~Thomsen\egroup{}, Traces, unitary characters and crossed
  products by {${\bf Z}$},  \emph{Publ. Res. Inst. Math. Sci.} \textbf{31}
  (1995), 1011--1029.

\bibitem[Tik14]{Tik14NucDimZstable}
\bgroup\scshape{}A.~Tikuisis\egroup{}, Nuclear dimension,
  {$\mathcal{Z}$}-stability, and algebraic simplicity for stably projectionless
  \ca{s},  \emph{Math. Ann.} \textbf{358} (2014), 729--778.

\bibitem[TWW17]{TikWhiWin17QDNuclear}
\bgroup\scshape{}A.~Tikuisis\egroup{}, \bgroup\scshape{}S.~White\egroup{}, and
  \bgroup\scshape{}W.~Winter\egroup{}, Quasidiagonality of nuclear \ca{s},
  \emph{Ann. of Math. (2)} \textbf{185} (2017), 229--284.

\bibitem[Vac23]{Vac23arX:UltraprodWBundles}
\bgroup\scshape{}A.~Vaccaro\egroup{}, Ultraproducts of factorial
  {$W^*$}-bundles, preprint (arXiv:2303.01942 [math.OA]), 2023.

\bibitem[Win12]{Win12NuclDimZstable}
\bgroup\scshape{}W.~Winter\egroup{}, Nuclear dimension and
  {$\mathcal{Z}$}-stability of pure \ca{s},  \emph{Invent. Math.} \textbf{187}
  (2012), 259--342.

\end{thebibliography}

\providecommand{\etalchar}[1]{$^{#1}$}
\providecommand{\bysame}{\leavevmode\hbox to3em{\hrulefill}\thinspace}
\providecommand{\noopsort}[1]{}
\providecommand{\mr}[1]{\href{http://www.ams.org/mathscinet-getitem?mr=#1}{MR~#1}}
\providecommand{\zbl}[1]{\href{http://www.zentralblatt-math.org/zmath/en/search/?q=an:#1}{Zbl~#1}}
\providecommand{\jfm}[1]{\href{http://www.emis.de/cgi-bin/JFM-item?#1}{JFM~#1}}
\providecommand{\arxiv}[1]{\href{http://www.arxiv.org/abs/#1}{arXiv~#1}}
\providecommand{\doi}[1]{\url{http://dx.doi.org/#1}}
\providecommand{\MR}{\relax\ifhmode\unskip\space\fi MR }
\providecommand{\MRhref}[2]{%
  \href{http://www.ams.org/mathscinet-getitem?mr=#1}{#2}
}
\providecommand{\href}[2]{#2}

\end{document}